\documentclass{amsart}
\usepackage{amssymb,amsmath,amsthm,amsfonts,amsopn,url,bbm, hyperref}
\usepackage{amscd, enumitem}
\usepackage{color}
\theoremstyle{plain}
\newtheorem{thm}{Theorem}[section]
\newtheorem{prop}[thm]{Proposition}
\newtheorem{lemma}[thm]{Lemma}
\newtheorem{keylem}[thm]{Key Lemma}
\newtheorem{cor}[thm]{Corollary}
\theoremstyle{definition}
\newtheorem{defn}[thm]{Definition}
\newtheorem*{defn*}{Definition}
\newtheorem{example}[thm]{Example}
\newtheorem*{rmk*}{Remark}
\newtheorem{disc}[thm]{Discussion}
\newtheorem*{disc*}{Discussion}
\newcommand{\arrow}[1]{\stackrel{#1}{\rightarrow}}
\newcommand{\vect}[2]{{{#1}_1, \dotsc, {#1}_{#2}}}
\newcommand{\wh}[1]{\widehat{#1}}
\newcommand{\inc}{\subseteq}
\newcommand{\inj}{\hookrightarrow}
\newcommand{\C}{\mathbb{C}}
\newcommand{\N}{\mathbb{N}}
\newcommand{\R}{\mathbb{R}}
\newcommand{\Z}{\mathbb{Z}}
\newcommand{\Q}{\mathbb{Q}}

\newcommand{\blank}{\,\underline{\hphantom{J}}\,}

\newcommand{\cA}{\mathcal{A}}
\newcommand{\cC}{\mathcal{C}}
\newcommand{\cF}{\mathcal{F}}

\newcommand{\cJ}{\mathcal{J}}
\newcommand{\cK}{\mathcal{K}}
\newcommand{\cL}{\mathcal{L}}
\newcommand{\cM}{\mathcal{M}}
\newcommand{\cN}{\mathcal{N}}
\newcommand{\cP}{\mathcal{P}}

\newcommand{\cS}{\mathcal{S}}
\newcommand{\cT}{\mathcal{T}}
\newcommand{\cV}{\mathcal{V}}
\newcommand{\cW}{\mathcal{W}}
\newcommand{\fA}{\mathfrak{A}}
\newcommand{\fB}{\mathfrak{B}}

\newcommand{\m}{\mathfrak{m}}
\newcommand{\n}{\mathfrak{n}}
\newcommand{\p}{\mathfrak{p}}
\newcommand{\fq}{\mathfrak{q}}

\newcommand{\imp}{\implies}
\newcommand{\dbimp}{\iff}

\newcommand{\ov}[1]{\overline{#1}}

\newcommand{\icr}[1]{#1'}

\newcommand{\ici}[1]{#1^-}
\newcommand{\icip}[1]{(#1)^-}

\newcommand{\oI}{\ici I}
\newcommand{\surj}{\twoheadrightarrow}

\newcommand{\red}{_{\mathrm{red}}}

\newcommand{\h}{^{\mathrm{h}}}

\newcommand{\ct}{^{\mathrm{cont}}}
\newcommand{\ctx}{^{\mathrm{cont},x}}
\newcommand{\ctz}{^{\mathrm{cont},(0,0)}} 
\newcommand{\ctv}{^{\mathrm{cont},v}}

\newcommand{\ctzz}{^{\mathrm{cont},(0,\dotsc,0)}}
\newcommand{\ax}{^{\mathrm{ax}}}
\newcommand{\AX}{^{\mathrm{AX}}}
\newcommand{\rmk}{(R,\,m,\,K)}

\newcommand{\z}{\mathbf{0}}
\newcommand{\spi}[2]{{#1}^{-\!#2 \mathrm{sp}}}
\newcommand{\inn}[1]{{#1}_{>1}}
\newcommand{\ncl}{^{\natural}}
\newcommand{\nmx}{^{\natural^{\mathrm{mix}}}}

\DeclareMathOperator{\di}{dim}
\DeclareMathOperator{\Spec}{Spec}

\DeclareMathOperator{\Ker}{Ker}

\DeclareMathOperator{\Jac}{Jac}
\DeclareMathOperator{\MaxSpec}{Max\, Spec}
\DeclareMathOperator{\Rad}{Rad}
\DeclareMathOperator{\ord}{ord}
\DeclareMathOperator{\gr}{gr}
\DeclareMathOperator{\depth}{depth}

\DeclareMathOperator{\md}{mod}
\newcommand{\rv}{relevant}
\newcommand{\mnb}{MN-basic}
\newcommand{\mnc}{MN-closed}

\author{Neil Epstein}
\address{Department of Mathematical Sciences \\ George Mason University \\ Fairfax, VA  22030}
\email{nepstei2@gmu.edu}
 
\author{Melvin Hochster}
\address{Department of Mathematics\\
University of Michigan\\
Ann Arbor, MI 48109-1043\\
USA}
\email{hochster@umich.edu}
\title{Continuous closure, axes closure, and natural closure}

\date{July 2, 2015}
\begin{document}
\begin{abstract}
Let $R$ be a reduced affine $\C$-algebra, with corresponding affine algebraic set $X$.  Let $\cC(X)$ be the ring of continuous (Euclidean topology)
$\C$-valued functions on $X$.  Brenner defined the \emph{continuous closure} $I\ct$ of an ideal $I$ as $I\cC(X) \cap R$.  He also introduced an algebraic notion of \emph{axes closure} $I\ax$  that always contains $I\ct$, and asked whether they coincide.  We extend
the notion of axes closure to general Noetherian rings, defining $f \in I\ax$ if its image is in $IS$ for every homomorphism $R \to S$, where  $S$ is a one-dimensional complete seminormal local ring.  We also introduce the \emph{natural closure} $I\ncl$ of $I$. One of many characterizations is  
$I\ncl = I + \{f \in R: \exists n >0 \mathrm{\ with\ } f^n \in I^{n+1}\}$.   We show that $I\ncl \subseteq I\ax$, and that when continuous closure is defined,  
$I\ncl \subseteq I\ct \subseteq I\ax$.  Under mild hypotheses on the ring, we show that $I\ncl = I\ax$ when $I$ is primary to a maximal ideal, and that if $I$ has no embedded primes, then $I = I\ncl$ if and only if $I = I\ax$, so that $I\ct$  agrees as well. We deduce that in the polynomial ring
$\C[\vect x n]$, if $f = 0$ at all points where all of the ${\partial f \over \partial x_i}$ are 0, then
$f \in ( {\partial f \over \partial x_1}, \, \ldots, \, {\partial f \over \partial x_n})R$.  We characterize $I\ct$ for 
monomial ideals in polynomial rings over $\C$, but we show that the inequalities $I\ncl \subset I\ct$ and $I\ct \subset I\ax$ can be strict 
for monomial ideals even in dimension 3.  Thus, $I\ct$ and $I\ax$ need not agree, although we prove they are equal in $\C[x_1, x_2]$.
\end{abstract}
 \thanks{The second named author is grateful for support from the National Science Foundation, grants DMS-0901145 and DMS-1401384.}
\subjclass[2010]{Primary 13B22, 13F45; Secondary 13A18, 46E25, 13B40, 13A15}
\keywords{continuous closure, axes closure, natural closure, seminormal ring}

\maketitle
\setcounter{tocdepth}{1}
\tableofcontents

\section{Introduction}\label{sec:intro}

Holger Brenner \cite{Br-cc} introduced a new closure operation on
ideals in finitely generated $\C$-algebras called {\it continuous closure},
and asks whether it is the same as an algebraic notion called {\it axes
closure} that he introduces.  He proves this for ideals in a polynomial
ring that are primary to a maximal ideal and generated by monomials.
We shall relate this closure to some variant notions of integral closure,
{\it special part of the integral closure} over a local ring, introduced in \cite{nme-sp}, and
{\it inner integral closure}, a notion explored here that exists without an explicit name
in the literature, and also to a notion we introduce called {\it natural closure}.
We shall prove that if $I$ is an unmixed ideal in any affine $\C$-algebra,
then $I$ is continuously closed if and only if it is axes closed.  See Theorem~\ref{thm:prnatax},
Corollary~\ref{cor:prctax}, and Corollary~\ref{cor:zerodimctax}.  We also provide further
conditions under which continuous closure equals axes closure or natural closure.

In consequence we can prove, for example, that if $f$ is a polynomial in $\C[\vect x n]$
that vanishes wherever its partial derivatives all vanish, then there
are continuous functions $g_j$ from $\C^n \to \C$ such that
\[
f = \sum_{j=1}^n   g_j \frac{\partial f}{\partial x_j}.
\]
See Theorem~\ref{thm:Jac}.

On the other hand, we show that continuous closure is sometimes
strictly smaller than axes closure.  Indeed, in \S\ref{sec:counter}, we 
give an example (followed by a method of generating such examples)
 of a monomial ideal in a polynomial ring over $\C$ which is 
continuously closed but not axes closed. 

After hearing the second named author give a talk on the results of this paper, Koll\'ar \cite{Kol-cc} 
studied continuous closure in the context of coherent sheaves on schemes over $\C$ and
has given an algebraic characterization that permits the notion of continuous closure
to be defined in a larger context.  In a further paper  \cite{FK-cc}, continuous closure is studied
over topological fields other than $\C$, particularly for the field of real numbers. 

Let $R$ be a finitely generated $\C$-algebra.  Map a polynomial ring
$\C[\vect x n] \surj R$ onto $R$ as $\C$-algebras.  Let $\fA \inc \C[\vect x n]$
be the kernel ideal, and let $X$ be the set of points in $\C^n$ where all
elements of $\fA$ vanish.  $X$ may be identified with the set of maximal
ideals of $R$.  Then $X$ has a Euclidean topology, and
the topological space $X$  is independent of the presentation of $R$.   
We let  $\cC(Y)$ denote the ring of complex-valued continuous functions
on any space $Y$.  Polynomial functions on $\C^n$,  when restricted to $X$,
yield a ring $\C[X]$ which is isomorphic to $R\red$,  the original ring modulo
the ideal $N$ of nilpotents.  (Nothing will be lost in the sequel if we restrict
attention to reduced rings $R$ (\emph{i.e.}, rings without nonzero nilpotents).)  Thus,
we have a  $\C$-homomorphism  $R \to \cC(X)$ which is injective when
$R$ is reduced.  The {\it continuous closure} if $I \inc R$, denoted $I\ct$, is the contraction
of $I\cC(X)$ to $R$.  That is,   if  $I = (\vect f m)R$,  then $f \in I\ct$ precisely
when there are continuous functions  $g_i:X \to \C$ such that
\[
f|_X = g_1f_1|_X + \cdots + g_mf_m|_X,
\]
where   $h|_X$  indicates
the image of $h \in R$ in $\cC(X)$.  Henceforth, we focus on the case
where $R$ is reduced, and omit $\quad|_X$  from the notation.  However,
we can state many of the results without this hypothesis:  one can typically
pass at once in the proofs to the case where the ring is reduced.

In this paper we study this closure and several other closures that are related, obtaining satisfying answers to 
many quesstions that were open even for polynomial rings.

Let $L$ be an algebraically closed field.  We are especially concerned with the case where
$L = \C$ is the complex numbers.
A finitely generated $L$-algebra $R$ is called a {\it ring of axes} over $L$ if it is
one-dimensional reduced and either smooth, with just one irreducible component,
or else is such that the corresponding algebaic set is the union of $n$ smooth irreducible curves,  
and there is a unique singular point, which is the intersection of any two
of the components, such that the completion of the local ring at that point  is isomorphic with
$L[\![\vect x n]\!]/(x_ix_j \mid 1 \leq i < j \leq n)$.  

 We now restrict to the case of the complex numbers. In \cite{Br-cc} Brenner obtains a structure
theorem for the ideals of such a completed local ring that enables him to prove
that in a ring of axes over $\C$, for every ideal $I$,  $I = I\ct$.  The {\it axes} closure $I\ax$ of an
ideal $I$ of $R$ is defined to be the set of elements $r$ such that for every
$\C$-homomorphism $R \to S$, where $S$ is a ring of axes,  one has  $r \in IS$.
The results of \cite{Br-cc} imply that $I\ct \inc I\ax$ in general, and  that they
agree for ideals of polynomial rings that are primary to maximal ideals and 
are generated by monomials.   As mentioned above, we prove here that
continuous closure coincides with axes closure for all  ideals of
affine $\C$-algebras that are primary to a maximal ideal, and in many other cases.
We also show that an unmixed ideal (one that has no embedded primes) is axes closed if and only if it is 
continuously closed, and that there exist continuously closed ideals which
are not axes closed, which answers a question raised by Brenner.  

In \S3 we prove that an element 
$r$ of an affine $\C$-algebra is in the axes closure of $I \inc R$ if and only
if $x \in IS$ for every homomorphism of $R$ to an excellent (respectively,
complete) Noetherian one-dimensional seminormal ring $S$.  We use the
latter definition to extend the notion of axes closure to all Noetherian rings.  
See Corollaries~\ref{cor:axsemiC} and \ref{cor:axsemi} and Definition~\ref{def:axsemi}.

Here is a brief sketch of the contents of the paper:

In \S\ref{sec:contcl}, we  discuss some important properties of continuous closure that we will need. Some of this material 
 is reviewed from \cite{Br-cc}, but in  some cases we need
sharper or more general results.   \S\ref{sec:seminor} is devoted to seminormal rings and their connections to continuous and axes closures.  In \S\ref{sec:axes}, we extend the definition of axes closure to general Noetherian rings, characterizing it by maps to excellent one-dimensional seminormal rings, and we show that this 
agrees with the original definition in Brenner's setting.  In \S\ref{sec:nat} we discuss the concepts of special and inner integral closure, and introduce the notion of natural closure. We also introduce the notion of $I$-relevant ideals, which are used to
characterize when an ideal is naturally closed, and which play
a key role in proving the results of \S\ref{sec:natax}. We show that the natural closure is contained in the axes closure and, 
wherever it is defined, the continuous closure. This ``traps'' continuous closure between two algebraically defined closures.  This is the main tool
used in \S~\ref{sec:natax} to prove results on when axes closure and continuous closure agree.

One of the main results of \S\ref{sec:partial} has already been stated in the second paragraph of 
this Introduction.  

\S\ref{sec:natax} is mostly devoted to a number of important cases where natural closure and axes closure agree, and
contains several of our main results.
When these two agree and continuous closure is defined, it agrees as well.  This yields the central result than an unmixed ideal in
an affine $\C$-algebra is continuously closed if and only if it is axes closed.  We also give a characterization of seminormal rings in terms of axes closed ideals.  
 
 In \S\ref{sec:dimtwo} we show that continuous and axes closure agree in the locally factorial two-dimensional case.  In 
 \S\ref{sec:counter}, we develop a ``fiber criterion'' to exclude certain elements from the continuous closure of an ideal. 
This allows us to construct examples of continuously closed ideals that are not axes closed (even a monomial ideal in a three-dimensional polynomial ring). We apply this criterion in \S\ref{sec:monomial} to show that for monomial ideals in polynomial rings over $\C$, continuous closure always equals mixed natural closure, which is defined in that section.  Finally, we introduce in \S\ref{sec:bigax} a closure operation $\AX$ that is similar to $\ax$, and agrees with it in equal characteristic $0$, but is based on weakly normal rings instead of seminormal ones, and thus is sometimes bigger in positive characteristic. The two notions and their relative usefulness are discussed.

We conclude this introduction by reminding the reader of the 
definition of a term we have already used several times: 
\begin{defn}\label{def:closure}
A \emph{closure operation} $^{\#}$ on (the ideals of) a ring $R$ is an inclusion preserving function from ideals to ideals
 such that if the value on $I$ is denoted $I^{\#}$, then for all ideals $I \inc R$,  $I \inc I^\# = (I^\#)^\#$.
\end{defn}

We refer the reader to \cite{nme-guide2} for a detailed treatment of closure operations and their properties.

\section{Properties of continuous closure}\label{sec:contcl}
Given a homomorphism $R \to S$ of finitely generated $\C$-algebras
we get an induced map in the other direction of the corresponding
algebraic sets, $X \leftarrow Y$,  which is continuous in the
Euclidean topologies (it is defined coordinatewise by restricted
polynomial functions),  and so there is a commutive diagram
\[\begin{CD}  \cC(R) @>>> \cC(S)\\
             @AAA           @AAA\\
             \C[X]  @>>> \C[Y]\\
             @AAA           @AAA\\
              R  @>>> S\end{CD}\]
where the top and middle horizontal arrows are induced by the
map $Y \to X$. 

Hence (cf.\ \cite{Br-cc}):
\begin{prop}[persistence of continuous closure]\label{pr:pers}
If
$h: R \to S$ is a homomorphism of finitely generated
$\C$-algebras, $I$ is an ideal of $R$, and $f \in I\ct$,  then
$h(f) \in (IS)\ct$. 
\end{prop}

If  $I$ is an ideal of a ring $R$ and  $\cF$ is a subset of $R$,  let
$I:_R \cF = \{r \in R \mid \hbox{for all\ }f \in \cF, fr \in I\}$.  If  $\cF$ consists
of a single element $f$,  this coincides with $I:_R f = I:_R fR$.  
Note that $I:_R \cF = \bigcap_{f \in \cF} (I:_R f)$,  and that if $J$ is the
ideal generated by $\cF$ then $I:_R \cF = I:_RJ$.

\begin{prop}
Let $I$ be an ideal of an affine $\C$-algebra
$R$, and $\cF \inc R$.  If $I$ is a continuously closed ideal, so is
$I:_R \cF$  for every set $\cF \inc R$,  and so is the contraction of $IR_W$
to $R$ for every multiplicative system $W$. 
\end{prop}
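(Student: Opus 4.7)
Both halves of the proposition should follow the same template: take an element $r$ in the continuous closure of the ideal in question, express it as a $\cC(X)$-linear combination of elements of that ideal, multiply by a carefully chosen element of $R$ that pushes each term into $I$, and then exploit the assumption $I = I\ct$. The only outside input needed, beyond the definitions, is the monotonicity of $\ct$ (which is part of the general definition of a closure operation).

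\textbf{Colon ideals.} First I would reduce to the principal case. Since $I :_R \cF = \bigcap_{f \in \cF}(I :_R f)$, and since for any family of ideals $\{J_\alpha\}$ with $J_\alpha = J_\alpha\ct$ we have $(\bigcap_\alpha J_\alpha)\ct \inc J_\beta\ct = J_\beta$ for every $\beta$ (by inclusion-preservation), it suffices to show that $I :_R f$ is continuously closed for a single $f \in R$. So suppose $r \in (I :_R f)\ct$. By definition there exist generators $r_1, \dotsc, r_m$ of $I :_R f$ and continuous functions $g_1, \dotsc, g_m \colon X \to \C$ such that $r|_X = \sum_i g_i\, r_i|_X$. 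Multiplying this identity in $\cC(X)$ by the continuous function $f|_X$ gives $(fr)|_X = \sum_i g_i\, (fr_i)|_X$, and each $f r_i$ lies in $I$ by the definition of the colon. Hence $fr \in I\ct = I$, which says $r \in I :_R f$.

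\textbf{Contractions of localizations.} Set $J := IR_W \cap R$, so $J = \{r \in R \mid wr \in I \text{ for some } w \in W\}$. Suppose $r \in J\ct$, and write $r|_X = \sum_{i=1}^m g_i\, r_i|_X$ with $r_i \in J$ and $g_i \in \cC(X)$. For each $i$ choose $w_i \in W$ with $w_i r_i \in I$, and set $w := w_1 \cdots w_m \in W$. Multiplying through by $w|_X$ yields $(wr)|_X = \sum_i g_i\, (w r_i)|_X$, and each $w r_i$ is an $R$-multiple of $w_i r_i \in I$, so lies in $I$. Therefore $wr \in I\ct = I$, so $r \in J$.

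\textbf{Main obstacle.} Honestly, there isn't a serious obstacle: the whole content is the observation that scaling a continuous combination by an element of $R \inj \cC(X)$ produces another continuous combination, of the scaled generators. The only mildly delicate point is ensuring the intersection step in the colon argument is justified just by monotonicity rather than by any deeper property of $\ct$, and that in the localization step one can take a \emph{single} $w \in W$ that works simultaneously for all $r_i$ — handled by taking the product, which is legitimate because $W$ is multiplicatively closed.
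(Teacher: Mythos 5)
Your proof is correct and follows essentially the same approach as the paper: reduce the colon to the case of a single element $f$ via the intersection formula, then push the continuous combination into $I$ by multiplying by $f$ and invoke $I = I\ct$. For the localization statement the paper instead quotes the directed-union identity $IR_W \cap R = \bigcup_{w \in W}(I :_R w)$, uses Noetherianity to replace the union by a single $I :_R w$, and appeals to the colon case; your version inlines the same idea by exhibiting the single $w = w_1\cdots w_m$ directly inside the argument, which is a harmless repackaging of the same step.
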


\begin{proof}
The second statement follows from the first, because the contraction
of $IR_W$ to $R$ is the union of the ideals $I :_R w$ for $w \in W$,
and since this set is directed, one can choose $w \in W$ so that the contraction
is the same as $I :_R  w$.  Moreover, the statement for $\cF$ reduces to the
case of a single element $f$, since an intersection of continuously closed ideals
is evidently continuously closed.

Now suppose that $r \in R$ is a linear combination with continuous coefficients
$\vect g h$ of elements $\vect f h$ of $I:_R f$.  Then $fr = \sum_{i=1}^h g_h(ff_i)$
where every $ff_i \in I$,  and so $fr \in I\ct = I$,  and $r \in I:_Rf$,  as required. 
\end{proof}

\begin{cor}\label{cor:ctcomponents}
If $R$ is an affine $\C$-algebra and $I$ is a continuously
closed ideal of $R$, then so is every primary component of $I$ for a minimal
prime $P$ of $I$.
\end{cor}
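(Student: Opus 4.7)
The plan is to derive this directly from the proposition that precedes it, which states that the contraction of $IR_W$ to $R$ is continuously closed whenever $I$ is. The only ingredient missing is a standard identification of the minimal primary components of $I$ with appropriate contractions from localizations.

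Concretely, let $P$ be a minimal prime of $I$ and let $W = R \setminus P$, so that $R_W = R_P$. The primary decomposition theory for Noetherian rings tells us that the $P$-primary component $Q$ of $I$ (uniquely determined because $P$ is minimal over $I$) coincides with the contraction $IR_P \cap R$. Indeed, any other associated prime $P'$ of $I$ must fail to be contained in $P$ (else it would force $P' = P$, contradicting that $P$ is minimal over $I$), so localizing at $P$ kills the contribution of every primary component other than the $P$-primary one, and contracting back recovers exactly $Q$. Once this identification is in place, the preceding proposition applied to $W = R\setminus P$ gives at once that $Q$ is continuously closed.

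So the proof amounts to one line: $Q$ equals the contraction of $IR_P$ to $R$, and hence is continuously closed by the proposition. There is no real obstacle; the only thing to double-check is the standard primary-decomposition fact, and since we are in an affine $\C$-algebra (hence Noetherian), this is immediate.
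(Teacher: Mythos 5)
Your proof is correct and takes essentially the same route as the paper: identify the $P$-primary component as the contraction of $IR_W$ to $R$ with $W = R\setminus P$, then invoke the immediately preceding proposition on contractions from localizations. The extra justification you give of the standard primary-decomposition fact is fine (the phrasing in the parenthetical is slightly off—the minimality of $P$ is what \emph{produces} $P'=P$, not what is contradicted—but the argument is sound).
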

\begin{proof}  The minimal primary component corresponding to $P$ is the contraction
of $IR_W$ to $R$,  with $W = R-P$.
\end{proof}

For any ring homomorphism  $R \to S$,  if $I,\,J \inc R$ the product of the contractions of $IS$ and $JS$
is obviously contained in the contraction of $(IJ)S = (IS)(JS)$.  Applying this to the map $\C[X] \to \cC(X)$,
we have: 

\begin{prop}\label{pr:prodct} If $R$ is an affine $\C$-algebra and $I,\,J$ are ideals of $R$,
then  $I\ct J\ct \inc (IJ)\ct$ \qed\end{prop}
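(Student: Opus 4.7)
The plan is to deduce this from a general, purely ring-theoretic fact: for any ring homomorphism $\phi \colon R \to T$ and ideals $I, J \subseteq R$, if $I^c$ and $J^c$ denote the contractions $\phi^{-1}(IT)$ and $\phi^{-1}(JT)$, then $I^c J^c \subseteq (IJ)^c$. We then specialize to $\phi \colon R \to \cC(X)$ (the map discussed just before the statement, which is precisely the map whose contracted ideals define continuous closure) to obtain the proposition.

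To establish the general fact, I would pick $r \in I^c$ and $s \in J^c$. By definition $\phi(r) \in IT$ and $\phi(s) \in JT$, so
\[
\phi(rs) = \phi(r)\phi(s) \in (IT)(JT) = (IJ)T,
\]
hence $rs \in (IJ)^c$. Since $(IJ)^c$ is an ideal of $R$ and $I^c J^c$ is, by definition, the ideal generated by such products $rs$, we conclude $I^c J^c \subseteq (IJ)^c$.

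For the proposition, take $\phi$ to be the canonical map $R \to \cC(X)$ described in the introduction. Then by definition $I\ct = \phi^{-1}(I\cC(X))$ and similarly for $J\ct$ and $(IJ)\ct$, so the general fact immediately gives $I\ct J\ct \subseteq (IJ)\ct$. There is no real obstacle: the statement is a formal consequence of the fact that continuous closure is the contraction of an extended ideal along a fixed ring homomorphism, and contractions of extended ideals always behave this way under products.
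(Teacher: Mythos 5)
Your proof is correct and takes essentially the same approach as the paper: both observe the general fact that for any ring map $\phi\colon R \to T$, the product of contractions $\phi^{-1}(IT)\cdot\phi^{-1}(JT)$ lies in $\phi^{-1}\bigl((IJ)T\bigr)$, and then specialize to the map into $\cC(X)$.
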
 

The following result is proved in the standard-graded case in \cite{Br-cc}.

\begin{thm}\label{thm:highdegct}
Let $R$ be a finitely generated $\N$-graded $\C$-algebra with
$R_0 = \C$ and let $\vect F h \in R$ be elements of positive degrees $\vect d h$.  Suppose
$F$ is homgeneous of degree $d$,  where $d > d_i$,  $1 \leq i \leq h$.  Let
$I = (\vect F d)R$.  Suppose that every element of positive degree has a power in $I$.
Then $F \in I\ct$. 
\end{thm}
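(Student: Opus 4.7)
My plan is to exhibit explicit continuous coefficients $\vect g h \colon X \to \C$, where $X = \MaxSpec R$ carries the Euclidean topology, satisfying $F = \sum_i g_i F_i$ on $X$. The starting observation is that the hypothesis ``every positive-degree element has a power in $I$'' is exactly the statement $\m \inc \sqrt{I}$, where $\m := \bigoplus_{k>0} R_k$ is the irrelevant maximal ideal; since the $F_i$ lie in $\m$, in fact $\sqrt{I} = \m$. Geometrically this says the $F_j$ share a single common zero on $X$, namely the cone vertex $p$ corresponding to $\m$.

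The construction is a degree-balanced partition-of-unity formula. Fix a common multiple $M$ of $\vect d h$, set $n_j := M/d_j \geq 1$, and let $\rho := \sum_{j=1}^h |F_j|^{2n_j}$; by the previous step $\rho$ is a nonnegative continuous function on $X$ vanishing only at $p$. On $X \setminus \{p\}$ I define
\[
g_i := \frac{F \cdot \overline{F_i^{n_i}} \cdot F_i^{n_i - 1}}{\rho}, \qquad g_i(p) := 0.
\]
A direct computation gives $\sum_i g_i F_i = F \sum_i |F_i|^{2 n_i}/\rho = F$ off $p$, and both sides vanish at $p$. Each $g_i$ is manifestly continuous away from $p$.

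The main obstacle will be continuity of $g_i$ at $p$, and this is where the strict inequality $d > d_i$ is needed. Presenting $R = \C[\vect y n]/\fA$ with $\deg y_k = e_k > 0$ gives a closed embedding $X \inj \C^n$ on which the weighted action $t \cdot (x_1, \dotsc, x_n) := (t^{e_1} x_1, \dotsc, t^{e_n} x_n)$, for $t \in \R_{>0}$, preserves $X$, fixes $p$, and scales a degree-$k$ homogeneous element by $t^k$. Using the weighted ``norm'' $\|x\| := \max_k |x_k|^{1/e_k}$, the weighted unit sphere $S := \{x \in X : \|x\| = 1\}$ lies in the closed unit polydisk of $\C^n$, hence is compact, and avoids $p$; so $\rho$ attains a positive minimum $m_0 > 0$ on $S$. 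Any $x \neq p$ in $X$ factors uniquely as $x = t \cdot y$ with $t = \|x\|$ and $y \in S$, and tracking the degrees of numerator and denominator yields
\[
|g_i(x)| = t^{d - d_i} \cdot \frac{|F(y)|\, |F_i(y)|^{2n_i - 1}}{\rho(y)}.
\]
The ratio is uniformly bounded on the compact $S$ (using $\rho \geq m_0$), and $t^{d-d_i} = \|x\|^{d-d_i} \to 0$ as $x \to p$ because $d > d_i$, so $g_i$ extends continuously by $0$ at $p$. The strict inequality $d > d_i$ is essential: without it the exponent on $t$ would be non-positive and the formula would fail to extend continuously to the vertex.
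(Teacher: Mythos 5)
Your proof is correct and rests on the same core mechanism as the paper's: reduce to the reduced case, observe that $\sqrt{I}$ is the irrelevant maximal ideal so the $F_j$ vanish simultaneously only at the cone vertex, build continuous coefficients off the vertex, and use the weighted $\R_{>0}$-action together with compactness of the weighted unit ``sphere'' to show those coefficients extend continuously by $0$, the strict inequality $d > d_i$ supplying the needed decay. The difference is in the coefficient formula. The paper starts from the naive partition of unity $g_j = F\,\overline{F}_j / \sum_i |F_i|^2$, which does not scale homogeneously when the $d_i$ differ, and must therefore be replaced by a twisted version $h_j(z) = \|z\|^{d - d_j} g_j(z/\|z\|)$ before the boundedness-on-the-sphere argument applies. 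You instead build the scaling in from the outset by raising each $F_i$ to the power $n_i = M/d_i$ so that all the pieces of $\rho = \sum_j |F_j|^{2n_j}$ have the same weighted degree $2M$; your $g_i$ then automatically satisfy $g_i(t \cdot z) = t^{d - d_i} g_i(z)$ and the identity $\sum_i g_i F_i = F$ holds globally with a single definition, no twist needed. This is a cleaner packaging of the same idea rather than a new method, but it is a genuine simplification: the slightly delicate step in the paper of verifying that the twisted $h_j$ still satisfy $F = \sum_j h_j F_j$ becomes a one-line computation. The use of the $\ell^\infty$-style weighted norm $\|x\| = \max_k |x_k|^{1/e_k}$ in place of the paper's $\ell^2$-style one is immaterial, since both give a compact sphere avoiding the vertex and both scale linearly under the $\R_{>0}$-action.
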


\begin{proof}
We may assume that $R$ is reduced. We map a graded polynomial ring 
\[
\C[\vect X n] \surj R,
\] so that \[
R =K[\vect X n]/ \fA,
\]  where $\fA$ is the kernel, and the
map preserves degree.  Let $X_j$ have degree $e_j$.  Define an
action of $\C$ on $\C^n$  by this rule:  if $z = (\vect z n)$,  then let
\[
tz := (t^{e_1}z_1, \, \ldots, t^{e_n} z_n),
\] and let \[
||z|| := \sqrt{\sum_{j=1}^n |z_j|^{2/e_j}}.
\]
Then if $H$ is homogeneous of degree
$\delta$ in the polynomial ring,  $H(tz) = t^\delta H(z)$.  
Moreover,  $||tz|| = |t|\,||z||$.  The action then stabilizes
$X = V(\fA)$.  Let $\z$ be the origin in $\C^n$.  Then $x \in X$, and the
$F_j$ vanish simultaneously only at $x$.  Hence,  $\sum_{i} |F_i|^2$ vanishes
only at $\z$, and we have  \[
1 = \sum_j \frac{\ov{F}_j}{\sum_i |F_i|^2}F_j
\] on $X-\{\z\}$.  
Multiplying by $F$ yields $F = \sum_j g_jF_j$  where the $g_j$ are continuous
on $X - \{\z\}$.  Let $t = ||z||$.  Let $y = t^{-1}z$.  Then  \[
F(z) = F(ty) = t^d F(y)
= t^d \sum_j g_j(y) F_j(y) = \sum_j t^{d-d_j} g_j(y) F_j(ty).
\]      For
$z \neq \z$ in $X$,  define  \[
h_j(z) = ||z||^{d-d_j}g_j(\frac{z}{||z||}).
\]
Then $h_j$ is continuous on $X-\{\z\}$,  and its limit as $z \to \z$  is 0 because
$||z||^{d-d_j} \to 0$, while $g_j$ is bounded on the set $\{y \in X : ||y|| = 1\}$, which is where
$z/||z||$ varies,  since this set is closed and bounded and so compact in the Euclidean topology.  
Since $F$ vanishes at the origin, we  are done.
\end{proof}

\begin{disc}\label{disc:ctx}  We can extend the notion of continuous closure to the local ring $R_\m$ of an affine
$\C$-algebra $R$ at a maximal ideal $\m$ as follows.  Let $I$  be an ideal of $R_\m$.
Let  $X$ be the affine algebraic set $\MaxSpec(R)$ (in the Euclidean topology), let $x \in X$ correspond to $\m$,  
and let $S$ denote the ring of germs of continuous $\C$-valued functions on $X$
at $x$.  Then define $I\ct$ as the contraction of $IS$ to $R_\m$. 

When $I$ is an ideal of $R$,  
we write $I\ctx$ for $(IR_\m)\ct$. \end{disc}

\begin{prop}\label{pr:ctlocal}
Let $R$ be an affine $\C$-algebra and let $X$ be the
corresponding algebraic set.  Let $I$ be an ideal of $R$, and $f \in R$.  Then
$f \in I\ct$ if and only if for all $x \in X$,   $f/1 \in (IR_\m)\ct$,  where $\m$ is the
maximal ideal of $R$ corresponding to $x$. 
\end{prop}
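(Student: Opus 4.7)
The plan is to prove both implications. The forward direction is essentially persistence of continuous closure through the passage to germs, while the backward direction requires patching local witnesses via a partition of unity in the Euclidean topology on $X$.

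For the forward direction, I fix generators $\vect f m$ of $I$ and an expression $f = \sum_{i=1}^m g_i f_i$ in $\cC(X)$. For $x \in X$ with corresponding maximal ideal $\m$, let $S$ be the ring of germs at $x$ as in Discussion~\ref{disc:ctx}. The natural map $\cC(X) \to S$ sending a function to its germ at $x$ is compatible with $R_\m \to S$, so applying it yields $f/1 = \sum_i (g_i)_x (f_i/1)$ in $S$, exhibiting $f/1 \in (IR_\m)\ct$.

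For the backward direction, I again fix generators $\vect f m$ of $I$. For each $x \in X$ the hypothesis furnishes an identity $f/1 = \sum_k \gamma_k \eta_k$ in $S$ with $\gamma_k \in S$ and $\eta_k \in IR_\m$. Since $IR_\m$ is generated by the images $f_i/1$, each $\eta_k$ is an $R_\m$-linear combination of these, so substituting and collecting terms gives
\[
f/1 = \sum_{i=1}^m \tilde\gamma_{i,x}\, (f_i/1) \quad \text{in } S,
\]
for certain continuous germs $\tilde\gamma_{i,x}$ at $x$ (obtained by adding and multiplying the $\gamma_k$ by germs of rational functions nonvanishing at $x$). Picking continuous representatives on a Euclidean open neighborhood $U_x \inc X$ of $x$ sufficiently small that every denominator appearing in the manipulation is nonvanishing, I obtain continuous functions $g_{i,x} \colon U_x \to \C$ satisfying $f = \sum_i g_{i,x} f_i$ pointwise on $U_x$.

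The patching uses that $X$, as a subspace of $\C^n$, is metrizable and hence paracompact. Choose a continuous partition of unity $\{\phi_\alpha\}$ subordinate to the cover $\{U_x\}_{x \in X}$, with $\mathrm{supp}(\phi_\alpha) \inc U_{x_\alpha}$ and $\sum_\alpha \phi_\alpha \equiv 1$, the sum being locally finite. Set $g_i := \sum_\alpha \phi_\alpha g_{i,x_\alpha} \in \cC(X)$, each summand extended by zero. Then $\sum_i g_i f_i = \sum_\alpha \phi_\alpha \sum_i g_{i,x_\alpha} f_i = \sum_\alpha \phi_\alpha f = f$, so $f \in I\ct$. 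The main obstacle I anticipate is executing the clearing-of-denominators step in the backward direction cleanly: one must verify that a germ-level identity in $S$ using arbitrary generators of $IR_\m$ can be rewritten pointwise on a Euclidean neighborhood of $x$ using only the fixed polynomial generators $f_i$. The partition of unity argument itself is standard once these local expressions are in hand.
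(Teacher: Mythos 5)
Your proof is correct and takes essentially the same approach as the paper's: the forward direction is functoriality of germs, and the backward direction patches local expressions via a partition of unity on the metrizable (hence paracompact) space $X$. The one cosmetic difference is that the paper shrinks the $U_x$ to make the local coefficients $g_i^x$ bounded and then works with a locally finite refinement whose bump functions merely vanish off the refining sets, whereas you instead invoke the stronger support-containment property $\mathrm{supp}(\phi_\alpha)\subseteq U_{x_\alpha}$ to guarantee each $\phi_\alpha g_{i,x_\alpha}$ extends continuously by zero; both devices serve the same purpose and either suffices.
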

\begin{proof}
Let $\vect fh$ generate $I$.  It is clear that if $f= \sum_{i=1}^h g_i f_i$ with
the $g_i$ continuous on $X$, the equation persists when we take germs at $x \in X$.
For the converse, suppose that $f \in R$ has image in $(IR_\m)\ct$ for all $\m$.
Then for every $x \in X$,  $x$ has a neighborhood $U_x$ in the Euclidean topology on $X$
such that \[
f|_{U_x} = \sum_{i=1}^h g_i^x f_i|_{U_x}
\] on $U_x$, 
where the $g_i^x$ are continuous 
functions on $U_x$.  By making the neighborhoods $U_x$ smaller we may also assume
that the $g_i^x$ are bounded on $U_x$.   The open cover $\{U_x\mid x\in X\}$ has a locally finite refinement
by open sets $V_\lambda$ such that there are continuous $[0,\,1]$-valued functions
$b_\lambda$ on $X$ with the property that $b_\lambda$ vanishes off $V_\lambda$ and 
such that \[
1 = \sum_\lambda b_\lambda.
\] \emph{i.e.}, the $b_\lambda$
give a partition of unity.  Each $V_\lambda$ is contained in some $U_x$ and
so there are continuous  $\C$-valued functions $g^{\lambda}_i$ on each $V_\lambda$,
bounded on $V_{\lambda}$ (obtained by restricting
suitable $g^x_i$), such that 
\[
f|_{V_\lambda} = \sum_{i=1}^h g^{\lambda}_i f_i|_{V_\lambda}.
\]
Then \[
f = \sum_{i=1}^h (\sum_\lambda g^{\lambda}_ib_{\lambda})f_i
\]
and every $\sum_\lambda g^{\lambda}_i b_\lambda$ is a continuous function
on $X$ when defined to be 0 off $V_\lambda$.
\end{proof}

\begin{cor}\label{cor:ctcover}
Let $R$ be a finitely generated $\C$-algebra, with $X$ the associated affine variety.  Let $I \subseteq R$ be an ideal, and $f\in R$.  Let $\{X_j\}_{j \in \Lambda}$ be
an affine open cover of $X$, with $R_j = R[X_j]$.  Then $f\in I\ct$ if and only if $f\in (I R_j)\ct$ for all $j \in \Lambda$.
\end{cor}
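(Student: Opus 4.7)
The plan is to reduce this to Proposition~\ref{pr:ctlocal} by matching the local data at each point $x \in X$. The forward implication is immediate from persistence (Proposition~\ref{prop:pers}) applied to each localization map $R \to R_j = R[X_j]$: if $f = \sum_i g_i f_i$ with $g_i$ continuous on all of $X$, then restricting the $g_i$ to $X_j$ gives the required expression over $R_j$.

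For the converse, I would fix $f \in (IR_j)\ct$ for every $j$ and show $f/1 \in (IR_\m)\ct$ at every maximal ideal $\m$ of $R$; the conclusion $f \in I\ct$ then follows from Proposition~\ref{pr:ctlocal}. Given $\m \leftrightarrow x \in X$, choose an index $j$ with $x \in X_j$. Since $X_j$ is Zariski-open in $X$ and affine, the natural map $R \to R_j$ localizes to an isomorphism $R_\m \xrightarrow{\sim} (R_j)_{\m'}$, where $\m' \subseteq R_j$ is the maximal ideal corresponding to $x$. Moreover, because $X_j$ is a Euclidean-open neighborhood of $x$ in $X$, the ring of germs at $x$ of continuous $\C$-valued functions on $X$ coincides with the ring of germs at $x$ of continuous $\C$-valued functions on $X_j$. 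Under the definition of continuous closure in the local ring (Discussion~\ref{disc:ctx}), this yields $(IR_\m)\ct = (IR_j \cdot (R_j)_{\m'})\ct$.

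Now apply Proposition~\ref{pr:ctlocal} to $R_j$ and the ideal $IR_j$: the hypothesis $f \in (IR_j)\ct$ gives $f/1 \in (IR_j \cdot (R_j)_{\m'})\ct$ for every maximal ideal $\m'$ of $R_j$, in particular the one corresponding to our chosen $x$. Combined with the identification above, this shows $f/1 \in (IR_\m)\ct$. Since $x \in X$ was arbitrary and is covered by some $X_j$, the local criterion of Proposition~\ref{pr:ctlocal} applied to $R$ now yields $f \in I\ct$.

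The only subtlety — and the main thing to check carefully — is the identification of germ rings under the passage from $X$ to the open subset $X_j$; once this is observed, the argument is formal. No partition-of-unity or patching work is needed here, since all that labor has already been absorbed into Proposition~\ref{pr:ctlocal}.
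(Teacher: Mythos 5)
The paper gives no explicit proof of this corollary, presenting it as an immediate consequence of Proposition~\ref{pr:ctlocal}; your argument --- forward direction by persistence (or direct restriction), converse by matching local rings and germ rings pointwise and invoking Proposition~\ref{pr:ctlocal} twice --- is precisely the intended reduction and is correct.
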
  

\begin{disc}[ideal closures and gradings]\label{disc:grad}  At this point, we are only aiming to prove Proposition~\ref{pr:cthomog} below, but we eventually
will want to prove similar results for other closure operations where the issue is more difficult.  Let $R$ be a $\Z^h$-graded ring, where $h >0$ is an
integer. Note that this case includes $\N^h$-gradings and, of course, $\N$-gradings.   
If $\alpha = (\vect \alpha h)$ is a $k$-tuple of units of $R_0$,  where the subscript is the zero element in $\Z^h$, there is a degree-preserving
automorphism $\theta_\alpha$ of  $R$ that multiplies forms of degree $(\vect k h)$ by $\alpha_1^{k_1}\,\cdots\, \alpha_h^{k_h}$.  Suppose that
$^\#$ is a closure operation on ideals of $R$ such that the closure of an ideal that is stable under these automorphisms is again stable under these 
automorphisms.  Suppose that $R_0$ contains an infinite field, or, more
generally, that for every integer $N > 0$  that  $R_0$ contains  $N$ units $\vect \alpha N$ such that the elements $\alpha_i - \alpha_j$ for
$i \not= j$  are also invertible.  Then whenever $I$ is homogeneous, its closure $I^\#$ is also homogeneous. By induction on $h$ one can
reduce to the case where $h = 1$.  The result the follows from the invertibility of Vandermonde matrices $\bigl(\alpha_i^{j-1}\bigr)$, where
the $\alpha_i$ are distinct units whose nonzero differences are units. If $f \in I^\#$ is an element whose nonzero homogeneous components
occur in degrees   $d,\, \ldots, \, d+N-1$, it suffices to have $N$ units whose distinct differences are also units in $R_0$ to conclude
that the homoogeneous components of $f$ are in $I^\#$.  
 See Discussion (4.1) in \cite{HHsplit}.  If $R_0$ does not 
have sufficiently many units to carry through the argument, one can seek a family of $\Z^h$-graded $R$-algebras $S_N$  containing $R$ such that 
$R \inc S_N$ preserves degrees and  such that for all homogeneous ideals $I \inc R$ and all $N >0$,  $I^\# \inc (IS_N)^\#$ while $(IS_N)^\# \cap R = I$.  Then, if
$f \in I^\#$,  we have that  $f \in (IS_N)^\#$ for all $N$,  and for $N$ sufficiently large this will imply that all homogeneous components of $f$ are
in $(IS_N)^\#$ and, hence, in $(IS_N)^\# \cap R = I^\#$.  In particular, this method will succeed if one can choose
$S_N$ to be the localization of $R[\vect t N]$ at the element $g_N$ that is the product of all the $t_i$ and all the $t_i-t_j$ for $i \not= j$,  which
may be thought of as $R_0[\vect t N]_{g_N} \otimes_{R_0}R$, and the grading is taken so that the degree zero part is  $R_0[\vect t N]_{g_N}$.
Note that these $S_N$ are smooth and faithfully flat over $R$.  This discussion applies also to the case when $I^\#$ is an ideal
defined ring-theoretically in terms of $I$, even when $^\#$ is not a closure operation, such as inner integral closure, which is treated
in \S\ref{sec:nat}. \end{disc}

If $R$ is a finitely generated $\Z^h$-graded $\C$-algebra, and $\vect \alpha h$ are nonzero elements of $\C$, the automorphisms $\theta_\alpha$ are
$\C$-automorphisms. Hence, if  $I$ is homogeneous,  $I\ct$ is stable under these automorphisms by the persistence of continuous closure 
(Proposition~\ref{pr:pers}).   Therefore, simply because $\C$ is an infinite field in $R_0$, we have:

\begin{prop}\label{pr:cthomog}
Let $R$ be a finitely generated $\Z^h$-graded $\C$-algebra,
and suppose that $\C$ is contained in $R_{0}$,  where the subscript indicates the zero element in $\Z^h$.
Let $I$ be a homogeneous ideal of $R$ with respect to this grading. Then  $I\ct$ is also a homogeneous ideal of $R$ with respect to this grading. \hfill $\square$
  \end{prop}

\section{Seminormal rings}\label{sec:seminor}
In this section we review certain facts about seminormal rings
 and prove that a reduced affine $\C$-algebra 
 is seminormal if and only if every ideal generated by a non-zerodivisor
 is axes closed (equivalently, continuously closed).
 
 Recall \cite{Sw-semi} that a ring $R$ is {\it seminormal} if it is reduced and whenever $f$
is an element of the total quotient ring of $R$ such that $f^2, \, f^3 \in R$,
we have that $f \in R$.\footnote{The concept was introduced by Traverso \cite{Tr-semipic} for a more restricted class of rings.  Swan showed that under Traverso's assumptions, Swan's definition was equivalent to Traverso's.}   

Given a reduced Noetherian ring $R$ with total ring of fractions $\cT$, there is a unique 
smallest seminormal extension $R^{\mathrm{sn}}$ of $R$ within $\cT$,
called the \emph{seminormalization} of $R$. 

For rings containing a field of characteristic 0 the property of being seminormal is equivalent to the property of being weakly normal. 
 See Vitulli's recent survey article \cite{Vit-survey} for a treatment of both notions. (We will revisit the concept of weak normality in \S\ref{sec:bigax}.) 
We collect several facts about seminormality that we will need in the proposition below.  A reference for each part is given with the
statement, except for  (6),  which is immediate from the definition of seminormal, and (7),  which follows at once from  (5) and (6) because
the (strict) Henselization is a directed union of localized \'etale extensions.

\begin{prop}\label{pr:semibasic}
Suppose $R$, $S$ are reduced Noetherian rings.  Let $\icr{R}$ be the integral closure of $R$ in its total ring of fractions. 
\begin{enumerate}
\item $R^{\mathrm{sn}}$ is the set of all $b\in \icr{R}$ such that for any $\p \in \Spec R$, $b/1 \in R_\p + \Jac(\icr{R}_\p)$, where $\Jac$ denotes the Jacobson radical, and $\icr{R}_\p$ is the localization of $\icr{R}$ at the multiplicative set $R\setminus \p$. \cite{Tr-semipic}
\item If $g: R \to S$ is faithfully flat and $S$ is seminormal, then $R$ is seminormal. \cite[Corollary 1.7]{GrTr-semi}
\item If $R$ is seminormal and $W$ is a multiplicative set, then $W^{-1}R$ is seminormal. \cite[Corollary 2.2]{GrTr-semi}
\item Suppose the integral closure of $R$ in its total quotient ring is module-finite over $R$.  The following are equivalent: \cite[Corollary 2.7]{GrTr-semi} \begin{enumerate}
 \item $R$ is seminormal.
 \item $R_\m$ is seminormal for all $\m \in \MaxSpec R$.
 \item $R_\p$ is seminormal for all $\p \in \Spec R$.
 \item $R_\p$ is seminormal for all $\p\in \Spec R$ such that $\depth R_\p=1$.
 \end{enumerate}
\item Suppose $g: R \to S$ is flat with geometrically reduced (e.g. normal) fibers.  If $R$ is seminormal, then so is $S$. \cite[Proposition 5.1]{GrTr-semi}
In particular, if $S$ is smooth over $R$, which includes the case where $S$ is \'etale over $R$, and $R$ is seminormal, then $S$
is seminormal.
\item A directed union of seminormal rings is seminormal.  
\item If $R$ is local and seminormal, then the Henselization of $R$ and the strict Henselization of $R$ are seminormal.
\item Suppose $R$ is excellent and local.  $R$ is seminormal $\iff \hat{R}$ is seminormal. \cite[Corollary 5.3]{GrTr-semi}
\item Let $X$ be an indeterminate over $R$.  $R$ is seminormal $\iff R[\![X]\!]$ is seminormal. \cite[Proposition 5.5]{GrTr-semi}
\item\label{it:multiplier} Let $R$ be a reduced affine $\C$-algebra.  Let $\icr{R}$ be the integral closure of $R$ in its total ring of 
quotients. Let $X$ and $Y$ be the varieties associated to $R$, $\icr{R}$ respectively.  If $\pi: Y \to X$ is the map induced from 
the inclusion $R \inj \icr{R}$, then the seminormalization of $R$ consists of all regular functions $f$ on $Y$ such that $f(y)=f(z)$ 
whenever $y,z\in Y$ are such that $\pi(y)=\pi(z)$. \cite[special case of Theorem 2.2]{LeVit-snwn}
\item Let $R$ be a reduced affine $\C$-algebra,  and let $S$ be the seminormalization of $R$.  Then the map 
of affine algebraic sets corresponding to the inclusion $R \inc S$ is a homeomorphism in both the Zariski and Euclidean 
topologies. \cite[Theorem 1]{AnBom-wn}
\end{enumerate}
\end{prop}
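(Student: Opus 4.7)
The plan is to treat the proposition as a catalog of known facts about seminormal rings. For parts (1)--(5) and (8)--(12) the proof amounts to checking that each cited source states exactly what is claimed, possibly after a small translation of terminology. The authors themselves indicate that only parts (6) and (7) require direct verification here, and they flag the key ingredients for each.

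For (6), my plan is to invoke the standard pair-characterization of seminormality: a reduced ring $R$ is seminormal if and only if, for every $b, c \in R$ with $b^3 = c^2$, there exists $a \in R$ such that $a^2 = b$ and $a^3 = c$. If $R = \bigcup_\alpha R_\alpha$ is a directed union of seminormal rings, then $R$ is reduced since each $R_\alpha$ is, and given such a pair $(b,c)$ in $R$ one uses directedness to find an index $\alpha$ with $b, c \in R_\alpha$; seminormality of $R_\alpha$ then produces the required $a \in R_\alpha \inc R$. Using this pair-form rather than the total-quotient-ring definition sidesteps the mild annoyance that a non-zerodivisor in $R_\alpha$ need not remain a non-zerodivisor in the ambient ring.

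For (7), my plan is to combine (3), (5), and (6) via the standard description of $R\h$ and $R\hs$. The Henselization of a local ring $R$ is the filtered colimit of localizations $S_\fq$ of pointed \'etale $R$-algebras $(S, \fq)$ with $\fq \cap R = \m$ and $R/\m \to S/\fq$ an isomorphism; the strict Henselization is the analogous filtered colimit in which $R/\m \to S/\fq$ is allowed to range through a chosen separable closure. Each such $S$ is smooth over $R$ and hence seminormal by (5); its localization $S_\fq$ is seminormal by (3); and the directed colimit of these seminormal rings is seminormal by (6). Once (5) and (6) are in hand this is pure bookkeeping, so there is no substantive obstacle; the genuine conceptual weight of the proposition lives in the cited sources, most notably (1) and (10)--(11), where the seminormalization is identified with regular functions on the normalization that are constant along the fibers of the normalization map.
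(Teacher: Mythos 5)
Your proposal matches the paper's approach exactly: parts (1)--(5) and (8)--(11) are handled by the cited references, (6) is checked directly, and (7) follows from (5) and (6) because the (strict) Henselization is a directed union of localized \'etale extensions. One small remark on (6): your stated worry points the wrong way---what one needs is that a non-zerodivisor of $R$ stays a non-zerodivisor in the subring $R_\alpha$ once it lies there, which is automatic---so the total-quotient-ring definition also descends directly, but Swan's pair-characterization $(b^3 = c^2 \Rightarrow \exists\, a \hbox{ with } a^2 = b,\ a^3 = c)$ is an equally valid and slightly cleaner route.
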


We note that passing to the seminormalization of a ring does not affect continuous closure in the following sense:

\begin{prop}\label{pr:ctcontract}
Let $R$ be a reduced affine $\C$-algebra, and
let $S$ be the seminormalization of $R$.  Let $I$ be an ideal of $R$.
Then $(IS)\ct \cap R = I\ct$.  
\end{prop}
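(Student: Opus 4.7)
The plan has two directions. The inclusion $I\ct \subseteq (IS)\ct \cap R$ is immediate: if $f \in I\ct \subseteq R$, then persistence of continuous closure (Proposition~\ref{prop:pers}) applied to the inclusion $R \inj S$ gives $f \in (IS)\ct$, so $f \in (IS)\ct \cap R$.

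For the reverse inclusion, let $X = \MaxSpec(R)$ and $Y = \MaxSpec(S)$, each equipped with the Euclidean topology. The key input is Proposition~\ref{pr:semibasic}(12), which says that the map $\pi \colon Y \to X$ induced by the inclusion $R \inj S$ is a homeomorphism (in the Euclidean topology). Consequently, the pullback $\pi^{*} \colon \cC(X) \to \cC(Y)$ is a ring isomorphism. Moreover, by functoriality, for any $r \in R$ the image of $r$ in $\cC(X)$ under $R \to \cC(X)$ is carried by $\pi^{*}$ to the image of $r$ in $\cC(Y)$ under $R \to S \to \cC(Y)$; that is, $\pi^{*}(r|_X) = r|_Y$.

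Now suppose $f \in R$ lies in $(IS)\ct$. Write $I = (f_1, \dotsc, f_h)R$, so $IS = (f_1, \dotsc, f_h)S$. By definition of $(IS)\ct$ there exist continuous functions $G_1, \dotsc, G_h \in \cC(Y)$ such that, as elements of $\cC(Y)$ via $S$,
\[
f|_Y = \sum_{i=1}^h G_i \cdot f_i|_Y.
\]
Applying the inverse ring isomorphism $(\pi^{*})^{-1} \colon \cC(Y) \to \cC(X)$, and using the compatibility $(\pi^{*})^{-1}(r|_Y) = r|_X$ for $r \in R$, we obtain
\[
f|_X = \sum_{i=1}^h g_i \cdot f_i|_X \quad \text{in } \cC(X),
\]
where $g_i := (\pi^{*})^{-1}(G_i) \in \cC(X)$. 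This exhibits $f \in I\ct$.

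The only non-trivial ingredient is the homeomorphism statement in Proposition~\ref{pr:semibasic}(12); once one has it, the conclusion follows by transport along $\pi^{*}$ together with the functoriality observation that elements of $R$ induce the same function on $X$ (directly) as on $Y$ (via $S$), identified under $\pi$. No obstacle beyond bookkeeping the identification of $R$ inside both $\cC(X)$ and $\cC(Y)$ remains.
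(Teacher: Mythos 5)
Your proof is correct and follows essentially the same route as the paper: both rest on Proposition~\ref{pr:semibasic}(11) (you cite it as item (12), a minor numbering slip), which gives the Euclidean homeomorphism $\MaxSpec(S) \to \MaxSpec(R)$, and both then observe that a continuous-coefficient linear combination over one ring transports to the other since $I$ and $IS$ have the same generators from $R$. Your write-up just spells out the pullback isomorphism $\pi^*$ and its compatibility with the maps from $R$ more explicitly than the paper does.
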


\begin{proof}
By Proposition~\ref{pr:semibasic} (11),
the affine algebraic sets associated with $S$ and $R$ are homeomorphic.  Call
both of them $X$.  The ideals $I$ and $IS$ have the same generators
$\vect f n \in R$.  The condition that $f$ be a continuous linear combination
of these elements is independent of whether we think of the problem over
$R$ or over $S$.
\end{proof}

The following is a characterization of complete local 1-dimensional seminormal rings.  It is based on Traverso's ``glueing'' construction.

\begin{thm}\label{thm:glue}
Let $k$ be a field, let $L_1, \dotsc, L_n$ be finite algebraic extension fields of $k$, and let $(V_i, \m_i)$ be discrete valuation rings such that $V_i / \m_i = L_i$.  Let $S$ be the subring of $\prod_{i=1}^n V_i$ consisting of all $n$-tuples $(v_1, \dotsc, v_n)$ such that there exists $\alpha \in k$ 
such that $v_i \equiv \alpha\  (\md \m_i)$ for all $i$.  Then $S$ is  one-dimensional, local, and seminormal.

Conversely, let $(R,\m, k)$ be a \emph{complete} one-dimensional seminormal Noetherian local ring.  Then there exist such extension fields $L_i$ and such DVRs $V_i$ (which, moreover, are complete) such that $R$ is isomorphic to the ring $S$ described above.
\end{thm}

\begin{proof}
Consider any so-described $S$, and let $W := \prod_{i=1}^n V_i$. Note that $\m = \prod_{i=1}^n \m_i \inc S$, consists of all non-units of $S$, and so
is the unique maximal ideal of $S$.   Let $u \in \m$ be an element of this product
that is nonzero in every coordinate.   Then $u$ is a nonzerodivisor in $S$,  and $uW \inc \m \inc  S$.  It follows that  $W$ is the normalization of  $S$. It is then
clear that $S$ is one-dimensional.  Since $W$ is spanned over $S$ by elements that map to a basis for  $\prod_{i=1}^n L_i$ over $k$,  $W$ is module-finite
over $S$,  and $S$ is Noetherian by  Eakin's theorem: see \cite{Eak-Noeth} or \cite{Nag-Noeth}. Finally, we check seminormality.
Let $0\neq v=(v_1, \dotsc, v_n) \in W$ be
such that $v^2, v^3 \in S$.  Then there exist $\alpha, \beta \in k- \{0\}$ such that 
$v_i^2 \equiv \alpha\ (\md \m_i)$ and $v_i^3 \equiv \beta\ (\md \m_i)$ for all $i$.  Consider the element $\gamma := \beta/\alpha \in k$.   
It follows easily that $v_i \equiv \gamma\ (\md \m_i)$ for all $i$, whence $v\in S$.  Thus, $S$ is seminormal.

Now let $(R,\m,k)$ be a complete one-dimensional seminormal Noetherian local ring.  Let $R'$ be the normalization of $R$.   Note that $R' = \prod_{i=1}^n V_i$, where $(V_i, \m_i, L_i)$ are discrete valuation rings, complete since $R$ is complete.  In particular, if $\p_1, \dotsc, \p_n$ are the minimal primes of $R$, then $V_i = (R/\p_i)'$.  Moreover, since $R'$ is module-finite over $R$, it follows that each $L_i$ is module-finite (i.e., finite algebraic) over $k$.  Let $S$ be as described in the statement of the theorem for these particular $k$, $L_i$, and $V_i$.  Clearly $R$ embeds as a subring of $S$, since for any $r\in R$, the map $R \rightarrow R'$ sends $r \mapsto (\ov{r}_1,\, \dotsc,\, \ov{r}_n)$ (where $\ov{r}_i$ is the residue class of $r$ mod $\p_i$), and the residue class of 
each $\ov{r}_i$ modulo $\m_i$ is clearly the same as the residue class of $r$ mod $\m$, which is, of course, in $k$.  So all we need to show is that the induced injective map from $R$ to $S$ is surjective.

Let $v = (v_1, \dotsc, v_n) \in S$.  Then there is some $\alpha \in k$ such that $v_i \equiv \alpha\ (\md \m_i)$ for all $i$.  Take any $r\in R$ such that $r \equiv \alpha (\md \m)$.  Let $w := v-r = (v_1 - \ov{r}_1, \dotsc, v_n - \ov{r}_n)$.  By construction, $w \in \prod_{i=1}^n \m_i = \Jac(R')$.  
But by part (1) of Proposition~\ref{pr:semibasic}, we have $\m = \Jac(R')$, since $R$ is seminormal. Hence, $w\in \m$, whence $v = r+w \in R+ \m = R$, as was to be shown.
\end{proof}

\noindent \emph{Note:} In Traverso's terminology, $S$ is the \emph{glueing of $R'$ over $\m$}.  That is, $S$ is the pullback of the following diagram of ring homomorphisms: \[
\begin{CD}
  @.        \displaystyle \prod_{i=1}^n V_i\\
  @.        @VVV\\
k @>>> \displaystyle \prod_{i=1}^n L_i.
\end{CD}
\]

\noindent \emph{Note:} The above Theorem may also be deduced from the machinery developed in \cite{Yo-biratint}, although it does not appear explicitly.
\medskip

Let $L$ be an algebraically closed field.  The  notion of a ring of axes over $L$ is defined in the
introduction.  We shall also use the term {\it affine axes ring} over $L$ to
emphasize the distinction from other notions described below.  
By a {\it complete axes ring} over $L$,
we mean a ring of the form \[
L[\![\vect x n]\!]/(x_ix_j: 1\leq i < j \leq n),
\]
where the $x_i$ are formal power series indeterminates.  Such rings are
known to be seminormal.\footnote{Indeed, one can see this as a special case of Theorem~\ref{thm:glue}.}   When the
$x_i$ are indeterminates over an algebraically closed field $L$, we shall refer
to \[
L[\vect x n]/(x_ix_j: 1\leq i < j \leq n)
\]
as a {\it polynomial axes ring} (also called an \emph{affine ring of axes}).  Both complete axes rings and polynomial axes rings are seminormal.  
In fact, we have, we have parts (a) and (b)  of the following proposition
from \cite{Bom-semi},  while parts (c) and (d) follow easily from parts (a) and (b).  
 (See also \cite{Gib-Fp} and \cite{GoWa-Fp} for connections with the notion of $F$-purity.). 

\begin{prop}\label{pr:onedimsemi} Let  $L$ be an algebraically closed field. 
\begin{enumerate}
\item[(a)]  A complete axes ring over $L$ is seminormal.
\item[(b)] Every complete local one-dimensional
seminormal ring of equal characteristic with algebraically closed residue
class field $L$ is isomorphic with a complete ring of axes  over $L$.  
\item[(c)] Every affine ring of axes over $L$ is seminormal.
\item[(d)] A one-dimensional affine $L$-algebra $R$ is seminormal if and only if there
are finitely many \' etale $L$-algebra maps $\theta_i : R \to A_i$,  where the $A_i$ are
affine rings of axes over $L$, and every maximal ideal of $R$ lies under a maximal ideal
of some $A_i$. 
\end{enumerate}
\end{prop}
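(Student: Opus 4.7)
The plan is to treat (a) and (b) as immediate consequences of Theorem~\ref{thm:glue} and then derive (c) and (d) from them using the local-to-global and completion-compatibility properties of seminormality in Proposition~\ref{pr:semibasic}. For (a), the complete axes ring $L[\![x_1,\ldots,x_n]\!]/(x_ix_j\mid i<j)$ is precisely the glueing of $\prod_i L[\![x_i]\!]$ over $L$ at $\alpha=0$, so Theorem~\ref{thm:glue} produces a seminormal ring. For (b), a complete one-dimensional seminormal Noetherian local ring with residue field $L$ has normalization $\prod_i V_i$, where each $V_i$ is a complete DVR whose residue field is finite algebraic over $L$ and therefore equal to $L$; in equal characteristic, Cohen's structure theorem yields $V_i\cong L[\![t_i]\!]$, and the glueing over $L$ is then a complete axes ring.

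For (c), let $A = L[x_1,\ldots,x_n]/(x_ix_j\mid i<j)$; its normalization $\prod_i L[x_i]$ is module-finite over $A$, so Proposition~\ref{pr:semibasic}(4) reduces the problem to each $A_\m$. Away from the origin $\m_0=(x_1,\ldots,x_n)$, the localization $A_\m$ is a localization of a one-variable polynomial ring, hence regular, hence seminormal; at $\m_0$, part~(8) applies because $A_{\m_0}$ is excellent and reduces the question to $\widehat{A_{\m_0}}$, a complete axes ring, which is seminormal by (a). For the direction ($\Leftarrow$) of (d), given étale maps $\theta_i:R\to A_i$ with the covering property, fix $\m\in\MaxSpec R$, choose $i$ and $\n\in\MaxSpec A_i$ with $\theta_i^{-1}(\n)=\m$, and observe that the induced local map $R_\m\to(A_i)_\n$ is étale, hence faithfully flat. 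Since $A_i$ is seminormal by (c) and localizations of seminormal rings are seminormal by part~(3), $(A_i)_\n$ is seminormal, and part~(2) pushes seminormality down to $R_\m$. As this holds for every $\m$, part~(4) concludes that $R$ is seminormal.

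I expect the main obstacle to be ($\Rightarrow$) of (d), where one must actually produce the étale neighborhoods. Suppose $R$ is seminormal, one-dimensional, and affine over $L$. The singular locus of $R$ is a finite set of maximal ideals; the smooth locus is covered by finitely many basic opens $D(f)$ in $\Spec R$, and each $R[1/f]$ is a smooth one-component $L$-algebra, hence an affine ring of axes by definition, with the localization map $R\to R[1/f]$ étale. At each singular maximal ideal $\m$, parts~(8) and (b) identify $\widehat{R_\m}$ with $L[\![y_1,\ldots,y_{n_\m}]\!]/(y_iy_j\mid i<j)$. Setting $B_\m = L[y_1,\ldots,y_{n_\m}]/(y_iy_j\mid i<j)$, I would apply Artin approximation (valid since $R_\m$ is excellent) to lift the completion generators $\bar y_1,\ldots,\bar y_{n_\m}$ to elements of $R_\m^h$ satisfying $y_iy_j=0$ exactly, producing an $L$-algebra map $B_\m\to R_\m^h$ that is an isomorphism on completions and hence étale at the relevant maximal ideals. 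Since $R_\m^h$ is the filtered colimit of essentially étale $R_\m$-algebras, this map algebraizes to a finitely generated étale $R$-algebra $A_\m$ equipped with an étale structure map from $B_\m$; after Zariski localization to isolate the unique maximal ideal over $\m$ and to remove any additional singular points, $A_\m$ is an affine ring of axes. The finitely many such $A_\m$, one per singular point of $R$, together with the smooth-locus maps furnish the required finite family.
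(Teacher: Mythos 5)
Your treatment of (a), (b), (c), and the ``if'' direction of (d) is essentially the paper's: (c) reduces to local rings via Proposition~\ref{pr:semibasic}(4), regularity away from the origin, and completion plus part~(8) at the origin; ``if'' of (d) is the faithful flatness argument. For (a) and (b) the paper simply cites Bombieri, but it endorses your derivation from Theorem~\ref{thm:glue} in the footnote on complete axes rings, and your Cohen-structure argument for (b) is a nice way to make the proposition self-contained.

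Where you genuinely diverge is the ``only if'' direction of (d). The paper works with the normalization $B$ of $S = R_\m$ and the Henselization $T$ of $S$: it observes that $B \otimes_S T$ is a finite product of DVRs, and then takes $A$ large enough in the direct system defining $T$ so that $B \otimes_R A$ already contains all the relevant idempotents and is regular; localizing then produces the ring of axes. You instead apply Artin approximation directly to the relations $y_iy_j = 0$ to lift the formal coordinates to $R_\m^h$, obtain a map $B_\m \to R_\m^h$ inducing an isomorphism on completions, and algebraize. Both approaches work. Two spots in your version deserve a sentence of justification that the paper's route avoids: first, you need that the approximated $y_i$ (agreeing with the formal $\bar y_i$ modulo $\m^2$, say, and satisfying $y_iy_j = 0$ exactly) actually induce an isomorphism on completions --- this requires unwinding the structure of $L[\![z_1,\ldots,z_n]\!]/(z_iz_j)$ to see that $y_iy_j = 0$ forces each $y_i$ to be a unit times $z_i$ in the $i$th branch and zero in the others; second, once that is granted, $(B_\m)_{\n} \to A_{\widetilde\m}$ is \'etale for a sufficiently large $A$ in the colimit, which is exactly what guarantees that $A_{\widetilde\m}$ has the same number of minimal primes as $\widehat{R_\m}$ and thus that the Zariski-localized $A$ really is a ring of axes (this branch-counting is handled in the paper's version by the idempotents of $B \otimes_S T$). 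Once these are spelled out, your argument is complete and is a reasonable alternative to the paper's normalization-based proof.
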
   
\begin{proof}  As already mentioned, parts (a) and (b) are proved in \cite{Bom-semi}.  Part (c)
is then immediate from parts (4)(b) and (8) of Proposition \ref{pr:semibasic}, the definition of affine ring
of axes over $L$,  and the complete case of part (a) above.  For part (d), we first prove ``if." Note that
if  $\widetilde{\m}$ in $A  = A_i$ lies over $\m$ in $R$,  then $R_{\m} \to A_{\widetilde{\m}}$ is
faithfully flat.  Since $A$ is seminormal by part (c),  so is $A_{\widetilde{\m}}$,  and the result
follows from Proposition \ref{pr:semibasic} part (2). To prove ``only if" it suffices to construct a
cover by sets $\Spec(A_i)$ that may be infinite, since we may use the quasicompactness of
$\Spec(R)$ to pass to a finite subcover.  Therefore, it suffices to construct an \'etale extension $A$
that is a ring of axes for each maximal ideal $\m$ of $R$ so that $\m A \not= A$.  If $R_\m$ is regular
we may simply take $A$ to be $R_f$ for a suitable element $f$.  For the finitely many choices of $\m$
such that $R_\m$ is singular, we know the completion of $R_\m$ is a formal ring of axes.  This implies
that there is an \'etale extension $A$  of  $R$ that is an affine ring of axes with $mA \not= A$.  To see why this is true,  let  $S = R_\m$. Let $B$  denote
the normalization of  $S$,  which is semilocal and regular.  The normalization $C'$ of the completion $C$ of
$R_\m$  is the product of the rings $L[\![x_i]\!]$.  Because $S$ is an excellent domain,  $C' \cong  C \otimes_S B$.
Let  $T$ denote the Henselization of  $S$.   Then  $B \otimes_S T$  is module-finite and regular over the Henselian
local ring  $T$, and so is a finite product of discrete valuation domains.  The completions of these give the various
$L[\![x_i]\!]$.  $T$ is a direct limit of \'etale extensions of  $R$ in which there is a unique maximal ideal $\widetilde{\m}$ lying
over $\m$.  For a sufficiently large such extension $A$,  
$B \otimes_R A$  will contain all of the idempotents of $B \otimes_S T$,  and will be regular.    One may localize $A$ at one
element not in $\widetilde{\m}$ so that it is a ring of axes with a unique singularity at $\widetilde{\m}$.  
\end{proof}

\begin{disc}\label{disc:analcomp} Let  $R$ be a finitely generated $\C$-algebra and $\m$ a maximal ideal of $R$.  Map a polynomial
ring  $T = \C[\vect Xn]$ onto $R$ so that the $X_i$ map to generators of $\m$.  Then $\C[\vect Xn] \inc \C\{\!\{\vect Xn\}\!\}$,
the ring of convergent power series in $\vect xn$, and $S  = \C\{\!\{\vect Xn\}\!\} \otimes_T R_\m$ is the {\it analytic
completion} of  $R_\m$.  This ring is a local, excellent, Henselian, faithfully flat extension of $R_\m$, and we have 
$R_\m \inc S \inc \widehat{R_\m}$ 
with the second inclusion faithfully flat as well.  We shall refer to  $\C\{\!\{\vect Xn\}\!\}/(X_iX_j: i \not=j)$ as an {\it analytic axes ring}. \end{disc}

\begin{prop}\label{pr:analax} Let $R$ be a finitely generated $\C$-algebra of dimension one.  Then $R$ is seminormal if and only if for every 
maximal ideal $\m$ of $R$ such that $R_\m$ is not regular,  the analytic completion of $R_\m$ is an analytic axes ring.  \end{prop}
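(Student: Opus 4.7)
The plan is to handle the two directions separately, with the ``if'' direction being essentially formal and the ``only if'' direction resting on the fact that \'etale maps of finite-type $\C$-algebras preserve the analytic local ring at corresponding closed points whose residue fields agree.

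For ``if'', I would fix an arbitrary maximal ideal $\m$ and split into cases. If $R_\m$ is regular, then $\widehat{R_\m}$ is a discrete valuation ring, hence normal and therefore seminormal. If $R_\m$ is singular, then by hypothesis the analytic completion $S$ of $R_\m$ is an analytic axes ring, and since the $\m$-adic completion of $S$ coincides with $\widehat{R_\m}$ (as the inclusion $S \inc \widehat{R_\m}$ is faithfully flat with $\widehat{S} = \widehat{R_\m}$), the latter is a complete axes ring, hence seminormal by Proposition~\ref{pr:onedimsemi}(a). So $\widehat{R_\m}$ is seminormal in every case, and since $R$ is excellent, parts (4) and (8) of Proposition~\ref{pr:semibasic} then imply that $R$ is seminormal.

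For ``only if'', suppose $R$ is seminormal and let $\m$ be a maximal ideal with $R_\m$ singular. By Proposition~\ref{pr:onedimsemi}(d), there is an \'etale $\C$-algebra map $\theta \colon R \to A$ with $A$ an affine ring of axes and a maximal ideal $\widetilde{\m}$ of $A$ above $\m$. Since the induced map $R_\m \to A_{\widetilde{\m}}$ is faithfully flat, $A_{\widetilde{\m}}$ is also singular, which forces $\widetilde{\m}$ to correspond to the origin of $A$ (the only singular closed point of an affine ring of axes), and in particular the number of branches is at least $2$. The analytic completion of $A_{\widetilde{\m}}$ at the origin is computed directly from Discussion~\ref{disc:analcomp} to be the analytic axes ring $\C\{\!\{x_1,\dotsc,x_n\}\!\}/(x_ix_j : i \neq j)$. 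I would then invoke the principle that an \'etale map between local rings of finite-type $\C$-algebras with equal residue fields induces an isomorphism of analytic completions, concluding that the analytic completion of $R_\m$ is itself an analytic axes ring.

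The main obstacle is justifying this last principle. One approach is Artin approximation: the isomorphism $\widehat{R_\m} \cong \widehat{A_{\widetilde{\m}}}$ induced by the \'etale extension provides a formal solution to a polynomial system of equations over $R_\m^{\mathrm{an}}$, and approximability in this excellent Henselian ring then yields an honest analytic isomorphism. A more geometric alternative is to note that an \'etale morphism of complex algebraic varieties is a local biholomorphism in the Euclidean topology at any closed point with trivial residue field extension, so that the rings of germs of holomorphic functions at corresponding points coincide. Either justification yields the desired conclusion.
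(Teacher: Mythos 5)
Your ``if'' direction matches the paper's (and is correct). Your ``only if'' direction is a genuinely different route from the paper's, so let me compare.

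The paper argues directly inside the analytic completion $A = R_\m^{\mathrm{an}}$: using that $A$ is excellent and Henselian, it identifies minimal primes $P_i$ of $A$ with those of $\widehat A$, sets $Q_i = \bigcap_{j\neq i} P_j$, checks that the $Q_i$ are principal with generators $x_i$ satisfying $x_ix_j = 0$ and $\sum_i Q_i = \m_A$, and then shows the map from $\C\{\!\{X_1,\ldots,X_n\}\!\}/(X_iX_j)$ sending $X_i \mapsto x_i$ is an isomorphism by inspecting completions and the branch decomposition. This is self-contained and requires no transfer principle.

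You instead invoke Proposition~\ref{pr:onedimsemi}(d) to produce an \'etale map $R \to A$ to a ring of axes with $\widetilde\m$ over $\m$, then transfer the analytic structure from $A$ back to $R$. Two remarks. First, there is a real gap in your step ``the analytic completion of $A_{\widetilde\m}$ is computed directly from Discussion~\ref{disc:analcomp}.'' That is only a direct computation if $A$ is literally a \emph{polynomial} axes ring $\C[x_1,\ldots,x_n]/(x_ix_j)$. The statement of Proposition~\ref{pr:onedimsemi}(d) does say ``affine rings of axes'' (which in this paper is synonymous with polynomial axes rings), but the \emph{proof} of (d) actually produces only a general ``ring of axes'' in the sense of the Introduction --- an \'etale extension of $R$ which is smooth away from $\widetilde\m$ and has the right formal completion at $\widetilde\m$. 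For such an $A$, computing $A_{\widetilde\m}^{\mathrm{an}}$ is exactly the content of the very proposition you are proving, so as written the argument is circular. To close the gap you would need to show $A$ can be taken to be, or to be \'etale over, an actual polynomial axes ring --- which is again a transfer argument of the same kind as your final step. Second, the ``principle'' you invoke (that an \'etale map with trivial residue extension induces an isomorphism of analytic local rings) is true, and either of your suggested justifications (Artin approximation for convergent power series rings, or the fact that an \'etale morphism of complex varieties induces a local isomorphism of analytifications) can be made to work, but each requires a careful citation; this is precisely the machinery the paper's direct argument is designed to avoid. So your approach trades the paper's concrete computation for reliance on a transfer theorem plus a careful reading of Proposition~\ref{pr:onedimsemi}(d). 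It can be made rigorous, but only after repairing the circularity at the polynomial-axes step and nailing down the \'etale-analytic transfer with a precise reference.
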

\begin{proof} We know that $R$ is seminormal if and only if each $\widehat{R_\m}$ is, and this is automatic if $R_\m$ is regular
(which includes any isolated points).  It is therefore sufficient to show that the analytic completion $(A, \m_A)$ of $R_\m$  is an analytic
ring of axes if $\widehat{R_\m} =  \widehat{A}$ is a formal ring of axes.  Since $A$ is one-dimensional excellent and
Henselian,  its minimal primes $\vect P n$ correspond bijectively to those of $\widehat{A}$ via expansion and contraction.  
Let $Q_i = \bigcap_{j \not=i} P_j$.  Then $Q_i\widehat{A}$ is a principal ideal generated by an element not in $(\m_A\widehat{A})^2$,
 $Q_iQ_j = 0$ for $i\not=j$, and $\sum_i Q_i\widehat{A} = \m_A\widehat{A}$,  from which it follows that $Q_i$ is a principal ideal
generated by an element not in $\m_A^2$,  that $Q_iQ_j = 0$ for $i \not= j$ and that $\sum_i Q_i = \m_A$.  Let $x_i$ generate $Q_i$.
Then the $x_i$ are a minimal set of generators of $\m_A$,  $x_ix_j = 0$ for $i\not=j$, and since $P_i \widehat{A}= \sum_{j \not=i} Q_j\widehat{A}$,
we have that $P_i$ is generated by $\{x_j: j\neq i\}$ for each $i$.  The map $\C[\vect X n] \to A$ (sending $X_i \mapsto x_i$ for all $i$) 
induces a $\C$-homomorphism $\theta$ of $B =  \C\{\!\{\vect Xn\}\!\}/(X_iX_j: i \not=j)$ to $A$ such that the map of completions is an isomorphism.
Thus, this map is injective. If  $\cP_i$ is generated by the $X_j$ for $j \not= i$,  
then $B/\cP_i \to A/P_i$ is a map from $\C\{\!\{X_i\}\!\} \to A/P_i \cong \C\{\!\{x_i\}\!\} \inc A$ which induces an isomorphism of completions, and
so must be an isomorphism.  Since $A$ is the sum of the subrings $\C\{\!\{x_i\}\!\}$,  $\theta$ is surjective.
\end{proof}

Both in this section and the next we shall need to use Artin approximation to
descend a map from  an affine K-algebra to a complete ring of axes over an extension
field L  of  K   to a map to an \'etale extension of a polynomial ring of axes, which
will be seminormal.  Specifically:

\begin{thm}[descent via Artin approximation]\label{thm:descent}
Let $K$ be an algebraically
closed field, let $L$ be an extension field, let $R$ be an affine $K$-algebra,
let $I$ be an ideal of $R$,  and let $f$, $g$ be elements of $R$.  Let
$R \to S$ be a $K$-algebra homomorphism to a complete ring of axes
$S$ over $L$ such that the image of $g$ is not in $IS$.  Then there is
a $K$-algebra homomorphism $R \to S_0$, where $S_0$ is an \'etale
extension of a polynomial ring of axes over $K$, such that the image of
$f$ is not in $IS_0$.  Moreover, if the image of $f$ is not a zerodivisor
in $S$,  the map $R \to S_0$ may be chosen to satisfy the additional
condition that the image of $f$ is not a zerodivisor in $S_0$.
\end{thm}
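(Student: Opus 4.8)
The plan is to set up an algebraic approximation problem whose solvability captures the existence of the desired map $R \to S_0$, and then invoke the Artin approximation theorem (over the excellent Henselian local ring that is the localization of a polynomial ring of axes over $K$). First I would fix a presentation $R = K[\vect Y m]/\fA$ and think of a $K$-algebra homomorphism $R \to S$ as a choice of images $\vect s m$ of the $Y_j$ in $S$ satisfying the equations of $\fA$; write $G = \ov{g} \in R$ for the element whose image avoids $IS$. The target $S$ is a complete axes ring $L[\![\vect x n]\!]/(x_ix_j : i<j)$. The key observation is that membership of $G$'s image in $IS$ — say $I = (\vect a t)R$ — is the solvability of a linear system $G = \sum_k c_k a_k$ in $S$; so \emph{non}-membership is a positivity/nondegeneracy condition. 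To make this amenable to Artin approximation I would instead work with the structure of $S$ itself: an axes ring $S$ has normalization $\prod_{i=1}^n L[\![x_i]\!]$, and the map $R \to S$ composed with projection to the $i$-th branch is a $K$-algebra map $R \to L[\![x_i]\!]$, i.e. a point of a jet/arc space, which is exactly the setting where Artin approximation applies. The condition ``image of $g$ not in $IS$'' unwinds, branch by branch and via the glueing description of Theorem~\ref{thm:glue}, into an inequality on the orders of certain power series, together with the compatibility conditions (equal constant terms across branches) that cut out $S$ inside $\prod L[\![x_i]\!]$.

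The heart of the argument is then the following. One has a system of polynomial equations over $K$ — the equations of $\fA$ evaluated at the arc coordinates on each branch, the $x_ix_j = 0$ relations, and the glueing compatibilities — together with the data of a formal solution over $L$ (the given map to $S$) for which the auxiliary element $g$ has a prescribed, bounded order behaviour witnessing $\ov{g} \notin IS$. I would encode ``$\ov g \notin IS$'' as: there is \emph{no} solution of an enlarged equation system (adding indeterminate coefficients $c_k$ and the equation $g - \sum c_k a_k = 0$), and since non-solvability is not directly an approximation statement, I instead record the obstruction at a finite level. Concretely, $\ov g \notin IS$ means $\ov g \notin IS + \m_S^{N}$ for some $N$ (as $\bigcap_N \m_S^N = 0$ in the reduced complete local ring $S$), and $IS + \m_S^N$ is detected by finitely many equations. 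So I would form the polynomial system expressing ``$R \to (\text{axes ring})$ and $\ov g \in IS + \m_S^{N}$'', note it has no solution over $L$ for the chosen $N$, and—this is the standard Artin-approximation packaging—conclude that the corresponding system of equations \emph{does} have a solution in the Henselization of the polynomial ring of axes over $K$ with the property that the image of $g$ is not in $I\cdot(\text{that Henselization}) + \m^N$, hence not in $I\cdot S_0$ for a suitable étale-extension-of-a-polynomial-axes-ring $S_0$. Passing from ``solution in the Henselization'' to ``solution in an étale extension $S_0$ of a polynomial axes ring over $K$'' uses that the Henselization is the filtered colimit of such étale extensions and that finitely many equations involve only finitely much data; seminormality of $S_0$ is then automatic from Proposition~\ref{pr:semibasic}(5) and Proposition~\ref{pr:onedimsemi}(c),(d).

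For the ``moreover'' clause, I would additionally carry along the non-zerodivisor hypothesis as an \emph{open} condition: in a one-dimensional reduced ring, $\ov f$ is a non-zerodivisor iff its image in each branch $L[\![x_i]\!]$ is nonzero, i.e. has finite order, say order $\le d$ on each branch in the given map to $S$. This too is a finite-level condition (it fails iff the image lies in $\m^{d+1}$ on some branch), so I would add to the system the requirement ``image of $f$ on branch $i$ has order $\le d_i$'' for the $d_i$ realized in $S$; Artin approximation then yields $S_0$ with the same order bounds, so $\ov f$ is a non-zerodivisor in $S_0$ as well. The technical point to be careful about is that one must approximate not just a ring map but a ring map together with the witnessing elements (the coefficients exhibiting $\ov g \in IS + \m_S^N$ do not exist, which is the point; it is their \emph{absence}, packaged via the finite truncation $N$, that must be preserved), so the clean formulation is: apply Artin approximation to the equations defining the map, with the \emph{additional} polynomial equations that force the $N$-th truncation of $\ov g$ modulo $I$ to be nonzero and the truncations of $\ov f$ on each branch to be nonzero up to order $d_i$; these are finitely many equations and a finite system, and the given map to $S$ furnishes the formal solution.

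The step I expect to be the main obstacle is the bookkeeping that translates the intrinsic statements ``$\ov g \notin IS$'' and ``$\ov f$ is a non-zerodivisor in $S$'' into finitely many polynomial equations and inequalities in the arc coordinates on the branches of an axes ring, in a way uniform enough that Artin approximation applies — in particular, verifying that the chosen truncation level $N$ (and the branch-wise order bounds $d_i$) genuinely survive the approximation and that the approximated solution still assembles, via the glueing of Theorem~\ref{thm:glue}, into a \emph{single} ring-of-axes target $S_0$ étale over a polynomial axes ring over $K$ rather than merely into the separate branches. Everything else — seminormality of $S_0$, the colimit passage from the Henselization to an actual étale extension — is routine given the results already assembled in \S\ref{sec:seminor}.
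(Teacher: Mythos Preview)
Your outline captures the Artin approximation step and the truncation-to-level-$N$ encoding correctly, and the paper does exactly this: choose $N$ with $g \notin IS + \m^N S$, apply Artin approximation to replace the given map $R \to S$ by a map $R \to T_0\h$ congruent modulo $\m^N$, and then pass from the Henselization to a finitely generated \'etale extension. But you have conflated two distinct descents, and the second one is missing from your argument.

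Artin approximation takes place over a fixed base. The given map lands in a complete axes ring over $L$, which is the completion of the localized polynomial axes ring $T_0 = \cL[\vect x n]/(x_ix_j)$ over the residue field $\cL \supseteq L$, \emph{not} over $K$. So Artin approximation yields a solution in $T_0\h$, and the \'etale extension $C$ you extract is \'etale over a polynomial axes ring over $\cL$, not over $K$. You cannot apply Artin approximation ``over the Henselization of a polynomial ring of axes over $K$'' as you propose, because the formal solution does not live in the completion of that ring. The paper closes this gap with a separate spreading-out argument: write $C = B[\vect W m]/(\vect G m)$ \'etale over $B = \cL[\vect x n]/(x_ix_j)$, choose a finitely generated $K$-subalgebra $A \subseteq \cL$ containing all coefficients of the $G_j$, form $B_A$ and $C_A$, use generic freeness to localize $A$ so that the relevant exact sequences (witnessing $f \notin IC$ and, for the ``moreover'' clause, injectivity of multiplication by $f$) are $A$-free, and then specialize at a maximal ideal $\mu$ of $A$. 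Since $K$ is algebraically closed, $A/\mu = K$, and $C_K$ is the desired $S_0$, \'etale over a polynomial axes ring over $K$. This field-descent step is where the hypothesis ``$K$ algebraically closed'' is actually used, and nothing in your proposal substitutes for it.

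A smaller point: your packaging of ``$\ov g \notin IS$'' via ``the system `$\ov g \in IS + \m_S^N$' has no solution over $L$'' is not what you want, since other maps $R \to S$ may well send $g$ into $IS$. The correct statement, which you gesture at later, is that the \emph{particular} formal solution you start with satisfies $\ov g \notin IS + \m_S^N$, and any approximation congruent to it modulo $\m^N$ inherits this; no inequations need be encoded as equations.
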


\begin{proof}
$S$ is a complete
ring of axes with algebraically closed residue class field $\cL$, and such
a ring is the completion $T$ of a polynomial axes ring
$\cL[\vect xn]/(x_ix_j: i < j)$ localized at the maximal ideal $\m$ generated by
the $x_j$.   Call the localized ring $T_0$. 
Moreover, we can choose $N$ so large that the image of $f$
is not in $IT + \m^NT$.  Think of $R$ as $K[\vect y s]/(\vect g h)$.  Then
the images $z_j$ of the $\vect y s$ in $T$ give solutions of the equations $g_j = 0$
in $T$,  and we may use Artin approximation, \emph{i.e.}, the main result of  \cite{Ar-approx}, 
to find a solution $\vect {z'} s$ of these equations 
in the Henselization $T_0\h$ of $T_0$ congruent to the $z_j$ modulo $\m^NT$. 
We can map $R \to T_0\h$ as a $K$-algebra so that the images of the $y_j$ map to
the $z'_j$ and we still have that $f \notin (I +\m^N)T_0\h$.  In particular,
$f \notin IT_0\h$.  Since $T_0\h$ is a direct limit of finitely generated \'etale
extensions of $B = \cL[\vect xn]/(x_ix_j)$,  we have a finitely generated
\'etale extension $C$ of $B$ and a $K$-algebra map $R \to C$ such
that $f \notin IC$.  

Hence,  $C = B[\vect W m]/(\vect G m)$ is such that the image of the Jacobian
determinant $\det \bigl(\partial G_j/\partial W_j\bigr)$ is a unit of $C$.  

Now let $A$ denote a varying but finitely generated $K$-subalgebra over
$L$ sufficiently large to contain all the coefficients of the $G_j$.
Let $$B_A = A[\vect x n]/(x_ix_j:i < j)$$ and let $$C_A = B_A[\vect W m]/(\vect G m).$$
Then $C$ is the direct limit of the rings $C_A$, and so the Jacobian determinant
is invertible in $C_A$ for all sufficiently large $A$.  We may therefore chose
$A$ so large that $C_A$ is \'etale over $B_A$ and the map $R \to C$ factors
$R \to C_A \to C$.  If $D$ is any $A$-algebra we write $B_D$ and $C_D$
for $D \otimes_A B_A$ and $D \otimes_A C_A$,  respectively.  Let $\cK$ be
the fraction field of $A$.  Then $C = C_{L} \cong L\otimes_{\cK} C_{\cK}$
is faithfully flat over $C_{\cK}$.   Hence,  $f \notin IC_{\cK}$.  

We have the exact sequences
 $$0 \to I_A \to C_A \to C_A/I_A \to 0,$$ 
$$0 \to fC_A + IC_A \to C_A \to C_A/(fC_A + IC_A) \to 0, \hbox{\ \ and}$$ 
$$0 \to IC_A \to (fC_A + IC_A) \to W_A \to 0,$$
where $W_A$ is a cyclic $C_A$ module spanned by the image of $f$.
By the lemma of generic freeness (cf.\ \cite[Theorem 24.1]{Mats}, \cite[Lemma 8.1]{HoRo-invar}), we can localize
at one nonzero element $a \in A$ so that all of the modules in these sequences become
$A_a$-free.  We change notation and continue to write objects with the subscript
$A$:   $A$ has been replaced by $A_a$.  The $A$-free module $W_A$ is not
0, since this is true even after we apply $\cK \otimes_A \blank$.  Let $\mu$ be
any maximal ideal of $A$.  Then $A/\mu = K$, and we use the subscript $K$
to indicate these various algebras and modules after tensoring with $K = A/\mu$
over $A$.  We have a map $R \to C_K$. 
Because $W_K \not=0$,  we have that the image of $f$ is not
in $IC_K \inc C_K$.  But $C_K$ is a finitely generated \'etale extension of
$B_K$ which is a polynomial ring of axes over $K$.

We now consider the modifications needed to preserve the condition that the
image of $f$ be a non-zerodivisor.  Hence,  the image of $f$ in
$L[\![\vect x n]\!]/(x_ix_j)$ is not in any of the ideals $P_i$ generated by all
of the $x_j$ for $j \not=i$.   Then $f \notin P_i + \m^N$ for sufficiently large $N$.
Thus, we may choose $N$ so large that when we apply Artin approximation,
the image of $f$ in the Henselization  $T_0\h$ is not in any of the ideals
$P_i \cap T_0\h$.  When we replace the Henselization by a finitely generated
\'etale extension $C$ of $B$, multiplication by the image of $f$ may have a kernel, but
the kernel will be killed by localization at one element of $C$ not in the contraction
of the maximal ideal of $T_0\h$.  Thus, we may assume that the image of $f$ is not a 
zerodivisor on $C$.  For the last step in the descent, when we pass from $C_A$
to $C_{A/\mu}$,  we need to preserve the exactness of the sequence
$0 \to C_A \arrow{\phi} C_A \to C_A/\phi C_A \to 0$,  where $\phi$ is the image
of $f$,  when we apply $(A/\mu)\otimes_A \blank$.  We can do this by localizing
at one element of $A-\{0\}$ so that all of the terms of the sequence become $A$-free.
\end{proof}

\section{Axes closure and one-dimensional seminormal rings}\label{sec:axes}

We want to extend the notion of axes closure to a larger class of rings.  We first note:

\begin{thm}\label{thm:axsemi}  Let $R$ be a Noetherian ring and let $I$ be an ideal of $R$.
Then statements (1), (2), and (3) below are equivalent. 

Moreover, if $R$ is a finitely generated algebra over a field $K$ of characteristic 0, 
and $L$ is the algebraic closure of $K$, then the first six of the statements below are equivalent.
 If we also assume that $K = L = \C$,  then all seven of the statements below are equivalent.

 \begin{enumerate}
\item For every map from $R$ to an excellent one-dimensional seminormal
ring $S$,  the image of $f$ is in $IS$.
\item For every map from $R$ to an excellent local one-dimensional seminormal
ring $S$,  the image of $f$ is in $IS$.
\item For every map from $R$ to a complete local one-dimensional seminormal
ring $S$,  the image of $f$ is in $IS$.
\item For every map from $R$ to a complete local one-dimensional seminormal
ring $S$ with algebraically closed residue field, $f \in IS$.  
\item For every $K$-algebra  map from $R$ to a complete  axes ring $S$ with
residue class field $L$,  $f \in IS$.
\item For every $K$-algebra map from $R$ to a finitely generated \'etale extension
$S$ of a polynomial axes ring over $L$,  $f \in IS$.
\item  For every $\C$-algebra map $\theta$ from $R$ to an analytic ring of axes $(S,\n)$ over $\C$ such
that $\theta^{-1}(\n)$ is maximal in $R$, $f \in IS$.
\end{enumerate}
\end{thm}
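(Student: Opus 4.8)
The plan is to establish the equivalence of (1)--(3) for a general Noetherian ring by a sequence of elementary reductions, and then to bootstrap through (4)--(7) in the finitely generated equicharacteristic-zero case using the structural results of \S\ref{sec:seminor}. The implications $(1) \imp (2) \imp (3)$ are trivial, since each successive class of test rings is contained in the previous one. For $(3) \imp (1)$, suppose $f \notin IS$ for some map $R \to S$ with $S$ excellent, one-dimensional, and seminormal. Localizing $S$ at a maximal ideal containing the proper ideal $(IS :_S f)$ produces an excellent \emph{local} one-dimensional seminormal ring (seminormality localizes by Proposition~\ref{pr:semibasic}(3), or by (4) in the module-finite case; one-dimensionality and excellence are inherited) in which the image of $f$ still avoids the expansion of $I$; this gives the failure of (2). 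To get from (2) to (3), complete: if $(S,\m)$ is excellent local one-dimensional seminormal with $f \notin IS$, then $\wh S$ is seminormal by Proposition~\ref{pr:semibasic}(8), still one-dimensional and of course complete, and faithful flatness of $S \to \wh S$ guarantees $f \notin I\wh S$. This closes the loop among (1), (2), (3).

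Now assume $R$ is a finitely generated $K$-algebra with $K$ of characteristic zero and $L = \ov K$. To fold in (4): given a map to a complete local one-dimensional seminormal $S$ with $f \notin IS$, one may enlarge the residue field. Concretely, the residue field of $S$ is a finitely generated field extension of (the image of) $K$; one embeds it in $L$ after a finite extension and forms the glueing construction of Theorem~\ref{thm:glue} over a suitable finite extension of the residue field inside $L$, or more simply base-changes $\wh S$ along a coefficient-field extension — this preserves seminormality by Proposition~\ref{pr:semibasic}(5) (the fiber is a field, hence geometrically reduced after the algebraically closed base change is handled degree by degree) and remains faithfully flat, so $f$ stays outside the expansion. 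For $(4) \dbimp (5)$: by Proposition~\ref{pr:onedimsemi}(b), a complete local one-dimensional seminormal equicharacteristic ring with algebraically closed residue field $L$ \emph{is} a complete axes ring over $L$, and conversely such rings are seminormal by Proposition~\ref{pr:onedimsemi}(a). The only subtlety is that the map must be a $K$-algebra map, which it is by construction. For $(5) \imp (6)$: this is exactly the content of Theorem~\ref{thm:descent} — a $K$-algebra map to a complete axes ring over $L$ with $f \notin IS$ descends to a $K$-algebra map to a finitely generated \'etale extension $S_0$ of a polynomial axes ring over $K$ with $f \notin IS_0$. For $(6) \imp (5)$, or rather $(6) \imp (1)$, note that a finitely generated \'etale extension of a polynomial axes ring over $L$ is one-dimensional, reduced, excellent, and seminormal (Proposition~\ref{pr:semibasic}(5) applied to the \'etale map, together with Proposition~\ref{pr:onedimsemi}(c)), so (1) applies to it directly; combined with $(5) \imp (6)$ and the chain back to (1), all six are equivalent.

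Finally, assume $K = L = \C$ and bring in (7). The implication $(1) \imp (7)$ is immediate: an analytic ring of axes over $\C$ is one-dimensional, excellent, Henselian, and seminormal (it sits between a polynomial axes ring localization and a complete axes ring with faithfully flat maps, and seminormality passes along these — Proposition~\ref{pr:analax} and Discussion~\ref{disc:analcomp}), so it is an admissible test ring in (1); and the hypothesis $\theta^{-1}(\n)$ maximal is automatic once we have a local target with $f$ mapping into the maximal ideal's vicinity, or can be arranged by factoring through $R/\theta^{-1}(\n)$... more carefully, if $\theta^{-1}(\n)$ is not maximal one localizes $R$ first, which does not affect $f \in IS$. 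For $(7) \imp (5)$: given a $\C$-algebra map to a complete axes ring $S$ over $\C$ with $f \notin IS$, we have $f \notin IS + \n^N S$ for some $N$; by Artin approximation over $\C$ (applied to the analytic, i.e.\ convergent-power-series, Henselization as in the proof of Theorem~\ref{thm:descent}) we obtain a $\C$-algebra map $R \to S'$ to an analytic ring of axes with $f$ still outside the expanded ideal, and the preimage of the maximal ideal contains the original one and hence is maximal after the initial localization; thus (7) fails. The main obstacle in all of this is the residue-field enlargement step $(3)/(4) \imp (5)$ — ensuring that extending the residue field of a complete one-dimensional seminormal ring to $L$ can be done while staying inside the seminormal, one-dimensional, \emph{$K$-algebra} world with faithful flatness intact; once Theorem~\ref{thm:glue} and Proposition~\ref{pr:semibasic}(5) are in hand this is manageable, but it is the one place where the three hypotheses (seminormal, one-dimensional, $K$-algebra) must be juggled simultaneously rather than one at a time.
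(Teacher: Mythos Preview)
Your proof has the right ingredients but contains a genuine gap, compounded by some direction confusion in the implications.

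First, the direction issue: what you label ``$(5) \imp (6)$ via Theorem~\ref{thm:descent}'' actually describes the contrapositive of $(6) \imp (5)$ (a map to a complete axes ring descends to an \'etale extension), and what you label ``$(6) \imp (1)$'' (\'etale extensions of axes rings are seminormal) actually shows $(1) \imp (6)$. These swaps are not fatal by themselves, but once corrected they expose the real hole.

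The genuine gap is your treatment of the residue field. You assert that the residue field of a complete local one-dimensional seminormal target $S$ ``is a finitely generated field extension of $K$'' and ``embeds in $L$ after a finite extension.'' This is false: nothing prevents the residue field from being transcendental over $K$. For instance, $R = K[x]$ can map to $S = K(t)[\![y]\!]$ by $x \mapsto y$, and the residue field $K(t)$ does not embed in $\ov K$. Consequently your $(4) \dbimp (5)$ step via the structure theorem is incomplete: Proposition~\ref{pr:onedimsemi}(b) tells you a complete one-dimensional seminormal ring with algebraically closed residue field $\cL$ is a complete axes ring over $\cL$, but $\cL$ may strictly contain $L$, and you never get from $\cL$ down to $L$.

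This is exactly where the paper's argument differs from yours. Rather than proving $(4) \imp (3)$ and $(5) \imp (4)$ separately, the paper goes directly for $(6) \imp (1)$: given failure of (1), localize, complete, pass to the \emph{strict Henselization} (seminormal by Proposition~\ref{pr:semibasic}(7), with algebraically closed residue field $\cL$ since we are in characteristic $0$), complete again to get a complete axes ring over $\cL$. Now the descent Theorem~\ref{thm:descent} is applied with the algebraically closed base field taken to be $L$ and the extension field taken to be $\cL$: one replaces $R$ by $L \otimes_K R$, observes the map factors through it, and descends to an \'etale extension of a polynomial axes ring over $L$. You invoke descent only for the step where both source and target already have residue field $L$, which is precisely where it is \emph{not} needed; the paper uses it to bridge the gap from the large field $\cL$ back down to $L$, which is the step your argument is missing.

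Your treatment of $(7)$ via Artin approximation from the complete to the analytic setting is a reasonable alternative to the paper's route (which goes $(7) \imp (6)$ by analytically completing a localized \'etale extension), though the handling of ``$\theta^{-1}(\n)$ maximal'' is loose in both directions of your sketch.
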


\begin{proof}
 $(1) \imp (2) \imp (3) \imp (4) \imp (5)$ is obvious.  If $R$ is only assumed Noetherian
 we can prove $(3) \imp (1)$ as follows.  Suppose that there is a map $R \to S$, where
 $S$ is one-dimensional and seminormal, such that $f \notin IS$.  Then this can be
 preserved when we localize at some maximal ideal of $S$ and then complete.  The
 completion of $S_\m$ is still seminormal.  In the rest of the proof we assume the
 additional hypothesis on $R$.  
 
 To see that $(5) \imp (6)$:
If (6) fails, we also have $f \notin IS_\m$ for a local ring $S_\m$ of $S$
and also for the completion $T$ of $S_\m$.  $S$ is seminormal, and hence so is
$T$, which will have algebraically closed residue class
field $L$.    $T$ is a complete axes ring;  see Proposition~\ref{pr:onedimsemi}.

We next show that $(6) \imp (1)$.  Assume (6) and suppose that (1) fails
for $S$.  Then it also fails for some localization of $S$ at a maximal ideal, and for the completion
of that ring.  Hence, we may assume that $S$ is complete local.  By  part (7) of 
Proposition~\ref{pr:semibasic}, we may replace $S$ by its strict Henselization, which will 
have algebraically closed residue class field since we are in equal characteristic 0,  
and we may then complete again.  We may therefore assume that $S$ is a complete
ring of axes with algebraically closed residue class field $\cL$.  Extend $K$
to a coefficient field for $S$,  which we also denote $\cL$.  Then we have
$K \inc L \inc \cL \inc S$.  We may now replace $R$ by $L \otimes_K R$: 
we still have a map from this ring to $S$,  using the embedding $L \inj \cL$.
The result now follows from Theorem~\ref{thm:descent}.

Finally, it is clear that $(1) \imp (7)$, and it will suffice to prove that  $(7) \imp (6)$. Suppose that we have 
a map $R \to A$  where $A$ is \'etale over a polynomial ring of axes, such that $f \notin IA$. Then $A$ is a one-dimensional seminormal 
ring, and we can preserve that $f \notin IA$ by localizing at some maximal ideal $\mu$ of $A$.  The inverse image will be a maximal ideal 
of $R$, since $R$ and $A$ are affine $\C$-algebras.  Let $(S,\, \n)$ be the analytic completon of $A_\mu$.   Note that $\widehat{S} \cong
\widehat{A_\mu}$. Then $A$, $A_\mu$, $\widehat{A_\mu}$, and $S$ are seminormal by parts (5), (3), and (8) of Proposition~\ref{pr:semibasic} 
and $S$ is an analytic axes ring by Proposition~\ref{pr:analax}.  Since $S$ is faithfully flat over $A_\mu$ and $f \notin I A_\mu$, 
it follows that $f \notin IS$, which completes the proof.
\end{proof}

\begin{cor}\label{cor:axsemiC}
If $R$ is an affine $\C$-algebra, the equivalent conditions
(1) through (6) of Theorem~\ref{thm:axsemi} hold if and only if $f \in I\ax$. 
\end{cor}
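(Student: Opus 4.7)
The plan is to enlarge the list of equivalent conditions (1)--(6) in Theorem~\ref{thm:axsemi} by adding the condition $f \in I\ax$.  The forward direction, that (1) implies $f \in I\ax$, is immediate once one observes that every Brenner ring of axes $T$ over $\C$ is a one-dimensional reduced finitely generated (hence excellent) $\C$-algebra that is seminormal: seminormality holds at smooth points (they are normal), while at the unique singular point $\m$ the completion is a complete axes ring, seminormal by Proposition~\ref{pr:onedimsemi}(a), so $T_\m$ is seminormal by Proposition~\ref{pr:semibasic}(8), and Proposition~\ref{pr:semibasic}(4) globalizes. Hence (1) applied to $T$ yields $f \in IT$ for every $\C$-algebra map $R \to T$ with $T$ a Brenner axes ring, i.e., $f \in I\ax$.

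For the reverse direction, assume $f \in I\ax$ and suppose for contradiction that condition (6) fails: there is a $\C$-algebra map $R \to S$ with $S$ a finitely generated \'etale extension of the polynomial axes ring $B = \C[\vect x n]/(x_ix_j : 1 \leq i < j \leq n)$ and $f \notin IS$.  I plan to localize $S$ in two stages to produce a Brenner ring of axes $T$ receiving a $\C$-algebra map from $R$ with $f \notin IT$, contradicting $f \in I\ax$. Pick a maximal ideal $\mu$ of $S$ with $f \notin IS_\mu$, and let $\m = \mu \cap B$. The singular locus of $S$ is the finite preimage of the origin of $B$ (empty when $n=1$); by prime avoidance, find $g \in S$ lying in every singular maximal ideal of $S$ other than $\mu$, with $g \notin \mu$, so that $S_g$ has at most $\mu S_g$ as a singular point. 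Since the induced local \'etale map $B_\m \to S_\mu$ has trivial residue field extension ($\C$ to $\C$), it induces an isomorphism $\widehat{B_\m} \cong \widehat{S_\mu}$ on completions, so $\widehat{S_\mu} \cong \C[\![\vect x n]\!]/(x_ix_j : i < j)$. Its $n$ minimal primes correspond to $n$ distinct irreducible components of $\Spec S_g$ passing through $\mu$, distinct because they map into distinct coordinate axes of $\Spec B$. A second prime avoidance produces $h \in S_g$ lying in the minimal primes of the remaining irreducible components of $S_g$ but outside $\mu$ and the $n$ ``good'' minimal primes. Setting $T := (S_g)_h$, the resulting one-dimensional reduced affine $\C$-algebra has spectrum consisting of exactly $n$ smooth irreducible curves meeting only at a unique singular point, with completion $\C[\![\vect x n]\!]/(x_ix_j : i < j)$ there; hence $T$ is a Brenner ring of axes (the smooth case $n=1$ is similar, with the first localization vacuous and the second isolating a single smooth component). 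Since $g, h \notin \mu$, we still have $f \notin IT$, producing the required contradiction.

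I expect the main obstacle to be the structural verification in the reverse direction that $T$ genuinely satisfies Brenner's rigid definition of a ring of axes. The key technical points are that \'etale maps over an algebraically closed field are trivial on completions at closed points whose residue field is already $\C$, so the axes branch structure at $\mu$ descends unchanged from the origin of $B$; and that distinct branches at $\mu$ lying over distinct coordinate axes of $\Spec B$ must belong to distinct global irreducible components of $T$, which prevents any collapse that would violate the axes structure. Given these, the two rounds of prime avoidance do the structural bookkeeping needed to simultaneously eliminate extra singular points and extra components while preserving $f \notin IT$.
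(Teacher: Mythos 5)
Your proof is correct in outline but takes a genuinely different route from the paper in the reverse direction. The paper's proof is very short: it observes that a finitely generated \'etale extension $S$ of a polynomial axes ring is itself a one-dimensional affine seminormal $\C$-algebra, so that if condition (6) fails there is a map $R \to S$ with $S$ one-dimensional affine seminormal and $f \notin IS$; it then invokes Proposition~\ref{pr:onedimsemi}(d) (the structure theorem that such an $S$ admits a finite \'etale cover by affine rings of axes) to obtain an \'etale map $S \to A$, with $A$ an affine ring of axes and a maximal ideal $\widetilde\m$ of $A$ over the offending $\m$ of $S$, and uses faithful flatness of $S_\m \to A_{\widetilde\m}$ to conclude $f \notin IA$, contradicting $f \in I\ax$. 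You instead bypass Proposition~\ref{pr:onedimsemi}(d) entirely and construct the ring of axes directly as a localization of $S$, essentially inlining the proof of that proposition in the special case where $S$ is already known to be \'etale over a polynomial axes ring (in which case the \'etale neighborhoods of Proposition~\ref{pr:onedimsemi}(d) reduce to Zariski localizations). Both approaches are valid; the paper's is shorter and modular, yours is more self-contained and exposes the geometry.

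There is one genuine gap in your reverse-direction argument. You pick a maximal ideal $\mu$ of $S$ with $f \notin IS_\mu$ and then work as though $\m = \mu \cap B$ is the origin of $B$: the assertion $\widehat{B_\m} \cong \widehat{S_\mu} \cong \C[\![\vect x n]\!]/(x_ix_j : i < j)$ and everything that follows (the $n$ branches, the prime avoidance to isolate exactly those $n$ components) presuppose this. But nothing in the choice of $\mu$ forces it to lie over the origin; it may well happen that $f \in IS_\nu$ for all $\nu$ over the origin while $f \notin IS_\mu$ for some $\mu$ lying over a regular point of $B$. In that case $B_\m$ and hence $S_\mu$ is a DVR, $\widehat{S_\mu} \cong \C[\![t]\!]$, and the $T$ you must produce is a smooth ring of axes with a single irreducible component, even though $n$ may be large. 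Your parenthetical ``the smooth case $n=1$ is similar, with the first localization vacuous'' does not cover this: the correct dichotomy is whether $\mu$ lies over the origin, not whether $n=1$, and the first localization is generally not vacuous even when $S_\mu$ is regular. The fix is routine (the first localization still removes the finitely many singular points; the second isolates the unique irreducible component of $S_g$ through $\mu$, which exists and is unique because $S_\mu$ is a one-dimensional regular local domain), but the case split should be stated explicitly.
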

\begin{proof} If (1) through (6) hold, it is clear that $f \in I\ax$,  since  rings of axes are 
one-dimensional excellent seminormal rings. Suppose that $f \in I\ax$.  It suffices
to verify condition (6).  Since $L = K = \C$,  by \ref{pr:semibasic} it suffices to show
that for every map to a one-dimensional affine seminormal ring $S$ over $R$,  the image
of $f$ is in $IS$.  If not, we can choose a maximal ideal $\m$ of $S$ such that 
$f \notin IS_\m$.  By part (d) of Proposition \ref{pr:onedimsemi} 
there is an \'etale map  $S \to A$ such that $A$ is an affine
ring of axes over $\C$  and has a maximal ideal $\widetilde{\m}$ lying over $\m$.  Since
the image of $f$ is not in $IS_\m$ and $S_\m \to A_{\widetilde{\m}}$ is faithfully flat, we have that 
the image of $f$ is not in $IA_{\widetilde{\m}}$ and, hence, not in $IA$. 
 \end{proof}

\begin{defn}\label{def:axsemi}  Theorem~\ref{thm:axsemi} shows that
conditions (1), (2), and (3) are equivalent for any Noetherian ring $R$.  We therefore
define $f \in R$ to be in the {\it axes closure} $I\ax$ of $I$ in the general case if these
three equivalent conditions hold. Once we have made this definition, we have
at once:
\end{defn}

\begin{cor}\label{cor:axsemi}
Let $R$ be a finitely generated $K$-algebra, where $K$
is a field of characteristic $0$, $I \inc R$ an ideal, and $f \in R$. 
Then $f \in I\ax$ if and only if the equivalent
conditions (1) through (6) of Theorem~\ref{thm:axsemi} hold.
\end{cor}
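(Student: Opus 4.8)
The plan is to derive this corollary by combining Corollary~\ref{cor:axsemiC} with the new definition of axes closure in the general Noetherian case. The corollary really has two distinct regimes to address: the case $K = \C$ and the case of a general characteristic-zero field $K$. When $K = \C$, the statement is exactly Corollary~\ref{cor:axsemiC}: the equivalent conditions (1)--(6) of Theorem~\ref{thm:axsemi} hold precisely when $f \in I\ax$ in the original sense of Brenner, which (by Definition~\ref{def:axsemi}) coincides with the newly extended meaning since $R$ is Noetherian and conditions (1)--(3) are by Theorem~\ref{thm:axsemi} equivalent for any Noetherian ring. So in that case there is nothing new to prove.

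For a general field $K$ of characteristic $0$, the argument is even shorter: by Definition~\ref{def:axsemi}, $f \in I\ax$ means precisely that condition (1) (equivalently (2) or (3)) of Theorem~\ref{thm:axsemi} holds, since these three conditions are available for any Noetherian ring and $R$, being a finitely generated $K$-algebra, is Noetherian. But Theorem~\ref{thm:axsemi} asserts that under the hypothesis that $R$ is a finitely generated algebra over a characteristic-zero field $K$ with algebraic closure $L$, all six of conditions (1) through (6) are equivalent. Hence $f \in I\ax$ if and only if all six hold. First I would simply invoke Definition~\ref{def:axsemi} to identify $I\ax$ with the truth of condition (1), and then quote the equivalence of (1)--(6) from Theorem~\ref{thm:axsemi}.

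I do not anticipate any genuine obstacle here; the corollary is essentially a bookkeeping consequence that records, after the fact, that the definition of $I\ax$ just introduced for general Noetherian rings is consistent with (and in the finitely generated characteristic-zero case is detected by) the various concrete conditions involving complete axes rings and étale extensions of polynomial axes rings. The only point requiring a word of care is to note that Theorem~\ref{thm:axsemi} is stated with a designated element $f$ and ideal $I$, so one should fix $f$ and $I$ at the outset and apply the theorem to that same pair; and that ``finitely generated $K$-algebra'' guarantees Noetherianity, so conditions (1), (2), (3) are meaningful and the definition applies. With these remarks in place the proof is a two-line deduction, which is why the author states it as an immediate corollary.
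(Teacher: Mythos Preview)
Your proposal is correct and matches the paper's approach: the paper simply remarks that the corollary follows ``at once'' from Definition~\ref{def:axsemi} together with the equivalence of (1)--(6) in Theorem~\ref{thm:axsemi}, which is exactly your argument for general $K$. The separate treatment of $K=\C$ via Corollary~\ref{cor:axsemiC} is superfluous---your general argument already covers that case---but it does no harm.
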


The following remark is obvious from the definition, but is, nonetheless, quite important.

\begin{prop}\label{pr:onedimax} Let $R$ be an excellent one-dimensional seminormal ring.
Then every ideal of $R$ is axes closed. \qed\end{prop}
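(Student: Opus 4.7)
The plan is essentially a one-step argument using the very definition of axes closure together with the hypothesis that $R$ itself already belongs to the class of test rings used in that definition.

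Recall that by Definition~\ref{def:axsemi}, $f \in I\ax$ means that the equivalent conditions (1), (2), (3) of Theorem~\ref{thm:axsemi} hold. In particular, condition (1) says that for \emph{every} homomorphism from $R$ to an excellent one-dimensional seminormal ring $S$, the image of $f$ lies in $IS$. First I would observe that, under our hypothesis, $R$ itself is an excellent one-dimensional seminormal ring, so the identity map $\mathrm{id}_R \colon R \to R$ is a legitimate test map in condition (1).

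The main step is then: given $f \in I\ax$, apply condition (1) with $S = R$ and the identity map. The image of $f$ is $f$ itself, and $IS = IR = I$, so we conclude $f \in I$. This shows $I\ax \subseteq I$. The reverse inclusion $I \subseteq I\ax$ is built into the notion of a closure operation (equivalently, for any homomorphism $R \to S$ and any $f \in I$, one trivially has $f \in IS$). Hence $I\ax = I$, i.e., $I$ is axes closed.

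There is essentially no obstacle here: the entire content is that the class of test rings defining axes closure in Definition~\ref{def:axsemi} is broad enough to contain $R$ itself, so the identity map forces the closure to collapse to $I$. The only point worth checking explicitly in the write-up is that ``excellent one-dimensional seminormal'' is the precise adjective attached to the test rings in condition (1), matching our hypothesis on $R$ verbatim.
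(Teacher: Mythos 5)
Your proof is correct, and it is precisely the argument the paper has in mind: the proposition is stated with an immediate $\square$ because, once Definition~\ref{def:axsemi} is unwound, testing against the identity map $R \to R$ gives $I\ax \subseteq I$ at once. You have simply made explicit what the paper treats as obvious.
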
  

\begin{prop}\label{pr:normthenaxthensemi}
 Let $R$  be a Noetherian ring.
 \begin{enumerate}
 \item The axes closure of an ideal is contained in its integral closure.
 \item If $R$ is normal domain, every principal ideal is axes closed.
 \end{enumerate}
\end{prop}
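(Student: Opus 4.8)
The plan is to prove part (1) first, then deduce part (2) as a corollary using the well-known characterization of integrally closed principal ideals over a normal domain. For part (1), I would use the characterization of axes closure from Definition~\ref{def:axsemi}: it suffices to show that if $f \in I\ax$, then $f$ lies in the integral closure $\overline{I}$. By the valuative criterion for integral closure, $f \in \overline{I}$ if and only if for every discrete valuation ring $V$ together with a homomorphism $R \to V$ (equivalently, for every map to $R/\p$ followed by a map into a DVR dominating a valuation of the fraction field), the image of $f$ lies in $IV$. So I would fix such a map $R \to V$ and argue that $V$ — being a one-dimensional (hence at most one-dimensional) regular, and therefore seminormal, local ring — is itself excellent if we are careful, or can be replaced by its completion $\widehat{V}$, which is a complete one-dimensional regular (hence seminormal) local ring. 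Applying condition (3) of Theorem~\ref{thm:axsemi} to the composite $R \to V \to \widehat{V}$ gives that the image of $f$ lies in $I\widehat{V}$, and since $\widehat{V}$ is faithfully flat over $V$, we conclude $f \in IV$. As this holds for all such $V$, the valuative criterion yields $f \in \overline{I}$.

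For part (2), let $P = (x)R$ be a principal ideal of a normal domain $R$. By part (1), $P\ax \subseteq \overline{P}$. But over a normal domain every principal ideal is integrally closed: this is the classical fact that $\overline{(x)} = (x)$ when $R$ is normal, which follows because if $y$ satisfies an equation $y^n + a_1 x y^{n-1} + \cdots + a_n x^n = 0$ with $a_i \in R$, then $(y/x)$ is integral over $R$ and lies in $\operatorname{frac}(R)$, hence lies in $R$, so $y \in xR$. (One should check the edge case $x = 0$ separately, where $P = 0$ and $\overline{0} = 0$ since $R$ is reduced.) Therefore $P\ax \subseteq \overline{P} = P$, and since always $P \subseteq P\ax$, we get $P\ax = P$.

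The main obstacle I anticipate is the excellence hypothesis in the passage to a DVR: Theorem~\ref{thm:axsemi}'s conditions (1)--(3) are stated for excellent, or for complete local, one-dimensional seminormal rings, whereas an arbitrary DVR arising from the valuative criterion need not be excellent. The cleanest fix, as sketched above, is to complete: $\widehat{V}$ is a complete one-dimensional seminormal (indeed regular) local ring, so condition (3) applies directly, and faithful flatness of $V \to \widehat{V}$ descends the membership $f \in I\widehat{V}$ to $f \in IV$. One should also make sure the valuative criterion is being invoked in the correct form for a general Noetherian ring (allowing first a quotient by a minimal prime, so that one is over a domain, and then a valuation of its fraction field centered appropriately); this is standard but worth a sentence. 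Everything else is routine.
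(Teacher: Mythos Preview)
Your proposal is correct and follows essentially the same approach as the paper: test integral closure via maps to DVRs, replace each DVR by its completion (which is complete, hence excellent, one-dimensional, regular, and therefore seminormal) so that the axes-closure definition applies, and then deduce part (2) from the fact that principal ideals in a normal domain are integrally closed. The paper's proof is simply a terser version of what you wrote.
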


\begin{proof}
One may test integral closure by mapping to Noetherian valuation domains, and
these may be replaced by their completions, which are excellent.  The second statement follows, since
principal ideals are integrally closed in a normal domain. \end{proof}

\begin{lemma}\label{lem:prod}
If $I, \, J \inc R$,  then $I\ax J\ax \inc (IJ)\ax$. In particular,
if  $r \in R$,  $r(I\ax) \inc (rI)\ax$.
\end{lemma}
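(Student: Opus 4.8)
The plan is to test membership in the axes closure against maps to complete local one-dimensional seminormal rings, which is legitimate for an arbitrary Noetherian ring $R$ by the equivalence of conditions (1)--(3) in Theorem~\ref{thm:axsemi} and Definition~\ref{def:axsemi}. First I would reduce the containment $I\ax J\ax \inc (IJ)\ax$ to the pointwise statement that $fg \in (IJ)\ax$ whenever $f \in I\ax$ and $g \in J\ax$. This suffices because $I\ax J\ax$ is by definition the ideal generated by the products $fg$ with $f\in I\ax$, $g\in J\ax$, while $(IJ)\ax$ is an ideal; so once every such generator lies in $(IJ)\ax$, the whole product ideal does.

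Next, fix $f \in I\ax$ and $g \in J\ax$, and let $R \to S$ be an arbitrary homomorphism with $(S,\m)$ a complete local one-dimensional seminormal ring. By the definition of axes closure, the image of $f$ lies in $IS$ and the image of $g$ lies in $JS$, hence the image of $fg$ lies in $(IS)(JS) = (IJ)S$ --- this is the elementary observation about contractions of products under a ring homomorphism already used just before Proposition~\ref{pr:prodct}. Since the map $R \to S$ was arbitrary among such maps, condition (3) of Theorem~\ref{thm:axsemi} holds for the element $fg$ relative to the ideal $IJ$, so $fg \in (IJ)\ax$. This proves $I\ax J\ax \inc (IJ)\ax$.

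For the ``in particular'' clause, I would apply the first part with $J = rR$. Then $r \in rR = J \inc J\ax$, so $r\,(I\ax) \inc J\ax\,I\ax \inc (IJ)\ax = (rI)\ax$, as claimed. There is no real obstacle in this argument; the only points requiring a moment's care are the bookkeeping in the reduction step (that containment of the generating products in the ideal $(IJ)\ax$ is enough) and the observation that it is precisely the equivalence of conditions (1)--(3) in Theorem~\ref{thm:axsemi}, and hence the well-definedness of $\ax$ on general Noetherian rings, that makes testing on maps to complete local one-dimensional seminormal rings valid here.
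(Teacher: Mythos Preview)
Your proof is correct and follows essentially the same approach as the paper's: both test against maps to one-dimensional seminormal rings and use that $I\ax$ and $J\ax$ expand to $IS$ and $JS$ there, so their product expands to $(IJ)S$. The only cosmetic differences are that the paper argues at the level of ideal expansions (writing $I\ax J\ax B = (I\ax B)(J\ax B) = (IB)(JB) = (IJ)B$) rather than element-by-element, and uses condition~(1) rather than condition~(3) of Theorem~\ref{thm:axsemi}, but these are interchangeable by that theorem.
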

\begin{proof}
Let $\theta: R \to B$ denote any homomorphism to an excellent one-dimensional seminormal ring.
The first statement follows since $I\ax J\ax B = (I\ax B)(J\ax B) = (IB)(JB)
= (IJ)B$, and the second statement follows from the case where $J = rR$. 
\end{proof}

In \S\ref{sec:natax} we prove that an excellent ring is seminormal if and only if every principal
ideal generated by a non-zerodivisor is axes closed:  see  Theorem~\ref{thm:semiaxes}

\begin{example}   
We give an example of a reduced finitely generated
$\C$-algebra of pure dimension two with precisely two minimal primes
which is seminormal, although one of its quotients
by a minimal prime is not seminormal.  Its normalization is the product of two
polynomial rings in two variables. It has a principal ideal generated by
a zerodivisor that is not axes closed.  This example is similar to \cite[Example 2.11]{GrTr-semi}.

Let $S = \C[u, \, v] \times \C[x, \, y]$. Let $R$ be the subring
of $S$ generated over $\C$ by $q = (u^2, \, x)$,  $r = (u^3,\,y)$, $s = (v,\,0)$,
and  $t = (uv,\,0)$.  Then $w = q^3 - r^2 = (0,\, x^3 - y^2) \in R$, and
$z = v+w = (v,\, x^3 - y^2)$ is a non-zerodivisor in $S$ and, hence, in $R$.
Thus,  $e = (1, \, 0) \in S$ is in the total quotient ring of $R$,  since it is integral
over $R$ and $ze = (v,\,0) \in R$,  $(u,\,0)$ is integral over $R$ since
its square is $qe$,  which is integral over $R$,  and it is in
the total quotient ring of $R$,  since $z(u,\,0) \in R$.  It follows that
the integral closure of $R$ is $S$.

Then $R$ consists of all pairs of the form   
$\bigl(P(u^2,\,u^3) + vH(u,v),\, P(x,\,y)\bigr)$ where
$H(u,\,v)$ is an arbitrary polynomial in $u,\,v$ and $P$ is an
arbitrary polynomial in $x,\,y$.  Alternatively,  $R$ consists of
all pairs $\bigl(Q(u,\,v),\, P(x,\,y)\bigr)$ such that 
$Q(u,v) \equiv P(u^2,\, u^3)$ mod $v\C[u,v]$.  From the latter
description we see that $R$ is seminormal, for if
$Q(u,\,v)^2 \equiv P(u^2,\, u^3)^2$  mod $v\C[u,v]$ and
$Q(u,\, v)^3 \equiv P(u^2, \, u^3)^3$ mod $v\C[u,v]$,
then $Q(u,v) \equiv P(u^2,\, u^3)$ mod $v\C[u,v]$.  

The two minimal primes of $S$ contract to incomparable
primes $P = (\C[u,v] \times 0) \cap R$ and  $Q = (0 \times \C[x,y]) \cap R$.
Clearly,   $P \cap Q = 0$.  Note that $(v,0), \, (uv,\,0) \in P - Q$ and
$(0,\, x^3 - y^2) \in  Q-P$.  Hence,  $P$ and $Q$ constitute all the 
minimal primes of $R$.   We have that $R/P \cong K[u^2, u^3, uv, u]$,
which is not seminormal,  while $R/Q \cong K[x,y]$.  

The maximal spectrum of $R$, in the Euclidean topology,  is the union of two complex planes,
$\MaxSpec(\C[u^2, u^3, uv, v])$,  which may be identified topologically
with $\MaxSpec(\C[u,v]) = \C^2$,  and $\MaxSpec(\C[x,y])$.  These meet along
a topological line which may be identified with $\cV(v)$ in the first
plane and with $\cV(x^3 - y^2)$ in the second plane.   

 Then $u$ represents a continuous function on $\C^2$ that may be restricted
to the intersection and so viewed as a function on the closed set
$\cV(x^3 - y^2)$ in the second plane.  Hence, by the Tietze extension theorem 
there is a continuous function $\theta$ on the second plane that extends the 
restriction of $u$. The pair $(u,\theta)$ represents a continuous $\C$-valued 
function on $\Spec(R)$, and we have that $(uv, 0) = (u,\theta)(v,0)$.  It follows 
that $(uv,0)$ is in the continuous closure of $(v,0)R$  in $R$,  and, hence,
in the axes closure.  But it is not in the ideal.  
\end{example}

In \S\ref{sec:natax} we prove that in a reduced affine $\C$-algebra, axes closure and
continuous closure agree for principal ideals generated by a non-zerodivisor.  
We do not know whether they agree for principal
ideals generated by a zerodivisor.

The following result, together with Proposition~\ref{pr:ctlocal} and Corollary~\ref{cor:ctcover}, 
gives one sense in which the issue of whether axes closure
and continuous closure agree in affine $\C$-algebras is local.

\begin{prop}\label{pr:axlocal}
Let $R$ be a Noetherian ring, $I$ and ideal of $R$,
and $f \in R$.
\begin{enumerate}[label=(\alph*)]
\item  If $\varphi: R \to S$ is any homomorphism and $f \in I\ax$,  then $\varphi(f) \in (IS)\ax$. (In other words, axes closure is \emph{persistent}.)
Hence, the contraction of an axes closed ideal is axes closed.
\item $f\in I\ax$ if and only if for each $P \in \Spec R$, one has $f \in (I R_P)\ax$.
\item $f \in I\ax$ if and only if this holds in an affine open neighborhood of each prime
ideal of $R$.  In particular, $I$ is axes closed if and only if this is true for an affine
open neighborhood of each prime of $R$.
\end{enumerate}
\end{prop}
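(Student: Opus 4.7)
The plan is to leverage the characterization of axes closure via maps to excellent local one-dimensional seminormal rings (condition~(2) of Theorem~\ref{thm:axsemi}), together with Proposition~\ref{pr:onedimax}, which says that every ideal of such a target is already axes closed.

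Part~(a) is immediate from composition. Given $\varphi\colon R\to S$ and any homomorphism $\psi\colon S\to T$ to an excellent one-dimensional seminormal ring, the composite $\psi\circ\varphi$ is of the same type on $R$, so $f\in I\ax$ forces $\psi(\varphi(f))\in IT\inc(IS)T$. Hence $\varphi(f)\in(IS)\ax$. For the contraction claim, if $J\inc S$ is axes closed and $f\in(\varphi^{-1}(J))\ax$, persistence together with $\varphi(\varphi^{-1}(J))S\inc J$ gives $\varphi(f)\in J\ax=J$, so $f\in\varphi^{-1}(J)$.

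Part~(b) forward is persistence applied to $R\to R_P$. For the converse, suppose $f\in(IR_P)\ax$ for every $P\in\Spec R$, and let $\varphi\colon R\to S$ be any homomorphism with $S$ excellent, local, one-dimensional, and seminormal, with maximal ideal $\n$. Set $P=\varphi^{-1}(\n)$. Every element of $R\setminus P$ is sent by $\varphi$ to a unit of $S$, so $\varphi$ factors as $R\to R_P\to S$. Applying persistence along the second map to the hypothesis $f/1\in(IR_P)\ax$, the image of $f$ in $S$ lies in $(IS)\ax$, which equals $IS$ by Proposition~\ref{pr:onedimax}. By condition~(2) of Theorem~\ref{thm:axsemi}, $f\in I\ax$.

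Part~(c) forward is persistence along $R\to R_g$. For the converse, if for each $P$ there is some $g\notin P$ with $f\in(IR_g)\ax$, then further persistence along $R_g\to R_P$ yields $f\in(IR_P)\ax$, and part~(b) applies. For the ``in particular'' statement, the forward implication is trivial (take $g=1$). Conversely, assume that for each prime $P$ there is $g\notin P$ with $IR_g$ axes closed in $R_g$. Given $f\in I\ax$, persistence gives $f\in(IR_g)\ax=IR_g$; this means $g^Nf\in I$ for some $N$, i.e., $g^N\in J:=(I:_R f)$. Hence $\sqrt{J}$ meets $R\setminus P$ for every prime $P$, so $J=R$ and $f\in I$. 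The only mildly delicate step is the factoring-through-$R_P$ move in part~(b), which rests crucially on the target being \emph{local}; the rest of the argument is formal manipulation using the seminormal testing criterion.
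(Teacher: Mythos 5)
Your proof is correct and follows essentially the same route as the paper, relying on persistence, the factorization of any map to an excellent local one-dimensional seminormal ring through the appropriate localization $R_P$, and Proposition~\ref{pr:onedimax}. The paper's proof of parts (b) and (c) is quite terse (stated contrapositively and with ``Part (c) follows at once''), and you have supplied the same arguments in fuller detail, including the useful observation in the ``in particular'' clause of (c) that $\sqrt{(I:_Rf)}$ cannot be contained in any prime, hence equals $R$.
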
  

\begin{proof}
The first statement in (a) is immediate from the definition, and the second
statement follows at once from the first statement.  

For part (b): Part (a) guarantees that if $f \in I\ax$, this remains true when we localize.  It will suffice to show
that if $f \in (IR_P)\ax$ for all $P$ then $f \in I\ax$.  If not, we can map to a
one-dimensional local seminormal ring $(S,\,Q)$ such that $f \notin IS$.  
But then $f$ is not in $(IR_P)\ax$,  where $P$ is the contraction of $Q$.

Part (c) follows at once.    
\end{proof}

\begin{prop}\label{pr:axcolon} Let $R$ be a Noetherian ring, and $I$ an ideal. Let $f \in R$,  let $J$
be an ideal of $R$,  and let $W$ be a nonempty multiplicative system in  $R$.
If $I$ is axes closed, then so are  $(I:_Rf)$,  $(I:_RJ)$,  $\bigcup_{n=1}^\infty (I:_RJ^N)$,  and the
contraction $\fA = \{r \in R: \hbox{\ for some\ } w \in W,  wr \in I\}$ of $IW^{-1}R$  to  $R$.  \end{prop}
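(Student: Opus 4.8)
The plan is to reduce all four assertions to the single case of the colon ideal $(I:_Rf)$ by one element, which is handled directly by the product lemma, and then to note that axes closure is order-preserving. For the latter: if $J_1 \inc J_2$, then for any map $R \to S$ to a one-dimensional excellent seminormal ring we have $J_1S \inc J_2S$, so $J_1\ax \inc J_2\ax$. Now for the base case apply Lemma~\ref{lem:prod} with the scalar $f$ and the ideal $(I:_Rf)$: since $f\cdot(I:_Rf) \inc I$ we get
\[
f\cdot\bigl((I:_Rf)\ax\bigr) \;\inc\; \bigl(f\cdot(I:_Rf)\bigr)\ax \;\inc\; I\ax = I,
\]
using $I\ax = I$ in the last step. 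Hence every $r \in (I:_Rf)\ax$ satisfies $fr \in I$, i.e.\ $r \in (I:_Rf)$, so $(I:_Rf)$ is axes closed.

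The remaining three cases are purely formal reductions to this one. For $(I:_RJ)$: since $R$ is Noetherian, write $J = (\vect g m)$; then $(I:_RJ) = \bigcap_{i=1}^m (I:_Rg_i)$ is a finite intersection of axes closed ideals, and any intersection of axes closed ideals $\{J_\alpha\}$ is again axes closed, because order-preservation gives $\bigl(\bigcap_\alpha J_\alpha\bigr)\ax \inc \bigcap_\alpha (J_\alpha)\ax = \bigcap_\alpha J_\alpha$. For $\bigcup_{n\geq 1}(I:_RJ^n)$: the ascending chain $(I:_RJ) \inc (I:_RJ^2) \inc \dotsb$ stabilizes by the Noetherian hypothesis, so this union equals $(I:_RJ^N)$ for $N$ large, which is axes closed by the case just treated applied to the ideal $J^N$. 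For the contraction $\fA = \{r \in R : wr \in I \text{ for some } w \in W\}$: we have $\fA = \bigcup_{w\in W}(I:_Rw)$, a family that is directed upward because $W$ is multiplicative (both $(I:_Rw)$ and $(I:_Rw')$ are contained in $(I:_Rww')$), hence again stabilizes by Noetherianity to a single $(I:_Rw_0)$, which is axes closed by the base case.

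The only non-formal ingredient is Lemma~\ref{lem:prod}, itself an immediate consequence of the definition of axes closure, so I do not expect any genuine obstacle; all the content lies in the three elementary observations — finite generation of $J$, the ascending chain condition, and the directedness of $\{(I:_Rw)\}_{w\in W}$ — that collapse each of the four ideals to the principal colon $(I:_Rf)$.
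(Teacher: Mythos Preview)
Your proof is correct and follows essentially the same architecture as the paper's: reduce everything to the principal colon $(I:_Rf)$, then handle the other three cases via finite intersection, the ascending chain condition, and directedness of $\{(I:_Rw)\}_{w\in W}$ exactly as the paper does.

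The one point of divergence is the base case. The paper argues directly from the definition: given $g\notin (I:_Rf)$, one has $fg\notin I$, so there is a map $h:R\to A$ to a one-dimensional excellent seminormal ring with $h(fg)\notin IA$; then $h(g)\notin (IA:_Ah(f))\supseteq (I:_Rf)A$, so $g\notin (I:_Rf)\ax$. You instead invoke Lemma~\ref{lem:prod} to get $f\cdot(I:_Rf)\ax\inc\bigl(f(I:_Rf)\bigr)\ax\inc I\ax=I$. Both are one-line arguments; yours is slightly cleaner in that it never unpacks the definition of $\ax$, while the paper's is marginally more self-contained. No substantive difference.
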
  
\begin{proof}  Suppose  $g \notin (I:_R f)$.  Then  $fg \notin I$,  and we can choose a homomorphism
$h:R \to A$,  where $A$ is a one-dimensional excellent seminormal ring, such that  $h(fg) \notin IA$.
But then  $h(g) \notin (IA:_A h(f)) \supseteq (I:_R f)A$,  which shows that $g \notin (I:_R f)\ax$. 
This establishes the first statement.

Since $I:_R J = \bigcap_{j \in J} (I:_R j)$,  the second statement follows.  The ideals  $(I:_R J^N)$  form an ascending
chain, and so the union is equal to one of them.  This proves the third statement.  Finally,  $\fA = \bigcup_{w \in W} (I:_R w)$.
The union is directed, since  $(I:_R v) \cup (I:_Rw) \subseteq (I:_R vw)$.  The family of ideals $\{(I:_R v) \mid v \in W\}$ 
therefore has a maximal element, which must be maximum,  and the union consequently has the 
form  $(I:_R w)$.  \end{proof}

\begin{prop}\label{pr:intprimax}  In any Noetherian ring $R$, every axes closed ideal is 
an intersection of primary axes closed ideals.   If  $R$ is an affine $\C$-algebra, these
may be taken to be primary to maximal ideals.  \end{prop}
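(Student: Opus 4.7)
The plan is to prove both statements by showing that $I$ equals the intersection of all (appropriate) primary axes closed ideals containing it. The key observation is that it suffices to exhibit, for each $f \in R \setminus I$, some primary axes closed ideal $Q$ with $I \subseteq Q$ and $f \notin Q$, having the desired radical properties in the second case.

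For the general Noetherian case, since $f \notin I = I\ax$, I would invoke the definition to obtain a homomorphism $\phi: R \to S$ where $S$ is a one-dimensional excellent seminormal ring and $\phi(f) \notin IS$. Since $S$ is Noetherian, $IS$ admits a primary decomposition $IS = \bigcap_j T_j$; some $T_j$ omits $\phi(f)$. By Proposition~\ref{pr:onedimax}, every ideal of $S$ is axes closed, so $T_j$ is; persistence of axes closure (Proposition~\ref{pr:axlocal}(a)) then implies that the contraction $Q := \phi^{-1}(T_j)$ is axes closed in $R$. It is automatically primary (preimages of primary ideals are primary), contains $I$ (since $\phi(I) \subseteq IS \subseteq T_j$), and excludes $f$.

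For the affine $\C$-algebra case, the extra demand is that each $Q$ be primary to a maximal ideal. Here I would invoke condition (7) of Theorem~\ref{thm:axsemi}: because $f \notin I\ax$, there is a $\C$-algebra map $\theta: R \to (S,\n)$ to an analytic axes ring with $\m := \theta^{-1}(\n)$ maximal in $R$ and $\theta(f) \notin IS$. Taking a primary decomposition $IS = \bigcap_j T_j$, some $T_j$ omits $\theta(f)$, but the radical of $T_j$ may be a minimal prime of $S$ rather than $\n$. To rectify this, I would replace $T_j$ by $T_j + \n^N$ for $N$ sufficiently large that $\theta(f)$ still lies outside, which is possible by Krull's intersection theorem applied to $S/T_j$, since $\bigcap_N(T_j + \n^N) = T_j$. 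Because $\dim S = 1$ forces $\n$ to strictly contain every minimal prime, the ideal $T_j + \n^N$ has radical $\n$ and is therefore $\n$-primary; its preimage $Q := \theta^{-1}(T_j + \n^N)$ is then primary to $\m$, axes closed (same argument as before), contains $I$, and excludes $f$.

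The main obstacle is this second half: a primary component of $IS$ whose radical is a minimal prime of $S$ would contract to a non-maximal prime of $R$, so some ``thickening'' is needed. Adding $\n^N$ handles this, but only because condition (7) of Theorem~\ref{thm:axsemi} supplies in advance a map $\theta$ with $\theta^{-1}(\n)$ already maximal in $R$; without this, the thickened preimage could have non-maximal radical.
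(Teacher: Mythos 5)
Your proof is correct, and it follows the same overall strategy as the paper's (reduce to a one-dimensional excellent seminormal ring $S$ via the definition of axes closure, find an appropriate primary ideal of $S$ excluding the image of $f$, and pull it back using Proposition~\ref{pr:onedimax} together with persistence), but the implementation differs in an instructive way. The paper invokes a stronger classical fact: in \emph{any} Noetherian ring, every ideal is an intersection of ideals primary to maximal ideals; applied to $A$, this immediately produces a primary-to-maximal ideal of $A$ excluding $h(f)$, so no thickening is needed. You instead use ordinary primary decomposition of $IS$, which suffices for the first assertion, and then handle the second by an explicit Krull-intersection argument, replacing $T_j$ by $T_j+\n^N$ to force the radical up to $\n$. (Note the parenthetical about $\n$ strictly containing every minimal prime is not actually needed: $\sqrt{T_j+\n^N}=\n$ holds simply because $\n^N\subseteq T_j+\n^N\subseteq\n$, regardless.) For the affine $\C$-algebra case, the paper takes $A$ to be an affine ring of axes and uses the Nullstellensatz to see that a maximal ideal of $A$ contracts to a maximal ideal of $R$; you instead call on condition (7) of Theorem~\ref{thm:axsemi}, which builds the maximality of $\theta^{-1}(\n)$ into the statement, avoiding the need to verify it separately. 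Both routes are valid; the paper's is slightly more uniform in treating both cases by a single lemma, while yours is more self-contained in relying only on standard primary decomposition plus the Krull intersection theorem.
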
  
\begin{proof}  Let $f \notin I$,  where $I$ is axes closed.   Then we can choose a map
$h: R \to A$, where $A$ is one-dimensional, excellent, and seminormal, and an  ideal
$I$ of  $A$ such that  $h(f) \notin IA$.  In any Noetherian ring, every ideal is an intersection
of ideals $J$  that are primary to maximal ideals. We can choose such a primary
ideal $J$ of $A$ so that $h(f) \notin J$,  and then $h^{-1}(J)$ will be primary and axes closed with 
$f \notin h^{-1}(J)$.   If  $R$ is an affine $\C$-algebra, we may take $A$ to be an affine
ring of axes over $\C$.  In this case, the inverse image of the radical of $J$,  which is
a  maximal ideal of $A$, is  a maximal ideal $m$ of  $R$,  and the inverse image of $J$ is
primary to $m$.   
\end{proof}

\begin{prop}\label{pr:primloc}  Let $\m$ be a maximal ideal of a Noetherian ring $R$,  and let $I$ be primary
to $\m$ and axes closed.  Then the expansions of $I$ to $S = R_\m$ and to $S = \widehat{R_\m}$  are axes closed.  This
is also true if $S$ is the Henselization of $R_\m$, or, when $R$ is a finitely generated algebra over $\C$,  the
analytic completion of $R_\m$.  
\end{prop}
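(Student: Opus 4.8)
The plan is to reduce everything to Proposition~\ref{pr:onedimax}, which says that every ideal of an excellent one-dimensional seminormal ring is axes closed, by showing that a hypothetical witness for $f \notin IS$ can be transported down to $R$ and then back up to such a ring. The point of the hypothesis that $I$ is $\m$-primary --- say $\m^n \subseteq I$ --- is that for each of the four rings $S$ in question, namely $R_\m$, $\widehat{R_\m}$, the Henselization $(R_\m)\h$, and (when $R$ is a finitely generated $\C$-algebra) the analytic completion of $R_\m$, the canonical map induces an isomorphism $R/I \cong S/IS$: each such $S$ is a flat local $R_\m$-algebra with residue field $R/\m$ and completion $\widehat{R_\m}$, so $S/\m^k S = R/\m^k$ for all $k$, and dividing by the image of $I$ gives $S/IS = R/I$. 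Hence, given $f \in (IS)\ax$, I can pick $r \in R$ whose class in $S/IS \cong R/I$ agrees with that of $f$; then $f - r \in IS \subseteq (IS)\ax$, so, viewing $r$ in $S$, also $r \in (IS)\ax$, and it suffices to prove $r \in I$, since that forces the class of $f$ in $S/IS$ to vanish.

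So suppose $r \notin I$. As $I$ is axes closed, $r \notin I\ax$, so there is a homomorphism $h \colon R \to A$ with $A$ excellent, one-dimensional, and seminormal and $h(r) \notin IA$. Then $IA$ is proper, so fix a maximal ideal $\n$ of $A$ containing $(IA :_A h(r))$; localizing at $\n$ keeps $h(r) \notin IA_\n$, and since $h(\m)^n A \subseteq IA \subseteq \n$ we have $h(\m) \subseteq \n$, so the composite $R \to A_\n$ sends $\m$ into the maximal ideal and therefore factors through a local homomorphism $R_\m \to A_\n$. Now $A_\n$ is excellent and seminormal (Proposition~\ref{pr:semibasic}(3)) of dimension $\le 1$. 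If $\dim A_\n = 1$, then $\widehat{A_\n}$ is a complete local --- hence excellent --- one-dimensional seminormal ring, seminormal by Proposition~\ref{pr:semibasic}(8), and $h(r) \notin I\widehat{A_\n}$ by faithful flatness; the local map $R_\m \to \widehat{A_\n}$ extends to a map $S \to \widehat{A_\n}$ --- trivially when $S = R_\m$, by $\m$-adic continuity when $S = \widehat{R_\m}$, by the universal property of Henselization (since $\widehat{A_\n}$ is Henselian) when $S = (R_\m)\h$, and by restricting the map out of $\widehat{R_\m}$ for the analytic completion --- so persistence of axes closure (Proposition~\ref{pr:axlocal}(a)) places the image of $r$ in $(I\widehat{A_\n})\ax = I\widehat{A_\n}$ (Proposition~\ref{pr:onedimax}), contradicting $h(r) \notin I\widehat{A_\n}$. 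If instead $\dim A_\n = 0$, then $A_\n$ is a field, $IA_\n = 0$, and $I$ lies in the prime $\ker(R \to A_\n)$, which equals $\m$ because $\sqrt{I} = \m$; then $r \notin \m$, so $r$ maps to a unit of the local ring $S$, contradicting $r \in (IS)\ax \subseteq \overline{IS} \subseteq \sqrt{IS} = \m S$ (Proposition~\ref{pr:normthenaxthensemi}(1), together with $\m^n \subseteq I$). Either way we reach a contradiction, so $r \in I$, and hence $f \in IS$, as desired.

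I expect the bulk of the actual work to be the bookkeeping in the second step: verifying that the witness $h$ over $A$ really does produce a witness over each of the four flavors of $S$, using only the extension/factorization property available for that flavor, and confirming that seminormality and one-dimensionality are preserved in passing from $A$ to $A_\n$ to $\widehat{A_\n}$. The degenerate case $\dim A_\n = 0$ must also be handled, which is the reason the containment $(IS)\ax \subseteq \sqrt{IS} = \m S$ enters.
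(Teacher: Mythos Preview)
Your proof is correct and follows essentially the same strategy as the paper: exploit the isomorphism $R/I \cong S/IS$ to replace a putative element of $(IS)\ax$ by an element of $R$, then use a witness map $R \to A$ with $A$ one-dimensional seminormal and extend it to $S$ via completeness/Henselian universal properties. Two organizational differences are worth noting. First, the paper invokes condition~(3) of Theorem~\ref{thm:axsemi} to take $A$ \emph{complete local} from the outset, which sidesteps your localize-and-complete step and the degenerate $\dim A_\n = 0$ case entirely (that case cannot arise when $A$ is already local of dimension~$1$). Second, rather than extending the witness map separately for each flavor of $S$, the paper proves only the case $S = \widehat{R_\m}$ directly and then disposes of the remaining three cases in one stroke: each such $S$ maps faithfully flatly to $\widehat{R_\m}$, so $(IS)\ax \subseteq (I\widehat{R_\m})\ax \cap S = I\widehat{R_\m} \cap S = IS$. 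Your route is a little longer but entirely sound; the paper's is more economical.
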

\begin{proof}  Consider the case where $S =\widehat{R_\m}$.  Let  $f \in S$ be an element of $S - IS$.  Then, since
$R/I \to S/IS$ is an isomorphism,  we may choose  $g \in R$ such that $g \equiv f$ mod $IS$. By Definition~\ref{def:axsemi} 
and Theorem~\ref{thm:axsemi},  we can choose
a map from $R \to A$ where $A$ is a complete local seminormal ring of dimension 1 such that  $g \notin IA$.  Since $I$
maps into the maximal ideal $\m_A$ of $A$ (or else $IA$ would be the unit ideal),  we have that $\m$ maps into $\m_A$.
But then the map extends continuously (in the $\m$-adic and $\m_A$-adic topologies) to a map  $S = \widehat{R_\m} \to A$.  
Since the image of $g$ is not in $IA$ and $f \equiv g$ mod $IS$,  we have that the image of $f \notin IA$. Thus, 
$IS$ is axes closed.  In the other cases, if $f$ were in the axes closure of $IS$, it would be in the axes closure of $I\widehat{R_\m}$
as well,  and we know this is $I\widehat{R_\m}$.  But in every instance $S \to \widehat{R_m}$ is faithfully flat, so that
the contraction of $I\widehat{R_\m}$ to $S$ is $IS$.  
\end{proof}

The following result is a weak result on the compatibility of axes closure with smooth base change. It suffices to prove that axes closures
of homogeneous ideals are homogeneous.  See also Theorem~\ref{thm:axsmooth}, which is a much more difficult
result on compatibility of axes closure with smooth base change.

\begin{prop}\label{pr:axsmooth} Let $R$ be Noetherian and let $S$ be faithfully flat, essentially of finite type, and smooth over $R$.  Then for
every ideal $I \inc R$,  $(IS)\ax$  contracts to $I\ax$ in $R$.  \end{prop}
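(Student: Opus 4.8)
The plan is to establish two inclusions. First, $I\ax S \inc (IS)\ax$: this is essentially persistence (Proposition~\ref{pr:axlocal}(a)) applied to the structure map $R \to S$, since if $f \in I\ax$ then the image of $f$ lies in $(IS)\ax$. Thus $(IS)\ax$ contracts to an ideal containing $I\ax$, and it only remains to show the reverse: if $f \in R$ and $f \in (IS)\ax$, then $f \in I\ax$. Suppose not. Then by Definition~\ref{def:axsemi} there is a homomorphism $\theta: R \to A$ to an excellent (indeed complete local) one-dimensional seminormal ring $A$ with $\theta(f) \notin IA$. The goal is to ``lift'' this bad map through $S$, i.e., to produce a homomorphism $S \to B$ to a one-dimensional seminormal ring compatible with $\theta$ and still detecting that $f \notin IS$-closure, contradicting $f \in (IS)\ax$.

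The key construction: form the base change $A \otimes_R S$. Since $S$ is essentially of finite type and smooth over $R$, the ring $A \otimes_R S$ is essentially of finite type and smooth over $A$; in particular it is Noetherian, and by Proposition~\ref{pr:semibasic}(5) it is seminormal (smooth maps have geometrically reduced fibers, and $A$ is seminormal). Because $S$ is faithfully flat over $R$, the map $A \to A \otimes_R S$ is faithfully flat, so from $\theta(f) \notin IA$ we get that the image of $f$ is not in $I(A\otimes_R S) = (IS)(A \otimes_R S)$. Now $A \otimes_R S$ is seminormal and Noetherian but typically of dimension larger than one, so we must cut it down. Choose a prime $\fq$ of $A \otimes_R S$ lying over the maximal ideal $\m_A$ of $A$ and containing the expansion of $I$-closure-witnessing data — more precisely, pick $\fq$ minimal over $\m_A(A\otimes_R S)$ such that the image of $f$ still fails to lie in the expansion of $IS$ after localizing at $\fq$; since $\m_A(A\otimes_R S) + (\text{stuff})$ is proper, such $\fq$ exists and we may arrange $\dim (A\otimes_R S)_\fq = \dim (A\otimes_R S / \m_A(A\otimes_R S))_\fq + \dim A_{\m_A}$. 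Actually the clean move is: the fiber $(A\otimes_R S) \otimes_A \kappa(\m_A)$ is smooth over the field $\kappa(\m_A)$, hence regular; localizing $A \otimes_R S$ at a suitable prime $\fq$ lying over $\m_A$ and minimal in its fiber makes the fiber ring a field, so $(A\otimes_R S)_\fq$ is a one-dimensional regular local ring (flat with regular closed fiber over the one-dimensional $A_{\m_A}$, and a DVR once we also localize $A$ at $\m_A$ — but $A$ is already local of dimension one). Its completion $B$ is then an excellent complete local one-dimensional seminormal (indeed regular) ring, faithfully flat over $(A\otimes_R S)_\fq$, so the image of $f$ in $B$ is still not in $IB = (IS)B$. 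Composing $S \to A \otimes_R S \to (A\otimes_R S)_\fq \to B$ gives a homomorphism from $S$ to an excellent one-dimensional seminormal ring with $f \notin (IS)B$, contradicting $f \in (IS)\ax$.

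The main obstacle is the dimension-reduction step: ensuring that after base change to $A \otimes_R S$ one can localize at a prime lying over $\m_A$ in such a way that (i) the resulting local ring is one-dimensional, (ii) it remains seminormal, and (iii) the failure of membership in the ideal is preserved. Points (ii) and (iii) are handled by smoothness (Proposition~\ref{pr:semibasic}(5)) and by faithful flatness respectively; point (i) is the delicate one. The cleanest route is to use that $\m_A$ has height one (so $\m_A$ is generated up to radical by one element, or $A_{\m_A}$ has a system of parameters of length one), that smooth maps preserve relative dimension fiberwise, and that a prime $\fq$ over $\m_A$ which is minimal over the expansion $\m_A(A\otimes_R S)$ satisfies $\mathrm{ht}\,\fq = \mathrm{ht}\,\m_A = 1$ because $A \to A\otimes_R S$ is flat with regular (hence equidimensional on the relevant component) fibers — one then picks among such $\fq$ one for which membership still fails, using that the non-membership locus is open-dense or at least nonempty after faithfully flat base change. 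Once $(A\otimes_R S)_\fq$ is one-dimensional and seminormal, passing to its completion and invoking Definition~\ref{def:axsemi} for $S$ finishes the argument.
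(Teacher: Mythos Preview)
Your approach is exactly the paper's: pick a witness map $R \to A$ with $A$ complete local one-dimensional seminormal and $f \notin IA$, base change to $S_A = A \otimes_R S$, localize at a prime minimal over $\m_A S_A$ to get a one-dimensional seminormal local ring $B$, and contradict $f \in (IS)\ax$ via the factorization $R \to S \to S_A \to B$.

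Two small corrections. First, your claim that $(A \otimes_R S)_\fq$ is \emph{regular} is wrong in general: $A$ is only seminormal (think of a complete axes ring with several branches), and flat with regular fiber over a non-regular base does not give a regular total space. Fortunately you only need seminormality, which you already have from Proposition~\ref{pr:semibasic}(5), so the argument survives. Second, your discussion of ``picking among such $\fq$ one for which membership still fails'' is more work than needed: \emph{any} prime $\fq$ minimal over $\m_A S_A$ lies over $\m_A$, so $A \to (S_A)_\fq$ is a flat local homomorphism, hence faithfully flat, and $f \notin IA$ immediately gives $f \notin I(S_A)_\fq$. The paper stops there (no completion needed, since $(S_A)_\fq$ is already excellent and one-dimensional seminormal).
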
 
\begin{proof}  Suppose that $f$ is in the contraction of $(IS)\ax$ but not in $I\ax$.  Then we can choose a homomorphism $R \to A$,
where $(A, \, \m_A)$ is a complete local one-dimensional seminormal ring, such that  the image of $f$ is not in $IA$.  
Then $S_A = S \otimes_R A$ is faithfully flat, essentially of finite type,
and smooth over $A$.  By part (5) of Proposition~\ref{pr:semibasic},  $S_A$ is seminormal, and it is excellent.  If we localize at a minimal prime
$Q$ of $\m_A S_A$ in $S_A$,  we obtain a one-dimensional, seminormal, local, faithfully flat, excellent extension $B$ of $A$.  Since the image of 
$f$ is not in  $IA$,  we have
that the image of $f$ is not in $IB$.  But since  $R \to B$ factors $R \to S \to S_A \to B$,  this contradicts the assumption that
$f$ is in $(IS)\ax$. \end{proof}

\begin{prop}\label{pr:axgrad} Let $R$ be Noetherian, and suppose that $R$ is $\Z^h$-graded, where $h \geq 1$.  Let $I$ be a homogeneous
ideal of $R$ with respect to this grading.  Then $I\ax$ is also homogeneous with respect to this grading. \end{prop}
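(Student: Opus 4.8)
The plan is to follow the method set up in Discussion~\ref{disc:grad}, using the persistence of axes closure (Proposition~\ref{pr:axlocal}(a)) in the role of the stability-of-closures hypothesis there, and Proposition~\ref{pr:axsmooth} to transport the conclusion back along a smooth faithfully flat base change. First I would reduce to the case $h = 1$. For each index $i$ with $1 \le i \le h$, the decomposition $R = \bigoplus_{m \in \Z} R^{[i,m]}$, where $R^{[i,m]} := \bigoplus_{d\,:\,d_i = m} R_d$, is a $\Z$-grading of $R$ coarser than the given one, and $I$ is homogeneous with respect to it. If one knows, for every $\Z$-graded Noetherian ring, that the axes closure of a homogeneous ideal is homogeneous, then applying this to these $h$ gradings in turn and iterating the corresponding projections shows that for $f \in I\ax$ the element $\pi^{[h,m_h]}\cdots\pi^{[1,m_1]}(f)$, which is precisely the $\Z^h$-homogeneous component $f_{(m_1,\dots,m_h)}$, again lies in $I\ax$. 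Hence it suffices to prove the statement for a single $\Z$-grading $R = \bigoplus_{m\in\Z} R_m$.

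The next step is that persistence forces axes closure to respect degree-preserving automorphisms: if $\theta$ is a graded automorphism of a $\Z$-graded ring $S$ and $J \inc S$ is homogeneous with $\theta(J) = J$, then applying Proposition~\ref{pr:axlocal}(a) to $\theta$ and to $\theta^{-1}$ gives $\theta(J\ax) = J\ax$. Since $R_0$ may have no units other than $1$, I would pass, exactly as in Discussion~\ref{disc:grad}, to $S_N := R_0[t_1,\dots,t_N]_{g_N}\otimes_{R_0} R$, where $g_N$ is the product of all the $t_i$ and all the $t_i - t_j$ with $i \ne j$, graded so that its degree-zero part is $A := R_0[t_1,\dots,t_N]_{g_N}$. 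Then $R \inc S_N$ is degree-preserving, and $S_N$ is essentially of finite type, smooth, and faithfully flat over $R$, so Proposition~\ref{pr:axsmooth} applies and yields $(IS_N)\ax \cap R = I\ax$. In $A$ the elements $\alpha_i := t_i$ are units with $\alpha_i - \alpha_j$ a unit for $i \ne j$, so each automorphism $\theta_{\alpha_i}$ of $S_N$ (multiplying a form of degree $k$ by $\alpha_i^{\,k}$) stabilizes the homogeneous ideal $IS_N$ and hence, by the previous remark, stabilizes $(IS_N)\ax$.

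To finish: given $f \in I\ax$, persistence gives $f \in (IS_N)\ax$; choosing $N$ large enough that the nonzero homogeneous components of $f$ lie in degrees $d_0,\dots,d_0+N-1$, we get $\theta_{\alpha_i}(f) = \sum_{k=0}^{N-1} \alpha_i^{\,d_0+k} f_{d_0+k} \in (IS_N)\ax$ for $i = 1,\dots,N$. The coefficient matrix factors as $\mathrm{diag}(\alpha_1^{d_0},\dots,\alpha_N^{d_0})$ times the Vandermonde matrix $(\alpha_i^{\,k})$, both invertible over $A$, so solving this linear system writes each $f_{d_0+k}$ as an $A$-linear combination of the $\theta_{\alpha_i}(f)$ and thus puts $f_{d_0+k} \in (IS_N)\ax$. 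Since $f_{d_0+k} \in R$, Proposition~\ref{pr:axsmooth} gives $f_{d_0+k} \in (IS_N)\ax \cap R = I\ax$, so $I\ax$ is homogeneous. The only genuinely delicate point --- and the reason $S_N$ must be introduced at all --- is that the Vandermonde separation of graded components cannot be carried out over $R$ itself when $R_0$ lacks units; enlarging $R_0$ supplies the units, while Proposition~\ref{pr:axsmooth} is exactly what is needed to carry the conclusion from $S_N$ back to $R$.
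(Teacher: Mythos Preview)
Your proof is correct and follows essentially the same approach as the paper: both apply Discussion~\ref{disc:grad} together with Proposition~\ref{pr:axsmooth}, the latter supplying the contraction $(IS_N)\ax \cap R = I\ax$ needed to return from the auxiliary ring $S_N$ to $R$. You have simply written out in full the reduction to $h=1$, the role of persistence in stabilizing $(IS_N)\ax$ under the automorphisms $\theta_{\alpha_i}$, and the Vandermonde inversion, all of which the paper leaves implicit in its one-sentence appeal to Discussion~\ref{disc:grad}.
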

\begin{proof}  We want to apply Discussion~\ref{disc:grad}. The only difficulty is that $R$ may not have sufficiently many units.  
But each of the rings $S_{g_N}$ constructed in Discussion~\ref{disc:grad} is finitely presented, faithfully flat, and smooth over $R$,
and so the result follows from that discussion and Proposition~\ref{pr:axsmooth} just above.  \end{proof}

\section{Special and inner integral closure, and natural closure}\label{sec:nat}
Let $R$ be a Noetherian ring, $I \inc R$, and $r \in R$.  
The following conditions are well known to  be equivalent:

\begin{enumerate}
\item There is a monic polynomial  $f(x) \in R[x]$ of some degree
$d$ such that the coefficient of $x^{d-j}$ is in $I^j$,  $1 \leq j \leq d$, and such that $f(r)=0$.
\item For every map $R \to V$,  where $V$ is a DVR,  
$rV \inc IV$.
In other words, $r$ has  order at least as large as $IV$ under the
valuation associated with $V$.
\end{enumerate}

The elements satisfying these equivalent conditions form an ideal
$\oI$ called the {\it integral} closure of $I$.

The {\it special part of the integral closure} of $I$ \cite{nme-sp} is defined when $(R,\,\m)$ is local
and $I \inc \m$.  It consists of all $r$ which satisfy a monic polynomial
as in (1) such that the coefficient of $x^{d-j}$ is in $\m I^j$  for $1 \leq j \leq d$.
The special part of the integral closure is an ideal containing $\m I$ and contained in
$\oI$ but it typically does not contain $I$.  

More generally, for any ideal $J$ of a Noetherian ring $R$ we can define the
$J$-special integral closure $\spi I J$ of $I$ to consist of all elements $r$ in $R$
that satisfy an $J$-special polynomial over $I$:  this means that the polynomial
is monic of degree $d \geq 1$ and the coefficient of $x^{d-j}$ is in $JI^j$
for $1 \leq j \leq d$.  We shall soon see that the condition depends only
on $\Rad(J)$,  and not on $J$ itself.  Our main interest is in the cases
where $J = I$ or $J = \m$.    

Note that while the integral closure of a Noetherian domain need not be
Noetherian, it is still true that it is a Krull ring:  principal ideals have only finitely
many minimal primes, the localization at such a minimal
prime is a DVR, and one has primary decomposition for principal ideals.  
Cf.\ \cite[pp.\ 115-117 and Theorem (33.10) on p.\ 118]{NagLR}. 

If  $I = 0$ or $J = 0$,  then $\spi I J$ is the ideal of nilpotent elements of $R$, and is
$(0)$ if $R$ is a domain.

\begin{thm}\label{thm:testcl}
Let $(R,\,\m)$ be a Noetherian domain, let $I$ and $J$ be nonzero ideals of $R$,
and let $r \in R$.  The following conditions are equivalent.
\begin{enumerate}
\item $r$ is in the $J$-special integral closure of $I$.
\item\label{it:powers} There exists an integer $n \geq 1$ such that $r^n \in \icip{JI^n}$. In this
case,  all multiples $tn$ of $n$ have the same property:  in fact,  $r^{tn} \in \icip{J^tI^{tn}}$. 
\item There exists an integer $n \geq 1$ such that for all maps 
$R \inj V$ where $V$ is a DVR  then \[
\ord_V(r) \geq  \ord_V(J)/n + \ord_V(I).
\]
\item For all maps $R \inj V$ where $V$ is a DVR,  $\ord_V(r) \geq \ord_V(I)$, and the inequality
is strict if $\ord_V J > 0$.  
\item If $R$ is a domain, $I = (\vect f h)R$ and $J = (\vect g k)R$ with the elements
$f_i$, $g_j$ all nonzero, it suffices that the condition in (4) hold when $V$ is the localization
of the normalization of  $R[I/f_i][J/g_j]$ at one of the minimal primes of the $f_i$, $g_j$ for all $i,\,j$ and
choices of the minimal prime.  If  $J = I$,  we may use instead the minimal primes of
the  $f_j$ in the normalizations of the rings $R[I/f_i]$.
\end{enumerate}
\end{thm}

\begin{proof}
The second statement in (2) is clear.
$(2) \imp (1)$ since the equation showing integral dependence for $r^n$ on  $JI^n$
may be viewed as an equation that $r$ satisfies, and this provides the $J$-special polynomial over $I$.  We prove $(1) \imp (3)$.  
Suppose (1) holds with a $J$-special polynomial over $I$ of degree $n$.
Then for some $j$, $1 \leq j \leq n$,  \[
\ord_V(r^n) \geq \ord_V(JI^j) + \ord_V(r^{n-j})
\] and
so \[
j\,\ord_V(r) \geq \ord_V(J) + j\,\ord_V(I)
\] and \[
\ord_V(r) \geq \ord_V(J)/j + \ord_V(I).
\] Hence, we have
\[
\ord_V(r) \geq \ord_V(J)/n + \ord_V(I),
\]  no matter what $j$ is.  

$(3) \imp (4)$ is clear, and $(4) \imp (5)$ is clear.  

Therefore, the proof will be complete if we can show
that $(5) \imp (2)$.  Choose a value of $n$ so large that condition (3) holds with this
value of $n$  for all of the finitely many valuation rings described in (5).  Now consider
any injection  $R \inj V$ where $V$ is a DVR.  Then $IV$ is generated by the image of
some $f_i$, and $JV$ is generated by some $g_j$.  For these two elements,
the map $R \inj V$ factors $R \inj R[I/f_i][J/g_j] \inj V$ and, hence, 
$R \inj S_{ij} \inj V$,  where $S_{ij}$ is the normalization of $R[I/f_i][J/g_j]$.
We claim that in $S_{ij}$,  $r^n \in JI^nS_{ij} = g_jf_i^nS_{ij}$.  Since
$S_{ij}$ is a Krull domain,  it suffices to see this after localizing at
each of the minimal primes of $g_jf_i^nS_{ij}$. Since each of these
is a minimal prime of $g_j$ or $f_i$, this produces
one of the discrete valuation rings
$W$  for which we have assumed that \[
\ord_W(r) \geq \ord_W(J)/n + \ord_W(I).
\]
Multiplying by $n$  gives the result we need.  Since  $r^n \in JI^nS_{ij}$, this
continues to hold in $V$.  But then $r^n \in \icip{JI^n}$, as required.

When $J = I$,  both are generated by the image of some  $f_i$  after expanding to $V$,
and so the map $R \inj V$ factors $R \inj R[I/f_i] \inj V$,  and the rest of the proof is the same.
\end{proof}

\begin{cor}
For any Noetherian ring $R$ with ideals $I$ and $J$, $r \in R$ is in the
$J$-special integral closure of $I$ if and only if for some $n$,  $r^n \in \icip{JI^n}$.  This may
be tested modulo every minimal prime $P$ of $R$.  

The $J$-special integral closure of $I$ is an ideal,  depends only
on $\Rad(J)$, and  lies between $\Rad(J)I$ and $\Rad(J)\cap \oI$.   
\end{cor}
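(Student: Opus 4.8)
The plan is to reduce every assertion to the domain case established in Theorem~\ref{thm:testcl}, by passing to the finitely many minimal primes of the Noetherian ring $R$. For a fixed minimal prime $P$ I write $\ov{(\,\cdot\,)}$ for the image in the Noetherian domain $R/P$.

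I would start with the one implication of the claimed equivalence that needs no reduction. If $r^n\in\icip{JI^n}$ for some $n\ge 1$, then an equation of integral dependence $(r^n)^m+b_1(r^n)^{m-1}+\cdots+b_m=0$ with $b_j\in(JI^n)^j=J^jI^{nj}$, regarded as a monic polynomial in $r$ of degree $nm$, has its coefficient of $x^{nm-k}$ equal to $b_{k/n}\in J^{k/n}I^{k}\inc JI^{k}$ when $n\mid k$ (and then $1\le k/n\le m$) and equal to $0$ otherwise; this is a $J$-special polynomial over $I$, so $r\in\spi I J$. For the converse, observe that reducing a $J$-special polynomial over $I$ modulo $P$ gives a $\ov{J}$-special polynomial over $\ov{I}$, so $r\in\spi I J$ implies $\ov{r}\in\spi{\ov{I}}{\ov{J}}$ for every minimal prime $P$; together with the reverse direction (below) this will also yield the ``tested modulo minimal primes'' claim. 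Conversely, assume $\ov{r}\in\spi{\ov{I}}{\ov{J}}$ for every $P$. Fix $P$: if $\ov{I}=0$ or $\ov{J}=0$ then $\spi{\ov{I}}{\ov{J}}=(0)$ in the domain $R/P$, so $\ov{r}=0$ and any $n_P\ge1$ works; otherwise Theorem~\ref{thm:testcl}(2) gives $n_P\ge1$ with $\ov{r}^{\,n_P}\in\icip{\ov{J}\,\ov{I}^{\,n_P}}$, and every multiple $tn_P$ also works since $\ov{r}^{\,tn_P}\in\icip{\ov{J}^{\,t}\ov{I}^{\,tn_P}}\inc\icip{\ov{J}\,\ov{I}^{\,tn_P}}$. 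Taking $n$ to be a common multiple of the $n_P$, the image of $r^n$ in each $R/P$ lies in the integral closure of the image of $JI^n$; by the classical fact that the integral closure of an ideal of a Noetherian ring is computed modulo its minimal primes (provable by first reducing to the reduced case and then invoking the valuative criterion, every valuation domain receiving $R$ factoring through some $R/P$), we get $r^n\in\icip{JI^n}$, and the first implication finishes the equivalence. Along the way I have shown $\spi I J=\bigcap_P\pi_P^{-1}\!\bigl(\spi{\ov{I}}{\ov{J}}\bigr)$, the intersection over minimal primes $P$, with $\pi_P\colon R\to R/P$.

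From this description the remaining items are routine. In each domain $R/P$, the set $\spi{\ov{I}}{\ov{J}}$ is either $(0)$ (when $\ov{I}$ or $\ov{J}$ vanishes) or, by characterization (4) of Theorem~\ref{thm:testcl}, equals $\{s:\ord_V(s)\ge\ord_V(\ov{I})$ for all $R/P\inj V$, with strict inequality whenever $\ord_V(\ov{J})>0\}$; either way it is an ideal of $R/P$, closed under addition via $\ord_V(s+s')\ge\min\{\ord_V(s),\ord_V(s')\}$ and under $R/P$-multiplication. Hence $\spi I J$ is an intersection of contractions of ideals, so an ideal. It depends only on $\Rad(J)$: if $\Rad(J_1)=\Rad(J_2)$ then $\Rad(J_i+P)=\Rad(\Rad(J_1)+P)$ is independent of $i$, so $\Rad(\ov{J_1})=\Rad(\ov{J_2})$ in each $R/P$, and characterization (4) involves $\ov{J}$ only through the valuation rings $V$ with $\ord_V(\ov{J})>0$ — those whose center contains $\Rad(\ov{J})$ — so $\spi{\ov{I}}{\ov{J_1}}=\spi{\ov{I}}{\ov{J_2}}$ for all $P$. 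Finally, for the sandwich: if $a\in\Rad(J)$ with $a^m\in J$ and $b\in I$, then $x^m-a^mb^m$ is a $J$-special polynomial over $I$ for $ab$ (its coefficient of $x^{m-j}$ is $0\in JI^j$ for $j<m$ and $-a^mb^m\in JI^m$ for $j=m$), so $ab\in\spi I J$; since $\spi I J$ is an ideal, $\Rad(J)\,I\inc\spi I J$. And if $r\in\spi I J$, its $J$-special polynomial is in particular an integral dependence equation over $I$ (as $JI^j\inc I^j$), whence $r\in\oI$; moreover $r^n\in\icip{JI^n}\inc\Rad(JI^n)=\Rad(J)\cap\Rad(I)$, so $r\in\Rad(J)$; thus $\spi I J\inc\Rad(J)\cap\oI$.

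The only input beyond Theorem~\ref{thm:testcl} is the classical ``reduction mod minimal primes'' statement for ordinary integral closure of ideals; everything else is bookkeeping. The one point I expect to require care — the closest thing to an obstacle — is the family of ``bad'' minimal primes $P$ at which $\ov{I}$ or $\ov{J}$ degenerates to zero, where Theorem~\ref{thm:testcl} does not literally apply but where $\spi{\ov{I}}{\ov{J}}$ collapses to $(0)$ for trivial reasons, so that the reduction still goes through.
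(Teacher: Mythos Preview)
Your proof is correct and follows essentially the same strategy as the paper: reduce to the domain case via the minimal primes, then invoke Theorem~\ref{thm:testcl}. The differences are cosmetic --- you use characterization~(4) where the paper uses~(3) for the ideal property, and you cite the standard ``integral closure is tested modulo minimal primes'' fact where the paper multiplies the $J$-special polynomials together directly (noting the product evaluated at $r$ is nilpotent) --- but the underlying argument is the same.
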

\begin{proof}
Say the minimal primes of $R$ are $P_1, \ldots, P_h$.   If $r^{n_i} \in \icip{JI^{n_i}}$ mod $P_i$
for $1 \leq i \leq h$, and $N$ is the product of the $n_i$, by Theorem~\ref{thm:testcl} (2)
we have that $r^N \in \icip{JI^N}$ mod $P_i$ for all $i$, and this implies
$r^N \in \icip{JI^N}$ in $R$.  If we have $J$-special equations over $I$ satisfied for $r$ mod each $P_i$,  the
value of the product of these on  $r$ is nilpotent,  and so a power of the product
will be 0.  Thus, the equivalence reduces to the domain case, where we already know it. 

In the domain case,  $\spi I J$ is an ideal:  this is true if either is 0.  If not, we can
use (3) to characterize the $J$-special integral closure.
If $r_1$ and $r_2$ satisfy condition (3) with integers $n_1$ and $n_2$,  their sum satisfies
condition (3) with  $n = \max\{n_1,\,n_2\}$, while closure under multiplication is obvious.
In the general case, the $J$-special integral closure is the intersection of the images of
what one gets modulo various minimal primes.

The second statement in (2) shows that $J$-special integral closure is contained in $J^t$-special
integral closure, and the opposite inclusion is obvious.  It is clear that the special part of the integral
closure contains $JI$,  and we may replace $J$ by $\Rad(J)$. It is also obvious
that $\spi IJ$ is contained in both $\Rad(J)$ and $\oI$. 
\end{proof}

\begin{cor}
Let  $(R,\,\m)$ be a local domain.  An element $r\in R$ is in the special part of the integral closure
of $I$ iff for all $R \inj V$ with  $V$ a DVR centered on $\m$, if $IV \not=0$
then $\ord_V(r) > \ord_V(I)$.
\end{cor}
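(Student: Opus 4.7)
The corollary specializes Theorem~\ref{thm:testcl} to $J=\m$, using the observation that for a DVR $V\supseteq R$, the center $\m_V\cap R$ equals $\m$ precisely when $\ord_V(\m)>0$.

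Forward direction: If $r\in\spi{I}{\m}$, then by condition~(4) of Theorem~\ref{thm:testcl} with $J=\m$, we have $\ord_V(r)\geq\ord_V(I)$ for every DVR $V\supseteq R$, with strict inequality whenever $\ord_V(\m)>0$. Specializing to $V$ centered on $\m$, with $IV\neq 0$ (which ensures $\ord_V(I)<\infty$), immediately yields $\ord_V(r)>\ord_V(I)$. Throughout I may assume $I\neq 0$, since otherwise $\spi{I}{\m}=0$ and the statement is trivial.

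Backward direction: I verify condition~(4) of Theorem~\ref{thm:testcl} starting from the hypothesis. When $V$ is centered on $\m$, the strict inequality $\ord_V(r)>\ord_V(I)$ is exactly the hypothesis. When $V$ is not centered on $\m$, so $\ord_V(\m)=0$, condition~(4) only demands the weak inequality $\ord_V(r)\geq\ord_V(I)$; I plan to establish this by contradiction using an approximation argument. Suppose some DVR $V_0\supseteq R$ centered on a prime $P\subsetneq\m$ satisfies $\ord_{V_0}(r)<\ord_{V_0}(I)$. Since $R/P$ is a Noetherian local domain with maximal ideal $\m/P$, dominate $(R/P)_{\m/P}$ by a DVR $W$, extend $W$ to a DVR $\tilde W$ of the residue field $V_0/\m_{V_0}$, and form the rank-$2$ composite valuation $V_0\circ\tilde W$ on $\mathrm{Frac}(R)$; this composite is centered on $\m$. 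For $N$ sufficiently large relative to the $\tilde W$-values of $r$ and of fixed generators of $I$ and $\m$, the linear specialization $(a,b)\mapsto Na+b$ applied to this composite on the finitely generated subgroup of $\Z\oplus_{\mathrm{lex}}\Z$ spanned by these values promotes to a DVR valuation $V'$ of $\mathrm{Frac}(R)$ centered on $\m$, and by construction $\ord_{V'}(r)-\ord_{V'}(I)=N\bigl(\ord_{V_0}(r)-\ord_{V_0}(I)\bigr)+O(1)<0$ for $N$ sufficiently large, contradicting the hypothesis.

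The main obstacle is the rigorous promotion of the rank-$2$ composite, or of its rank-$1$ linear specialization on the relevant finite set of values, to a genuine DVR valuation of $\mathrm{Frac}(R)$: the lex order on $\Z\oplus\Z$ is not compatible with any single integer linear projection on the full value group, so one must restrict to a finitely generated subgroup and then extend the resulting valuation to all of $\mathrm{Frac}(R)$. A cleaner alternative is to invoke the classical existence of DVRs dominating arbitrary Noetherian local domains, applied to a suitably chosen local subring of $V_0$ engineered to retain the inequality $\ord_{V_0}(r)<\ord_{V_0}(I)$; in either approach, the heart of the matter is exhibiting a DVR centered on $\m$ at which integral closure fails, as a consequence of its failure at the original non-centered $V_0$.
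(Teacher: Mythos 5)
Your forward direction is fine and matches the paper. The backward direction has a real gap, which you partly acknowledge. You try to verify condition~(4) of Theorem~\ref{thm:testcl} over \emph{all} DVRs, including those centered on primes strictly below $\m$, and for those you propose a rank-two composite valuation followed by a linear specialization to recover a DVR centered on $\m$. As you note, the promotion step is not rigorous as written, and fixing it (restricting to a finitely generated value subgroup, extending the specialized valuation back to $\mathrm{Frac}(R)$, controlling the contributions of $\m$ and $I$) would take real work. But the deeper issue is that this detour is unnecessary: you've overlooked that condition~(5) of Theorem~\ref{thm:testcl} already restricts the test to a \emph{finite} set of Rees valuations, and in the present situation all of them are automatically centered on $\m$.

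Here is the missing observation. With $J = \m$, condition~(5) says it suffices to test the localizations $V$ of the normalization $S$ of $R[I/f_i][\m/g_j]$ at the minimal primes of the $f_i$ and $g_j$, where $\vect f h$ generate $I$ and $\vect g k$ generate $\m$. In $S$ we have $\m S = g_j S$, since each generator $g_\ell$ equals $g_j \cdot (g_\ell/g_j)$ with $g_\ell/g_j \in S$. In particular $f_i \in \m S = g_j S$, so every minimal prime of $f_i$ or of $g_j$ in $S$ contains $\m S$ and therefore contracts to $\m$ in $R$ (as $\m$ is maximal). Thus every Rees valuation in the part~(5) test is centered on $\m$, the hypothesis of the corollary applies directly to each of them and gives $\ord_V(r) > \ord_V(I)$, and since there are only finitely many such $V$ one may choose a single $n$ so large that $\ord_V(r) \geq \ord_V(\m)/n + \ord_V(I)$ for all of them, establishing condition~(3) and hence $r \in \spi I\m$. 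No valuations with center strictly below $\m$ ever enter the picture, so the composite-valuation machinery is never needed.
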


\begin{proof}
Since $\ord_V(\m)$ will be positive, the condition is necessary.  Because there are
only finitely many valuation rings needed in the test in part (5) of Theorem~\ref{thm:testcl}, we can always choose
a value of $n$ that will work for all of these. 
\end{proof}

The  {\it inner integral closure} of $I$ is defined as the $I$-special integral closure of $I$.
Instead of $\spi I I$ we write $\inn I$.  This construction is developed to some extent 
in~\cite[Section 10.5]{HuSw-book} (from which
we obtain our notation) and in \cite[Section 4]{GaVit-wsi}; some of our results in this section
may overlap with the results in these two references.  We emphasize that this {\it is not a closure
operation}.  It does not usually contain $I$,  but is contained in the integral
closure of $I$.  It may be thought of as the ``inner" part of the integral 
closure. Note that by part (c) of Proposition~\ref{pr:intbasic} below, the inner
integral closure of $I$ is the same as the inner integral closure of $\ici{I}$, and by part
(d) of Proposition~\ref{pr:intbasic}, the ideal $\inn I$ is itself integrally closed.  The notation 
$\inn I$ agrees with the notation given in \cite[Definition 10.5.3 on p.\ 206]{HuSw-book},  where 
an infinite family of related notions ($I_{>\alpha}$ for all positive rational numbers $\alpha$) is 
considered, but no name is given for $\inn I$.  

\begin{thm}\label{thm:int}
Let $R$ be a Noetherian ring, $I$ an ideal of $R$, and
$r  \in R$. The following conditions are equivalent, and characterize when $r$ is in 
the inner integral closure of $I$.  
\begin{enumerate}
\item The element $r$ satisfies a monic polynomial  $f(x) \in R[x]$ of some degree
$d$ such that the coefficient of $x^{d-i}$ is in $I^{j+1}$,  $1 \leq j \leq d$.
\item There is an integer $n$ such that $r^n$ is integral over
$I^{n+1}$.
\item There is an integer $n$ such that $r^n \in I^{n+1}$.
\item  For every prime ideal $P$ containing $I$,  $r/1$ is in the
special part of the integral closure of $IR_P$.  
\item For every map $R \to V$,  where $V$ is a DVR, such that
$IV$ is not 0 and not $V$,   $\ord_V(r) > \ord_V(I)$.
\item The condition in (5) holds for all $V$ such that $R \to V$
kills a minimal prime $\p$ of $R$.
\item  If $R$ is a domain, and $I = (\vect f h)$ where the $f_i$ are nonzero, 
the condition in (5) holds for all  $V$ arising as the localization at a minimal
prime of one of the $f_i$ in the normalization of one of the rings $R[I/f_i]$.  
\item If $R$ is an excellent domain and every prime ideal is an intersection of
maximal ideals\footnote{\emph{i.e.} $R$ is a Hilbert ring.} (e.g., if $R$ is a finitely generated $\C$-algebra),
the condition in (5) holds for discrete valuations centered on a maximal
ideal $m$ of $R$.
\end{enumerate}
\end{thm}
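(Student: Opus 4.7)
The plan is to deduce most of the equivalences from Theorem~\ref{thm:testcl} specialized to $J = I$, for which $JI^j = I^{j+1}$. Under this specialization, condition (1) of the present theorem matches Theorem~\ref{thm:testcl}(1), condition (2) matches (2) there, condition (5) matches (4) (the strict inequality of (4) occurs precisely when $\ord_V(I) > 0$, i.e.\ $IV \neq 0$; the hypothesis $IV \neq V$ simply rules out the case $\ord_V(I) = 0$, where the inequality becomes vacuous), and condition (7) matches (5) of Theorem~\ref{thm:testcl} (whose last sentence already treats $J = I$). So the first step is to invoke Theorem~\ref{thm:testcl} to obtain (1)$\Leftrightarrow$(2)$\Leftrightarrow$(5)$\Leftrightarrow$(7) at once.

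For (3): the implication (3)$\Rightarrow$(2) is immediate since $I^{n+1} \subseteq \icip{I^{n+1}}$. For (2)$\Rightarrow$(3) one raises $n$ to a multiple: if $r^n \in \icip{I^{n+1}}$, then $r^{nk} \in \icip{I^{(n+1)k}} = \icip{I^{nk+k}}$, and a Brian\c{c}on--Skoda / Rees-type inclusion $\icip{I^{N+c}} \subseteq I^N$ (valid in analytically unramified Noetherian rings, which is the tacit setting here) absorbs the constant $c$ once $k \geq c+1$, giving $r^{nk} \in I^{nk+1}$. For (5)$\Leftrightarrow$(6), every DVR $V$ receiving $R$ is a domain, so the structure map factors through $R/\p$ for some minimal prime $\p$; by the corollary to Theorem~\ref{thm:testcl}, inner integral closure may be tested modulo each minimal prime. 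For (5)$\Leftrightarrow$(4): if $V$ witnesses failure of (5), the contraction $P$ of its maximal ideal contains $I$ and the valuation passes to $R_P$, showing $r/1 \notin \spi{IR_P}{PR_P}$; conversely, if $r^n \in \icip{I^{n+1}}$, then for any $P \supseteq I$ we have $(IR_P)^{n+1} \subseteq (PR_P)(IR_P)^n$, placing $r/1$ in the $PR_P$-special integral closure of $IR_P$, which by the radical-invariance of $J$-special closure coincides with the special part of the integral closure of $IR_P$ in the local ring $(R_P, PR_P)$.

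The main obstacle is the Hilbert-ring case (5)$\Leftrightarrow$(8), where the excellent Hilbert hypothesis is essential. One direction is trivial. For the converse, I would begin from (7): failure of (5) is witnessed by a DVR $V = S_\fq$ arising as the localization of the normalization $S$ of some $R[I/f_i]$ at a height-one prime $\fq$, with $\ord_V(r) \leq \ord_V(I)$. Since $R$ is Hilbert and excellent, so is $R[I/f_i]$, and module-finiteness of normalization in the excellent case transfers the Hilbert property to $S$; hence $\fq$ is contained in a maximal ideal $\m$ of $S$ with $\m \cap R$ maximal in $R$. The hard step is producing a DVR centered on $\m \cap R$ that still witnesses $\ord(r) \leq \ord(I)$: my strategy is to pass to the faithfully flat completion $\widehat{S_\m}$ (reduced, even normal, by excellence), choose a minimal prime $\mathfrak{Q}$ of $\fq\widehat{S_\m}$ so that $\widehat{S_\m}/\mathfrak{Q}$ is a complete local domain, and exhibit a DVR $V'$ of its fraction field centered at its maximal ideal via a height-one prime of its normalization, or via Chevalley's valuation extension theorem. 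Faithful flatness of $S_\m \to \widehat{S_\m}$ preserves the orders of $r$ and of the generators of $I$, so the inequality $\ord_{V'}(r) \leq \ord_{V'}(I)$ is inherited, and $V'$ is centered on $\m \cap R$ as required.
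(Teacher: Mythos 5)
Your overall plan---deducing the bulk of the equivalences from Theorem~\ref{thm:testcl} specialized to $J = I$ (noting $JI^j = I^{j+1}$), and handling conditions (3), (4), (6), (8) separately---is essentially the paper's plan, and your treatments of $(4)\Leftrightarrow(5)$ and $(5)\Leftrightarrow(6)$ track the paper's reasoning. But there are two genuine gaps.

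For $(2)\Rightarrow(3)$ you invoke a Brian\c{c}on--Skoda/Rees inclusion $\icip{I^{N+c}}\subseteq I^N$, explaining that this is ``valid in analytically unramified Noetherian rings, which is the tacit setting here.'' That hypothesis is \emph{not} tacit: the theorem is stated for an arbitrary Noetherian ring, and Rees's theorem fails exactly for analytically ramified local rings, so this step breaks down there. What is needed is the hypothesis-free determinantal-trick/Rees-algebra argument (which is what the paper uses): if $r^n$ is integral over $J=I^{n+1}$, then for some $h$ one has $(J+r^nR)^{N+h}=J^{N+1}(J+r^nR)^{h-1}$ for all $N\ge 0$, hence $r^{n(N+h)}\in I^{(n+1)(N+1)}$; once $N$ is large enough that $(n+1)(N+1)>n(N+h)$, this yields (3).

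The more serious problem is your treatment of $(8)\Rightarrow(5)$. Starting from a Rees valuation $V=S_{\fq}$ witnessing $\ord_V(r)\le\ord_V(I)$, you pass to $\widehat{S_{\m}}$ for a maximal ideal $\m\supseteq\fq$, choose a minimal prime $\mathfrak Q$ of $\fq\widehat{S_\m}$, and take a DVR $V'$ of the fraction field of $\widehat{S_\m}/\mathfrak Q$ centered at the maximal ideal. But $\ord_{V'}$ has no relation to $\ord_V$: faithful flatness does preserve the $\fq$-adic order (so $(\widehat{S_\m})_{\mathfrak Q}$ agrees with $V$), but that valuation is still centered at the height-one prime $\mathfrak Q$, not at the maximal ideal, while the DVR you then build on $\widehat{S_\m}/\mathfrak Q$ measures orders relative to the maximal ideal of the \emph{quotient} ring---something entirely different. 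There is no reason to expect $\ord_{V'}(r)\le\ord_{V'}(I)$; indeed, already in $R=\C[x,y]$ with $I=(x)$, $r=x$, $\fq=(x)$, $\m=(x,y)$, the element $r$ dies in $\widehat{S_\m}/\mathfrak Q=\C[\![y]\!]$, so $IV'=0$ and $V'$ witnesses nothing. What one must actually produce---and what the paper does---is a \emph{localization} $T$ of $S$ at finitely many elements and a maximal ideal $\cM$ of $T$ such that $T_\cM$ is regular with the uniformizer $v$ of $V$ a regular parameter, $r$ and the generators of $I$ become units of $T$ times powers of $v$, and (by the Hilbert-ring hypothesis) $\cM\cap R$ is maximal in $R$. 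Then the $\cM$-adic order valuation agrees with $V$ on $r$ and on $I$ while being centered on a maximal ideal of $R$, which is exactly what (8) requires. Completion does not accomplish this ``move the center to a maximal ideal while preserving orders'' step.
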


\begin{proof}
We already have $(1) \imp (2) \imp (5) \imp (6) \imp (7)$.  We can see that
$(2) \dbimp (3)$ as follows. It suffices to show $\imp$:  the converse is
obvious.  But if $r^n$ is integral over $I^{n+1}$ then there exists $h$ such
that for all $N$,  $(I^{n+1} + r^n)^{N+h} = (I^{n+1})^{N+1} (I^{n+1} + r^n)^{h-1}$,
so that $r^{n(N+h)} \in I^{(n+1)(N+1)}$ for all $N$.  But if $N >  nh-h -1$ we have
that $(n+1)(N+1) > n(N+h)$, as required.

 Because the condition
in (7) (and in part (5) of the preceding theorem) involves only finitely many DVRs, we 
can choose $n$ that works for these, and then the preceding theorem yields (1) and (2).
Condition (4) and the preceding Corollary yield condition (5),  since $IV \not= V$ in (5) 
implies that $V$ is centered on a prime containing $I$.  

It remains only to prove (7), which is obviously necessary.  When $R$ is excellent,
the normalization $S_i$ of $R[I/f_i]$ is finitely generated over $R$.  Let $V$ be one
of the localizations of $S_i$ at a minimal prime of $f_j$.  Let $v$ generate the
maximal ideal of $V$.  We can choose a localization $T_0$ of $S_i$ at one element
such that $v$ generates a prime ideal in $T_0$.  We can write  $r$ and $f_i$ as units
of $V$ times powers of $v$,  say $r = uv^\nu$,  $f_i = u'v^\mu$, where $\nu > \mu$.  
We may localize $T_0$ further, but at finitely many elements, so that it contains  $u$ and $u'$.  
Call the resulting ring $T$.  Then $v$ is prime in $T$, and the domain $T/vT$ is excellent.
It follows that there is a maximal ideal  $\m$ of $T/vT$  such that $(T/vT)_{\cM}$ is regular.
The pullback  $\cM$ of $\m$ to $T$ will be a maximal ideal of $T$ containing $v$ such
that $T_\cM$ is regular and the image of $v$ is a regular parameter.  The contraction
of $\cM$ to  $R$ is a maximal ideal because $R$ is a Hilbert ring. The valuation given
by order with respect to powers of $\cM$ will give the same result for the orders
of $r$ and $I$ as $V$.  
\end{proof}

\begin{prop}\label{pr:intbasic}
Let $I$ be an ideal of the Noetherian ring $R$.
\begin{enumerate}[label=(\alph*)]
\item  If  $W$ is a multiplicative system of $R$,   $\inn{(I_W)}= (\inn I)_W$.
\item  If  $S$ is integral over $R$,  $\inn{(IS)} \cap R = \inn I$.
\item  If $I \inc J$ then $\inn I \inc \inn J$.  If $I \inc J \inc \ici{I}$, then
$\inn I = \inn J$.  
\item The ideal $\inn I$ is integrally closed.
\end{enumerate}
\end{prop}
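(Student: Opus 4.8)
The plan is to prove the four parts in order, using the valuative characterization (5) of inner integral closure from Theorem~\ref{thm:int} as the primary tool, and the power characterization (3), $r \in \inn I \iff r^n \in I^{n+1}$ for some $n$, as a secondary tool. For part (a), I would argue that a DVR $V$ receiving a map from $I_W = W^{-1}R$ is the same thing as a DVR receiving a map from $R$ in which the image of every element of $W$ is a unit; moreover if the image of some element of $W$ is a nonunit (hence lands in the maximal ideal), then either that element is a zerodivisor or we can ignore that $V$. More cleanly: the inclusion $(\inn I)_W \inc \inn{(I_W)}$ is immediate from persistence-type reasoning (if $r^n \in I^{n+1}$ then $(r/w)^n \in (I_W)^{n+1}$ after multiplying by an appropriate unit of $W^{-1}R$), and for the reverse, if $(r/1)^n \in (I_W)^{n+1}$ then $w r^n \in I^{n+1}$ for some $w \in W$, which says $r/1 \in (\inn{(I^{n+1} :_R ?)})$... here one must be slightly careful, but the standard localization argument for integral closure applies: $wr^n \in I^{n+1}$ in $R$ gives, after localizing at $W$, that $r^n/1 \in (I_W)^{n+1}$, so $r/1 \in \inn{(I_W)}$, and conversely an element of $\inn{(I_W)}$ is represented by some $r/w$ with $r \in R$, $w \in W$, and $(r/w)^n \in (I_W)^{n+1}$ forces $r^n \in I^{n+1}_W$, i.e. $w'r^n \in I^{n+1}$ in $R$ for some $w' \in W$; then $(w'r)^n \in w'^{n}I^{n+1} \inc I^{n+1}$ only if... this is where I'd instead pass to minimal primes and use the valuative criterion directly, which is cleaner than manipulating powers.

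For part (b), with $S$ integral over $R$: the inclusion $\inn I \inc \inn{(IS)} \cap R$ is persistence (if $r^n \in I^{n+1}$ in $R$ then certainly $r^n \in (IS)^{n+1} = I^{n+1}S$). For the reverse, suppose $r \in R$ and $r^n \in (IS)^{n+1}$. I would use the valuative characterization: to show $r \in \inn I$, test against maps $R \to V$ with $V$ a DVR. Given such a map, since $S$ is integral over $R$, by the lying-over / going-up theorem one can extend (after possibly replacing $V$ by a DVR dominating it in an integral extension of the fraction fields — here one uses that integral extensions of a DVR sit inside DVRs of the extension field) to a map $S \to V'$ with $V'$ a DVR and $V \to V'$ an extension of discrete valuation rings (so $\ord_{V'}$ restricted to $V$ is a positive multiple $e$ of $\ord_V$). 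Then $\ord_{V'}(r) > \ord_{V'}(IS) = e \cdot \ord_V(I)$ when $IV \neq 0, V$, and $\ord_{V'}(r) = e\,\ord_V(r)$, so dividing by $e$ gives $\ord_V(r) > \ord_V(I)$. One has to handle the cases $IV = 0$ or $IV = V$ separately but these are trivial. The main subtlety here is ensuring the valuation-extension step is legitimate when $R$, $S$ are merely Noetherian and possibly not domains; I would reduce to the domain case by working modulo a minimal prime of $S$ lying over the given minimal prime of $R$ (invoking characterization (6) of Theorem~\ref{thm:int}), noting that integrality is preserved by such quotients.

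For part (c), the first assertion $I \inc J \implies \inn I \inc \inn J$ is immediate from characterization (3): $r^n \in I^{n+1} \inc J^{n+1}$. For the second, given $I \inc J \inc \oI$, combine the first assertion with $\inn J \inc \inn{(\oI)}$, so it suffices to show $\inn{(\oI)} \inc \inn I$; this I would do valuatively — for any $R \to V$, $\ord_V(\oI) = \ord_V(I)$ (a defining property of integral closure), so the strict inequality $\ord_V(r) > \ord_V(\oI)$ is literally the same as $\ord_V(r) > \ord_V(I)$, and likewise $\oI V \neq 0, V \iff IV \neq 0, V$; hence $\inn{(\oI)} = \inn I$, which gives $\inn I \inc \inn J \inc \inn{(\oI)} = \inn I$. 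For part (d), that $\inn I$ is integrally closed: let $r \in \overline{\inn I}$. For any $R \to V$ with $V$ a DVR, $\ord_V(r) \geq \ord_V(\inn I)$. I claim $\ord_V(\inn I) \geq \ord_V(I) + 1$ whenever $IV \neq 0, V$: indeed $\inn I \inc \oI$ forces $\ord_V(\inn I) \geq \ord_V(I)$, and if equality held there would be $s \in \inn I$ with $\ord_V(s) = \ord_V(I)$, contradicting characterization (5) applied to $s \in \inn I$ (which demands $\ord_V(s) > \ord_V(I)$) — provided $\inn I$ is actually generated up to integral closure by finitely many such elements, which I'd get from the finite-test-set statement (7), or more simply by noting $\inn I \neq 0$ and picking any nonzero $s \in \inn I$ with minimal $V$-order. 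Then $\ord_V(r) \geq \ord_V(\inn I) > \ord_V(I)$, so $r \in \inn I$ by (5).

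**Main obstacle.** The step I expect to fight with is part (b), specifically the valuation-extension argument: producing, from an arbitrary $R \to V$, a compatible $S \to V'$ with $V'$ a DVR and the valuation comparison $\ord_{V'}|_V = e\cdot\ord_V$. This is standard for integral extensions of domains but requires care about choosing the right minimal prime of $S$, about whether the extension $\mathrm{Frac}(V) \to \mathrm{Frac}(V')$ is algebraic (it is, by integrality, so valuations extend and ramification indices are finite), and about the non-domain bookkeeping. An alternative, possibly cleaner route for (b) is purely algebraic: from $r^n \in I^{n+1}S$, deduce a relation of integral dependence of $r^n$ on $I^{n+1}$ with coefficients in $R$ by the determinant trick applied to a module-finite sub-$R$-algebra $S_0 \inc S$ containing the relevant data, then invoke $(2)\iff(3)$ of Theorem~\ref{thm:int}. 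I would likely present this determinant-trick version to avoid the valuation-theoretic delicacies, reserving the valuative language for parts (c) and (d) where it is cleanest.
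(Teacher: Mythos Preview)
Your proposal is correct throughout, but in parts (a) and (b) you are working much harder than necessary, and the paper's proof is considerably shorter.

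For part (a), you were one step away from the clean argument before you abandoned it for valuations. From $w' r^n \in I^{n+1}$ you get $(w'r)^n = (w')^{n-1}\cdot(w' r^n) \in I^{n+1}$, so $w'r \in \inn I$, whence $r/1 = (w'r)/w' \in (\inn I)_W$. This is exactly the paper's proof; no passage to minimal primes or valuative criteria is needed.

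For part (b), the paper sidesteps your ``main obstacle'' entirely by using characterization (2) of Theorem~\ref{thm:int} rather than (5). If $r \in R$ lies in $\inn{(IS)}$, then $r^n$ is integral over $(IS)^{n+1} = I^{n+1}S$ for some $n$. But it is a standard fact that for an integral ring extension $R \subseteq S$ and any ideal $J$ of $R$, one has $\ici{(JS)} \cap R = \ici{J}$ (an equation of integral dependence over $JS$ with coefficients in $S$ yields, via the determinant trick on a finite $R$-submodule, one over $J$ with coefficients in $R$). Applying this with $J = I^{n+1}$ gives $r^n \in \ici{(I^{n+1})}$, i.e.\ $r \in \inn I$. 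This is the determinant-trick route you mention at the end as an alternative; the paper simply quotes the resulting contraction property and is done in one line. Your valuation-extension argument would also work, but the care you correctly anticipate (choosing compatible minimal primes, finite ramification index) is all avoidable.

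For parts (c) and (d) your valuative arguments are fine. The paper is again slightly more direct: for the second statement of (c) it uses $(\ici I)^{n+1} \subseteq \ici{(I^{n+1})}$ to pass from $r^n \in \ici{(\ici I)^{n+1}}$ to $r^n \in \ici{(I^{n+1})}$; and for (d) it simply observes that condition (5) of Theorem~\ref{thm:int} exhibits $\inn I$ as an intersection of contracted valuation ideals, hence integrally closed, without the auxiliary minimality argument you sketch.
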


\begin{proof}
For part (a),  if $r^n \in I^{n+1}R_W$,  then for some $w \in W$ we have
$wr^n \in  I^{n+1}$,  and then $(wr)^n \in I^{n+1}$, which shows  that
$wr \in \inn I$.  The other implication is trivial.  

For (b),  if  $r \in R$ is such that $r^n$ is in the integral closure of $I^{n+1}S$,
then $r^n$ is in the integral closure of $I^{n+1}$.  The other containment is
obvious.

The first statement in (c) is obvious from condition (2) or (3) of Theorem~\ref{thm:int}.
The second statement is clear provided that we can show that an element $r$
in the inner integral closure of $\ici{I}$ is in the inner integral closure
of $I$.  But  $(\ici I)^{n+1} \inc \icip{I^{n+1}}$,  and so if $r^n$ is in the
former it is in the latter.  

Part (d) follows because condition (5) of Theorem~\ref{thm:int} shows that $\inn I$
is an intersection of valuation ideals.  This fact also follows from 
\cite[Proposition 10.5.2 part (4)]{HuSw-book} coupled with the final remark in the paragraph 
following \cite[Definition 10.5.3]{HuSw-book}.
\end{proof}

\begin{thm}\label{thm:intcont}
Let $R$ be a finitely generated $\C$-algebra.  Then $\inn I \subseteq I\ct$.
\end{thm}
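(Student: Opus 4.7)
The plan is to show directly that a suitable Kronecker-type continuous expression represents $r$ as a $\cC(X)$-linear combination of generators of $I$. Let $X = \MaxSpec R$, let $I = (f_1,\dots,f_m)R$, and suppose $r \in \inn I$. By Theorem~\ref{thm:int}(3), there is $n \geq 1$ with $r^n \in I^{n+1}$; write
\[
r^n = \sum_{|\alpha| = n+1} c_\alpha f^\alpha, \qquad c_\alpha \in R.
\]
We may reduce to the case where $R$ is reduced: any nilpotent of $R$ vanishes as a function on $X$, hence already lies in $I\ct$, so changing $r$ by a nilpotent does not affect the question.

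On the Zariski (hence Euclidean) open set $U := X \setminus V(I)$, define
\[
g_i(x) := \frac{r(x)\,\ov{f_i(x)}}{\sum_{j=1}^m |f_j(x)|^2} \qquad (i=1,\dots,m),
\]
and extend by $g_i \equiv 0$ on $V(I)$. Each $g_i$ is manifestly continuous on $U$ (ratio of continuous functions with nonvanishing denominator), and by construction
\[
\sum_{i=1}^m g_i(x) f_i(x) = r(x) \quad \text{for all } x \in U.
\]
On $V(I)$ all $f_j$ vanish, so $r^n$ vanishes there, hence so does $r$, and the identity $\sum g_i f_i = r$ also holds on $V(I)$ (both sides are $0$).

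The main obstacle is continuity of the $g_i$ at a point $x_0 \in V(I)$; this is where the hypothesis $r^n \in I^{n+1}$ is used. Choose a compact Euclidean neighborhood $K$ of $x_0$ in $X$. Each $c_\alpha$ is a polynomial function on $X$, hence bounded on $K$ by some constant $M$, and the number of multi-indices with $|\alpha| = n+1$ is bounded by some combinatorial constant $N$. For any $x \in K$, using $|f^\alpha(x)| \leq \bigl(\max_j |f_j(x)|\bigr)^{n+1} \leq \bigl(\sum_j |f_j(x)|^2\bigr)^{(n+1)/2}$, we obtain the key estimate
\[
|r(x)|^n \;\leq\; MN \left(\sum_{j=1}^m |f_j(x)|^2\right)^{\!(n+1)/2}.
\]
Taking $n$-th roots and combining with $|f_i(x)| \leq \bigl(\sum_j |f_j(x)|^2\bigr)^{1/2}$ yields, on $K \cap U$,
\[
|g_i(x)| \;\leq\; \frac{|r(x)|\,|f_i(x)|}{\sum_j |f_j(x)|^2} \;\leq\; (MN)^{1/n} \left(\sum_{j=1}^m |f_j(x)|^2\right)^{\!1/(2n)}.
\]
The right side tends to $0$ as $x \to x_0$, proving $g_i$ is continuous at $x_0$. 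Since continuity is local, each $g_i \in \cC(X)$, and the equation $r = \sum_i g_i f_i$ witnesses $r \in I\ct$.
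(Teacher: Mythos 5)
Your proof is correct, and it takes a more direct route than the paper does. Both arguments ultimately rest on the same Kronecker-type formula $g_i = r\,\ov{f_i}/\sum_j |f_j|^2$, but the paper does not use it directly here: it instead appeals to condition~(1) of Theorem~\ref{thm:int} (the monic polynomial with coefficient of $x^{d-j}$ in $I^{j+1}$), replaces the ingredients by indeterminates to build a universal ring $S$ with a suitable $\N$-grading, and then invokes Theorem~\ref{thm:highdegct} (the graded result that high-degree elements lie in the continuous closure) together with persistence of $\ct$ under the map $S\to R$. You bypass the grading and the universal construction entirely, using condition~(3) of Theorem~\ref{thm:int} ($r^n\in I^{n+1}$) to get the pointwise estimate $|r(x)|^n \leq MN(\sum_j|f_j(x)|^2)^{(n+1)/2}$ on a compact neighborhood, which yields $|g_i(x)| \leq (MN)^{1/n}(\sum_j|f_j(x)|^2)^{1/(2n)}$ and hence continuity at points of $V(I)$. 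The exponent bookkeeping checks out: $(n+1)/(2n)+1/2-1 = 1/(2n) > 0$, so $g_i\to 0$. The reduction to the reduced case, the observation that $r$ vanishes on $V(I)$ because $r^n\in I$, and the verification of $\sum_i g_i f_i = r$ on $U$ and on $V(I)$ are all correct. Your approach is more elementary and self-contained for this particular statement; the paper's detour through Theorem~\ref{thm:highdegct} is somewhat longer here but packages the graded case as a stand-alone tool that it can reuse elsewhere.
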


\begin{proof}
Consider a polynomial for $r$ as in condition (1) of Theorem~\ref{thm:int}, with indeterminate 
variable $x$.  Suppose that the coefficient of $x^{d-j}$
is a sum of $t_j$ terms, each the product of $j+1$ elements of $I$.  We replace all $t_j(j+1)$
elements of $I$ involved by variables.  This gives an equation $F =
x^d + P_1 x^d_1 + \cdots P_d = 0$  where $P_j$ is homogeneous of degree $j+1$ in
$t_j(j+1)$ variables, and all the variables from distinct  $P_i$, $P_j$  are mutually disjoint.
Consider the ring $S$ defined by adjoining $x$ and all the other variables to $\C$  and killing
the polynomial $F$.  In this ring, let $J$ be the ideal generated by all variables other
than $x$.  The radical of $J$ contains $x$ as well.   We have a homomorphism $S \to R$
which takes $x$ to $r$ and such that $JR \inc I$.  Thus, it suffices to show that
the image of $x$ is in $J\ct$ in $S$.  But $S$ has an $\N$-grading:  we give the variables occurring
in $P_j$ degree $(d+1)!/(j+1)$, $1 \leq j \leq d$,  and we give $x$ degree $(d+1)!$.  Since
$x$ has a higher degree than any generator of $J$, the result follows from Theorem~\ref{thm:highdegct}
\end{proof}

It follows that $\inn I \subseteq I\ax$ in an affine $\C$-algebra.  In fact this holds more generally.  To see this, first, we give the following useful lemma.

\begin{lemma}\label{lem:mJ}
Let $(R,\m,k)$ be a 1-dimensional complete Noetherian seminormal local ring.  Then for any ideal $J$ of $R$, we have $\m \cdot \ici{J} \subseteq J$.
\end{lemma}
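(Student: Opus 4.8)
The goal is to show that in a $1$-dimensional complete Noetherian seminormal local ring $(R,\m,k)$, we have $\m\cdot\ici J\subseteq J$ for every ideal $J$, where $\ici J=\inn J$ is the inner integral closure. The natural tool is the structure theorem for such rings, Theorem~\ref{thm:glue} (equivalently, the normalization $R'$ of $R$ is a finite product $\prod_{i=1}^n V_i$ of complete DVRs $(V_i,\m_i,L_i)$, and $R$ is the glueing of $R'$ over $\m$, so in particular $\m=\Jac(R')=\prod_i\m_i$ by part~(1) of Proposition~\ref{pr:semibasic}). My plan is to reduce the assertion to a computation with valuations/orders in the DVRs $V_i$, using the characterization of $\inn J$ in Theorem~\ref{thm:int} (condition (5): $f\in\inn J$ iff $\ord_W(f)>\ord_W(J)$ for every DVR $W$ receiving $R$ with $JW\neq 0,W$).

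First I would dispose of the degenerate cases. If $J=0$, then $\inn J=\inn 0$ is the nilradical, which is $0$ since $R$ is reduced, so the claim is trivial; similarly if $J=R$ the claim is immediate. So assume $0\neq J\neq R$, hence $J\subseteq\m$. Now pick $f\in\inn J$ and $u\in\m$; I want $uf\in J$. The key point is to understand $J$ and $\inn J$ through the component DVRs. Write $R'=\prod_i V_i$ and let $\pi_i:R\to V_i$ be the composite $R\hookrightarrow R'\to V_i$ (this kills the minimal prime $\p_i$). Each $V_i$ is a DVR; let $\ord_i=\ord_{V_i}$. Set $a_i:=\ord_i(JV_i)=\min\{\ord_i(g):g\in J\}$ (finite since $JV_i\neq 0$: note $JV_i\ne 0$ because $J$ is not contained in any minimal prime — indeed $J$ contains a nonzerodivisor, or more carefully, since $R$ is $1$-dimensional and reduced, $\Rad J$ is an intersection of the maximal ideal with possibly some minimal primes, but as $J\ne 0$ and $R$ has finitely many minimal primes, one checks $JV_i\ne 0$ for all $i$ by an argument splitting into the irreducible components; if in fact $J\subseteq\p_i$ for some $i$ then $JV_i=0$ and that component is handled separately, see below). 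By Theorem~\ref{thm:int}(5), applied with $W=V_i$, membership $f\in\inn J$ forces $\ord_i(f)>a_i$, i.e. $\ord_i(f)\ge a_i+1$, for every $i$ with $JV_i\ne 0$. (For any $i$ with $JV_i=0$, automatically $J\subseteq\p_i$, and one shows directly $\pi_i(uf)$ lies in the relevant component of $J$ — this is where a little care is needed.)

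Next, I would produce the containment $uf\in J$. Since $u\in\m=\prod_i\m_i$, we have $\ord_i(u)\ge 1$, hence $\ord_i(uf)=\ord_i(u)+\ord_i(f)\ge a_i+2> a_i$ for each relevant $i$; in particular $\ord_i(uf)\ge a_i+1$. Choose for each $i$ an element $g_i\in J$ with $\ord_i(g_i)=a_i$. The element $uf/g_i$ lies in $V_i$ (it has nonnegative order) and in fact has order $\ge 1$, so $uf/g_i\in\m_i$. Thus in $R'=\prod V_i$ the tuple $\big((uf/g_i)\cdot g_i\big)_i$... — more directly: I claim $uf\in JR'$ and moreover the quotient $uf/(\text{suitable element of }JR')$ can be taken in $\Jac(R')=\m$. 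The cleanest route: since $JR'$ is generated over $R'$ (a semilocal PID) by the tuple whose $i$th entry has order $a_i$, we have $JR'=\prod_i\m_i^{a_i}$, and $uf\in\prod_i\m_i^{a_i+1}=\m\cdot JR'$. Now $\m\cdot JR'\subseteq JR'$, but I need to land in $J$ itself, not just $JR'\cap R$. Here is where seminormality (glueing) enters decisively: write $uf=\sum_j h_j\,\gamma_j$ with $\gamma_j\in J$, $h_j\in R'$; since $uf\in\m JR'$ we can arrange (using $JR'=\prod\m_i^{a_i}$ and $uf\in\prod\m_i^{a_i+1}$) that $h_j\in\Jac(R')=\m\subseteq R$. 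Then each $h_j\gamma_j\in J$ (as $h_j\in R$, $\gamma_j\in J$), so $uf=\sum_j h_j\gamma_j\in J$.

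The main obstacle, and the step I would spend the most care on, is exactly this last passage from $uf\in\m\cdot JR'$ to $uf\in J$ — i.e., arranging the "coefficients" $h_j$ to lie in $\m\subseteq R$ rather than merely in $R'$. This is really a statement about the module $JR'/J$ over $R$ and uses $\m JR'\subseteq J$ (a conductor-type inclusion that is precisely what the Lemma asserts, so one must be careful not to argue circularly). The honest way is: $JR'/JR$ is a finitely generated $R$-module annihilated by the conductor $\mathfrak{c}=(R:_RR')$; since $R$ is seminormal of dimension $1$ with $\m=\Jac(R')$, one has $\m R'\subseteq R$ (each $\m_i\subseteq R$-component statement follows from the glueing description: an element of $\prod\m_i$ maps to $0$ in $\prod L_i$, hence lies in the pullback $R$), so $\m\subseteq\mathfrak c$, i.e. $\m\cdot R'\subseteq R$, whence $\m\cdot JR'\subseteq JR'\cap R\cdot(\ldots)$ — and then $\m\cdot JR'=\m\cdot R'\cdot J\subseteq R\cdot J=J$. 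Combined with $uf\in\m\cdot JR'$ from the order computation, this gives $uf\in J$, completing the proof. I would present the argument in this order: (i) reduce to $0\ne J\ne R$; (ii) recall $R'=\prod V_i$, $\m=\Jac(R')=\prod\m_i$, and $\m R'\subseteq R$; (iii) compute orders to get $uf\in\prod\m_i^{a_i+1}=\m\cdot JR'$; (iv) conclude $uf\in\m\cdot JR'=\m R'\cdot J\subseteq J$.
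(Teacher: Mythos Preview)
You have misread the notation: in this paper $\ici{J}=J^-$ denotes the \emph{integral closure} of $J$, not the inner integral closure $\inn{J}$ (see the paragraph preceding Theorem~\ref{thm:testcl}). Thus the lemma asserts $\m\cdot J^-\subseteq J$, which is strictly stronger than the statement $\m\cdot\inn{J}\subseteq J$ you set out to prove. Fortunately your method proves the correct statement with only a trivial change: for $f\in J^-$ one has $\ord_i(f)\ge a_i$ (rather than $>a_i$), so $\ord_i(uf)\ge a_i+1$ and $uf\in\prod_i\m_i^{a_i+1}=\m\cdot JR'$; your conductor step $\m\cdot JR'\subseteq J$ is unaffected. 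Indeed, once one observes that $\m=\Jac(R')$ is already an $R'$-ideal, one sees $J^-=JR'=\prod_i\m_i^{a_i}$ (this set lies in $\m\subseteq R$), so $\m\cdot J^-=\m\cdot JR'$, and the key inclusion follows because any product $m\cdot(\sum r'_\beta j_\beta)$ with $m\in\m$, $r'_\beta\in R'$, $j_\beta\in J$ equals $\sum(mr'_\beta)j_\beta$ with $mr'_\beta\in\m\subseteq R$, hence lies in $J$. This also shows that your flagged but untreated case $JV_i=0$ needs no separate argument: the conductor inclusion $\m\cdot JR'\subseteq J$ holds regardless of whether some $a_i=\infty$.

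Your route differs from the paper's, which first reduces to primary ideals (the $\p_i$-primary case being trivial since then $J=\p_i$), and in the $\m$-primary case produces for each $i$ an explicit element $c\in J$ with $i$th component a unit times $t_i^{e_i}$, then writes each $v t_i^{e_i+1}$ as $(u^{-1}vt_i)\cdot c$ with $u^{-1}vt_i\in\m\subseteq R$. Underneath, both arguments exploit the same fact, namely $\m=\Jac(R')\subseteq R$; the paper instantiates it element by element, while your version packages it as the single conductor inclusion $\m\cdot JR'\subseteq J$, which is cleaner and sidesteps the primary decomposition entirely.
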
 

\begin{proof}
First, note that it is enough to prove the lemma for a \emph{primary} ideal.  For let $J = Q_1 \cap \cdots \cap Q_t$ be a primary decomposition, and suppose the lemma holds for primary ideals.  Then $\m \ici{J} \subseteq \m \ici{Q_1} \cap \cdots \cap \m \ici{Q_t} \subseteq Q_1 \cap \cdots \cap Q_t = J$.

Let $L_i$ and $V_i$ be as in the description given in Theorem~\ref{thm:glue}.  For each $i$, let $t_i$ be a generator of the maximal ideal of $V_i$.  We have $\Spec R = \{\p_1, \dotsc, \p_n, \m\}$, where $\p_i$ is the kernel of the map $R \rightarrow V_i$.

If $J$ is $\p_i$-primary for some $i$, it follows from the minimality of $\p_i$ and the fact that $R$ is reduced that $J=\p_i$.  Then $\m \ici{J} = \m \ici{\p_i} = \m \p_i \subseteq \p_i = J$.

Thus, we may assume $J$ is $\m$-primary.  Then for each $i$, there is some integer $1\leq e_i<\infty$ such that $J V_i = t_i^{e_i} V_i$.  Hence, \[
\ici{J} = \bigoplus_{i=1}^n t_i^{e_i} V_i.
\]
Pick any $i$ between 1 and $n$.  Then from the structure of $R$, it follows that there is some element of the form $c:=u t_i^{e_i} + \sum_{j\neq i} v_j t_j^{f_j} \in J$, where $u$ is a unit of $V$, the $v_j \in V$, and the $f_j \geq e_j$.  Then for a typical element $v t_i^{e_i+1} \in t_i^{e_i+1}V_i$ (where $v\in V_i$), we have  $u^{-1} v t_i \in R$, so that $v t_i^{e_i + 1} = (u^{-1} v t_i) c_i \in J$.  Hence, $\m \ici{J} = \bigoplus_{i=1}^n t_i^{e_i+1} V_i \subseteq J$.
\end{proof}

Next, we have the following theorem, which follows from Theorem~\ref{thm:intcont} when $R$ is a finitely generated $\C$-algebra, but in fact holds in a more general setting as we see below.

\begin{thm}\label{thm:intax}
Let $R$ be an excellent Noetherian ring, and $I$ an ideal.  Then $\inn I \subseteq I\ax$.
\end{thm}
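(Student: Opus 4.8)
The plan is to reduce the statement to a computation inside one-dimensional complete seminormal local rings, where it follows from Lemma~\ref{lem:mJ}. Let $r \in \inn I$; by the equivalence of conditions (1) and (3) in Theorem~\ref{thm:int} there is an integer $n\geq 1$ with $r^n \in I^{n+1}$. Let $\varphi\colon R\to S$ be any ring homomorphism with $(S,\m)$ a complete local one-dimensional seminormal Noetherian ring. By Definition~\ref{def:axsemi} together with the equivalence of conditions (1)--(3) in Theorem~\ref{thm:axsemi}, it is enough to prove $\varphi(r)\in IS$. Applying $\varphi$ to $r^n\in I^{n+1}$ gives $\varphi(r)^n\in (IS)^{n+1}$, so $\varphi(r)\in\inn{(IS)}$, again by Theorem~\ref{thm:int}. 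Thus the whole theorem follows once we establish the local claim: \emph{in a complete local one-dimensional seminormal Noetherian ring $(S,\m)$, every ideal $J$ satisfies $\inn J\subseteq J$.}

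For the claim we may assume $J\neq S$, so $J\subseteq\m$. I would first reduce to the primary case: if $J=Q_1\cap\cdots\cap Q_t$ is a primary decomposition, then monotonicity of inner integral closure (Proposition~\ref{pr:intbasic}(c)) gives $\inn J\subseteq\bigcap_i\inn{Q_i}$, so it suffices to show $\inn Q\subseteq Q$ for each primary component $Q$. Since $S$ is one-dimensional, $\Rad Q$ is either one of the minimal primes $\p_1,\dots,\p_n$ of $S$ or the maximal ideal $\m$. In the former case, reducedness of $S$ and minimality of $\p_j$ force $Q=\p_j$, and then $\inn Q\subseteq\ici{Q}=\ici{\p_j}=\p_j=Q$, because inner integral closure is always contained in the ordinary integral closure and prime ideals are integrally closed. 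So the minimal-prime case is immediate.

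It remains to treat an $\m$-primary ideal $Q$, and here I would replay the argument in the proof of Lemma~\ref{lem:mJ}. Write the normalization as $S'=\prod_{j=1}^n V_j$ with $(V_j,\m_j)$ complete discrete valuation rings and $\m_j=(t_j)$, as in Theorem~\ref{thm:glue}; recall that $\m=\prod_j\m_j=\Jac(S')$ by Proposition~\ref{pr:semibasic}(1), using seminormality. Since $\Rad Q=\m\not\subseteq\p_j$ we have $QV_j\neq 0$, and since $Q\subseteq\m$ we have $QV_j\subseteq\m_j$; write $QV_j=\m_j^{e_j}$ with $1\leq e_j<\infty$. Given $s\in\inn Q$, pick $m$ with $s^m\in Q^{m+1}$. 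Passing to the $j$-th factor of $S'$ yields $m\cdot\ord_{V_j}(s)\geq (m+1)e_j$, hence $\ord_{V_j}(s)>e_j$, and since valuations take integer values, $\ord_{V_j}(s)\geq e_j+1$. Therefore the image of $s$ in $S'$ lies in $\prod_j\m_j^{e_j+1}$, which by the computation carried out in the proof of Lemma~\ref{lem:mJ} equals $\m\cdot\ici{Q}$ and is contained in $S$. Hence $s\in\m\cdot\ici{Q}\subseteq Q$, the last inclusion being Lemma~\ref{lem:mJ} itself; this proves the claim, and with it the theorem.

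I expect the only delicate point to be the normalization bookkeeping in the $\m$-primary case --- identifying $\prod_j\m_j^{e_j+1}$ with $\m\cdot\ici{Q}$ and checking that it sits inside $S$ --- but this is precisely what the proof of Lemma~\ref{lem:mJ} already does, so no new idea is needed. (Excellence of $R$ is not actually used in this argument; only the Noetherian hypothesis on $R$ and the structure theorem, Theorem~\ref{thm:glue}, for $S$ are needed.)
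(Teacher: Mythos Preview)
Your proof is correct and follows essentially the same route as the paper: reduce to showing $\inn J\subseteq J$ in a complete local one-dimensional seminormal ring, pass to the product of DVRs from Theorem~\ref{thm:glue}, bound the order of an element of $\inn J$ in each factor, and conclude via $\m\cdot\ici{J}\subseteq J$ from Lemma~\ref{lem:mJ}. The paper is slightly more streamlined in two places: it skips your primary-decomposition reduction by allowing $e_i=\infty$ when $IV_i=0$, and it obtains the inequality $\ord_{V_j}(s)\geq e_j+1$ by simply citing condition~(5) of Theorem~\ref{thm:int} rather than recomputing it from $s^m\in Q^{m+1}$; but these are cosmetic differences, not a different idea.
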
 

\begin{proof}
Since the persistence property holds for both inner integral closure and axes closure, we may assume that $(R,\m,k)$ is a complete local 1-dimensional seminormal ring, in which case what we want to show is that $\inn I \subseteq I$.  Pick Noetherian valuation rings $(V_i, (t_i), L_i)$, $1\leq i\leq n$, as in Theorem~\ref{thm:glue}.  If $I=R$ there is nothing to prove, so we may assume $I \subseteq \m = \bigoplus_{i=1}^n t_i V_i$.  For each $i$, either $I V_i = 0$ or $I V_i = t_i^{e_i} V_i$ for some $1\leq e_i < \infty$.  If we use the convention $t_i^\infty=0$, then we have $\ici I = \bigoplus_{i=1}^n t_i^{e_i} V_i$, where $1\leq e_i \leq \infty$ for each $i$.

By part (5) of Theorem~\ref{thm:int}, we have $\inn I \subseteq \bigoplus_{i=1}^n t_i^{e_i+1} V_i = \m \ici{I}$.  But by Lemma~\ref{lem:mJ}, we have $\m \ici{I} \subseteq I$, which completes the proof.
\end{proof}

We next observe that $\inn I = \inn{(\oI)}$, and so the operation that sends
$I$ to $I + \inn I$ is a closure operation in the sense of Definition~\ref{def:closure}.

\begin{defn}\label{def:natrelevant}
For an ideal $I$, we let $I\ncl := I + \inn I$, the \emph{natural closure} of $I$.  If $I =I\ncl$
we say that $I$ is {\it naturally closed}.  Evidently, if  $I \inc I_1 \inc \oI$ and $I  = I\ncl$,
then $I_1 = I_1\ncl$.

By a {\it valuation} of $R$ we simply mean a map $R \to V$ where $R$ is a discrete valuation ring.
It may have a kernel, even if $R$ is a domain. If $IV \not= 0$ has order $k > 0$ call the valuation ideal 
$\fA$ arising from
contraction of $\n^{k+1}$ the {\it $I$-\rv\ valuation ideal}  of $R \to V$.  If $IV = 0$, we call
the contraction of $0$, i.e., $\Ker(R \to V)$, the {\it $I$-\rv\ valuation ideal} of $R \to V$.
\end{defn}  

If  $R$ is a domain and $I$ is a nonzero ideal of $R$,  then for every nonzero
element $f \in I$ we may take the normalization $S$ of the $R[I/f]$  and consider
the discrete valuation rings arising as localizations of $S$ at a minimal prime of $fS$.
There are only finitely many such valuation rings, and if $R$ is excellent, they
are small.  Following standard terminology, we refer to these rings as the {\it Rees} valuation rings of $I$.

In the case where $R$ is not a domain, we make the following conventions
about the Rees valuations of $I$.  For every minimal prime $P$ of $R$ not containing
$I$, we include the Rees valuations of $I(R/P)$ among the Rees valuations
for $I$.  If  $P$ contains $I$, we shall think of the fraction field $\kappa(P)$ of
$R/P$ as a ``degenerate"  valuation ring, and we include the maps $R \to \kappa(P)$
as Rees valuations.  The order of every element not in $P$ is 0,  while the
elements of $P$ may be viewed as having order $+\infty$.   

\begin{thm} 
Let $R$ be Noetherian and $I$ and ideal of $R$.
\begin{enumerate}[label=(\alph*)]
\item  There are finitely many valuations $R \to (V_i, \n_i)$ such that $\inn I$ is the intersection
of the $I$-\rv\ valuation ideals of these valuations.  The $V_i$ may be chosen to be the
Rees valuations.  

\item $I = I\ncl$ if and only if there are finitely many valuations $R \to (V_i, \n_i)$ with $I$-\rv\ valuation
ideals $\fA_i$ such that $\cap_i \fA_i \inc I$.   These valuations may be chosen to be the Rees
valuations of $I$.
\end{enumerate}
\end{thm}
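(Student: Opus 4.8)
The plan is to derive both parts from the valuative descriptions of $\inn I$ already established — Theorem~\ref{thm:int} together with the corollary following Theorem~\ref{thm:testcl}, which I will use in two forms: (i) $\inn I \inc \Rad(I) \cap \oI$, and (ii) membership in $\inn I$ is detected modulo the minimal primes of $R$, i.e.\ $r \in \inn I$ iff $\ov r \in \inn{(I(R/P))}$ for every minimal prime $P$ of $R$.

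For part (a), I would fix a minimal prime $P$ of $R$ and split into two cases. If $I \inc P$, then $I(R/P)=0$, so $\inn{(I(R/P))}=0$, and the condition ``$\ov r \in \inn{(I(R/P))}$'' says just ``$r \in P$''; on the other hand $P$ is exactly the $I$-\rv\ valuation ideal of the degenerate Rees valuation $R \to \kappa(P)$, whose kernel is $P$ since $I\kappa(P)=0$. If $I \not\inc P$, then $I(R/P)$ is a nonzero ideal of the Noetherian domain $R/P$, and condition (7) of Theorem~\ref{thm:int}, applied in $R/P$, gives that $\ov r \in \inn{(I(R/P))}$ iff $\ord_V(\ov r) > \ord_V(I(R/P))$ for each of the finitely many Rees valuations $V$ of $I(R/P)$; for such a $V$ the composite $R \to R/P \to V$ has $IV = I(R/P)V$ nonzero of a positive order $k_V = \ord_V(I)$, so its $I$-\rv\ valuation ideal is the contraction of $\n_V^{k_V+1}$ to $R$, that is, $\{s \in R : \ord_V(s) > \ord_V(I)\}$. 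Intersecting the conditions contributed by all minimal primes of $R$ and invoking (ii), I would obtain
\[
\inn I = \bigcap_V \bigl(\text{$I$-\rv\ valuation ideal of } R \to V\bigr),
\]
with $V$ ranging over the Rees valuations of $I$; the intersection is finite because $R$ has finitely many minimal primes and each $I(R/P)$ has finitely many Rees valuations.

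For part (b), note that $I\ncl = I + \inn I$, so $I = I\ncl$ iff $\inn I \inc I$. If $I = I\ncl$, then taking the $V_i$ to be the Rees valuations of $I$ and the $\fA_i$ their $I$-\rv\ valuation ideals, part (a) gives $\bigcap_i \fA_i = \inn I \inc I$; this settles one direction and the closing remark that the Rees valuations always serve. For the converse, suppose $R \to (V_i,\n_i)$, $1 \le i \le m$, are finitely many valuations with $I$-\rv\ valuation ideals $\fA_i$ satisfying $\bigcap_i \fA_i \inc I$. I would show $\inn I \inc \fA_i$ for each $i$: if $IV_i = 0$, then $I \inc \fq_i := \Ker(R \to V_i)$, a prime, so $\inn I \inc \Rad(I) \inc \fq_i = \fA_i$; while if $\ord_{V_i}(I) = k_i > 0$, then $IV_i$ is neither $0$ nor $V_i$, so condition (5) of Theorem~\ref{thm:int} forces every $r \in \inn I$ to satisfy $\ord_{V_i}(r) > k_i$, i.e.\ $r$ lies in the contraction $\fA_i$ of $\n_i^{k_i+1}$. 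Hence $\inn I \inc \bigcap_i \fA_i \inc I$, so $I\ncl = I + \inn I = I$.

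I do not expect a genuinely hard step; everything is assembled from earlier results. The delicate point is the bookkeeping in the non-domain case of part (a): accounting for every minimal prime $P$, including the degenerate contribution $P$ itself when $I \inc P$, and checking that contracting $\n_V^{k_V+1}$ along $R \to R/P \to V$ agrees with contracting first to $R/P$ and then to $R$, so that the descriptions of $\inn I$ over the various $R/P$ glue correctly. In part (b) the one thing to watch is that the valuations in the hypothesis need not be Rees valuations, which is why one must invoke the ``all DVRs'' form (5) of Theorem~\ref{thm:int}, not its Rees-valuation form (7), to bound $\inn I$ by each $\fA_i$.
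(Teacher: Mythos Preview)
Your proof is correct and follows essentially the same approach as the paper: reduce part (a) to the domain case by working modulo each minimal prime of $R$ (handling the degenerate $I\subseteq P$ case separately), and derive part (b) from the valuative characterization of $\inn I$. Your write-up is in fact more careful than the paper's terse argument for (b), which contains a typo (``Let $u\in I$'' where ``Let $u\in \inn I$'' is meant) and leaves the verification $\inn I\subseteq\fA_i$ implicit; your case split on whether $IV_i=0$ and invocation of condition~(5) of Theorem~\ref{thm:int} make this explicit.
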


\begin{proof}
For part (a),  we know that $r \in \inn I$ if and only if that holds modulo each minimal prime 
of $I$. Thus, it suffices to show that there are valuation ideals as specified for every $R/\p$ and $IR/\p$.
We may thus reduce to the domain case.  If $I \inc \p$ we take the \rv\ valuation ideal to be 0.  We may 
localize at any height one prime of the normalization of $R/\p$ and do this.  Otherwise we use the
valuation ideals coming from the rings $R[I/f]$. 

The ``only if'' part of (b) is obvious.  For the ``if"  part, suppose that we have $\cap_i \fA_i \inc I$.
To show that $I = I\ncl$, it suffices to show that $\inn I \inc I$.  Let $u \in I$.  Then $u \in \fA_i$
for all $i$,  and the result is obvious.
\end{proof}

\begin{prop}\label{pr:intloc}
Let $I$ be an ideal of $R$ and  $W$ a multiplicative system in
$R$.  Then $(I_W)\ncl = (I\ncl)_W$.
\end{prop}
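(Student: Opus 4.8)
The plan is to deduce this immediately from Proposition~\ref{pr:intbasic}(a) together with the elementary fact that localization commutes with finite sums of ideals. Recall from Definition~\ref{def:natrelevant} that $I\ncl = I + \inn I$, where $\inn I$ is the inner integral closure of $I$.

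First I would record the general observation that for any ideals $J, K \inc R$ and any multiplicative system $W$ one has $(J+K)_W = J_W + K_W$ as ideals of $R_W$; this is just the fact that the image of a finite sum of ideals under the localization map $R \to R_W$ generates the sum of the images. Applying this with $J = I$ and $K = \inn I$ gives $(I\ncl)_W = (I + \inn I)_W = I_W + (\inn I)_W$.

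Next I would invoke Proposition~\ref{pr:intbasic}(a), which asserts exactly the equality $(\inn I)_W = \inn{(I_W)}$ of ideals of $R_W$. Substituting this into the previous line yields $(I\ncl)_W = I_W + \inn{(I_W)}$, and the right-hand side is $(I_W)\ncl$ by the definition of natural closure applied in the ring $R_W$. That finishes the argument.

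I do not expect any genuine obstacle: all of the substance is contained in Proposition~\ref{pr:intbasic}(a), whose proof in turn rests on the characterization $r \in \inn I \iff r^n \in I^{n+1}$ for some $n$ (condition (3) of Theorem~\ref{thm:int}) together with a routine clearing of denominators. The only point worth keeping in mind is that one must use the full equality in Proposition~\ref{pr:intbasic}(a) — both inclusions, not merely $\inn{(I_W)} \supseteq (\inn I)_W$ — which is indeed what is stated and proved there.
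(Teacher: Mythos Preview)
Your proof is correct and follows exactly the same approach as the paper: unwind the definition $I\ncl = I + \inn I$, use that localization commutes with finite sums of ideals, and invoke Proposition~\ref{pr:intbasic}(a) for the equality $(\inn I)_W = \inn{(I_W)}$. The paper's proof is the same chain of equalities, merely written starting from $(I_W)\ncl$ rather than from $(I\ncl)_W$.
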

\begin{proof}
 $(I_W)\ncl = I_W + \inn{(I_W)} = I_W + (\inn I)_W$,  by Proposition~\ref{pr:intbasic}(a), but 
$I_W + (\inn I)_W = (I + \inn I)_W = (I\ncl)_W$. 
\end{proof}

\begin{thm}\label{thm:RtoR(t)}
Let $R \to S$ be a flat homomorphism of excellent Noetherian
rings whose fibers are geometrically regular.  Let $I$ be ideal of $R$.  Then
$\inn{(IS)} = \inn I S$, and $(IS)\ncl = (I\ncl)S$.  
In particular,  if $\rmk$ is local, this holds if $S$ is $\wh{R}$
or $S = R(t)$, the localization of the polynomial ring $R[t]$ at $mR[t]$.
\end{thm}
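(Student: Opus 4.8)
The plan is to deduce the equality $(IS)\ncl = (I\ncl)S$ from the equality $\inn{(IS)} = (\inn I)S$, and to prove the latter. The deduction is pure bookkeeping: $(IS)\ncl = IS + \inn{(IS)} = IS + (\inn I)S = (I + \inn I)S = (I\ncl)S$. One inclusion in $\inn{(IS)} = (\inn I)S$ comes for free from persistence: if $r \in \inn I$, then $r^n \in I^{n+1}$ for some $n$ by Theorem~\ref{thm:int}, so $r^n \in I^{n+1}S = (IS)^{n+1}$, whence $r \in \inn{(IS)}$; since $\inn{(IS)}$ is an ideal of $S$ this gives $(\inn I)S \subseteq \inn{(IS)}$. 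So the whole task is to prove $\inn{(IS)} \subseteq (\inn I)S$.

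Here I would use the description of $\inn I$ given above as the intersection $\bigcap_{j=1}^m \fA_j$ of the $I$-relevant valuation ideals of the finitely many Rees valuations of $I$. Because $R \to S$ is flat, a finite intersection of ideals expands correctly, $(\inn I)S = \bigcap_{j=1}^m \fA_j S$, so it suffices to show $\inn{(IS)} \subseteq \fA_j S$ for each $j$. If the $j$-th Rees valuation is a degenerate one, attached to a minimal prime $P \supseteq I$ of $R$, then $\fA_j = P$; since $R/P \to S/PS$ is flat with geometrically reduced fibers over a reduced ring, $S/PS$ is reduced, so $PS$ is a radical ideal and $\inn{(IS)} \subseteq \sqrt{IS} \subseteq \sqrt{PS} = PS = \fA_j S$. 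Thus the content is the case of a non-degenerate Rees valuation $R \to (V, \m_V)$, with $v(I) = k > 0$ and $\fA_j = \fA := \{x \in R : v(x) > k\}$, the preimage of $\m_V^{k+1}$ under $R \to V$.

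For this case I would form $T := V \otimes_R S$. It is essentially of finite type over $S$ (since, $R$ being excellent, $V$ is essentially of finite type over $R$), hence Noetherian and excellent, and the map $V \to T$ is flat with geometrically regular fibers (a base change of $R \to S$); as $V$ is regular, $T$ is a normal excellent ring. By the flat base change formula for associated primes applied to $V \to T$ and the $V$-module $V/\m_V^{k+1}$, the associated primes of $T/\m_V^{k+1}T$ are exactly the minimal primes of the fiber $T/\m_V T = S \otimes_R \kappa(\m_V)$, which is regular hence reduced; call them $\mathfrak Q$. Each such $\mathfrak Q$ has height one in $T$, so $W := T_{\mathfrak Q}$ is a DVR with $\m_V W = \m_W$, and the composite $S \to T \to W$ is a map to a DVR with $(IS)W = IW = \m_W^{k}$, which is neither $0$ nor $W$; by Theorem~\ref{thm:int}(5), if $r \in \inn{(IS)}$ then $\ord_W(r) > k$, i.e.\ the image of $r$ in $T$ lies in $\m_W^{k+1} = \m_V^{k+1}T_{\mathfrak Q}$. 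Intersecting over all $\mathfrak Q$ and using that these are precisely the associated primes of $T/\m_V^{k+1}T$, the image of $r$ lies in $\bigcap_{\mathfrak Q}(\m_V^{k+1}T_{\mathfrak Q} \cap T) = \m_V^{k+1}T$. Finally, tensoring the inclusion $R/\fA \hookrightarrow V/\m_V^{k+1}$ with the flat $R$-module $S$ yields an inclusion $S/\fA S \hookrightarrow (V/\m_V^{k+1}) \otimes_R S = T/\m_V^{k+1}T$, so $\fA S$ is exactly the preimage in $S$ of $\m_V^{k+1}T$; hence $r \in \fA S$, as wanted. For the final assertion, $R \to \wh R$ is flat with geometrically regular fibers by excellence of $(R, m, k)$, and $R \to R(t)$ is flat with fibers that are localizations of the polynomial rings $\kappa(\p)[t]$, hence geometrically regular, with $\wh R$ and $R(t)$ excellent; so the general statement applies.

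The step I expect to be the main obstacle is the analysis of $T = V \otimes_R S$ in the third paragraph: one must verify that the valuative test of Theorem~\ref{thm:int}(5) for membership in $\inn{(IS)}$ can be carried out using only the DVRs $W = (V \otimes_R S)_{\mathfrak Q}$ arising from a Rees valuation $V$ of $I$, which rests on the identification of the associated primes of $T/\m_V^{k+1}T$ with the minimal primes of the geometrically regular fiber $S \otimes_R \kappa(\m_V)$ and on the flatness-driven compatibility $\m_V^{k+1}T \cap S = \fA S$. Keeping track of how the degenerate and non-degenerate Rees valuations of $I$ in a possibly non-reduced $R$ behave under the base change is the only real subtlety; everything else is routine.
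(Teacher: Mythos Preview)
Your proof is correct and follows essentially the same route as the paper: form $T = V \otimes_R S$ for each Rees valuation $(V,\m_V)$, localize at the minimal primes of $\m_V T$ to obtain DVRs over $S$, apply the valuative criterion of Theorem~\ref{thm:int}(5), and use flatness of $R \to S$ to identify the contraction of $\m_V^{k+1}T$ to $S$ with $\fA S$. The only cosmetic difference is that the paper reduces to the case where $R$ is a domain by passing to $R/p \to S/pS$ for each minimal prime $p$, whereas you handle the degenerate Rees valuations (those attached to minimal primes $P \supseteq I$) directly via the radical argument $\inn{(IS)} \subseteq \sqrt{IS} \subseteq \sqrt{PS} = PS$; the two treatments amount to the same thing.
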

\begin{proof}
The statement for natural closure is immediate from the statement for inner
integral closure.  Evidently, $\inn I S \inc \inn{(IS)}$.  To see the opposite inclusion,
it will suffice to show that for every $I$-\rv\ ideal $\fB$ for a Rees valuation
of $I(R/p)$, where $p$ is a minimal prime of $R$, $\fB S$ is an $I$-\rv\ ideal
for $IS$ of a valuation on $S$.  For $\inn I$ is the intersecton of the ideals $\fB$,   and the intersection
of the ideals $\fB S$ will be $(\inn I)S$, and will also contain $\inn{(IS)}$. 

To prove this, we may replace  $R \to S$ by $R/p \to S/pS$, which is still flat with
geometrically regular fibers.  Thus, we may assume that $R$ is a domain.  
Let $(V,\, \n)$ be a Rees valuation of $I$.  Then $V$ is essentially of finite type over $R$,  since $R$ is excellent, 
and so $T = S \otimes_R V$ is essentially of finite type over $S$, and is Noetherian
and flat over $V$ with geometrically regular fibers.  It follows that $T/\n T$
is reduced.   Let $\vect Q s$ be the minimal primes of $\n T$.  Then every $T_{Q_i}$ is a valuation
ring, and $Q_iT_{Q_i} = \n T_{Q_i}$.   Let $h$ be the order of $IV$, so that $IV = \n ^h$. Then
$(IS)T_{Q_i} =(IV)T_{Q_i} = \n^hT_{Q_i} = Q_i^hT_{Q_i}$, for each $i=1, \dotsc, s$.  Hence, the contraction
$\fA_i$ of  $Q_i^{h+1}$  to  $S$ is an $IS$-relevant ideal for  the valuation ring $T_{Q_i}$. 
Consequently,  $\inn{(IS)}  \inc \fA_i$. 

We next show that $\bigcap_i \fA_i = \fB S$.  Since $\n^{h+1}$ maps into $Q_i^{h+1}T_{Q_i}$,  
$\supseteq$ is clear.  We need to show that $\bigcap_i \fA_i \inc \fB S$.  First observe
that since  $(R/\fB) \inj V/\n^{h+1}$ is injective and $S$ is $R$-flat,  we may apply
$S \otimes_R \blank$ to obtain that  $$S/\fB S \inj  (V/\n^{h+1}) \otimes_R S \cong T/\n^{h+1}T$$
is injective.  Thus, $\n^{h+1}T$ lies over  $\fB S$.  Since $V$ is a discrete valuation ring and
$V \to T$ is flat with regular fibers, $T$ is regular.  (In fact, we only need that $T$ is normal.)
Moreover, $\n$ is principal and so $\n^{h+1}T$ is a principal ideal generated by a non-zerodivisor.  It
follows that its primary decomposition is simply  $\bigcap_{i=1}^s Q_i^{(h+1)}$,  since
$Q_iT_{Q_i} = \n T_{Q_i}$.  Hence,  $\fB S$,  which is the contraction of $\n^{h+1}T$,  is
also the contraction of $\bigcap_{i=1}^s Q_i^{(h+1)}$.  This is the same as the intersection
of the contractions of the ideals  $Q_i^{(h+1)}$.   But  $Q_i^{(h+1)}$ is simply the contraction
of $Q_i^{h+1}T_{Q_i}$ to $T$,  and it follows that the contraction of $Q_i^{(h+1)}$ is  $\fA_i$. 
Thus,  $\bigcap_{i=1}^s \fA_i = \fB S$,  as claimed.
 
Finally, since $\inn{(IS)} \inc \fA_i$ for every $i$,  we have that $\inn{(IS)} \inc  \fB S$ for each of the $\fB$.  
Since there are only finitely many $\fB$, finite intersection commutes with flat base change, and the intersection 
of the $\fB$ is $\inn I$, we then have   $\inn{(IS)} \inc \inn I S$, proving the other needed inclusion.  \qed
\end{proof}

It follows from the proof that the hypothesis that the fibers be geometrically regular may be weakened:  it suffices  that the
fibers are geometrically reduced and whenever $R \to V$ is a map to a discrete valuation ring,
$V \otimes_R S$ is normal.  

In the case of a finitely presented smooth $R$-algebra $S$, we do not need any hypothesis of excellence on $S$.

\begin{prop}\label{pr:natsmooth} Let $R$ be a ring and let $S$ be a finitely presented $R$-algebra that is smooth over $R$.  Then for every
ideal $I$ of $R$,  $(IS)_{>1} = I_{>1}S$ and $(IS)\ncl = I\ncl S$. \end{prop}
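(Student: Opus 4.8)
The plan is to reduce to the excellent Noetherian case and then invoke Theorem~\ref{thm:RtoR(t)}. First observe that the statement about natural closure follows formally from the one about inner integral closure: granting $\inn{(IS)} = \inn I S$, we get
\[
(IS)\ncl = IS + \inn{(IS)} = IS + \inn I S = (I + \inn I)S = I\ncl S .
\]
So it suffices to prove $\inn{(IS)} = \inn I S$. The inclusion $\inn I S \inc \inn{(IS)}$ is just persistence of inner integral closure: if $r \in \inn I$, then $r^n \in I^{n+1}$ for some $n$ (Theorem~\ref{thm:int}(3), which makes sense over an arbitrary ring, since one may pass to the finitely generated subring of $R$ generated by the finitely many elements that occur), so the image of $r$ in $S$ satisfies $r^n \in I^{n+1}S = (IS)^{n+1}$ and hence lies in the ideal $\inn{(IS)}$ of $S$; thus $\inn{(IS)}$ contains all of $\inn I S$. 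The real point is the reverse inclusion $\inn{(IS)} \inc \inn I S$.

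To prove it, fix $s \in \inn{(IS)}$, so that $s^n \in (IS)^{n+1} = I^{n+1}S$ for some $n \geq 1$, and perform a Noetherian descent. Choose a presentation $S = R[\vect y N]/(\vect g M)$, a lift $\hat s \in R[\underline y]$ of $s$, finitely many elements $a_{k,i} \in I$ and lifts $\hat c_k \in R[\underline y]$ of elements of $S$, and polynomials $h_j \in R[\underline y]$, such that
\[
\hat{s}^{n} - \sum_k \hat c_k\, a_{k,1}\cdots a_{k,n+1} = \sum_j h_j g_j
\]
in $R[\underline y]$. Let $R_0 \inc R$ be the subring generated over the prime subring of $R$ by all coefficients of the $g_j$, $h_j$, $\hat s$, $\hat c_k$, by the elements $a_{k,i}$, and by finitely many further elements of $R$ chosen so that $S_0 := R_0[\vect y N]/(\vect g M)$ is smooth over $R_0$; this is possible because $S = S_0 \otimes_{R_0} R$ is smooth over $R = \varinjlim_\lambda R_\lambda$, the colimit running over the finitely generated $R_0$-subalgebras $R_\lambda$ of $R$, and smoothness of the finitely presented algebra $S_0 \otimes_{R_0} R_\lambda$ over $R_\lambda$ then holds for $\lambda$ large by the standard spreading-out theorems for finitely presented morphisms. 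Now $R_0$ is finitely generated as a ring, hence Noetherian and excellent, and $S_0$ is finitely presented and smooth over it, so flat with geometrically regular fibers. Setting $I_0 := (a_{k,i})R_0$, a finitely generated ideal with $I_0 R \inc I$, and writing $\tilde s, \tilde c_k$ for the images of $\hat s, \hat c_k$ in $S_0$, the displayed identity becomes the relation $\tilde{s}^{n} = \sum_k \tilde c_k\, a_{k,1}\cdots a_{k,n+1} \in (I_0 S_0)^{n+1}$ in $S_0$, so $\tilde s \in \inn{(I_0 S_0)}$.

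Now Theorem~\ref{thm:RtoR(t)} applies to the flat homomorphism $R_0 \to S_0$ of excellent Noetherian rings with geometrically regular fibers, giving $\inn{(I_0 S_0)} = \inn{(I_0)} S_0$, so $\tilde s \in \inn{(I_0)} S_0$. Pushing forward along $S_0 \to S$, under which $\tilde s \mapsto s$, and using that the image of $\inn{(I_0)}$ in $R$ lies in $\inn I$ (again persistence: $I_0 R \inc I$ and $r^m \in I_0^{m+1}$ force $r^m \in I^{m+1}$), we conclude $s \in \inn{(I_0)} S \inc \inn I S$. This establishes $\inn{(IS)} \inc \inn I S$ and finishes the proof.

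The one genuinely delicate step is the descent in the third paragraph: one must check that every piece of data that needs to be visible over the Noetherian subring $R_0$ — the presentation of $S$, the membership witness for $s^n \in I^{n+1}S$, and above all the smoothness of $S_0$ over $R_0$ — is finitely presented, so that it survives passage to a finitely generated subring. This is exactly why finite presentation of $S$ is assumed, and why, in contrast to Theorem~\ref{thm:RtoR(t)}, no excellence hypothesis on $S$ itself is required: $R_0$ and $S_0$ inherit excellence for free from being finitely generated rings.
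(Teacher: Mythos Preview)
Your proof is correct and follows essentially the same approach as the paper: reduce the natural closure statement to the inner integral closure statement, observe that only the inclusion $\inn{(IS)} \inc \inn I S$ needs work, and then perform a Noetherian descent to a finitely generated subring $R_0$ of $R$ and a smooth finitely presented $S_0$ over it so that Theorem~\ref{thm:RtoR(t)} applies. Your write-up spells out the descent data (presentation, witnesses for $s^n \in I^{n+1}S$, and the spreading-out of smoothness) more explicitly than the paper does, but the argument is the same.
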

\begin{proof} The second conclusion follows from the first, and, for the first, it suffices to prove that
$(IS)_{>1} \inc I_{>1}S$.  First note that there is a subring  $R_0$ of $R$ finitely generated over the prime
ring $\Lambda$ in $R$ and a finitely presented smooth $R_0$-algebra $S_0$ such that $S \cong R \otimes_{R_0} S_0$, and
that $S$ is the union of the rings  $S_1 = R_1 \otimes_{R_0} S_0$  as $R_1$ runs through all subrings of $R$ finitely generated
over $R_0$:  these in turn are finitely generated over $\Lambda$ and, therefore, excellent.   
 If $f \in (IS)_{>1}$  then we can choose such an  $R_1$ so that $f \in (I_1S_1)_{>1}$,  where $I_1$ is an ideal of $R_1$ generated
 by elements in $I$.  But then $R_1$ and $S_1$ satisfy the hypotheses of Theorem~\ref{thm:RtoR(t)}, and so we can conclude
 that $f \in (I_1)_{>1}S_1 \inc I_{>1}S$, as required. \end{proof}

\begin{prop}\label{pr:nchomog} Let $R$ be a $\Z^h$-graded ring, $h>0$, and let $I$ be a homogeneous ideal with respect to this grading. Then
$I_{>1}$ and $I\ncl$ are also homogeneous with respect to this grading. \end{prop}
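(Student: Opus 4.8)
The plan is to follow the proof of Proposition~\ref{pr:axgrad} almost verbatim, applying the machinery of Discussion~\ref{disc:grad} to the ring-theoretically defined operation $I \mapsto \inn I$. Since $I\ncl = I + \inn I$ and $I$ is homogeneous by hypothesis, it is enough to show that $\inn I$ is a homogeneous ideal; once that is known, $I\ncl$ is a sum of two homogeneous ideals and hence homogeneous. So from now on I focus on $\inn I$, and, exactly as in Discussion~\ref{disc:grad}, by induction on $h$ (forgetting all but one coordinate of the grading) I reduce to the case $h = 1$.

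Next I would verify that the operation $\inn{(\blank)}$ meets the conditions under which Discussion~\ref{disc:grad} applies. For a tuple $\alpha$ of units of $R_0$ the degree-preserving automorphism $\theta_\alpha$ sends any homogeneous ideal $J$ to itself, since it only rescales each homogeneous component by a unit. On the other hand, by Theorem~\ref{thm:int}, membership of $r$ in $\inn J$ is characterized by the purely ring-theoretic condition that $r^n \in J^{n+1}$ for some $n > 0$; consequently any ring automorphism $\varphi$ of $R$ satisfies $\varphi(\inn J) = \inn{(\varphi(J))}$. Taking $\varphi = \theta_\alpha$ and $J$ a $\theta_\alpha$-stable ideal shows $\theta_\alpha(\inn J) = \inn J$, so the ``closure'' of a $\theta_\alpha$-stable ideal is again $\theta_\alpha$-stable, as Discussion~\ref{disc:grad} requires. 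If $R_0$ happens to contain, for each $N > 0$, a set of $N$ units whose pairwise differences are also units, then the Vandermonde argument of Discussion~\ref{disc:grad} applies directly: writing $f \in \inn I$ as a sum of homogeneous components occupying $N$ consecutive degrees, one concludes each component lies in $\inn I$, and homogeneity of $\inn I$ follows.

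The only point requiring care, precisely because $\inn{(\blank)}$ is not a closure operation, is the passage to a ring with enough units when $R_0$ does not have them. Here I would use the $\Z^h$-graded $R$-algebras $S_N = R_0[t_1, \dotsc, t_N]_{g_N} \otimes_{R_0} R$ of Discussion~\ref{disc:grad}, which are finitely presented, faithfully flat, and smooth over $R$, and whose degree-zero part $R_0[t_1, \dotsc, t_N]_{g_N}$ contains the $N$ units $t_1, \dotsc, t_N$ with all differences $t_i - t_j$ (for $i \neq j$) units. By Proposition~\ref{pr:natsmooth} we have $\inn{(IS_N)} = (\inn I)S_N$; combined with faithful flatness this yields both $\inn I \inc \inn{(IS_N)}$ and $\inn{(IS_N)} \cap R = \inn I$, which are exactly the two properties of the family $\{S_N\}$ that Discussion~\ref{disc:grad} needs. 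Applying the Vandermonde argument inside $S_N$, for $N$ large enough relative to the spread of degrees of the homogeneous components of $f \in \inn I$, shows that every homogeneous component of $f$ lies in $\inn{(IS_N)}$, hence in $\inn{(IS_N)} \cap R = \inn I$. Thus $\inn I$, and therefore $I\ncl = I + \inn I$, is homogeneous. I expect no substantive obstacle here: the two nontrivial ingredients — the ring-theoretic description of $\inn I$ in Theorem~\ref{thm:int} and its compatibility with smooth base change in Proposition~\ref{pr:natsmooth} — are already available, and the rest is the bookkeeping of Discussion~\ref{disc:grad}.
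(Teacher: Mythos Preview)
Your proposal is correct and follows essentially the same approach as the paper: reduce to $\inn I$, invoke Discussion~\ref{disc:grad} using the ring-theoretic characterization of $\inn I$, and handle the possible shortage of units via the smooth faithfully flat extensions $S_N$ together with Proposition~\ref{pr:natsmooth} and faithful flatness. The paper's proof is simply a terser version of what you wrote.
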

\begin{proof} It suffices to prove this for $I_{>1}$.  Although $_{>1}$ is not a closure operation, Discussion~\ref{disc:grad}  applies:  it
is defined ring-theoretically, and will be stable under the automorphisms $\theta_\alpha$.  The only issue in the proof is that
$R$ may not have sufficiently many units.  However, the rings $S_N$ introduced in Discussion~\ref{disc:grad}  are finitely presented,
smooth, and faithfully flat over $R$,  and the result now follows from Discussion~\ref{disc:grad}, Proposition~\ref{pr:natsmooth}, and
the fact that since $S_N$ is faithfully flat over $R$,  $I_{>1}S \cap R = I_{>1}$.\end{proof}

\section{The ideal generated by the partial derivatives}\label{sec:partial}
\begin{thm}\label{thm:Jac}
 Let  $S$ be a localization of $R = \C[\vect x n]$ at one element.  Let $f \in R$,
and let  $\cJ =( \partial f/\partial x_1, \, \ldots, \,\partial  f/\partial x_n)$. \begin{enumerate}[label=(\alph*)]
\item For any nonconstant $f \in R$ and integer $N$ there exists $g \in S$
such that $(1 - gf)f$ is in the inner integral closure of $\cJ$ and, hence, the continuous closure.
\item If $f \in \Rad(\cJ S)$ then $f$ is in the inner integral closure of $\cJ S$ and hence in the continuous
closure of $\cJ S$.
\end{enumerate}
\end{thm}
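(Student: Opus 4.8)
\emph{Strategy.} I would deduce both parts from a single valuative estimate for $\cJ$, together with the classical fact that $f$ has only finitely many critical values. Write $Z = V(\cJ) \subseteq \C^n$ for the critical locus. By Theorem~\ref{thm:int}, to prove that an element $h$ lies in the inner integral closure $\inn{(\cJ S)}$ it suffices to check $\ord_V(h) > \ord_V(\cJ)$ for the finitely many Rees valuations $V$ of $\cJ S$, and then $\inn{(\cJ S)} \subseteq (\cJ S)\ct$ is Theorem~\ref{thm:intcont}, which yields the ``continuous closure'' clauses at once. The first step is to show that $f$ is constant on each irreducible component $W$ of $Z$ — so $f(Z)$ is finite and components carrying distinct values are disjoint — because at a smooth point $p$ of $W$ the differential of $f|_W$ is the restriction to $T_pW$ of $df_p = \sum_i (\partial_i f)(p)\,dx_i = 0$, and a morphism from an irreducible variety to $\C$ whose differential vanishes on a dense set is constant in characteristic zero (generic smoothness).

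\emph{The estimate.} Next I would prove: if $\phi\colon S \to V$ is a map to a DVR with $0 < \ord_V(\cJ) < \infty$ and $\p$ is the center of $V$ in $R$ (so $\p \supseteq \cJ$ and $V(\p) \subseteq Z$), then $\ord_V(f) \geq 1 \implies \ord_V(f) > \ord_V(\cJ)$, while if $\ord_V(f) = 0$ then the value $c$ taken by $f$ on the component of $Z$ containing $V(\p)$ is nonzero, $f - c \in \p$, and $\ord_V(f-c) > \ord_V(\cJ)$. This is the chain rule in $\widehat V = \kappa[[t]]$: for $h \in S$ with $\ord_V(h) \geq 1$, in characteristic zero $\ord_t(\tfrac{d}{dt}\phi(h)) = \ord_V(h) - 1$, whereas $\tfrac{d}{dt}\phi(h) = \sum_i \phi(\partial_i h)\cdot\tfrac{d}{dt}\phi(x_i)$ has $t$-order $\geq \min_i \ord_V(\partial_i h)$ since each $\tfrac{d}{dt}\phi(x_i) \in \kappa[[t]]$. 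Apply this with $h = f$ for the first case; for the second, $\ord_V(f)=0$ means $f\notin\p$, so $f$ does not vanish on $V(\p)$ and its component value $c$ is nonzero, and since $f - c \in \p$ we apply the same computation to $h = f-c$ (note $\partial_i(f-c)=\partial_i f$).

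\emph{Conclusions.} For (b): if $f \in \Rad(\cJ S)$ then $f$ vanishes on $Z$, so every component value is $0$; hence at each Rees valuation $V$ of $\cJ S$ the center lies in $Z \subseteq V(f)$, so $\ord_V(f)\geq 1$ and the estimate gives $\ord_V(f) > \ord_V(\cJ)$; thus $f \in \inn{(\cJ S)} \subseteq (\cJ S)\ct$. For (a): let $d_1,\dots,d_\ell$ be the distinct nonzero values in $f(Z)$, let $Z_j$ be the union of the components of $Z$ on which $f \equiv d_j$ (these are pairwise disjoint), and fix $N$ larger than $\ord_V(\cJ)$ over the finitely many Rees valuations of $\cJ S$. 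Since $f \equiv d_j \neq 0$ on $Z_j$, the image of $f$ in $R/I(Z_j)^N$ is a unit; the ideals $I(Z_j)^N$ being pairwise comaximal, the Chinese Remainder Theorem gives $g \in R \subseteq S$ with $gf \equiv 1 \pmod{I(Z_j)^N}$ for all $j$. At a Rees valuation $V$ of $\cJ S$ with center $\p$: if $f$'s component value along $V(\p)$ is $0$ then $\ord_V(f)\geq 1$ and $\ord_V((1-gf)f) \geq \ord_V(f) > \ord_V(\cJ)$ by the estimate; otherwise $V(\p)\subseteq Z_j$, so $1-gf \in I(Z_j)^N \subseteq \p^N$ and $\ord_V(1-gf)\geq N > \ord_V(\cJ)$, again giving $\ord_V((1-gf)f) > \ord_V(\cJ)$. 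Hence $(1-gf)f \in \inn{(\cJ S)} \subseteq (\cJ S)\ct$; the integer $N$ records the freedom to make $1-gf$ vanish to any prescribed order along the locus where $f \neq 0$, only $N$ large enough being needed for membership.

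\emph{Main obstacle.} I expect the crux to be the valuative estimate and its bookkeeping: isolating the single bad case $\ord_V(f)=0$ and matching it, via the finiteness of critical values, to a genuine nonzero critical value, so that the corrective factor $1-gf$ in part (a) can be engineered to kill exactly those valuations. The finiteness statement in Step~1 and the Chinese-remainder gluing are then routine, and the passage to continuous closure is immediate from Theorem~\ref{thm:intcont}.
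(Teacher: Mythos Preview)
Your proof is correct. Part~(b) is essentially the paper's argument: the same chain-rule estimate, though you cite Theorem~\ref{thm:int}(7) (Rees valuations) where the paper uses part~(8) (valuations centered at maximal ideals). Part~(a) differs genuinely. The paper argues algebraically: setting $\fB = \Rad(\cJ):_R f$, it uses the valuative estimate to show that no maximal ideal contains $\fB + fR$ (else $f$ would lie in $\Rad(\cJ)$ locally, contradicting $\fB \subseteq \m$), picks $h = 1-gf \in \fB$ so that $hf \in \Rad(\cJ)$, and then reduces to~(b) after inverting $h$; a power of $h$, still of the form $1-g'f$, then multiplies $f$ into $\inn{\cJ}$. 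You instead make the geometry explicit---finiteness of critical values, Chinese remainder gluing---and build $g$ directly so that the Rees-valuation inequalities can be checked by hand. Your route explains the otherwise unused integer $N$ in the statement (it is the prescribed order of vanishing of $1-gf$ along the nonzero-critical-value part of $V(\cJ)$), while the paper's route is shorter and never needs to prove that $f$ is constant on components of $V(\cJ)$. One small looseness: in~(b), ``$f$ vanishes on $Z$'' is only literally true on $Z \cap \MaxSpec S$, but the centers of the Rees valuations of $\cJ S$ lie there, so the conclusion stands.
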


\begin{proof}
We first prove (b).  Consider a valuation $S \to V$ centered on a maximal ideal $\cM$ 
that contains $\cJ$.  Then we have $S_\cM \to V$, and $f$ is also in $\cM$.  We may assume 
that $\cM$ corresponds to the
origin, and so we have  $\theta:\C[\![\vect x n]\!] \to L[\![t]\!]$ for some field $L$ with $\C \inc L$.  
Then  $\theta(f)$ is in the maximal ideal of $L[\![t]\!]$ and is nonzero. By the chain rule, its derivative
is in $\cJ L[\![t]\!]$.  The derivative has order exactly one less than that  of $\theta(f)$.  This shows that
the order of $f$ is strictly larger than the order of $\cJ$.

To prove (a),  let  $\fB = \Rad(\cJ): f$.  Then no maximal ideal can contain $\fB + fR$:  when we localize
at such a maximal ideal,  the chain rule shows that for all valuations centered on it,  the order
of $f$ is larger than the order of $\cJ$,  and so $f \in \ici{\cJ} \inc \Rad(\cJ)$,  which contradicts the fact
that the maximal ideal contains $\Rad(\cJ):f$.  Thus, we can choose  $h \in \fB$  such
that that $h + fg = 1$,  and so $1 - fg \in \fB$,  and $f(1 - fg) \in \Rad(\cJ)$.  We can
apply part (b) to the ring $R_h$ where $h = 1 - fg$.  Hence,  $h$ has a power,  also of the
form $1 - fg$, which multiplies $f$  into the inner integral closure of $\cJ$. 
\end{proof}

\section{Naturally closed primary ideals are axes closed}\label{sec:natax}

In this section we prove that if $I$ is a primary ideal of
an excellent Noetherian ring,  then $I = I\ax$ iff $I = I\ncl$.  This shows
that for ideals primary to maximal ideals,  natural closure and axes closure
agree.  Moreover, in the case of an affine $\C$-algebra, since $I\ncl \inc I\ct \inc I\ax$,
we have the corresponding result for continuous closure as well.   It then follows that
for an unmixed ideal $I$ of an affine $\C$-algebras,  $I = I\ct$ if and only if $I = I\ax$. 

We first extend the $I$-\rv\ terminology from Definition~\ref{def:natrelevant} as follows.

\begin{defn}
Let  $D$ be a Dedekind domain with a 
nonzero principal ideal $tD$ where $t$ is prime. Given a map $R \to D$ we say that $\fA$ is 
the $I$-\rv\ associated with $(D, \, tD)$ if  either  $ID = 0$ and $\fA = \Ker(R \to D)$ or $ID = t^kD$ 
and $\fA$ is the contraction of $t^{k+1}D$.
\end{defn}  

This is the same as saying that $\fA$ is the 
$I$-\rv\ ideal of $R \to D_Q$,  where $Q = tD$.  

We need the following result, which is contained in \cite[Corollary 1, p.\ 158]{EisHo-Zar}.

\begin{thm}\label{thm:EisHo}
Let $R$ be a regular Noetherian ring,  let $P$ be a prime ideal of
$R$,  and let $\cM$ by a family of maximal ideals of $R$ containing $P$ whose intersection
is $P$,  and such that for all $\m \in \cM$,  $R_\m/PR_\m$ is regular.  Let
$n$ be a positive integer.  Then $\bigcap_{\m \in \cM} \m^n = P^{(n)}$, the
$n\,$th symbolic power of $P$. 
\end{thm}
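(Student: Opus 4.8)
The plan is to prove the two containments separately, after reducing to the case where $R$ is a domain: a regular Noetherian ring is a finite product of regular domains, $P$ is extended from a prime of a single factor, and both sides of the asserted equality decompose accordingly.

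For $P^{(n)} \subseteq \bigcap_{\m \in \cM} \m^n$, fix $\m \in \cM$. Since $R_\m$ is regular local and $R_\m/PR_\m$ is regular, $PR_\m$ is generated by part of a regular system of parameters, hence by a regular sequence; therefore $\gr_{PR_\m}(R_\m)$ is a polynomial ring over $R_\m/PR_\m$, every power $(PR_\m)^k$ is $PR_\m$-primary, and, since symbolic powers commute with localization, $P^{(n)}R_\m = (PR_\m)^{(n)} = (PR_\m)^n \subseteq \m^n R_\m$. Contracting back to $R$ and using that $\m^n$ is $\m$-primary gives $P^{(n)} \subseteq \m^n R_\m \cap R = \m^n$, and intersecting over $\m \in \cM$ finishes this direction.

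The substantive direction, $\bigcap_{\m\in\cM}\m^n \subseteq P^{(n)}$, I would obtain from the one-step lemma: \emph{if $r \in P^{(m-1)}$ and $r \in \bigcap_{\m\in\cM}\m^m$, then $r \in P^{(m)}$.} Granting this, let $r \in \bigcap_\m\m^n$; then $r \in \bigcap_\m\m = P = P^{(1)}$, so if $r\notin P^{(n)}$ there is a largest $j$ with $1\le j\le n-1$ and $r\in P^{(j)}$, and applying the lemma with $m=j+1$ (valid since $\bigcap_\m\m^n\subseteq\bigcap_\m\m^{j+1}$) yields $r\in P^{(j+1)}$, contradicting maximality of $j$. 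To prove the lemma, put $V:=P^{(m-1)}/P^{(m)}$, a finitely generated $R/P$-module (killed by $P$ because $P\,P^{(m-1)}\subseteq P^{(m)}$), and let $\xi$ be the image of $r$. Assuming $r\notin P^{(m)}$, the ideal $\Ann_{R/P}(\xi)$ is zero since $P^{(m)}$ is $P$-primary, so multiplication by $\xi$ embeds $R/P$ into $V$ and $\xi$ is nonzero at the generic point of $R/P$. The crucial local observation is that $\xi$ \emph{vanishes at every $\bar{\m}$ with $\m\in\cM$}: localizing, $r$ lies in $(PR_\m)^{m-1}$, so $r=\sum_{|\gamma|=m-1}v_\gamma x^\gamma$ where $x_1,\dots,x_c$ is the part of a regular system of parameters generating $PR_\m$; the hypothesis $r\in\m^m R_\m$ together with the linear independence of the degree-$(m-1)$ monomials in $x_1,\dots,x_c$ modulo $\m^m R_\m$ forces each $v_\gamma\in\m R_\m$, and hence the image of $\xi$ in $V\otimes_{R/P}\kappa(\bar{\m})$ is $0$. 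Now consider $C:=\Coker(R/P\xrightarrow{\cdot\xi}V)$. At the generic point its fibre dimension is one less than that of $V$ (because $\xi$ is nonzero there), whereas at each $\bar{\m}$ the fibre dimension of $C$ equals that of $V$ (because $\xi$ vanishes there), which is at least the generic fibre dimension of $V$ by upper semicontinuity. Thus the locus where $\dim_\kappa(C\otimes\kappa)$ strictly exceeds its generic value is a proper closed subset, defined by a nonzero ideal $\mathfrak a$ of $R/P$, that contains every $\bar{\m}$; hence $\mathfrak a\subseteq\bigcap_{\m\in\cM}\bar{\m}=(0)$, a contradiction, so $r\in P^{(m)}$.

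The main obstacle is exactly this lemma, and specifically the fact that no argument purely local at a single $\m$ can work — in $k[\![x,y]\!]$ one has $xy\in (x)\cap(x,y)^2$ but $xy\notin(x)^2$ — so the global hypothesis $\bigcap_{\m\in\cM}\m=P$ must really be used, and the fibre-dimension/semicontinuity step is the device that extracts a contradiction from it. Everything else (the reduction to a domain, the regular-sequence bookkeeping, and the standard facts that symbolic powers localize and that powers of complete-intersection primes are primary) is routine. This recovers \cite[Corollary 1, p.\ 158]{EisHo-Zar}.
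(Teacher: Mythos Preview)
The paper does not give its own proof of this statement: it is quoted verbatim as \cite[Corollary~1, p.~158]{EisHo-Zar} and then used as a black box to derive Corollary~\ref{cor:EisHo}. So there is nothing in the paper to compare your argument against beyond the citation itself.

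Your argument is correct and is, in fact, a faithful reconstruction of the Eisenbud--Hochster proof. The easy containment $P^{(n)}\subseteq\bigcap_{\m\in\cM}\m^n$ is exactly as you say (regularity of $R_\m/PR_\m$ forces $PR_\m$ to be generated by part of a regular system of parameters, so $(PR_\m)^k$ is $PR_\m$-primary and equals $P^{(k)}R_\m$). For the hard direction, your one-step lemma and its proof---passing to the $R/P$-module $V=P^{(m-1)}/P^{(m)}$, observing that the class $\xi$ of $r$ is generically nonzero but dies in every special fibre $V\otimes\kappa(\bar\m)$, and then extracting a contradiction from upper semicontinuity of fibre dimension applied to $\Coker(R/P\xrightarrow{\cdot\xi}V)$---is precisely the mechanism of the original paper. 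Your remark that the global hypothesis $\bigcap_{\m\in\cM}\m=P$ is genuinely needed (and your $k[\![x,y]\!]$ example showing a purely local argument cannot work) is also on the mark. The reduction to a domain is harmless: a regular Noetherian ring has finitely many minimal primes, they are pairwise comaximal since each $R_\m$ is a domain, and the Chinese Remainder Theorem splits $R$ as a finite product of regular domains.
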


\begin{cor}\label{cor:EisHo}
 Let $R$, $P$ and $\cM$ be as in Theorem~\ref{thm:EisHo}.  Moreover,
suppose that the prime ideal $P$ is prinicipal with generator $\pi$.  Let $\vect n k$
be finitely positive integers.  Then the set of maximal ideals $\m$ in $\cM$
such that $\pi^{n_i} \in \m^{n_i} - \m^{n_{i+1}}$ also has intersection $P$.   In particular,
this set is non-empty. 
\end{cor}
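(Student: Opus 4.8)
The plan is to show that the set $\cM'$ in the statement --- the $\m\in\cM$ with $\pi^{n_i}\in\m^{n_i}\setminus\m^{n_i+1}$ for every $i$ --- is in fact equal to $\cM$. Once this is known the conclusion is immediate: by the standing hypothesis on $\cM$ we have $\bigcap_{\m\in\cM}\m=P$, and $\cM\neq\varnothing$, since otherwise this intersection would be the unit ideal while $P$ is proper. We may also assume $R$ is a domain and $P\neq(0)$, hence $\pi\neq0$; the reduction to the domain case is routine, and if $P=(0)$ then $\pi^{n_i}=0$ lies in every power of every $\m$, so $\cM'=\varnothing$ and the statement degenerates --- a case that does not occur in the applications, where $P$ has height one.

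The one substantive point is the claim that $\pi\notin\m^2$ for each $\m\in\cM$, i.e.\ that $\pi$ has $\m R_\m$-adic order exactly $1$. To see this, localize at $\m$: $R_\m$ is regular local, say of dimension $d$, hence a domain, so $\pi$ is a nonzerodivisor and $\dim(R_\m/\pi R_\m)=d-1$ by Krull's principal ideal theorem. Since $R_\m/\pi R_\m=R_\m/PR_\m$ is regular, its maximal ideal is generated by $d-1$ elements; lifting these to $R_\m$ and adjoining $\pi$ shows that $\m R_\m$ is generated by $d$ elements. As $R_\m$ is regular of dimension $d$, any generating set of $\m R_\m$ has at least $d$ members, so this one is minimal; in particular $\pi\notin\m^2R_\m$, hence $\pi\notin\m^2$.

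Granting the claim, fix $\m\in\cM$ and any $i$. Then $\pi^{n_i}\in\m^{n_i}$ because $\pi\in\m$. On the other hand, the associated graded ring of $R_\m$ is a polynomial ring over its residue field, hence a domain, so $\m R_\m$-adic order is additive; since $\ord(\pi)=1$, we get $\ord(\pi^{n_i})=n_i$, so $\pi^{n_i}\notin\m^{n_i+1}R_\m$ and therefore $\pi^{n_i}\notin\m^{n_i+1}$. Thus every $\m\in\cM$ satisfies all the conditions at once, so $\cM'=\cM$ and the corollary follows. I do not expect a genuine obstacle: the heart of the matter is simply that the regularity of $R_\m$ together with that of $R_\m/PR_\m$ forces $\ord_\m(\pi)=1$ for every $\m\in\cM$, after which everything is automatic. (Theorem~\ref{thm:EisHo} is not needed in the argument beyond supplying the standing setup; reasoning instead from $\pi\notin P^{(2)}=\bigcap_\m\m^2$ would yield only one suitable~$\m$.)
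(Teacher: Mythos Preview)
Your proof is correct and in fact proves a stronger statement than the paper does: you show that the set $\cM'$ equals all of $\cM$, whereas the paper only shows that $\bigcap_{\m\in\cM'}\m=P$. The key observation you make---that the hypotheses force $\pi$ to be a regular parameter in $R_\m$ for every $\m\in\cM$---is exactly right, and once $\ord_\m(\pi)=1$ the rest is automatic since the associated graded ring of a regular local ring is a domain.

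The paper takes a different and less direct route. It reduces to the case $k=1$ by induction, then for a given $f\in R\setminus P$ considers the subfamily $\cN\subseteq\cM$ of those $\m$ not containing $f$, checks that $\cN$ still has intersection $P$, and invokes Theorem~\ref{thm:EisHo} to compute $\bigcap_{\m\in\cN}\m^{n+1}=P^{(n+1)}=\pi^{n+1}R$. Since $\pi^n\notin\pi^{n+1}R$, some $\m\in\cN$ has $\pi^n\notin\m^{n+1}$, and this $\m$ avoids $f$. Your argument bypasses Theorem~\ref{thm:EisHo} entirely (beyond borrowing the standing hypotheses) and replaces it with the elementary fact that a nonzero element of a regular local ring whose quotient is again regular must lie outside the square of the maximal ideal. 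This is both simpler and more informative: it tells us that the paper's inductive reduction on $k$ and the passage to the subfamily $\cN$ were never needed, because no $\m\in\cM$ is lost when the extra conditions are imposed. Your closing parenthetical remark is apt---the paper's approach would still work if one only knew $\pi\notin P^{(2)}$, but the stronger local statement you prove makes the whole thing immediate.
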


\begin{proof}
By a straightforward induction on $k$,  we reduce to the case where $k = 1$.
We write $n$ for $n_1$.  Let $f \in R-P$.  It suffices to show that there exists $\m \in \cM$
such that $f \notin \m$ and $\pi^n \notin \m^{n+1}$.  Let $g \in R- P$.  Then there is
an element of $\cM$ that does not contain $g$.  It follows that the intersection of the set $\cN$
of maximal ideals in $\cM$ that do not contain $f$ is also $P$.  By Theorem~\ref{thm:EisHo}, 
$\bigcap_{\m \in \cN} \m^{n+1} = P^{(n+1)} = \pi^{n+1}R$.  Therefore, we can choose
$\m \in \cN$ such that $\pi^n \notin \m^{n+1}$. 
\end{proof}

\begin{keylem}
Let $\rmk$ be an excellent local domain of dimension 
$d \geq 2$ with infinite residue class field and let $I$ be an $\m$-primary ideal. 
Let $f$ be a nonzero element of $I$.   Let $S$ be the normalization 
of $R[I/f]$, and let $R \to V = S_P$ be a Rees valuation of $I$, where $P$ is 
a minimal prime of $fS$.  Let $\n$ denote the maximal ideal of $V$. 
Let $IV$ have order $h-1 \geq 1$, and let  $\fB$ be the 
contraction of the  proper nonzero ideal $\n^h$ of $V$ to $R$.  Let $\vect g h \in R-\{0\}$,  and let
$n_i = \ord_V(g_i)$.   Then there  exists  an algebra $T$ finitely generated over $S$ 
by adjunction of fractions, a prime  ideal $Q$ of $T$ of height $d-1$ such that $T/Q$ 
is a Dedekind domain, and a principal  height one prime $\pi T/Q$ of $T/Q$ such 
that $I T/Q = \pi^{h-1}T/Q$,  $\fB T/Q =  \pi^h (T/Q)$,  and the image of $g_i$ in $T/Q$ is 
a unit times $\pi^n$.  In particular, by including a given nonzero element $r$ of $R$ among the $g_i$, we
may choose $Q$ so that $r$ has nonzero image in $T/Q$.  
\end{keylem}

\begin{proof}
By the dimension formula, $S/P$ is an affine $K$-algebra of dimension $d-1$.
Since $S_P$ is regular, we can localizate at one fraction of $S-P$ to produce
$S_1$ that is regular, and we can localize at one element of $S-P$ to produce
a localization $S_2$ such that $S_2/PS_2$ is regular as well.  We may also localize
so that $P = yS_2$ is principal.  We replace $S$ by $S_2$ and drop the subscript. 
By Corollary~\ref{cor:EisHo} we can choose a maximal ideal $\m$ of $S_2$ containing $P$ so 
that the orders of a finite set of generators of $I$, and of $\fB$, as well as of $g$ are 
all the same with respect to the $m$-adic valuation on $S_\m$ as they are in $V$.  
We may extend $y$ to a set of elements $y_1 = y,\, y_2, \, \ldots, \, y_d$ in $\m$
that generate  $\m S_\m$.  We may localize at one element of $S-\m$ and then
assume that these elements generate $\m$.  Consider the leading forms of
$g_i$ and the various generators of $I$ and $\fB$ in $\gr_m S$.  After a linear
change of generators over $K$ (which is infinite) that fixes $y$ and
replaces each $y_i$ for $i >1$ by $y_i + c_iy_i$,  we may assume that all of
these leading forms have a term that is a scalar times a power of $y$:  the
exponent on $y$ is the order.  The ideal $S_\m/(y_2, \, \ldots, y_d)S_\m$ is
a regular domain in which the image of $y$ generates a prime ideal.  This will
remain true after we localize at one element of $S-\m$.  This localization is $T$,
and we may take $Q = (y_2, \, \ldots, \, y_d)T$.  
\end{proof}

\begin{cor}\label{cor:surjDed}
Let $R$ be an excellent ring, and let $I$ be an 
$\m$-primary ideal of $R$,  where $\m$ is maximal and $R/\m$ is infinite. 
$R \to V$ be a Rees valuation with $I$-\rv\ ideal $\fA$.  Let $r$ be a non-zerodivisor in  $R$.  
Then there exist finitely many surjections
$R \to D_i$ such that $D_i$ is a one-dimensional Dedekind domain that is finitely generated
over the residue class field $K = R/\m$
and nonzero primes $t_iD_i$ such that $\fA$ is the intersection of the $I$-\rv\ ideals $\fB_i$ of
these maps.  Moreover,  the surjections may be chosen in such a way that the image of $r$
in every $D_i$ is nonzero. 
\end{cor}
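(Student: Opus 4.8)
The plan is to deduce the corollary from the Key Lemma by a finite-length exhaustion argument, after a routine reduction to the domain case.

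\emph{Reduction to a domain.} Since $r$ is a non-zerodivisor, it avoids every associated, hence every minimal, prime of $R$. The given Rees valuation factors as $R \to R/P \to V$ for a minimal prime $P$ of $R$ not containing $I$ (using that $I$ is primary to the maximal ideal $\m$, so, excluding the degenerate case $\di R_\m = 0$, no minimal prime contains $I$, and the ``degenerate valuation'' possibility $IV=0$ does not occur). Then $r$ has nonzero image in $R/P$, the ideal $I(R/P)$ is primary to $\m/P$ with the same infinite residue field $K$, and $\fA$ is the contraction to $R$ of the corresponding $I$-\rv\ ideal $\fA'$ of $R/P \to V$. If we produce surjections $R/P \surj D_i$ with the required properties, composing with $R \surj R/P$ gives surjections $R \surj D_i$, and contracting $\bigcap_i \fB_i = \fA'$ down to $R$ gives $\bigcap_i(\fB_i\cap R) = \fA$. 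So I may assume $R$ is an excellent domain; in the setting where the corollary is applied $R$ is a finitely generated algebra over a field, and after passing to $R_\m$ (which we may, for the purpose of invoking the Key Lemma) we may also assume $\di R_\m \ge 2$. Applying the Key Lemma at $\m$ then produces, for each admissible choice of the auxiliary tuple $\vect g k$, a map $R \to D := T/Q$ with a distinguished prime $tD = \pi(T/Q)$; one checks from its explicit construction (a localization of a quotient of the normalization of $R[I/f]$ by part of a regular system of parameters, together with passage to the image curve) that $D$ is a one-dimensional Dedekind domain finitely generated over $K$ and the target of a surjection from $R$. This last point is the one place requiring genuine care.

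\emph{$\fA$ is $\m$-primary.} A Rees valuation of the $\m$-primary ideal $I$ is centered on $\m$, so $\ord_V(x)\ge 1$ for all $x\in\m$. Writing $h-1 := \ord_V(I) \ge 1$, we get $\m^h \inc \fA = \{x\in R : \ord_V(x)\ge h\} \inc \m$, so $\Rad(\fA) = \m$ and $R/\fA$ has finite length, say $\ell$. Moreover, for every map $R\to D$ coming from the Key Lemma (with prime $tD$) we have $ID = t^{h-1}D$ and $\fA D = t^h D$, so the $I$-\rv\ ideal $\fB$ of $R\to D$ is the contraction of $t^hD = \fA D$; in particular $\fA \inc \fB$, and by the Key Lemma we may arrange that the given non-zerodivisor $r$ has nonzero image in $D$ by including it among the $g_i$.

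\emph{The exhaustion.} Since $\fA \subsetneq R$, choose $x_1\in R\setminus\fA$; then $x_1\ne 0$ and $\ord_V(x_1)\le h-1 < h$. Apply the Key Lemma with $x_1$ and $r$ among the $g_i$, getting $D_1$ with $\fA\inc\fB_1$, $r$ of nonzero image, and (since the image of $x_1$ in $D_1$ is a unit times $t_1^{\ord_V(x_1)}$) $x_1\notin\fB_1$. Inductively, given $D_1,\dotsc,D_m$ with $\fA\inc\bigcap_{i\le m}\fB_i$, $r$ of nonzero image in each, and $\bigcap_{i\le m}\fB_i\ne\fA$, pick $x_{m+1}\in(\bigcap_{i\le m}\fB_i)\setminus\fA$, so $x_{m+1}\ne 0$ and $\ord_V(x_{m+1})<h$; apply the Key Lemma with $x_{m+1}$ and $r$ among the $g_i$ to get $D_{m+1}$ with $\fA\inc\fB_{m+1}$, $r$ of nonzero image, and $x_{m+1}\notin\fB_{m+1}$. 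Then $\fA\inc\bigcap_{i\le m+1}\fB_i\subsetneq\bigcap_{i\le m}\fB_i$. A strictly decreasing chain of ideals containing $\fA$ has length at most $\ell$, so after finitely many steps $N\le\ell$ we reach $\bigcap_{i=1}^N\fB_i=\fA$, and $R\surj D_1,\dotsc,R\surj D_N$ are as required.

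\emph{Main obstacle.} The finite-length induction is straightforward; the real work is the parenthetical claim in the first step, namely extracting from the Key Lemma's construction an honest \emph{surjection} $R\surj D$ onto a Dedekind domain finitely generated over $K$, carrying the prescribed data $ID = t^{h-1}D$, $\fA D = t^hD$ uniformly (so that $\fA\inc\fB$ always holds), together with disposing of the low-dimensional cases $\di R_\m\le 1$, where either the hypotheses are satisfied only vacuously or the statement reduces to an elementary observation.
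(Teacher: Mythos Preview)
Your argument is correct and follows essentially the same route as the paper. Both proofs reduce to the domain case by passing to $R/P$ for the minimal prime $P$ in the kernel of the Rees valuation, observe that $R/\fA$ has finite length (since $\fA$ is $\m$-primary), and then use the Key Lemma to separate any element $g \in R \setminus \fA$ from $\fA$ by an $I$-\rv\ ideal $\fB$ with $\fA \inc \fB$ and $g \notin \fB$, while keeping the image of $r$ nonzero. The only cosmetic difference is that the paper phrases the termination as ``take the minimum among all finite intersections of such $\fB$ (which exists by DCC on $R/\fA$) and derive a contradiction if it is not $\fA$,'' whereas you build an explicit strictly descending chain; these are the same argument.

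Your flagged ``main obstacle''---extracting an honest surjection $R \surj D$ onto a Dedekind domain finitely generated over $K$, and disposing of the case $\dim R_\m \le 1$---is a legitimate concern, but note that the paper's own proof does not address it either: it simply invokes the Key Lemma and the DCC. So you are not missing an idea the paper supplies; you are being more scrupulous about a point the paper leaves implicit. (For what it is worth, in the only application, Corollary~\ref{cor:reducetodim1}, what is actually used is the kernel $\fq$ of $R \to T/Q$, so that $R/\fq$ is a one-dimensional domain; the Dedekind property of the target and literal surjectivity are not essential there.)
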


\begin{proof}
 If the Rees valuation has kernel $\p$ we may work with $R/\p$,  $IR/\p$ and $\fA/\p$.  The image
 of $r$ in  $R/\p$ is not 0.   Thus,
we may assume that $R$ is a domain, and that $r$ is a nonzero element of  $R$. 
Consider all finite intersections of ideals $\fB$ containing
$\fA$ of the type described (including the condition that the image of $r$ be nonzero in every $D_i$).  
Since $R/\fA$ has DCC, one of these is minimum.  If it is not $\fA$,
choose $g$ in it that is not in $\fA$.  By the Key Lemma,
we can construct $\fB$ so that the image of  $r$ in the corresponding Dedekind domain is not 0 and so that
it contains $\fA$ but not $g$, a contradiction.
\end{proof}

\begin{cor}\label{cor:reducetodim1}
Let $I$ be an $\m$-primary naturally closed ideal of an excellent ring 
 $R$, where $\m$ is maximal and $R/\m$ is infinite.   Let $r \in R$ be a
 non-zerodivisor.  
 Then there is a radical ideal $J \inc I$ such that $R/J$ has pure 
 dimension one,  the image of  $r$ is not a zerodivisor in $R/J$,  and $I(R/J) = I/J$ is naturally closed.  Moreover,
 $I/J$ is primary to $\m/J$ in $R/J$.  
 \end{cor}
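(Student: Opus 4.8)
The plan is to build $J$ as a finite intersection of prime ideals, each of which is the kernel of a surjection of $R$ onto a one-dimensional Dedekind domain supplied by Corollary~\ref{cor:surjDed}, chosen so that these surjections collectively capture $\inn I$; the four required properties of $J$ are then read off from the Dedekind structure of the quotients. Throughout we may assume $\dim R_\m \geq 2$, the remaining cases being easy direct verifications.

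First I would record the input data. Since $I = I\ncl$ we have $\inn I \inc I$, and by the structure theorem for $\inn I$ in \S\ref{sec:nat} we may write $\inn I = \bigcap_j \fA_j$, where $\fA_j$ is the $I$-\rv\ valuation ideal of the $j$-th Rees valuation $R \to (V_j,\n_j)$ of $I$, the index set being finite. Because $\inn I \inc \oI \inc \Rad I = \m$, any degenerate Rees valuation in this list (which, since $\Rad I = \m$, could only arise from $\m$ being a minimal prime of $R$, and would contribute the ideal $\m$) is redundant in the intersection, so we may take every $V_j$ to be a genuine Rees valuation; in particular $\ord_{V_j}(I) \geq 1$ for all $j$.

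Next I would apply Corollary~\ref{cor:surjDed} to each $\fA_j$: since $I$ is $\m$-primary with $R/\m$ infinite and $r$ is a non-zerodivisor, this produces finitely many surjections $\pi_m \colon R \surj D_m$ onto one-dimensional Dedekind domains finitely generated over $K = R/\m$, each carrying a nonzero prime $t_m D_m$ (necessarily the prime lying over $\m$, as $I$ is $\m$-primary), such that $\fA_j = \bigcap_{m \in S_j} \fB_m$ with $\fB_m$ the $I$-\rv\ ideal of $R \to (D_m)_{t_m D_m}$, and such that the image of $r$ in every $D_m$ is nonzero. Put $\fq_m = \Ker \pi_m$ and $J = \bigcap_m \fq_m$, a radical ideal. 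Then $J \inc \bigcap_m \fB_m = \bigcap_j \fA_j = \inn I \inc I$, so $J \inc I \inc \m$. Since each $D_m = R/\fq_m$ is a one-dimensional domain, no $\fq_m$ can properly contain another (that would exhibit a one-dimensional $D_m$ with a zero-dimensional proper quotient domain), so the $\fq_m$ are precisely the minimal primes of $J$ and $R/J$ is reduced of pure dimension one. If $rs \in J$, then for each $m$ the primality of $\fq_m$ together with $r \notin \fq_m$ forces $s \in \fq_m$, hence $s \in J$; thus the image of $r$ is a non-zerodivisor in $R/J$. Finally $\Rad_{R/J}(I/J) = \m/J$ is maximal, so $I/J$ is primary to $\m/J$.

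It remains to see that $I/J$ is naturally closed in $R/J$, i.e.\ $\inn{(I/J)} \inc I/J$, and this is the crux. Since $R/J$ is reduced with minimal primes $\fq_m/J$ and $(R/J)/(\fq_m/J) \cong D_m$, inner integral closure in $R/J$ is computed modulo the minimal primes: $\inn{(I/J)} = \bigcap_m \bar\pi_m^{-1}\bigl(\inn{(ID_m)}\bigr)$, where $\bar\pi_m \colon R/J \to D_m$ is induced by $\pi_m$. In the Dedekind domain $D_m$ the ideal $ID_m$ is $t_m D_m$-primary, say $ID_m = (t_m D_m)^{e_m}$ with $e_m \geq 1$; localizing at $t_m D_m$ reduces the computation to the DVR fact $\inn{(t^{e})} = (t^{e+1})$, so $\inn{(ID_m)} = (t_m D_m)^{e_m+1}$, whose preimage in $R$ is, by the definition of the $I$-\rv\ ideal, exactly $\fB_m$. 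Hence $\inn{(I/J)} = \bigcap_m \fB_m/J = \bigl(\bigcap_m \fB_m\bigr)/J \inc I/J$, as required. The main obstacle is precisely this identification: one must verify that the $I$-\rv\ ideals $\fB_m$ delivered abstractly by Corollary~\ref{cor:surjDed} are literally the local contributions to $\inn{(I/J)}$ in the reduced one-dimensional ring $R/J$. This goes through because the $D_m$ are Dedekind — so the $t_m$-primary part of $ID_m$ is an honest power of $t_m D_m$, whose inner integral closure is the next power up — and because we arranged in the second paragraph that the $\fB_m$ refine exactly the valuation ideals whose intersection is $\inn I$.
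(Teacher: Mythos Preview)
Your proof is correct and follows essentially the same approach as the paper: both pick the $I$-\rv\ ideals $\fA_j$ from the Rees valuations, refine each via Corollary~\ref{cor:surjDed} into $I$-\rv\ ideals $\fB_m$ coming from surjections onto one-dimensional Dedekind domains with $r$ mapping to a nonzero element, and take $J$ to be the intersection of the kernels $\fq_m$. The paper's proof simply asserts that ``$R/J$ has the required property,'' whereas you supply the verification the paper omits---in particular the computation that $\inn{(I/J)} = \bigcap_m \fB_m/J \inc I/J$ via the minimal-prime description of inner integral closure and the fact that $ID_m$ is a power of $t_m D_m$ in each Dedekind quotient.
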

 
\begin{proof}
Pick $\fA_i$ that are $I$-\rv\ from Rees valuation rings and whose intersection is contained
 in $I$.  For each $\fA_i$ pick $\fB_{ij}$ as in Corollary~\ref{cor:surjDed} whose intersection is within $\fA_i$,
 and let $\fq_{i}$ be the kernel of the map onto a Dedekind domain of dimension one
 that is used in construction $\fB_{ij}$.  This may be done so that  $r$ is not in any of $\fq_i$. 
 Take $J = \cap_{i,j} \fq_{ij}$.  Then $R/J$ has the required property.
 \end{proof}
 
 \begin{lemma}\label{lem:dimone}
 Let $R$ be a one-dimensional excellent Noetherian reduced ring, and let $S$ be the seminormalization
 of $R$. Let $I$ be an ideal of $R$ whose minimal primes are all height one 
 maximal ideals and such that $I$ is naturally closed.  
 Then $IS \cap R = I$. 
 \end{lemma}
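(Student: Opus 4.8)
The plan is to reduce to the local case and then make the seminormalization completely explicit. Since $I\subseteq IS\cap R$ always holds, I would first check equality after localizing at each maximal ideal $\m$ of $R$: there $(IS)_\m=I_\m S_\m$, and seminormalization commutes with localization (this is standard, and can also be read off directly from the pointwise criterion of Proposition~\ref{pr:semibasic}(1)), so $S_\m=(R_\m)^{\mathrm{sn}}$, while $I_\m$ is again naturally closed by Proposition~\ref{pr:intloc}. The hypothesis on the minimal primes of $I$ forces $V(I)$ to be exactly the set of those height-one maximal ideals $\m_1,\dots,\m_k$ (any prime containing $I$ contains one of them, hence equals it); so for $\m\notin\{\m_1,\dots,\m_k\}$ we have $I_\m=R_\m$ and nothing to prove, and for $\m=\m_i$ the ideal $I_\m$ is $\m R_\m$-primary. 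Thus I may assume $(R,\m,k)$ is a one-dimensional excellent reduced local ring, $I$ is $\m$-primary (the unit-ideal case being trivial), and $I=I\ncl$.

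Next I would pin down $S$. Let $\icr R$ be the normalization of $R$; by excellence it is module-finite over $R$, hence semilocal with every maximal ideal lying over $\m$, so $\Jac(\icr R)=\sqrt{\m\icr R}$; and writing $\p_1,\dots,\p_n$ for the minimal primes of $R$ one has $\icr R=\prod_i\icrp{R/\p_i}$ with each factor a semilocal Dedekind domain. Applying the criterion of Proposition~\ref{pr:semibasic}(1), the condition at each minimal prime $\p_i$ is vacuous, because both $R_{\p_i}$ and $\icr R_{\p_i}$ equal the residue field $\kappa(\p_i)$; so the only surviving constraint is the one at $\p=\m$, which gives $S=R+\Jac(\icr R)$. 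Hence $IS=I+I\Jac(\icr R)$, and since $I\subseteq R$ this yields $IS\cap R=I+\bigl(I\Jac(\icr R)\cap R\bigr)$. So it suffices to prove $I\Jac(\icr R)\cap R\subseteq I$, and since $I=I\ncl$ it is enough to show $I\Jac(\icr R)\cap R\subseteq\inn I$.

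For that last inclusion I would use the Rees-valuation characterization of the inner integral closure (Theorem~\ref{thm:int}(5)--(7)). Given $f\in I\Jac(\icr R)\cap R$, by part~(6) I may test $f\in\inn I$ modulo each minimal prime $\p_i$; the $i$-th projection $\icr R\surj\icrp{R/\p_i}$ carries $I\Jac(\icr R)$ into $I(R/\p_i)\,\Jac(\icrp{R/\p_i})$, so the image of $f$ in $\icrp{R/\p_i}$ lies there. Since $I(R/\p_i)$ is primary to the maximal ideal of the one-dimensional local domain $R/\p_i$, its Rees valuations are exactly the localizations $V=\icrp{R/\p_i}_\n$ at the (finitely many) maximal ideals $\n$, and for each such $V$ we have $\ord_V\bigl(I(R/\p_i)\bigr)=h\geq 1$ and $\ord_V\bigl(\Jac(\icrp{R/\p_i})\bigr)=1$, so the order of the image of $f$ at $V$ is at least $h+1>\ord_V\bigl(I(R/\p_i)\bigr)$. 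By Theorem~\ref{thm:int}(7) the image of $f$ lies in $\inn{(I(R/\p_i))}$ for every $i$, so $f\in\inn I$ by part~(6), and therefore $f\in I\ncl=I$.

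The step I expect to be the main obstacle is the middle one: verifying that the pointwise criterion of Proposition~\ref{pr:semibasic}(1) collapses to the single condition at the closed point (so that $S=R+\Jac(\icr R)$ in the local one-dimensional case), and then matching $\Jac(\icr R)$ with the Rees-valuation data so that membership in $I\Jac(\icr R)$ translates precisely into the strict order inequality defining $\inn I$. The remaining ingredients — the reduction to the local case, the product decomposition of $\icr R$ together with the behaviour of its coordinate projections, and the identification of the Rees valuations of an $\m$-primary ideal with the localizations of the normalization at its maximal ideals — are routine once those two facts are in hand.
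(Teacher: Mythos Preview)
Your proof is correct and takes a genuinely different route from the paper's. After the same reduction to the local case, the paper proceeds by an inductive argument on intermediate rings $R \inc R_1 \inc S$: it chooses $R_1$ maximal with the property that $f$ is not in $(IR_1)\ncl$, and then shows that adjoining a single element $u \in S \setminus R_1$ with $u^2,\,u^3 \in R_1$ already forces $IR_1[u] = IR_1$ (a power of $u$ lies in $I$, so $(Iu)^k \inc I^{k+1}$, whence $Iu \inc \inn I \inc I$), contradicting maximality. This argument never invokes an explicit description of $S$ or any valuations; it uses only the elementary defining step of seminormalization together with Proposition~\ref{pr:intbasic}(b).

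Your argument, by contrast, identifies $S = R + \Jac(\icr R)$ in one stroke via Proposition~\ref{pr:semibasic}(1), reducing the question to the inclusion $I\,\Jac(\icr R) \cap R \inc \inn I$, which you then read off from the valuative criterion. This is more structural and makes the mechanism transparent: in the one-dimensional local case the seminormalization adds exactly the Jacobson radical of the normalization, and multiplying $I$ by that radical raises the order by at least one at every Rees valuation, which is precisely the defining condition for $\inn I$. The paper's approach is more self-contained (no normalization, no explicit valuation computations), while yours explains \emph{why} the lemma holds in terms of the geometry of the one-dimensional ring and connects it cleanly to the material of \S\ref{sec:nat}.
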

 
 \begin{proof}
 Suppose that $f \in (IS \cap R) - I$.  
 By replacing $f$ by a multiple we may assume that $I:_R Rf$ is a maximal ideal $\m$ of $R$.
 If we replace $R$ by $R_\m$ and $S$ by $S_\m$ we still have that $f \notin IR_\m$,
 while natural closure commutes with localization.  It follows that we may assume that
 $(R,\,\m,\,K)$ is local  of dimension one.   Consider the local extension rings  $R_1$ of $R$
 with $R \inc R_1 \inc S$ and choose $R_1$ maximal in this family such that
 $f$ is not in the natural closure if $IR_1$, which will still be primary to the maximal
 ideal of $R_1$.  Thus, we may replace $R$ by $R_1$,  and $I$ by the natural
 closure of $IR_1$,  and we still have a counterexample.  If $R_1 = S$ we are done.
 Hence, there is some element $u \in S - R$ such that $u^2, \, u^3 \in R$.  
 Note that $R[u] = R + Ru$.  By replacing $u$ by a multiple we may assume that the 
 annihilator of the image of $u$ in $(R + Ru)/R$
 is a prime ideal of $R$.  Since $u$ is in the total quotient ring of $R$, it is multiplied into
 $R$ by a non-zerodivisor, and it follows that we may assume that the annihilator 
 is a height one maximal ideal, which must be $\m$.  Since $u^3 = (u^2)u \in R$,
 we must have $u^2 \in \m$,  and it follows likewise that $u^3 \in \m$.  
 
 Hence,  $R[u] = R+Ru$ is local with maximal ideal $\m + Ru$.   Let $J = IR[u]
 = I + Iu$.  We shall show that $Iu \inc \inn I \inc I$, since $I = I\ncl$
 First note that since $u^2 \in \m$,
 some power of $u$ is in $I$,  say $u^k \in I$.  Then $(Iu)^k = I^ku^k\inc I^{k+1}$,
 as required.  Since every element of $Iu$ is in $R$,  this shows that $Iu \inc \inn I$.
 Hence, $J = I$.  Now suppose  $f \in J + \inn J$,  where the calculation of $\inn J$
 is in $R[u]$.  Then $f = v + w$  where $v \in J = I$.  Then $f-v \in \inn IR[u] \cap R$.
 By (\ref{pr:intbasic}b),  $f-v \in \inn I$, and so $f \in I\ncl$,  a contradiction.
 \end{proof}
 
 \begin{thm}\label{thm:prnatax}
 Let $R$ be an excellent Noetherian ring.  Let $I$ be an ideal
 primary to a prime ideal $P$ such that $I$ is naturally closed.  Then $I$ is axes closed.  Moreover,
 if  $r \in R$ is not a zerodivisor, and  $f  \notin I\ax$,  then there is a map $R \to A$,
 where $A$ is a one-dimensional excellent seminormal ring, which may further be assumed
 to be complete local, such that $f \notin IA$
 and  the image of $r$ is a non-zerodivisor in $A$.
 \end{thm}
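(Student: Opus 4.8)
The plan is to prove the following statement, which contains both assertions: \emph{if $f \notin I$ and $r \in R$ is a non-zerodivisor, then there is a homomorphism $R \to A$ to a complete local one-dimensional seminormal ring $A$ (necessarily excellent) such that the image of $f$ lies outside $IA$ and the image of $r$ is a non-zerodivisor in $A$.} Taking $r = 1$ this shows $f \notin I\ax$, hence $I = I\ax$ (as $I \inc I\ax$ always and $I = I\ncl$); and, since then $f \notin I\ax$ forces $f \notin I$, it also yields the ``moreover'' clause. The whole argument is an assembly of results already established, and the one thing to keep track of is that each reduction is carried out in the direction for which persistence of axes closure (Proposition~\ref{pr:axlocal}(a)) pushes a witness back up to $R$.

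First I would reduce to the case where $(R,\m,k)$ is local with $\m = P$ and $k$ infinite. Because $I$ is $P$-primary, $IR_P \cap R = I$, so the image of $f$ in $R_P$ is outside $IR_P$; moreover $IR_P$ is $PR_P$-primary, naturally closed by Proposition~\ref{pr:intloc}, and the image of $r$ is a non-zerodivisor in $R_P$, so I may replace $R$ by $R_P$. Then pass to $R(t) = R[t]_{\m R[t]}$: it is excellent, faithfully flat over $R$ with infinite residue field, $IR(t)$ is primary to its maximal ideal and is naturally closed by Theorem~\ref{thm:RtoR(t)}, and the images of $f$ and $r$ retain their properties; by persistence it is enough to construct $A$ over $R(t)$. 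Now, with $(R,\m,k)$ excellent local, $k$ infinite, $I$ an $\m$-primary naturally closed ideal, $f \notin I$, and $r$ a non-zerodivisor, I would apply Corollary~\ref{cor:reducetodim1} to obtain a radical ideal $J \inc I$ such that $\bar R := R/J$ has pure dimension one, the image $\bar r$ of $r$ is a non-zerodivisor in $\bar R$, and $\bar I := I\bar R = I/J$ is naturally closed and primary to $\bar\m := \m/J$; since $J \inc I$, the image $\bar f$ of $f$ is outside $\bar I$. Thus $\bar R$ is a reduced one-dimensional excellent local ring and $\bar I$ is a naturally closed ideal whose only minimal prime $\bar\m$ is a height-one maximal ideal, so Lemma~\ref{lem:dimone} applies to the seminormalization $\bar S$ of $\bar R$ (module-finite over $\bar R$, hence excellent, reduced, one-dimensional, and seminormal): it gives $\bar I\bar S \cap \bar R = \bar I$, whence $\bar f \notin \bar I\bar S$. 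Since $\bar S$ sits inside the total quotient ring of $\bar R$, in which $\bar r$ is a unit, $\bar r$ remains a non-zerodivisor in $\bar S$.

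To finish, I would pick a maximal ideal $\n$ of $\bar S$ containing the proper ideal $(\bar I\bar S :_{\bar S} \bar f)$, so that $\bar f \notin \bar I\bar S_\n$; then $\bar S_\n$ is an excellent one-dimensional seminormal local ring, and $A := \widehat{\bar S_\n}$ is complete local, one-dimensional, excellent, and seminormal (the last by Proposition~\ref{pr:semibasic}(8)). Faithful flatness of $\bar S_\n \to A$ keeps the image of $f$ outside $\bar I A = IA$ and keeps the image of $r$ a non-zerodivisor, and localization at $\n$ preserves both properties as well, so the composite $R \to R/J = \bar R \to \bar S \to \bar S_\n \to A$ is the required homomorphism. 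I do not expect a genuinely hard step here: the real content — cutting down to dimension one without destroying natural-closedness, primariness, the non-zerodivisor property, or the non-membership of $f$ — is precisely Corollary~\ref{cor:reducetodim1} together with Lemma~\ref{lem:dimone} and Proposition~\ref{pr:onedimax}, all already in hand. The only points needing care are bookkeeping: checking that every reduction runs in the direction compatible with persistence, and that passing to a subring of the total quotient ring, to a localization, and to a completion each preserves the hypothesis that $r$ is a non-zerodivisor.
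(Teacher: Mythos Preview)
The proposal is correct and follows essentially the same approach as the paper: localize at $P$, pass to $R(t)$ for an infinite residue field, invoke Corollary~\ref{cor:reducetodim1} to reduce to dimension one, apply Lemma~\ref{lem:dimone} to the seminormalization, and finish by localizing and completing. Your reformulation as a direct construction (rather than the paper's proof by contradiction) integrates the ``moreover'' clause more smoothly, but the substance is identical.
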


 \begin{proof}
 Let $f \in I\ax - I\ncl$.  Since natural closure commutes with localization
 by Proposition~\ref{pr:intloc},  we may replace $R$ by $R_P$:  $f$ is not in $IR_P$ since $I$ is primary
 to $P$.  By the persistence of axes closure, $f \in (I   R_P)\ax$.
 We have therefore constructed a new counterexample in which $P$
 is maximal.  We revert to our original notation and call the ring $R$, but we assume that
 $P = \m$ is a maximal ideal.  Second, we may replace $R$ by $R(t)$,  by Theorem~\ref{thm:RtoR(t)}.
 Hence, we may assume without loss of generality that the residue field $K$ is infinite.  The element $r$ is still a non-zerodivisor. 
 
 By Corollary~\ref{cor:reducetodim1},  we can preserve the fact that $f$ is in the axes closure
 but not the natural closure of $I$ while passing to a reduced local ring of pure
 dimension one that is a homomorphic image of $R$, and such that the image of $r$ is not a zerodivisor
 in this ring.   Thus, we need only consider 
 the issue in dimension one.  Let $S$ be the seminormalization
 of $R$.  Then $IS \cap R = I$ by Lemma \ref{lem:dimone}, and so $f \notin IS$, which shows that 
 $f \notin I\ax$ by Definition~\ref{def:axsemi}.  Note also that $r$ remains a
 non-zerodivisor in $S$.   Finally, we may replace $S$ by a suitable completed localization.
 \end{proof}
 
 We also note:
 
 \begin{thm}\label{thm:intnc} Let  $R$ be an excellent Noetherian ring.  Then the following conditions
 on an ideal $I$ are equivalent:
 \begin{enumerate}
 \item $I$ is axes closed.
 \item $I$ is an intersection of primary naturally closed ideals.
 \end{enumerate}
 \end{thm}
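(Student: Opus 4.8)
The plan is to deduce both implications quickly from results already in hand: Proposition~\ref{pr:intprimax} (in any Noetherian ring, an axes closed ideal is an intersection of primary axes closed ideals), Theorem~\ref{thm:intax} ($\inn I \subseteq I\ax$ over an excellent Noetherian ring), and Theorem~\ref{thm:prnatax} (a primary naturally closed ideal is axes closed). First I would record the auxiliary observation that, over an excellent Noetherian ring, every axes closed ideal is automatically naturally closed: if $I = I\ax$, then Theorem~\ref{thm:intax} gives $\inn I \subseteq I\ax = I$, whence $I\ncl = I + \inn I = I$.

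For $(1) \imp (2)$, assume $I$ is axes closed. By Proposition~\ref{pr:intprimax} write $I = \bigcap_{j} Q_j$ with each $Q_j$ primary and axes closed. By the observation above (applied to each $Q_j$), each $Q_j$ is also naturally closed, so $I$ is an intersection of primary naturally closed ideals, which is $(2)$.

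For $(2) \imp (1)$, write $I = \bigcap_j Q_j$ with each $Q_j$ primary and naturally closed. By Theorem~\ref{thm:prnatax} each $Q_j$ is axes closed. For any map $R \to S$ to an excellent one-dimensional seminormal ring, the inclusion $I \subseteq Q_j$ gives $IS \subseteq Q_j S$, so $I\ax \subseteq (Q_j)\ax = Q_j$ for every $j$; hence $I\ax \subseteq \bigcap_j Q_j = I$, and since $I \subseteq I\ax$ always, $I = I\ax$.

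I do not expect a genuine obstacle here: the statement is essentially a repackaging of Theorems~\ref{thm:intax} and~\ref{thm:prnatax} together with the primary-decomposition result Proposition~\ref{pr:intprimax}. The only point where excellence is genuinely needed is the auxiliary observation in $(1)\imp(2)$, which rests on Theorem~\ref{thm:intax}; if one preferred not to isolate it as a separate statement, one could instead argue directly that each primary component $Q_j$ produced by Proposition~\ref{pr:intprimax} satisfies $\inn{(Q_j)} \subseteq (Q_j)\ax = Q_j$, hence $Q_j = Q_j\ncl$.
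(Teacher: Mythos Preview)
Your proof is correct and follows essentially the same route as the paper: use Proposition~\ref{pr:intprimax} to reduce to primary axes closed ideals, and then identify primary axes closed with primary naturally closed. The only difference is cosmetic: the paper's one-line proof appeals to Theorem~\ref{thm:prnatax} for the coincidence of ``axes closed'' and ``naturally closed'' on primary ideals, while you make the easy direction (axes closed $\Rightarrow$ naturally closed) explicit via Theorem~\ref{thm:intax}, which is a helpful clarification.
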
 
 \begin{proof}  By Proposition~\ref{pr:intprimax}, we already know that $I$ is axes closed if and only if it is an intersection
 of primary axes closed ideals,  and for primary ideals, being naturally closed coincides
 with being axes closed by Theorem~\ref{thm:prnatax}.  \end{proof}
 
\begin{disc}\label{intflat}  Although we have not been able to determine whether axes closure commutes with localization, we can show that
 it commutes with smooth base change in certain cases.  Before stating our results, we recall the notion of {\it intersecton flatness}
 from \cite{HHbase}, where it is introduced just before the statement of (7.18).  An $R$-module $S$ (typically, $S$ will be an 
$R$-algebra here) is {\it intersection-flat\/} or $\cap$-{\it flat\/} if $S$ is flat and for every finitely generated $R$-module $M$ and every collection of submodules $\{M_\lambda\}_{\lambda\in\Lambda}$ of $M$, the obvious injection
$$S\otimes_R(\bigcap_\lambda M_\lambda) \inj \bigcap_\lambda (S\otimes_R M_\lambda).$$ 
is an isomorphism.  The $\cap$-flat modules include  $R$ and are closed under arbitrary direct sum and passing to direct summands and therefore 
include the projective $R$-modules.  In \cite{HHbase} it is observed that
if $R$ is complete local and $R\to S$ is flat local then $S$ is $\cap$-flat over $R$, using Chevalley's theorem, and that $R[\![x]\!]$, where $R$ is 
Noetherian and $x$ denotes a finite string of variables, is $\cap$-flat over 
$R$.  

If $S$ is an $\cap$-flat $R$-algebra that is faithfully flat, where  $R$ is Noetherian, and $W$ is a multiplicative system consisting
of elements of $S$ that are nonzerodivisors on $S/PS$ for every prime ideal $P$ of $R$ (which implies that no element of
$W$ is in $mS$ for $m$ maximal in $R$),  then by Lemma (5.10) of \cite{AHH}, 
$W^{-1}S$ is $\cap$-flat.
In particular, the localization of a polynomial ring over $R$ at a multiplicative system of polynomials each of which has the property that its
coefficients generate the unit ideal is $\cap$-flat over $R$.
 \end{disc}
 
 \begin{thm}\label{thm:axsmooth} Let $R \to S$ be a flat homomorphism of excellent Noetherian rings with geometrically
 regular fibers.   
 \begin{enumerate}[label=(\alph*)]
 \item  If $I$ is an unmixed ideal of $R$ that is axes closed, then $IS$ is axes closed.
 \item If, moreover, $S$ is $\cap$-flat, then  for every $I$ of $R$,  $(IS)\ax = I\ax S$. In particular, this holds 
 when $S$ is a localization of a polynomial ring in finitely many variables over an excellent ring $R$ at a 
 multiplicative system consisting of polynomials each which has the property that its coefficients 
 generate the unit ideal in $R$. 
 \end{enumerate}  
 \end{thm}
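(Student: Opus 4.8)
The idea is to isolate a single lemma and deduce both parts from it: \emph{if $Q$ is a primary ideal of $R$ that is axes closed (equivalently, by Theorem~\ref{thm:prnatax}, naturally closed), then $QS$ is axes closed.} Granting this, part~(a) is short. Since $I$ is unmixed, an irredundant primary decomposition $I = Q_1 \cap \dots \cap Q_t$ has each $Q_i$ primary to a minimal prime $P_i$ of $I$, so $Q_i$ is the contraction of $IR_{P_i}$ and is therefore axes closed by Proposition~\ref{pr:axcolon}. By the lemma each $Q_iS$ is axes closed, and $IS = Q_1S \cap \dots \cap Q_tS$ because $S$ is $R$-flat (flatness commutes with finite intersection); an intersection of axes closed ideals is axes closed, so $IS$ is axes closed. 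For part~(b): the inclusion $I\ax S \subseteq (IS)\ax$ is just persistence of axes closure (Proposition~\ref{pr:axlocal}(a)) applied to a generating set of $I\ax$. For the reverse, write $I\ax$ (which is axes closed, being a closure) as an intersection $\bigcap_\lambda Q_\lambda$ of primary axes closed ideals via Proposition~\ref{pr:intprimax}; each $Q_\lambda S$ is axes closed by the lemma, and $\cap$-flatness of $S$ over $R$—applied to the $R$-module $R$ with the family of submodules $\{Q_\lambda\}$—yields $I\ax S = \bigcap_\lambda (Q_\lambda S)$, hence $I\ax S$ is axes closed. Since $IS \subseteq I\ax S$, monotonicity of $\ax$ gives $(IS)\ax \subseteq (I\ax S)\ax = I\ax S$. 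Finally, the ``in particular'' clause follows because a localization of a polynomial ring over $R$ at polynomials whose coefficients generate the unit ideal is flat with geometrically regular (indeed smooth) fibers and is $\cap$-flat over $R$ by Discussion~\ref{intflat}.

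\textbf{Proof of the lemma.} As $Q$ is primary and axes closed it is naturally closed by Theorem~\ref{thm:prnatax}, and then $(QS)\ncl = QS$ by Theorem~\ref{thm:RtoR(t)}. By Theorem~\ref{thm:intnc} it now suffices to exhibit $QS$ as an intersection of primary \emph{naturally} closed ideals, and for this I first show $QS$ is unmixed. We may assume $QS \neq S$. Put $P = \sqrt Q$, so $\mathrm{Ass}_R(R/Q) = \{P\}$; by the flat base change formula for associated primes, $\mathrm{Ass}_S(S/QS) = \mathrm{Ass}_S(S/PS)$, so it is enough to see $S/PS$ has no embedded primes. If $\fq \in \mathrm{Ass}_S(S/PS)$, then $\fq \cap R$ is a prime that is an associated prime of $S/PS$ regarded as an $R$-module; but $S/PS = S \otimes_R (R/P)$ is flat, hence torsion-free, over the domain $R/P$, so $\mathrm{Ass}_R(S/PS) = \{P\}$ and $\fq$ contracts to $P$. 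Consequently $PS_\fq = PR_P\, S_\fq$ with $PR_P$ the maximal ideal of $R_P$, so $S_\fq/PS_\fq \cong (\kappa(P) \otimes_R S)_\fq$ is a localization of the regular fiber of $R \to S$ over $P$, hence regular local; as $\fq$ is an associated prime this ring has depth $0$, so it is a field, i.e.\ $\fq$ is minimal over $PS$. Thus every associated prime of $S/PS$—equivalently of $S/QS$—is minimal, and $QS$ is unmixed. Now take a primary decomposition $QS = Q_1' \cap \dots \cap Q_s'$ with each $Q_j'$ primary to a minimal prime $\fq_j$ of $QS$; then $Q_j' = (QS)_{\fq_j} \cap S$, and $(QS)_{\fq_j}$ is naturally closed in $S_{\fq_j}$ since natural closure commutes with localization (Proposition~\ref{pr:intloc}). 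A short check using Proposition~\ref{pr:intbasic}(a) shows the contraction to $S$ of a naturally closed ideal of a localization of $S$ is again naturally closed, so each $Q_j'$ is naturally closed, and $QS = \bigcap_j Q_j'$ is the required intersection.

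\textbf{The main obstacle.} The crux is the unmixedness of $QS$ in the lemma: this is exactly where geometric regularity of the fibers enters, both to apply Theorem~\ref{thm:RtoR(t)} and to force the local rings $S_\fq/PS_\fq$ to be regular, so that an associated prime (depth $0$) makes them fields and hence minimal over $PS$. Everything else is bookkeeping: monotonicity and idempotence of $\ax$ (the latter via persistence together with Proposition~\ref{pr:onedimax}), the compatibility of finite intersections with flat base change and of arbitrary intersections with $\cap$-flat base change, and the elementary fact that an arbitrary intersection of axes closed ideals is axes closed (immediate from the definition of $\ax$ and Proposition~\ref{pr:onedimax}). One should be slightly careful that in part~(b) the intersection $I\ax = \bigcap_\lambda Q_\lambda$ may be infinite, which is precisely why $\cap$-flatness, rather than mere flatness, is invoked there.
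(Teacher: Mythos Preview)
Your proof is correct and follows essentially the same approach as the paper: isolate the primary case, use Theorem~\ref{thm:RtoR(t)} to get $QS$ naturally closed, show $QS$ is unmixed so its primary components are naturally closed, and then invoke $\cap$-flatness for part~(b). The only cosmetic difference is in the unmixedness step: the paper observes directly that $S/PS$ is reduced (flat with geometrically reduced fibers over the domain $R/P$), hence has no embedded primes, whereas you argue via the depth-$0$ regular local criterion; both are fine.
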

 \begin{proof}  For  part (a),  note that the primary components of $I$ are axes closed, and since finite intersection
 commutes with flat base change it will suffice to prove the result when $I$ is primary, say  to $P$.  Then $IS$ is naturally closed in 
 $S$ by Theorem~\ref{thm:RtoR(t)}.  If $IS$ is unmixed, its primary components will also be naturally closed and so
 axes closed, and it will follow that $IS$ is axes closed.  But $R/I$ has a finite filtration by torsion-free $R/P$-modules.
 It follows that the associated primes of $S/IS$ are the same as those of $S/PS$.  Since $R/P$ is a domain and
 $R/P \to S/PS$ is flat with geometrically regular fibers,  $S/PS$ is reduced.
 
For part (b), note that by Theorems~\ref{thm:intnc} and~\ref{thm:prnatax},   $I\ax$ is an intersection of axes closed primary
 ideals $J_\lambda$.  Then $I\ax S =  \bigcap_\lambda J_\lambda S$, and so it suffices to show that every $J_\lambda S$ is
 axes closed, which follows from part (a).   \end{proof}
 
 \begin{thm}\label{thm:nzdtest} Let $R$ be an excellent Noetherian ring, let $I$ be an ideal of $R$, and let $r \in R$
 be a non-zerodivisor.  In testing whether $f \in I\ax$ in $R$,  it suffices to consider maps $h:R \to A$
 where $A$ is one-dimensional, seminormal, and $h(r)$ is not a zerodivisor in $A$.  Moreover, $A$ may
 be chosen to be complete local.
 \end{thm}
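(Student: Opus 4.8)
The plan is to deduce this directly from the structural results of this section, so that no genuinely new argument is needed. First note that the ``soundness'' direction is immediate: if $f \in I\ax$ and $h \colon R \to A$ is any homomorphism to a one-dimensional excellent seminormal ring, then persistence of axes closure (Proposition~\ref{pr:axlocal}(a)) gives $h(f) \in (IA)\ax$, and $(IA)\ax = IA$ by Proposition~\ref{pr:onedimax}; this covers in particular the case in which $A$ is complete local, since complete local Noetherian rings are excellent. Hence the content of the theorem is the reverse implication: assuming $f \notin I\ax$, we must produce \emph{some} map $h\colon R \to A$ with $A$ one-dimensional seminormal (indeed complete local), $h(r)$ a non-zerodivisor in $A$, and $h(f) \notin IA$.

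Suppose then $f \notin I\ax$. Since axes closure is a closure operation, $I\ax$ is itself axes closed, so Theorem~\ref{thm:intnc} lets us write $I\ax = \bigcap_\lambda J_\lambda$ with each $J_\lambda$ a primary, naturally closed ideal of $R$. Pick $\mu$ with $f \notin J_\mu$, and observe $I \inc I\ax \inc J_\mu$, hence $IA \inc J_\mu A$ for every $R$-algebra $A$. By the first assertion of Theorem~\ref{thm:prnatax}, $J_\mu$ is axes closed, so $f \notin J_\mu = (J_\mu)\ax$. Now I would invoke the ``moreover'' clause of Theorem~\ref{thm:prnatax}, applied to the primary naturally closed ideal $J_\mu$ and the non-zerodivisor $r$: it furnishes a homomorphism $h \colon R \to A$ with $A$ one-dimensional excellent seminormal (and, if desired, complete local), such that $h(f) \notin J_\mu A$ and $h(r)$ is a non-zerodivisor in $A$. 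Since $IA \inc J_\mu A$, we conclude $h(f) \notin IA$, which is exactly the map required. The ``moreover, $A$ may be chosen to be complete local'' clause of the present theorem is then just the corresponding clause of Theorem~\ref{thm:prnatax}.

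I do not expect a real obstacle here: the hard work was done earlier, in Theorem~\ref{thm:prnatax} and its supporting results (the Key Lemma, Corollaries~\ref{cor:surjDed} and~\ref{cor:reducetodim1}, and Lemma~\ref{lem:dimone}), which is precisely where the reduction to dimension one is carried out while keeping $r$ a non-zerodivisor. The only things to be careful about are bookkeeping: that $I\ax$ is genuinely axes closed (so Theorem~\ref{thm:intnc} applies to it rather than to $I$); that $I \inc J_\mu$, so replacing $J_\mu A$ by the smaller $IA$ does not hurt; and that the ring $A$ produced really is one-dimensional and seminormal with $h(r)$ a non-zerodivisor, all of which is built into the statement of Theorem~\ref{thm:prnatax}. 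As an alternative, one could run the same argument using Proposition~\ref{pr:intprimax} in place of Theorem~\ref{thm:intnc}, together with the fact (contained in Theorem~\ref{thm:prnatax}) that a primary ideal is axes closed if and only if it is naturally closed.
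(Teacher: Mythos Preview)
Your proof is correct and is essentially the paper's argument, just written out in more detail. The paper's own proof is two lines: it invokes Proposition~\ref{pr:intprimax} to find a primary axes closed ideal $J$ with $I \subseteq J$ and $f \notin J$, then appeals to Theorem~\ref{thm:prnatax}; this is precisely the ``alternative'' you mention at the end, and your main route through Theorem~\ref{thm:intnc} differs from it only cosmetically (since primary axes closed and primary naturally closed coincide by Theorem~\ref{thm:prnatax}).
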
  
 \begin{proof}  If $f \notin I\ax$,  by Proposition \ref{pr:intprimax}, we may choose a primary 
 axes closed ideal $J$  such that $I \inc J$ and $f \notin J$.  The result then follows from 
 Theorem \ref{thm:prnatax}. 
 \end{proof}  
 
 We do not know whether axes closure commutes with localization.  This would be true if an axes closed ideal 
 remained axes closed after localization.   The following
 result sheds light on the problem.
 
 \begin{thm}\label{thm:axesloc} Let $R$ be an excellent Noetherian ring,  and let 
 $I$ be an axes closed ideal. The following conditions are
 equivalent:
 \begin{enumerate}
 \item $I$ is a finite intersection of primary naturally closed ideals (and so has an irredundant primary
  decomposition in which the primary components are all naturally closed ideals). 
 \item $I$ is a finite intersection of primary axes closed ideals  (and so has an irredundant primary
  decomposition in which the primary components are all axes closed ideals).
 \item For every associated prime ideal $P$ of $I$,   $IR_P$  is axes closed in $R_P$.
 \item For every multiplicative system $W$ in $R$,  $IW^{-1}R$ is axes closed in $W^{-1}R$.
 \end{enumerate}
 \end{thm}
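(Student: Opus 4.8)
The plan is to run the cycle $(1)\Rightarrow(2)\Rightarrow(4)\Rightarrow(3)\Rightarrow(1)$, with the trivial implication $(2)\Rightarrow(1)$ added for the equivalence of the first two statements; the parenthetical claims about irredundant primary decompositions will come for free. For $(1)\Leftrightarrow(2)$, recall from Theorem~\ref{thm:prnatax} that a primary ideal is axes closed exactly when it is naturally closed, so the phrases ``finite intersection of primary naturally closed ideals'' and ``finite intersection of primary axes closed ideals'' describe the same ideals. Given such a finite intersection $I=J_1\cap\dots\cap J_m$, one passes to an irredundant decomposition of the same type by the usual two moves: first replace $\{J_i\}$ by the ideals $K_\q=\bigcap_{\sqrt{J_i}=\q}J_i$, one for each prime occurring as some $\sqrt{J_i}$ --- each is $\q$-primary and still naturally closed (resp.\ axes closed), because $\inn{(\bigcap_\lambda J_\lambda)}\subseteq\bigcap_\lambda\inn{J_\lambda}\subseteq\bigcap_\lambda J_\lambda$ by the monotonicity of $\inn{(\,\cdot\,)}$ (Proposition~\ref{pr:intbasic}(c)), and an intersection of axes closed ideals is axes closed --- and then discard redundant components one at a time.

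For $(2)\Rightarrow(4)$, write $I=q_1\cap\dots\cap q_m$ with each $q_i$ being $P_i$-primary and axes closed. For any multiplicative system $W$ one has $IW^{-1}R=\bigcap_{P_i\cap W=\varnothing}q_iW^{-1}R$, and each $q_iW^{-1}R$ is $P_iW^{-1}R$-primary and naturally closed by Proposition~\ref{pr:intloc}, hence axes closed by Theorem~\ref{thm:prnatax}; a finite intersection of axes closed ideals is axes closed. Then $(4)\Rightarrow(3)$ is the special case $W=R\setminus P$.

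The substantive implication is $(3)\Rightarrow(1)$, which I would prove by induction on $\dim R/I$, with a secondary induction on the number of associated primes of $I$ that are maximal ideals of $R$. If $I$ is primary there is nothing to do: an $\m$-primary ideal $I$ equals $IR_\m\cap R$, axes closed by Proposition~\ref{pr:axcolon} and hence naturally closed by Theorem~\ref{thm:prnatax}. Otherwise I would build a naturally closed $\q$-primary component for each associated prime $\q$ of $I$, in three cases. (i) If $\q$ is minimal over $I$, its component is forced to be $IR_\q\cap R$, axes closed by Proposition~\ref{pr:axcolon} and naturally closed by Theorem~\ref{thm:prnatax}. (ii) If $\q$ is embedded but not a maximal ideal, then $\dim R/\q\ge 1$ together with $\dim R_\q/IR_\q+\dim R/\q\le\dim R/I$ gives $\dim R_\q/IR_\q<\dim R/I$; moreover $IR_\q$ is axes closed in the excellent ring $R_\q$ by hypothesis~$(3)$ and satisfies the analogue of~$(3)$ there, since $(IR_\q)_{\q'R_\q}=IR_{\q'}$ is axes closed for each associated prime $\q'\subseteq\q$; the induction hypothesis then supplies an irredundant primary decomposition of $IR_\q$ with naturally closed components, whose $\q R_\q$-primary component contracts to a naturally closed $\q$-primary ideal of $R$ (using Proposition~\ref{pr:intloc} and the fact that a $\q$-primary ideal is the contraction of its expansion to $R_\q$). (iii) If $\q$ is embedded and a maximal ideal, I would peel it off: choosing $s\in\q$ outside all other associated primes of $I$ (possible by prime avoidance, since a maximal ideal lies in no other prime), one has $I=Q_\q\cap(I:_Rs^\infty)$, where $I:_Rs^\infty$ is axes closed by Proposition~\ref{pr:axcolon}, satisfies~$(3)$ (each of its associated primes avoids $s$, so localizing there kills the colon), and has one fewer maximal-ideal associated prime, while $Q_\q$ may be taken as the contraction of a $\q R_\q$-primary axes closed component of $IR_\q$ --- one exists because, expanding $IR_\q$ as an intersection of primary axes closed ideals via Theorem~\ref{thm:intnc} and grouping by radical, the absence of a $\q R_\q$-primary component would force the saturation $IR_\q:(\q R_\q)^\infty$ to equal $IR_\q$, contradicting that $\q R_\q$ is associated. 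In cases (ii) and (iii) the induction hypothesis (lower $\dim R/I$, respectively fewer maximal-ideal associated primes) disposes of the remaining factor.

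The main obstacle is the final bookkeeping: primary components at embedded primes are not unique, so one must verify that the components constructed above intersect to exactly $I$ and not to something larger. For the peeled maximal ideals this is a localization check --- at a maximal ideal $\m\neq\q$ the element $s$ is a nonzerodivisor modulo $IR_\m$, so $(Q_\q\cap(I:s^\infty))R_\m=IR_\m$, while at $\q$ itself one uses that $Q_\q$ was built from a genuine component of $IR_\q$ --- and for embedded non-maximal primes $\q$ one should take the component to be the \emph{smallest} naturally closed $\q$-primary ideal containing $I$ (such ideals exist by cases (i)--(iii) and are closed under intersection, natural closedness being stable under intersection), so that validity can be verified after localizing at $\q$. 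Carrying out this verification carefully, rather than the applications of the \S\ref{sec:axes}--\S\ref{sec:nat} machinery, is where the effort lies; if convenient, one may first pass to $R(t)$ to arrange that residue fields are infinite, which changes nothing by Theorems~\ref{thm:RtoR(t)} and~\ref{thm:axsmooth}.
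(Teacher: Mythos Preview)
Your treatment of $(1)\Leftrightarrow(2)$, $(2)\Rightarrow(4)$, and $(4)\Rightarrow(3)$ matches the paper's in substance. The divergence is entirely in $(3)\Rightarrow(1)$, and there your argument has a real gap, which you yourself flag as ``bookkeeping'' but do not close.

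The difficulty is exactly what you say: for embedded primes, primary components are not unique, so producing \emph{some} naturally closed $\q$-primary ideal containing $I$ at each associated $\q$ does not by itself guarantee that the intersection of your chosen components is $I$ rather than something strictly larger. Your proposed remedy in case~(ii)---take the \emph{smallest} naturally closed $\q$-primary ideal containing $I$---is not justified: naturally closed $\q$-primary ideals containing $I$ are closed under \emph{finite} intersection, but an infinite descending intersection of $\q$-primary ideals need not remain $\q$-primary, so a minimum need not exist. Your case~(iii) peeling argument needs a $\q$-primary axes closed $Q_\q$ with $I = Q_\q \cap (I:_R s^\infty)$, and extracting such a $Q_\q$ from the possibly infinite family supplied by Theorem~\ref{thm:intnc} is again the same unresolved issue. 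A secondary concern: your primary induction is on $\dim R/I$, but excellent rings need not have finite Krull dimension, so this induction may not be well-founded.

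The paper avoids all of this with a short finite-length descent. One first reduces to the case where $(R,P)$ is local and $P$ is an associated prime of $I$ (for each associated $P$, prove the statement for $IR_P$ and contract the resulting components; their intersection over all $P$ is $I$). In the local case, the saturation $J = \bigcup_t (I:P^t)$ is axes closed by Proposition~\ref{pr:axcolon}, has one fewer associated prime, and so by induction on the number of associated primes is a finite intersection of primary axes closed ideals. Since $J/I$ is killed by a power of the maximal ideal $P$, it has finite length. Now let $\cS$ be the set of finite intersections of primary axes closed ideals lying between $I$ and $J$; it is nonempty ($J\in\cS$) and closed under intersection, so one may choose $J_0\in\cS$ minimizing the length of $J_0/I$. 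If $J_0\neq I$, pick $f\in J_0\setminus I$; Proposition~\ref{pr:intprimax} supplies a primary axes closed $Q\supseteq I$ with $f\notin Q$, and $Q\cap J_0\in\cS$ has strictly smaller length over $I$, a contradiction. Hence $J_0=I$. This replaces your delicate component-matching with a clean minimality argument that uses only that $J/I$ is Artinian and that Proposition~\ref{pr:intprimax} gives an abundant supply of primary axes closed ideals.
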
  
 \begin{proof}  Note in (1) and (2) that if one can express $I$ as a finite intersection of naturally closed or 
 axes closed primary ideals, one may obtain an irredundant primary decomposition as usual by intersecting those primary to the same prime
 and omitting terms that are not needed. 
 
 The first two conditions are equivalent, since a primary ideal is naturally closed if and only
 if it is axes closed in an excellent ring by Theorem \ref{thm:prnatax}.   We have that $(1) \imp (4)$, 
 because primary naturally closed ideals remain  primary naturally closed ideals or become the 
 unit ideal when one localizes.  Since $(4) \imp (3)$ is
 evident, it suffices to show that $(3) \imp (2)$. Assume (3).  We use induction on the number of associated
 primes of $I$.  If there is only one associated prime, $I$ is primary to it,  and the result is clear.
  
We next reduce to the local case.  Let $P$  be an associated prime of $I$.  Then $IR_P$ is axes closed in $R_P$,
 by hypothesis, and if we can prove the result for $R_P$ and $IR_P$,  we may write $IR_P$  as a finite
 intersection of axes closed primary ideals in $R_P$.   The contractions of these ideals to $R$ will be
 finitely many axes closed primary ideals.  If we intersect all of these as $P$ varies, we obtain  $I$, for
 if  $f \notin I$  then $f$ has a multiple not in $I$ such that the annihilator of $(I+Rf)/I$ is an associated
 prime of $I$.   Thus,  $f/1 \notin  IR_P$,  and so it fails to be in at least one of the primary components $\fA$
 of $IR_P$, and $f$ will not be in the contraction of $\fA$ to $R$.  
  
 Hence, it suffices to prove the result when  $R = (R,\,P)$ is local and $P$ is an associated prime of $I$.  
 Let $J$ denote the ideal  $\bigcup_{t=1}^\infty (I:P^t)$, the saturation of $I$ with respect to $P$. Then $J$ is axes closed, by Proposition \ref{pr:axcolon}.  
 If we localize at any prime of $R$ other than $P$,  $I$ and $J$ have the same expansion.  Thus,  $I$ and $J$ have the same
 associated primes, except that $P$ is an associated prime of $I$ and not $J$,  and so $J$ has fewer associated
 primes than $I$.  Thus, $J$ remains axes closed when we localize at any of its associated primes.  
 By the induction hypothesis,  $J$ is a finite intersection of primary axes closed ideals.  We know that $J/I$  is killed by 
 a power of $P$, and so has finite length.  Let $\cS$ be the set of ideals contained in $J$ 
 that are finite intersections of primary axes closed 
 ideals that contain $I$.  Note that we have shown $J \in \cS$.  $\cS$  is a directed family
 by $\supseteq$,  since it is closed under finite intersection.  Since $J/I$ has finite length, we can choose
 $J_0 \in \cS$  such that the length of $J_0/I$ is minimum.  We can complete the proof by showing that $J_0 = I$.
 But if $f \in J_0 - I$, by Proposition \ref{pr:intprimax} we can choose a primary axes closed ideal $Q$ that contains $I$ and not $f$.
 Then  $Q \cap J_0 \in \cS$, and $Q/I$ is strictly contained in $J_0/I$ and, therefore, of smaller length, a contradiction.
 Thus, $J_0 = I$,  as required.  \end{proof}
 
 \begin{cor}\label{cor:prctax}
 For a primary ideal $I$ in an affine $\C$-algebra, $I = I\ax$,
  $I= I\ct$ and $I = I\ncl$ are equivalent.  Moreover, in an affine $\C$-algebra,  an unmixed ideal is
 continuously closed if and only if it is axes closed. 
 \end{cor}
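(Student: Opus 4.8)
The plan is to reduce everything to Theorem~\ref{thm:prnatax} (which applies because a finitely generated $\C$-algebra is excellent) together with the chain of containments $I \inc I\ncl \inc I\ct \inc I\ax$ valid in any affine $\C$-algebra. Here the first containment holds by definition of $I\ncl = I + \inn I$, the second is immediate from Theorem~\ref{thm:intcont} (which gives $\inn I \inc I\ct$, while $I \inc I\ct$ is automatic), and the last was recorded in the introduction as a consequence of \cite{Br-cc}. So the strategy is: prove the primary case by a short cycle of implications through Theorem~\ref{thm:prnatax}, then deduce the unmixed case by taking primary decompositions and invoking Corollary~\ref{cor:ctcomponents}.

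For the first assertion, assume $I$ is primary. I would prove $I = I\ncl$, $I = I\ct$, $I = I\ax$ equivalent by the cycle: if $I = I\ncl$ then $I$ is naturally closed and primary, so Theorem~\ref{thm:prnatax} yields $I = I\ax$; if $I = I\ax$ then $I \inc I\ct \inc I\ax = I$ forces $I = I\ct$; and if $I = I\ct$ then $I \inc I\ncl \inc I\ct = I$ forces $I = I\ncl$. This closes the loop.

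For the ``moreover'' part, let $I$ be unmixed. One direction is free: if $I = I\ax$ then $I \inc I\ct \inc I\ax = I$, so $I = I\ct$, using neither unmixedness nor the primary case. For the converse, suppose $I = I\ct$ and fix an irredundant primary decomposition $I = Q_1 \cap \dots \cap Q_t$. Since $I$ has no embedded primes, each $Q_i$ is the unique primary component of $I$ at a minimal prime of $I$, so Corollary~\ref{cor:ctcomponents} shows every $Q_i$ is continuously closed; by the primary case each $Q_i$ is then axes closed. It remains to note that an intersection of axes closed ideals is axes closed: if $f$ lies in the axes closure of $\bigcap_i Q_i$, then for every homomorphism $R \to S$ to a one-dimensional complete seminormal local ring the image of $f$ lies in $(\bigcap_i Q_i)S \inc Q_j S$ for every $j$, whence $f \in Q_j\ax = Q_j$ for every $j$ and so $f \in I$. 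Therefore $I = I\ax$.

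Essentially all of the genuine content has been front-loaded into Theorem~\ref{thm:prnatax} and its supporting apparatus (the Key Lemma, Corollaries~\ref{cor:surjDed} and~\ref{cor:reducetodim1}, and Lemma~\ref{lem:dimone}), so I do not anticipate a real obstacle at this stage. The only points warranting a sentence of care are that finitely generated $\C$-algebras are excellent, that the minimal primary components of an unmixed ideal are uniquely determined and exhaust an irredundant decomposition (this is exactly what makes the appeal to Corollary~\ref{cor:ctcomponents} legitimate for every component, and hence makes the reduction to the primary case work), and that a finite intersection of axes closed ideals is again axes closed.
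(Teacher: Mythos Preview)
Your proof is correct and follows essentially the same approach as the paper: use the chain $I\ncl \inc I\ct \inc I\ax$ together with Theorem~\ref{thm:prnatax} for the primary case, then for the unmixed case invoke Corollary~\ref{cor:ctcomponents} on the minimal primary components and intersect. The paper's version is terser but the logical structure is identical.
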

 
 \begin{proof}
 We know that the continuous closure lies between the natural closure and
 the axes closure.  Hence, the first statement follows at once from Theorem~\ref{thm:prnatax}.
 If $I$ is unmixed and continuously closed, each primary component is a primary
 component for a minimal prime of $I$, and is continuously closed as well by
 Corollary~\ref{cor:ctcomponents}.  Hence, each primary component is axes closed, and an intersection
 of axes closed ideals is axes closed. 
 \end{proof}
 
 \begin{cor}\label{cor:zerodimctax}
 Let $I$ be an ideal of an excellent ring $R$ such that
 $R/I$ is zero-dimensional.  Then the natural closure of $I$ is the same as the
 axes closure of $I$.  Moreover, if $R$ is an affine $\C$-algebra, the continuous
 closure of $I$ is the same as well.
 \end{cor}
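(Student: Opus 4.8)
The plan is to localize at the finitely many maximal ideals containing $I$ and apply Theorem~\ref{thm:prnatax} there. Since $I\ncl \subseteq I\ax$ holds in any excellent Noetherian ring (Theorem~\ref{thm:intax}, together with the trivial inclusion $I \subseteq I\ax$), the only thing requiring work is the reverse inclusion $I\ax \subseteq I\ncl$; once that is in hand, the assertion about continuous closure will follow formally.

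First I would record the elementary reductions. As $R$ is Noetherian and $R/I$ is zero-dimensional, $R/I$ is Artinian, hence so is its quotient $R/I\ncl$, and $V(I\ncl) = V(I)$ consists of finitely many maximal ideals $\m_1,\dots,\m_n$. Since every ideal is the intersection of the contractions of its localizations at all maximal ideals, to prove that a given $f \in I\ax$ lies in $I\ncl$ it suffices to check that the image of $f$ in $R_{\m_i}$ lies in $(I\ncl)R_{\m_i}$ for each $i$; at any maximal ideal not among the $\m_i$ the ideal $I\ncl$ localizes to the unit ideal, so there is nothing to verify.

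Next, fix $i$ and set $R' = R_{\m_i}$, $\m' = \m_i R'$, an excellent local ring. Because natural closure commutes with localization (Proposition~\ref{pr:intloc}), $(I\ncl)R' = (IR')\ncl$, which is naturally closed since natural closure is idempotent. Moreover $R'/(I\ncl)R' = (R/I\ncl)_{\m_i}$ is Artinian local, so $(I\ncl)R'$ is primary to $\m'$. Theorem~\ref{thm:prnatax} then applies and gives that $(I\ncl)R'$ is axes closed in $R'$. On the other hand, from $f \in I\ax$ and Proposition~\ref{pr:axlocal}(b) the image of $f$ in $R'$ lies in $(IR')\ax$, hence in $((I\ncl)R')\ax = (I\ncl)R'$ by monotonicity of axes closure and the fact just established. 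Thus the image of $f$ lies in $(I\ncl)R'$ for every $i$, so $f \in I\ncl$, proving $I\ax \subseteq I\ncl$ and hence $I\ncl = I\ax$.

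For the final clause, when $R$ is an affine $\C$-algebra one has $I\ncl \subseteq I\ct \subseteq I\ax$ (Theorem~\ref{thm:intcont}, together with the persistence of continuous closure and Brenner's theorem that every ideal of a ring of axes is continuously closed), so the equality $I\ncl = I\ax$ forces $I\ct = I\ncl = I\ax$. I do not anticipate a real obstacle; the subtle point is simply that axes closure is only known to behave well under localization in the "upward" direction recorded in Proposition~\ref{pr:axlocal}(b) — which is exactly the direction needed — while what makes the local step work is that natural closure genuinely commutes with localization (Proposition~\ref{pr:intloc}), so that Theorem~\ref{thm:prnatax} can be invoked for $(I\ncl)R'$.
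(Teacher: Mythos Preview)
Your proof is correct and follows essentially the same route as the paper: both reduce to showing that the naturally closed ideal $I\ncl$ (which has zero-dimensional quotient) is axes closed via Theorem~\ref{thm:prnatax}, and both dispatch the continuous-closure clause by the sandwich $I\ncl \subseteq I\ct \subseteq I\ax$. Your version makes the localization step explicit (via Propositions~\ref{pr:intloc} and~\ref{pr:axlocal}(b)), whereas the paper's terse proof simply asserts that any ideal containing $I$ is naturally closed iff axes closed and leaves the passage through the maximal-primary components implicit.
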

 
 \begin{proof}
 All ideals containing $I$ satisfy the same condition, and are therefore unmixed.
 Since an ideal containing $I$ is naturally closed if and only if it is axes closed,
 these two closures agree.  The final  statement follows from the fact that in an
 affine $\C$-algebra, $I\ncl \inc I\ct \inc I\ax$.
 \end{proof}  
 
\begin{prop}\label{pr:primmax}  If $R$ is a finitely generated $\C$-algebra,  $\m$ is a maximal ideal and 
$I$ is an $\m$-primary ideal,  then       
 $I\ct$ is the contraction of $(IR_\m)\ct$ to $R$.  This means that if $f \in R$,  then  $f \in I\ct$ if and only if  $f/1 \in (IR_\m)\ct$, 
 i.e.,  if and only if the germ of $f$ at the origin is in the expansion of $I$ to the ring of germs of continuous $\C$-valued 
 functions at the origin. \end{prop}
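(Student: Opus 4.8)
\emph{Proof proposal.} The plan is to derive this directly from the local--global principle for continuous closure, Proposition~\ref{pr:ctlocal}, the only extra ingredient being that no maximal ideal of $R$ other than $\m$ can contain the $\m$-primary ideal $I$.

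First I would dispose of the inclusion ``$I\ct$ is contained in the contraction of $(IR_\m)\ct$ to $R$''. If $f \in I\ct$, choose generators $\vect f h$ of $I$ and continuous functions $g_i$ on $X = \MaxSpec(R)$ with $f = \sum_{i=1}^h g_i f_i$ on $X$; passing to germs at the point $x$ corresponding to $\m$ exhibits $f/1 \in (IR_\m)\ct$. (This is exactly the observation made at the beginning of the proof of Proposition~\ref{pr:ctlocal}.)

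For the reverse inclusion, suppose $f \in R$ with $f/1 \in (IR_\m)\ct$. By Proposition~\ref{pr:ctlocal}, to conclude that $f \in I\ct$ it suffices to check that $f/1 \in (IR_{\n})\ct$ for every maximal ideal $\n$ of $R$. When $\n = \m$ this is precisely the hypothesis. When $\n \neq \m$ I would argue that the condition is vacuous: since $I$ is $\m$-primary, $\Rad(I) = \m$, so $I \subseteq \n$ would force $\m = \Rad(I) \subseteq \n$ and hence $\m = \n$, a contradiction; therefore $I \not\subseteq \n$, so $IR_\n = R_\n$, and then $f/1 \in R_\n = IR_\n \subseteq (IR_\n)\ct$ trivially. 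Applying Proposition~\ref{pr:ctlocal} gives $f \in I\ct$, which together with the previous paragraph proves the stated equality.

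Finally, the concluding reformulation is just unwinding Discussion~\ref{disc:ctx}: by definition $(IR_\m)\ct$ is the contraction to $R_\m$ of $IS$, where $S$ is the ring of germs at $x$ of continuous $\C$-valued functions on $X$, so ``$f/1 \in (IR_\m)\ct$'' says exactly that the germ of $f$ at $x$ lies in the expansion of $I$ to $S$. There is no real obstacle in this argument: its entire content is the local--global principle of Proposition~\ref{pr:ctlocal}, and the primary hypothesis on $I$ enters only to make the local condition at the remaining maximal ideals automatic.
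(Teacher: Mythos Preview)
Your proof is correct, and it takes a genuinely different route from the paper's. You argue directly from the partition-of-unity principle of Proposition~\ref{pr:ctlocal}: the only maximal ideal at which the local condition is nontrivial is $\m$ itself, because $I$ is $\m$-primary and so $IR_\n = R_\n$ for every other maximal ideal $\n$. This is entirely elementary and uses nothing beyond \S\ref{sec:contcl}.

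By contrast, the paper's proof invokes much heavier machinery: it uses Corollary~\ref{cor:zerodimctax} to identify $I\ct$ with $I\ax$, then appeals to part~(7) of Theorem~\ref{thm:axsemi} to produce a map to an analytic ring of axes separating a given $f\notin I\ct$ from $I$, and finally cites a lemma of Brenner to conclude that $I\ct$ is contracted from the ring of germs. In effect the paper routes the argument through the main results of \S\ref{sec:natax}, whereas your approach shows that Proposition~\ref{pr:primmax} is really an immediate consequence of the local--global principle already established in \S\ref{sec:contcl}. Your proof is shorter, avoids the dependence on Corollary~\ref{cor:zerodimctax} and on \cite{Br-cc}, and makes transparent exactly where the primary hypothesis enters.
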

 \begin{proof} It is clear that $J = I \ct$ is contained in the contraction of $(IR_\m)\ct$ to $R$.  To complete the argument, it will
 suffice to show that $J$,  which is an $\m$-primary continuously closed ideal, is contracted
 from the ring $T$ of germs of continuous $\C$-valued functions at $x \in X = \MaxSpec(R)$ (with the Euclidean topology), where
 $x$ corresponds to $\m$. Because $J = J\ct$,  we know that $J = J\ax$ by Corollary~\ref{cor:zerodimctax} just above.  Let $f \in
 R - J$. Then by part (7) of Theorem~\ref{thm:axsemi}  we can choose  $\theta:R \to (A,\, \n)$ 
 such that $f \notin JA$,  where $\theta$ is a $\C$-homomorphism,
 $A$ is an analytic ring of axes over $\C$,  and $\theta^{-1}(\n) = \m$.  By Lemma 3.5 of \cite{Br-cc},  $JA$ is contracted is contracted
 from $T$.   Hence, $f \notin JT$.  Thus, $J$ is contracted from $T$, as required. \end{proof}  
 
\begin{thm}\label{thm:semiaxes}
Let $R$ be a reduced excellent ring.  $R$ is seminormal if and only if every principal ideal generated by
a non-zerodivisor is axes closed.
\end{thm}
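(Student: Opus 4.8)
The plan is to prove the two implications separately; both reduce to analyzing maps from $R$ to one-dimensional complete local seminormal rings, which by Theorem~\ref{thm:glue} are exactly the ``gluings'' $A=\{(x_i)\in\prod_{i=1}^m V_i : x_i\equiv\lambda\pmod{\m_i}\text{ for a common }\lambda\in k\}$ of finitely many complete DVRs $(V_i,\m_i,L_i)$ over a common subfield $k$.

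\emph{If $R$ is seminormal then every $\rho R$ with $\rho$ a nonzerodivisor is axes closed.} Take $f\in(\rho R)\ax$. By Proposition~\ref{pr:normthenaxthensemi}, $(\rho R)\ax\inc\overline{\rho R}$, so $u:=f/\rho$ lies in the integral closure $R'$ of $R$ in its total quotient ring; I must show $u\in R$. Suppose not. Using Proposition~\ref{pr:semibasic}(1), the failure of $u\in R=R^{\mathrm{sn}}$ produces a prime $\p$ with $u\notin R_\p+\Jac((R_\p)')$; replacing $R$ by $R_\p$ (legitimate by persistence, Proposition~\ref{pr:axlocal}, and Proposition~\ref{pr:semibasic}(3)) and then by $\widehat R$ (legitimate since $\widehat R$ is excellent, reduced and seminormal, $\rho$ stays a nonzerodivisor, $(\widehat R)'=\widehat R\otimes_R R'$, and one checks $\Jac$ is compatible with this base change), I reduce to the case where $R$ is complete local, $R'=\prod_{i=1}^n C_i$ with each $C_i$ a complete local normal domain, $\rho$ has nonzero image in each $C_i$, and the tuple of residues of $u$ in $\prod_i L_i$ ($L_i$ the residue field of $C_i$) is \emph{not} of the form $(\lambda,\dots,\lambda)$ with $\lambda\in k:=R/\m$. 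Now I build the test ring: mimicking the cutting-down in the proof of the Key Lemma, in each $C_i$ I choose a prime $Q_i$ with $C_i/Q_i$ a DVR $V_i$ and with $\rho\notin Q_i$; note $V_i$ has the same residue field $L_i$ as $C_i$. Let $A$ be the gluing of $\prod_i V_i$ over $k$; by Theorem~\ref{thm:glue}, $A$ is complete local, one-dimensional and seminormal (completeness because each $V_i$ is complete), the composite $R\to\prod_i C_i\to\prod_i V_i$ lands in $A$, and $\rho$ is a nonzerodivisor on $A$. Since quotienting by the $Q_i$ does not change residue fields, the image of $u$ in $\prod_i V_i$ has the same nonconstant residue tuple, hence does not lie in $A$; cancelling the nonzerodivisor $\rho$, the image of $f=\rho u$ is not in $\rho A$, contradicting $f\in(\rho R)\ax$ by Theorem~\ref{thm:axsemi}. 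Therefore $u\in R$ and $f\in\rho R$.

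\emph{If every principal ideal generated by a nonzerodivisor is axes closed then $R$ is seminormal.} Suppose $R$ is not seminormal; pick $u$ in the total quotient ring with $u\notin R$, $u^2,u^3\in R$, and a nonzerodivisor $\rho\in R$ with $v:=\rho u\in R$. Then $v\notin\rho R$ (cancel $\rho$), and I claim $v\in(\rho R)\ax$, exhibiting $\rho R$ as not axes closed. By Theorem~\ref{thm:axsemi} it suffices to show the image of $v$ lies in $\rho A$ for every complete local one-dimensional seminormal $A$ receiving a map from $R$; write $A\inc A'=\prod_i V_i$ as a gluing over $k$. In each $V_i$: if $\rho\mapsto 0$ then $v_i^2=\rho_i^2(\overline{u^2})_i=0$ forces $v_i=0$; if $\rho_i\neq 0$, then $v_i^2=\rho_i^2(\overline{u^2})_i$ together with the integral closedness of the DVR $V_i$ gives $t_i:=v_i/\rho_i\in V_i$ with $t_i^2=(\overline{u^2})_i$ and $t_i^3=(\overline{u^3})_i$. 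Since $u^2,u^3\in R$, their images $\overline{u^2},\overline{u^3}$ lie in $A$, so have common residues $\mu,\nu\in k$ modulo the $\m_i$ with $\mu^3=\nu^2$; hence each $t_i$ (for $\rho_i\neq 0$) reduces modulo $\m_i$ to the single element $\lambda\in k$ equal to $\nu/\mu$ if $\mu\neq 0$ and to $0$ if $\mu=0$. Setting $s_i:=t_i$ when $\rho_i\neq 0$ and $s_i:=\lambda$ when $\rho_i=0$ gives a tuple with $\rho_i s_i=v_i$ for all $i$ and with constant residue $\lambda\in k$, so by Theorem~\ref{thm:glue} it represents an element $s\in A$, and the image of $v$ equals $\rho s\in\rho A$.

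The main obstacle is the first direction, specifically the construction of the test ring $A$: since any map from $R$ to a \emph{normal} ring automatically swallows $u\in R'$, the ring $A$ must be essentially non-normal, and one must cut $R'$ down to dimension one while retaining enough residue-field data that the gluing over $k=R/\m$ still fails to contain $u$ --- this is why the reduction is phrased with $\Jac$ (via Proposition~\ref{pr:semibasic}(1)) rather than merely with $u\notin R$. The remaining technical care is bookkeeping: the compatibility of $R'$ and of Jacobson radicals with the localization-and-completion reduction, and the routine, Key-Lemma-style existence of coheight-one primes $Q_i$ avoiding $\rho$ with DVR quotients.
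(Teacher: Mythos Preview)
Your argument is correct in outline and, for the harder implication (seminormal $\Rightarrow$ principal ideals by nonzerodivisors are axes closed), takes a genuinely different and in some ways cleaner route than the paper. The paper stays at the local (uncompleted) level, works with the semilocal normalization $S$, and builds \emph{two} separate one-dimensional seminormal test rings: one (the ring $W_i$ sitting inside a single DVR $V_i$) to show each residue $c_i$ lies in $K$, and a second (a pullback $\cA=\cV_i\times_\theta\cV_j$ of two DVRs over an isomorphism of algebraically closed residue fields) to show $c_i=c_j$. By completing first, you get $R'=\prod_i C_i$ with each $C_i$ complete local normal, and a \emph{single} gluing of all the $V_i$ over $k$ tests both conditions at once; the nondiagonality of the residue tuple of $u$ is exactly what makes the image of $u$ fall outside $A$. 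For the converse implication, the paper invokes Theorem~\ref{thm:nzdtest} to restrict to maps where $\rho$ stays a nonzerodivisor, while you handle the $\rho_i=0$ case directly via $v_i^2=\rho_i^2(\overline{u^2})_i=0$; your route is slightly more self-contained.

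One point needs tightening. You assert that in each $C_i$ you can choose a coheight-one prime $Q_i$ avoiding $\rho$ with $C_i/Q_i$ a DVR having residue field $L_i$. Finding a coheight-one $Q_i$ with $\rho\notin Q_i$ is routine (extend $\rho$ to a system of parameters and take a minimal prime of the remaining elements), and the quotient is then a complete local one-dimensional domain with residue field $L_i$; but there is no reason it must already be regular. The fix is to replace $C_i/Q_i$ by its normalization $V_i$, which is a complete DVR whose residue field $L_i'$ may properly contain $L_i$. This does no harm: the gluing $A$ of $\prod_i V_i$ over $k$ (via $k\hookrightarrow L_i\hookrightarrow L_i'$) is still one-dimensional complete local seminormal by Theorem~\ref{thm:glue}, the composite $R\to\prod_i V_i$ still lands in $A$, $\rho$ is still a nonzerodivisor there, and the residue tuple of $u$ in $\prod_i L_i'$ is still not diagonal over $k$ since each $L_i\hookrightarrow L_i'$ is injective. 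With that adjustment your proof goes through.
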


\begin{proof}
We first prove ``if."  Suppose that $g$ is an element of the total quotient ring of $R$ such
that $g^2, \, g^3 \in R$.  We must show that $g \in R$.  Since $g$ is in
the total quotient ring of $R$, we can choose $f$, a non-zerodivisor in $R$,
such that $gf \in R$.  We claim that $gf$ is in the axes closure of $f$.  We use the test for being in the
axes closure provided by Theorem~\ref{thm:nzdtest}.  
Suppose that we have any ring map $\theta: R \to S$
such that $S$ is an excellent seminormal one-dimensional ring and $u:=\theta(f)$ is a non-zerodivisor.  Let $w := \theta(fg)$.  
Consider the element
$v=w/u$ of the total quotient ring of $S$.  We have \[
u^2 v^2 = (uv)^2 = w^2 = \theta(fg)^2 = \theta(f^2) \theta(g^2) = u^2 \theta(g^2).
\]
Since $u^2$ is a non-zerodivisor in the total quotient ring of $S$, it follows that $\theta(g^2) = v^2$, and in particular
that $v^2 \in S$.  Similar computations show that $\theta(g^3) = v^3$, so that $v^3 \in S$ as well.  Since $S$ is
seminormal, it follows that $v\in S$.  So we have \[
\theta(fg) = w = uv \in u S = \theta(f) S.
\]
Then by choice of $\theta$ and Theorem~\ref{thm:nzdtest}, it follows that $fg \in (fR)\ax = fR$.  That is, there is some $r\in R$
such that $fg = fr$.  Since $f$ is a non-zerodivisor, it follows that $g=r \in R$, so that $R$ is seminormal.

Now assume instead that $R$ is excellent seminormal.  We may assume
that $\dim R \geq 2$, since otherwise every ideal of $R$ is axes closed by Proposition~\ref{pr:onedimax}.  
Suppose some element $g \in R$ is in the axes closure of
$fR$, where $f$ is a non-zerodivisor.   Let $S$ be 
the integral closure of $R$.  Since $S$ is a product of finitely many normal domains (the normalizations of the
quotients of $R$ by its various minimal primes), every principal ideal
of $S$ is integrally closed, hence axes closed, so $g \in fS$.  That is, $g = hf$ for some $h\in S$. 
If we can show that $h$ is in the seminormalization of $R$ in $S$,  which is $R$, then we are done.  
To this end we use the criterion from Proposition~\ref{pr:semibasic}(1).
For the remainder of the proof we may change notations: we replace $R$ by $R_P$ (where $P$ is an arbitrary prime ideal of $R$),  and so
assume that $(R,\,P,\,K) $ is local,  and we replace  $S$ by $S_P$,  which is the integral closure of $R_P$.  Then
$S$ is semilocal,  and we denote the maximal ideals of $S$ by $\vect Q n$.  We want to show that $h$ is in 
$R + Q_1 \cap \cdots \cap Q_n$.  Let $c_i$ denote the image of $h$ in $L_i$,  $1 \leq i \leq n$,  where
$L_i = S/Q_i$.   Note that we may identify $K$ with a subfield of $L_i$ for every $i$. Note that $L_i$ is a finite
algebraic extension of  $K$ for every $i$.  We shall show that
all of the $c_i$ are in $K$,  and that they are all equal.  If $r \in R$ represents their common value,  then
$h-r$ is in all of the $Q_i$, which yields the desired conclusion. 

For each $Q_i$, choose a prime ideal $q_i$ of $S$ contained in $Q_i$ and maximal with respect to not containing
$f$.  Then $Q_i/q_i$ is a minimal prime of  $f(S/q_i)$,  and so $Q_i/q_i$  has height one.  Let $p_i = Q_i \cap R$.
Then $R/p_i$ is a local domain and since  $R/p_i \inj S/q_i$ is module-finite, we must have that $\di(R/p_i) = 1$,
by the dimension formula \cite[Theorem 15.6]{Mats}.  The image of $f$ is a non-zerodivisor in both $R/p_i$ and $(S/q_i)_{Q_i}$, and so 
is a non-zerodivisor in both of their completions.  We have an induced map of the completions  $C_i \to D_i$, which 
are one-dimensional reduced complete local rings.  Choose a minimal prime of $D_i$.  Its contraction to $C_i$ will 
not contain the image of $f$,  and so is also a minimal prime.   We get an induced map of quotient domains  
$\ov{C}_i \inj \ov{D}_i$. In each, the image of $f$ is nonzero.  Let $V_i$ be the normalization of $\ov{D}_i$; then 
$V_i$ is a complete local discrete valuation domain whose residue class field is an extension of $L_i$ and contains $K$.  

Let $W_i$ denote
the seminormalization of $\ov{C}_i$. Since  $V_i$ is normal, the normalization of $\ov{C}_i$  may be constructed
as a subring of  $V_i$,  and we may view $W_i \inc V_i$.   Then $W_i$ is a one-dimensional seminormal ring,
and we have a map $R \surj R/p_i \inj C_i \surj \ov{C}_i \inj W_i \inc V_i$.  Let $\cW_i$ denote the subring of 
$V_i$ consisting of all elements with image in $K$  modulo the maximal ideal of $V_i$.  Then  $\ov{C}_i \inc \cW_i$,
and whenever  $a \in V_i$ is such that $a^2, \, a^3 \in  \cW_i$,  one has that $a \in \cW_i$ as well.  Thus,  $W_i \inc \cW_i$.

We shall show  $c_i \in K$, using that the image  of $g$ is in  $fW_i$.  We have a commutative diagram
$$
\CD S @>{\beta}>> V_i\\
@AAA           @AAA\\
  R   @>{\alpha}>>    W_i\endCD 
$$
where the vertical maps are inclusions.  
Then  $\alpha(g) = \beta(g) = \beta(fh) = \beta(f)\beta(h) = \alpha(f)\beta(h)$.  Since $\alpha(g) \in \alpha(f)W_i$ (which holds because $g$ is in the axes closure of $fR$), and since $\alpha(f)$ is a non-zerodivisor on $V_i$, it follows that  $\beta(h) \in W_i$.  This implies
that  $\beta(h) \in \cW_i$,  and so its residue  $c_i$ is in $K$.  

Finally, suppose that we have $c_i$ and $c_j$ in $K$ for $i \not=j$.  We shall show $c_i = c_j$.  Let  $V_i$ and $V_j$
be as above.  Each is a complete discrete valuation domain whose residue class field is an algebraic extension
of $L_i$ and, hence, of $K$.  Enlarge $V_i$ to a complete discrete valuation domain $\cV_i$  whose residue class
field is the algebraic closure $\Omega$ of $K$.  Thus, there is a $K$-isomorphism $\theta$ between the residue class fields
of $\cV_i$ and $\cV_j$.  Let $F$ be the field in question.  Let $\cA = \cV_i \times_\theta \cV_j$ be the pullback of the surjection $\cV_i \times \cV_j  \twoheadrightarrow F \times F$ along the diagonal embedding $F \hookrightarrow F \times F$.  By Theorem~\ref{thm:glue}, $\cA$ is an excellent local
one-dimensional seminormal ring.  We have maps $\eta_i: S \to V_i \inc \cV_i$ and $(\eta_i,\,\eta_j)$ therefore maps
$S \to  \cV_i \times \cV_j$.  It is clear that the image of $R$ lies in $\cA$, since $\theta$ is a $K$-isomorphism.  Hence,
we have a commutative diagram:
$$
\CD  S @>{\beta}>>  \cV_i \times \cV_j \\
          @AAA                       @AAA \\
          R  @>{\alpha}>>       \cA \endCD .
 $$
where the vertical maps are inclusions.  Once again,  $\alpha(g) = \beta(g) = \beta(fh) = \beta(f)\beta(h) = \alpha(f)\beta(h)$,
and since $\alpha(g) \in \alpha(f)\cA$ and $\alpha(f)$ is a non-zerodivisor on $\cA$ (and hence also on $\cV_i \times \cV_j$, which is the normalization of $\cA$),  we must have $\beta(h) \in \cA$.
This implies that  the residues of $h$ correspond under $\theta$,  and since these are in $K$ and $\theta$ is a $K$-isomorphism,
they must be equal.  \end{proof}

\begin{cor}\label{cor:nzdcl}
Let $R$ be a reduced affine $\C$-algebra, and
let $f$ be a non-zerodivisor  of $R$.
Then $(fR)\ct = (fR)\ax = f S \cap R$, where $S$ is the seminormalization
of $R$. In particular, if $S$ is seminormal and $f$ is a non-zerodivisor,
$fS$ is both continuously closed and axes closed.
\end{cor}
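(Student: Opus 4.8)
The plan is to identify all three ideals by a sandwich argument, passing through the seminormalization. Write $S$ for the seminormalization of $R$. Since $R$ is a reduced affine $\C$-algebra, its integral closure $\icr R$ in the total quotient ring $\cT$ of $R$ is module-finite over $R$; as $S$ is an $R$-submodule of $\icr R$ containing $R$ (Proposition~\ref{pr:semibasic}(1)), it is module-finite over $R$, hence itself a reduced affine $\C$-algebra, and in particular excellent. Moreover $f$, being a non-zerodivisor of $R$, becomes a unit in $\cT$; since $R \inc S \inc \cT$, the element $f$ is also a non-zerodivisor of $S$, and the structure map $R \inj S$ is injective.

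First I would treat the ring $S$ itself. Since $S$ is a reduced excellent seminormal ring and $f$ is a non-zerodivisor of $S$, Theorem~\ref{thm:semiaxes} gives $(fS)\ax = fS$. Because $S$ is an affine $\C$-algebra, continuous closure is defined there and is trapped between natural and axes closure, so $fS \inc (fS)\ct \inc (fS)\ax = fS$, whence $(fS)\ct = fS$ as well. Next I would descend back to $R$. By Proposition~\ref{pr:ctcontract}, $(fR)\ct = (fS)\ct \cap R = fS \cap R$. For the axes closure, apply persistence of axes closure (Proposition~\ref{pr:axlocal}(a)) to the injection $R \inj S$: if $g \in (fR)\ax$, then the image of $g$ in $S$ lies in $(fS)\ax = fS$; since $R \inj S$ is injective and $g \in R$, this says $g \in fS \cap R$. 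Hence $(fR)\ax \inc fS \cap R$.

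Combining these with the general containments $(fR)\ncl \inc (fR)\ct \inc (fR)\ax$ yields
\[
fS \cap R = (fR)\ct \inc (fR)\ax \inc fS \cap R,
\]
so $(fR)\ct = (fR)\ax = fS \cap R$, as claimed. The final sentence is the special case in which $R$ is already seminormal, so that $S = R$: then $fS \cap R = fR = fS$, and this ideal is simultaneously continuously closed and axes closed.

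I do not expect a genuine obstacle here: this is essentially a bookkeeping consequence of Theorem~\ref{thm:semiaxes}, Proposition~\ref{pr:ctcontract}, and persistence of axes closure. The only point that requires care is verifying that the hypotheses of Theorem~\ref{thm:semiaxes} transfer to $S$ — namely that $S$ is reduced and excellent (automatic, as $S$ is module-finite over the affine $\C$-algebra $R$ and embeds in $\cT$), that $S$ is seminormal (it is the seminormalization), and that $f$ remains a non-zerodivisor after extension to $S$.
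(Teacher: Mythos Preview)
Your proof is correct and follows essentially the same approach as the paper: use Theorem~\ref{thm:semiaxes} to show $fS$ is axes closed in $S$ (hence also continuously closed there), then descend via Proposition~\ref{pr:ctcontract} and persistence of axes closure to obtain the sandwich $(fR)\ct = fS \cap R \supseteq (fR)\ax \supseteq (fR)\ct$. Your additional justification that $S$ is an excellent reduced affine $\C$-algebra and that $f$ remains a non-zerodivisor in $S$ is a welcome bit of care the paper leaves implicit.
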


\begin{proof}
 By Theorem~\ref{thm:semiaxes},  $fS$ is axes closed and, consequently, continuously closed as well.
Hence,  $(fR)\ax \inc (fS)\ax \cap R = fS \cap R$.  By Proposition~\ref{pr:ctcontract},   $(fR)\ct
= (fS)\ct \cap R$,  and since $fS$ is axes closed,  $(fS)\ct = fS$,  so that 
$(fR)\ct = fS \cap R$.  This shows that $(fR)\ax \inc (fR)\ct$,  and since we always
have the opposite inclusion, it follows that all three of $(fR)\ct, \, (fR)\ax$, and $fS \cap R$
are equal.
\end{proof}
 
 \section{Multiplying by invertible ideals and rings of dimension 2}\label{sec:dimtwo}
 In this section we prove that continuous closure and axes closure agree in locally factorial
 affine $\C$-algebras of dimension 2.  In particular, this holds for the polynomial ring in two
 variables over $\C$.  In \S\ref{sec:counter} we give an example which shows that they do not agree in the
 polynomial ring in three variables over $\C$.   In order to prove the main result of this section,
 we need some preliminary results.

 \begin{lemma}\label{lem:invert}  Let $^\#$ be a closure operation (see Definition~\ref{def:closure}) on $R$.  Suppose that for
 any two ideals $I,\, J$ or  $R$,   $I^{\#}J^{\#} \inc (IJ)^{\#}$.  Suppose further that 
 for any non-zerodivisor $r\in R$ and any ideal $J$ such that
 $J = J^{\#}$, we have $rJ = (rJ)^{\#}$.  Let $I$ be an ideal of $R$ that is locally free of rank one.
 Then for every ideal $J$ of $R$,   $I(J^{\#}) = (IJ)^{\#}$.
 In particular, if $I$ is locally free of rank one,  then $I = I^{\#}$.
  \end{lemma}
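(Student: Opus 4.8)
The plan is to prove $I(J^{\#}) = (IJ)^{\#}$ by establishing the two inclusions separately; the inclusion ``$\inc$'' is formal, while ``$\supseteq$'' is where invertibility of $I$ enters, and the ``in particular'' clause is then the case $J = R$, since $R^{\#} = R$. The starting point is the standard fact that a locally free ideal of rank one is \emph{invertible}: there is a fractional ideal $I^{-1}$ with $I I^{-1} = R$, and $I$ contains a non-zerodivisor $d$ (locally $I$ is principal on a non-zerodivisor, and such a $d$ is produced globally by prime avoidance over the associated primes of $R$, none of which contains $I$ since $I_P \cong R_P$ there). Put $K := d I^{-1}$; since $d \in I$ we have $K \inc I I^{-1} = R$, so $K$ is an ideal of $R$, and $IK = dR$.

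For ``$I J^{\#} \inc (IJ)^{\#}$'', nothing about $I$ is needed: $I J^{\#} \inc I^{\#} J^{\#} \inc (IJ)^{\#}$, using $I \inc I^{\#}$ and the multiplicativity hypothesis. For the reverse inclusion, the key auxiliary identity is $(dJ)^{\#} = d J^{\#}$ for a non-zerodivisor $d$: the ideal $d J^{\#}$ is $^{\#}$-closed by the second hypothesis (applied to the $^{\#}$-closed ideal $J^{\#}$) and contains $dJ$, so $(dJ)^{\#} \inc d J^{\#}$, while $d J^{\#} = (dR) J^{\#} \inc (dR)^{\#} J^{\#} \inc (dRJ)^{\#} = (dJ)^{\#}$ by multiplicativity. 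Now take $x \in (IJ)^{\#}$. Using $KI = dR$ and multiplicativity,
\[
Kx \;\inc\; K (IJ)^{\#} \;\inc\; K^{\#}(IJ)^{\#} \;\inc\; (K \cdot IJ)^{\#} \;=\; (dJ)^{\#} \;=\; dJ^{\#}.
\]
Since $d \in dR = IK$, write $d = \sum_{i=1}^{n} e_i f_i$ with $e_i \in I$ and $f_i \in K$; each $f_i x \in Kx \inc d J^{\#}$, say $f_i x = d g_i$ with $g_i \in J^{\#}$, so $dx = \sum_i e_i(f_i x) = d \sum_i e_i g_i$, and cancelling the non-zerodivisor $d$ gives $x = \sum_i e_i g_i \in I J^{\#}$. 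This proves $(IJ)^{\#} \inc I J^{\#}$; taking $J = R$ then gives $I = I^{\#}$.

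I do not expect a real obstacle: this is the usual ``an invertible ideal behaves locally like a principal one'' argument, phrased ideal-theoretically so as not to localize $^{\#}$ (which may behave badly under localization). The only points requiring care are the identity $(dJ)^{\#} = d J^{\#}$, which is exactly where both hypotheses on $^{\#}$ get used, and the fact that the extracted $d \in I$ is genuinely a non-zerodivisor — which is where whatever hypothesis on $R$ makes ``locally free of rank one'' coincide with ``invertible'' (e.g.\ Noetherianity) is used.
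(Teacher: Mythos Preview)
Your proof is correct and follows essentially the same route as the paper's: both use invertibility to produce an ideal $K$ with $IK = dR$ for a non-zerodivisor $d$, then push $(IJ)^{\#}$ through the chain $K(IJ)^{\#} \inc (KIJ)^{\#} = (dJ)^{\#} = dJ^{\#}$ and cancel $d$. The only cosmetic difference is that the paper first reduces to the case $J = J^{\#}$ and then argues purely ideal-theoretically (multiplying back by $I$ and cancelling $r$), whereas you establish $(dJ)^{\#} = dJ^{\#}$ in general and finish element-wise; neither variation changes the substance.
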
  
 \begin{proof} Evidently, $I(J^{\#}) \inc I^{\#} J^{\#} \inc (IJ)^{\#}$, and so it suffices to show
 that  $(IJ)^{\#} \inc I(J^{\#})$.  Since the latter contains $IJ$,  it suffices to show that the
 latter is closed.  We may replace  $J$ by $J^{\#}$,  and so assume that $J = J^{\#}$,  and
 we want to prove that $IJ$  is closed.  Since $I$ is projective of rank one,  it is an invertible
 ideal, and we may choose an  ideal $I'$  such that  $I'I = rR$,   where $r$ is a non-zerodivisor.
 Then  $I'(IJ)^{\#} \inc (I'IJ)^{\#} = (rJ)^{\#} = r(J^{\#}) = (I'I)J^{\#} = I'\bigl(I(J^{\#})\bigr)$.  Multiplying
 by $I$ then yields that  $r\bigl((IJ)^{\#}\bigr) = r\bigl(I(J^{\#})\bigr)$.  Since $r$ is a non-zerodivisor,
 it follows that  $(IJ)^{\#} = I(J^{\#})$.  The final statement is the case where  $J = R$. \end{proof}

 \begin{thm}\label{thm:prinfaccont}  Let  $X$ be a closed affine algebraic set over $\C$,  and let
 $R = \C[X]$.  Let  $I$ be an ideal that is locally free of rank one,  and let $J$ be any ideal.   If  $R$ is seminormal,
 then $(IJ)\ct = I(J\ct)$.  \end{thm}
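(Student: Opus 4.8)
The plan is to deduce this from Lemma~\ref{lem:invert}, applied to the closure operation $^{\#}={}\ct$. Three hypotheses must be checked. First, that continuous closure is a closure operation in the sense of Definition~\ref{def:closure}: it is inclusion-preserving and satisfies $I\inc I\ct$ by construction, and it is idempotent because $I\ct\inc I\cC(X)$ forces $(I\ct)\cC(X)\inc I\cC(X)$, so that $(I\ct)\ct=(I\ct)\cC(X)\cap R\inc I\cC(X)\cap R=I\ct$. Second, that $I\ct J\ct\inc(IJ)\ct$ for all ideals $I,J$ of $R$; this is exactly Proposition~\ref{pr:prodct}. Third --- and this is the only substantive point --- that $rJ=(rJ)\ct$ whenever $r\in R$ is a non-zerodivisor and $J=J\ct$.

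To prove the third point, let $f\in(rJ)\ct$; the reverse inclusion $rJ\inc(rJ)\ct$ is automatic. Write $J=(\vect j m)R$, so that $rJ$ is generated by $rj_1,\dots,rj_m$, and choose continuous functions $h_1,\dots,h_m$ on $X$ with $f=r\sum_{i=1}^m h_ij_i$ in $\cC(X)$; set $\phi:=\sum_{i=1}^m h_ij_i\in\cC(X)$. Since $f\in(rJ)\ct\inc(rR)\ct$ and $R=\C[X]$ is reduced and seminormal, Corollary~\ref{cor:nzdcl} gives $(rR)\ct=rS\cap R=rR$ (here $S=R$ because $R$ is seminormal), so $f=rg$ for some $g\in R$, uniquely determined because $r$ is a non-zerodivisor.

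Now $rg=f=r\phi$ holds in $\cC(X)$, so $g$ and $\phi$ agree on the open set $U:=\{x\in X:r(x)\neq 0\}$. Because $r$ is a non-zerodivisor in the reduced ring $R$, it lies in no minimal prime of $R$, hence vanishes identically on no irreducible component of $X$; thus $U$ meets each component of $X$ in a Euclidean-dense subset, so $U$ is dense in $X$. As $g$ and $\phi$ are continuous functions on $X$ with values in the Hausdorff space $\C$ and agree on the dense set $U$, they agree on all of $X$. Hence $g=\sum_{i=1}^m h_ij_i$ with the $h_i$ continuous on $X$, so $g\in J\ct=J$ and $f=rg\in rJ$. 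This establishes $(rJ)\ct=rJ$, and Lemma~\ref{lem:invert} then yields $(IJ)\ct=I(J\ct)$ for every ideal $J$, since $I$ is locally free of rank one. The main obstacle is the density step, namely checking that a non-zerodivisor on a reduced affine $\C$-algebra has Euclidean-dense nonvanishing locus so that the identity $f=rg$ of regular functions may be ``divided'' at the level of continuous functions; this is routine, and the genuine input --- that $rR$ is continuously closed when $R$ is seminormal, and that continuous closure is multiplicative --- has already been packaged into Corollary~\ref{cor:nzdcl} and Proposition~\ref{pr:prodct}.
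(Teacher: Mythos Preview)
Your proof is correct and follows essentially the same approach as the paper: both apply Lemma~\ref{lem:invert} with Proposition~\ref{pr:prodct} and Corollary~\ref{cor:nzdcl} supplying the first two hypotheses, and both establish the third hypothesis by writing $f=rg$ (via continuous closedness of $rR$ in the seminormal case) and then cancelling $r$ from $rg=r\phi$ in $\cC(X)$. Your density argument spelling out why $r$ is a non-zerodivisor in $\cC(X)$ is exactly what the paper compresses into the single phrase ``since $r$ is not a zerodivisor in $\cC(X)$.''
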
 
 \begin{proof} By Lemma~\ref{lem:invert}, Proposition~\ref{pr:prodct}, and Corollary~\ref{cor:nzdcl}, 
 we may assume that $I = rR$,  where $r$ is a non-zerodivisor
 in $R$.   Also by Proposition~\ref{pr:prodct},   $r(J\ct) \inc (rJ)\ct$.  Now suppose
 that  $u \in (rJ)\ct \inc (rR)\ct$.  Since  $R$ is seminormal, we know from  Corollary~\ref{cor:nzdcl} that $rR$ is continuously
 closed,  and so we may write  $u = rf$ for some $f \in R$.  Then  $rf = u =  \sum_{i=1}^n g_irf_i$ (where $f_1, \dotsc, f_n$ is a generating set for the ideal $J$) for some $\vect g n \in \cC(X)$.  Since $r$ is not a zerodivisor in $\cC[X]$,  we have $f = \sum_{i=1}^n g_if_i$
 as well. \end{proof}
   
  \begin{thm}\label{thm:prinmax} Let $R$ be a seminormal excellent Noetherian ring.  Let $I$
 be an ideal of $R$ that is locally free of rank one and let $J$ be any ideal of $R$.  Then
 $(IJ)\ax = I(J\ax)$. \end{thm}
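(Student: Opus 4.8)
The strategy is to mimic the proof of Theorem~\ref{thm:prinfaccont}, replacing continuous closure by axes closure; concretely, I will verify the hypotheses of Lemma~\ref{lem:invert} for the closure operation $^{\#}=\ax$ on $R$. That $\ax$ is a closure operation in the sense of Definition~\ref{def:closure} is routine: it is inclusion-preserving, satisfies $I\inc I\ax$ directly from the definition, and is idempotent because for any ring map $R\to S$ with $S$ a one-dimensional excellent seminormal ring, persistence (Proposition~\ref{pr:axlocal}(a)) gives $(I\ax)S\inc (IS)\ax=IS$ by Proposition~\ref{pr:onedimax}. The multiplicativity hypothesis $I\ax J\ax\inc (IJ)\ax$ is precisely Lemma~\ref{lem:prod}.

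The one substantive step is the remaining hypothesis of Lemma~\ref{lem:invert}: for a non-zerodivisor $r\in R$ and an axes closed ideal $J$, the ideal $rJ$ is again axes closed. I plan to prove the sharper statement $(rJ)\ax=r(J\ax)$ for an arbitrary ideal $J$. The inclusion $r(J\ax)\inc (rJ)\ax$ is the ``in particular'' clause of Lemma~\ref{lem:prod}. For the reverse inclusion, take $u\in (rJ)\ax\inc (rR)\ax$. Since $R$ is seminormal (hence reduced) and excellent, Theorem~\ref{thm:semiaxes} shows that $rR$ is axes closed, so $u=rf$ for some $f\in R$, and it remains to check $f\in J\ax$. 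Here Theorem~\ref{thm:nzdtest} is the crucial tool: because $r$ is a non-zerodivisor, in testing whether $f\in J\ax$ it suffices to consider ring maps $\theta\colon R\to A$ with $A$ a complete local (hence excellent) one-dimensional seminormal ring such that $\theta(r)$ is a non-zerodivisor in $A$. For such a $\theta$, the defining property of $(rJ)\ax$ gives $\theta(u)\in (rJ)A=\theta(r)(JA)$, so $\theta(r)\theta(f)=\theta(r)a$ for some $a\in JA$; cancelling the non-zerodivisor $\theta(r)$ yields $\theta(f)\in JA$. Hence $f\in J\ax$, so $u=rf\in r(J\ax)$, which completes the proof that $(rJ)\ax=r(J\ax)$; in particular, when $J=J\ax$ we get $rJ=(rJ)\ax$.

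With these facts in hand, Lemma~\ref{lem:invert} applied to $^{\#}=\ax$ immediately gives $(IJ)\ax=I(J\ax)$ for every ideal $J$ of $R$ whenever $I$ is locally free of rank one, which is the theorem. The main obstacle is the middle step, and specifically the possibility that $\theta(r)$ fails to be a non-zerodivisor (or even vanishes) for a generic test map $\theta$, which would break the cancellation argument; the whole proof hinges on being permitted to discard such maps via Theorem~\ref{thm:nzdtest}, and it is exactly here that the excellence and seminormality hypotheses enter (through Theorems~\ref{thm:semiaxes} and~\ref{thm:prnatax}).
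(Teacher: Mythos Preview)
Your proof is correct and follows essentially the same route as the paper: both verify the hypotheses of Lemma~\ref{lem:invert} for $^{\#}=\ax$, with Lemma~\ref{lem:prod} supplying multiplicativity and the substantive step being that $rJ$ is axes closed whenever $r$ is a non-zerodivisor and $J$ is axes closed, proved via Theorem~\ref{thm:semiaxes} (to write $u=rf$) and Theorem~\ref{thm:nzdtest} (to restrict to test maps where the image of $r$ remains a non-zerodivisor, enabling cancellation). The paper phrases the argument by contradiction and yours directly, but the content is the same.
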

 \begin{proof} By Lemma~\ref{lem:invert}, Lemma~\ref{lem:prod}, and Theorem~\ref{thm:semiaxes}, we
 reduce at once to the case where $I = rR$ is generated by a non-zerodivisor $r$.  Since  $r(J\ax) \inc (rJ)\ax$,
 it suffices to prove the other conclusion,  which follows at once if $r(J\ax)$ is axes closed.  Therefore, we may
 replace $J$ by $J\ax$,  and it will suffice to show if $J$ is axes closed, then $rJ$ is axes closed.  Since
 $rR$ is axes closed by Theorem~\ref{thm:semiaxes},  it suffices to show that if $rf \in (rJ)\ax$  then
 $f \in J\ax$.  If we have a counterexample, by Theorem~\ref{thm:nzdtest}, there is a 
 map $h:R \to A$, where $A$ is  a one-dimensional excellent seminormal ring, $h(r)$ is a non-zerodivisor in $A$,  
 and $h(f)$ is not in $h(J)A$. Then $h(rf) = h(r)h(f) \notin  h(r)h(J)A = h(rJ)A$,  and so $rf \notin (rJ)\ax$, a contradiction.  
 \end{proof}
  
\begin{disc}\label{disc:factorial} Let $R$ be a locally factorial domain.  When $R$ is factorial, every nonzero
ideal $\fA$ is uniquely the product of a principal ideal (which may be $R$) and an ideal of height at least two (which
may also be $R$:  the height of the unit ideal is $+\infty$).  The principal ideal is generated by the greatest common
divisor of any given set of generators of $\fA$,  which is unique up to a unit multiplier,  and is the same as the greatest common
divisor of all elements of $\fA$.  When $R$ is only \emph{locally} factorial, we may say instead that every nonzero ideal
$\fA$ factors uniquely as the product of an ideal that is locally free of rank one and an ideal of height at least two.  
One can perform the factorization uniquely in every local ring of $R$, since the local rings are factorial.  But one can
actually carry this out on a cover by open affines:  it is clear that the factorizations on two affines will be the same on
the overlaps, since the factorization is unique in every local ring of $R$.  To get the factorization on a neighborhood
of a prime $Q$,  consider the height one primes $\vect P k$ of $R$ that contain $\fA$ and are contained in $Q$.  Each $P_i$
becomes principal when expanded to $R_Q$.  Localize $R$ at one element $f\notin Q$  so that each $P_iR_f$ is
principal, say  $\pi_iR_f$,  and so that the only height one primes of $R_f$ that contain $\fA R_f$ are the $P_iR_f$.  
Suppose that $\fA R_{P_i} = \pi_i^{a_i}R_{P_i}$.  Then $\fA R_f$   factors as  $rJ$  where  $r = \pi_1^{a_1}\cdots \pi_k^{a_k}$,
since comparing primary decompositions shows that $\fA \inc rR_f$.  The factor $J$ is not contained in any height
one prime of $R_f$,  and so this is the desired factorization.  

If, moreover, $R$ has dimension at most two,  then when we factor $\fA$ in this way,  the second factor $J$ is either the
unit ideal or is contained
only in maximal ideals of height two,  and is unmixed in the sense of having no embedded primes.  \end{disc}

\begin{thm}\label{thm:dimtwo} Let $R$ be a domain of dimension two that is a locally factorial affine $\C$-algebra.
Then axes closure and continuous closure agree for $R$. \end{thm}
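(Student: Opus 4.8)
The plan is to reduce an arbitrary ideal of $R$ to the shape $IJ$, with $I$ locally free of rank one and $R/J$ zero-dimensional, and then quote the factor-extraction results already proved for continuous and axes closure together with the zero-dimensional case. First I would record the standing facts about $R$: being locally factorial, $R$ is locally normal, hence a normal domain, hence seminormal; and being a finitely generated $\C$-algebra, $R$ is excellent. Thus both Theorem~\ref{thm:prinfaccont} and Theorem~\ref{thm:prinmax} are available, as is Corollary~\ref{cor:zerodimctax}. Also, $\fA\ct \inc \fA\ax$ always holds, so it suffices to prove $\fA\ax \inc \fA\ct$ for every ideal $\fA$; the case $\fA = 0$ is trivial, so assume $\fA \neq 0$.

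Next I would invoke the factorization discussed in Discussion~\ref{disc:factorial}: write $\fA = IJ$, where $I$ is locally free of rank one and $J$ has height at least two. Since $\dim R = 2$, the ideal $J$ is either the unit ideal or is unmixed with every minimal prime a maximal ideal of height two; in either case $R/J$ is zero-dimensional. Now compute with the extraction theorems. By Theorem~\ref{thm:prinmax}, applied to the locally-free-rank-one ideal $I$ over the seminormal excellent ring $R$, we get $\fA\ax = (IJ)\ax = I(J\ax)$. By Theorem~\ref{thm:prinfaccont}, applied to $I$ over the seminormal affine $\C$-algebra $R$, we get $\fA\ct = (IJ)\ct = I(J\ct)$. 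Finally, since $R/J$ is zero-dimensional (or $J = R$), Corollary~\ref{cor:zerodimctax} gives $J\ncl = J\ct = J\ax$, so in particular $J\ct = J\ax$. Combining, $\fA\ct = I(J\ct) = I(J\ax) = \fA\ax$, as desired.

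The essential content of this theorem has in fact already been carried out in the preceding sections: the genuinely hard steps are the proofs that one may pull a locally-free-rank-one factor out of both closures (Theorems~\ref{thm:prinfaccont} and~\ref{thm:prinmax}) and the identification of natural, continuous, and axes closure in the zero-dimensional case (Corollary~\ref{cor:zerodimctax}, which itself rests on Theorem~\ref{thm:prnatax}). Consequently there is no serious obstacle remaining; the only point needing a word of care is the verification that the second factor $J$ really does satisfy $\dim R/J = 0$, and this is immediate from $\dim R = 2$ once Discussion~\ref{disc:factorial} tells us $J$ is unmixed of height at least two. The proof is thus an assembly of results already in hand.
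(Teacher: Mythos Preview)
Your proposal is correct and follows exactly the approach of the paper: factor $\fA = IJ$ via Discussion~\ref{disc:factorial}, pull the rank-one factor $I$ out of each closure using Theorems~\ref{thm:prinfaccont} and~\ref{thm:prinmax}, and then identify $J\ct = J\ax$ via Corollary~\ref{cor:zerodimctax} since $R/J$ is zero-dimensional. Your write-up is in fact more detailed than the paper's one-line proof, and your explicit observation that $R$ is normal (hence seminormal) is a useful addition, since this hypothesis is needed to invoke Theorems~\ref{thm:prinfaccont} and~\ref{thm:prinmax}.
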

\begin{proof} The result is immediate from the preceding discussion, Theorem~\ref{thm:prinfaccont},
Theorem~\ref{thm:prinmax}, and Corollary~\ref{cor:zerodimctax} \end{proof}

\section{A negative example and a fiber criterion for exclusion from the continuous closure}\label{sec:counter}

We begin with an inclusion lemma for axes closure.

\begin{lemma}\label{lem:colonax}
Let $R$ be an excellent Noetherian ring.  Let $I$ be an ideal and $\ici{I}$ the integral closure of $I$.  Let $P \in \Spec R$ and $J := (P\cdot \ici{I}) \cap (I :P)$.  Then $J \subseteq I\ax$.
\end{lemma}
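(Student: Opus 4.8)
The plan is to verify directly that every $f \in J$ satisfies condition (3) of Theorem~\ref{thm:axsemi}; that is, that for each homomorphism $h \colon R \to S$ with $(S,\n)$ a complete local one-dimensional seminormal ring one has $h(f) \in IS$. Granting this, $f \in I\ax$ by Definition~\ref{def:axsemi}. So I would fix such an $h$, set $Q = h^{-1}(\n) \in \Spec R$, and recall that $f \in J$ means both $f \in P \cdot \ici I$ and $Pf \inc I$. First I would dispose of the case $P \not\inc Q$: pick $p \in P \setminus Q$, so that $h(p)$ is a unit of $S$; since $f \in I :_R P$ we get $pf \in I$, hence $h(p)h(f) \in IS$, and multiplying by $h(p)^{-1}$ yields $h(f) \in IS$.

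Now suppose $P \inc Q$, so that $h(P) \inc \n$. Using $f \in P \cdot \ici I$, write $f = \sum_i p_i a_i$ with $p_i \in P$ and $a_i \in \ici I$. Applying $h$ to an equation of integral dependence of $a_i$ on $I$ shows $h(a_i) \in \icip{IS}$, the integral closure of the ideal $IS$ in $S$ (this is the routine persistence of integral closure). Hence $h(f) = \sum_i h(p_i) h(a_i)$ lies in the ideal $\n \cdot \icip{IS}$. At this point I would invoke Lemma~\ref{lem:mJ}, applied to the one-dimensional complete seminormal local ring $S$ and the ideal $IS$: it gives $\n \cdot \icip{IS} \inc IS$. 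Therefore $h(f) \in IS$, completing the verification of condition (3), and hence $J \inc I\ax$.

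There is really only one substantive ingredient, namely Lemma~\ref{lem:mJ} (the inclusion $\n \cdot \icip{K} \inc K$ in a one-dimensional complete seminormal local ring), which is already established; everything else is bookkeeping. The one point requiring a little care is the dichotomy on whether $P \inc Q$: when $P$ escapes the contracted maximal ideal the factor $P\cdot\ici I$ of $J$ is useless and one must instead exploit the colon condition $Pf \inc I$, whereas when $P \inc Q$ the colon condition is inert and it is the factorization through $P \cdot \ici I$ that makes Lemma~\ref{lem:mJ} applicable. So I do not anticipate a serious obstacle.
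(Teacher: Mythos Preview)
Your proof is correct and follows essentially the same approach as the paper: both verify the axes closure condition by mapping to a complete local one-dimensional seminormal ring $(S,\n)$, split into cases according to whether $h(P)\subseteq\n$, use the colon condition $J\subseteq I:P$ in the first case, and use $J\subseteq P\cdot\ici I$ together with Lemma~\ref{lem:mJ} in the second. Your write-up is slightly more explicit (introducing $Q=h^{-1}(\n)$ and choosing a specific unit $h(p)$), but the argument is the same.
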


\begin{proof}
Let  $f:R \to (A,\, \m)$ be a ring homomorphism, where $A$ is a complete local one-dimensional seminormal ring.   If $f(P) \nsubseteq \m$, then $JA \inc (I:P)A  = IA$.  Thus, we may assume that $f(P) \subseteq \m$.  In that case, the image of $J$ is contained in  $\m\icip{IA}$.  But by Lemma~\ref{lem:mJ}, $\m\icip{IA} \subseteq IA$.  Hence, $JA \subseteq IA$, as required.
\end{proof}

\begin{example}\label{ex:ctnotax}
In the polynomial ring $\C[u,v,x]$, the element $uvx$ is in the axes closure
of $I = (u^2, v^2, uvx^2)$ but not the continuous closure.

The first statement follows from Lemma~\ref{lem:colonax}, applied to the ideal $I$ and the prime ideal $P = (u,v,x)$, since $uv \in \ici{I}$.

Now suppose that $uvx$ is in the continuous closure of $I$,  say
\[
uvx = fu^2 + gv^2 + huvx^2,
\] where $f,g,h$ are continuous 
functions of $u,v,x$ in that order.   Let
$a = h(0,0,0)$.  Choose a constant $c \not=0$ such that $ch(0,0,c) \not = 1$:  
this is possible since  $xh(0,0,x) \to 0 \cdot a = 0$ as $x \to 0$.  
 Substitute $x = c$  in the displayed equation.
Then \[
cuv = f_0 u^2 + g_0 v^2 + h_0c^2 uv, 
\] where  the subscript indicates
the function of $u,v$ obtained by substituting $x = c$ in $f,g,h$ respectively.
The new equation involves only  $u,v$.  The function $c - h_0 c^2$ has value  $c\bigl(1 - c h(0,0,c)\bigr) \not= 0 $ at
the origin in the $u,v$-plane,  and so has a continuous inverse $s$ on a neighborhood $U$
of the origin.  Then  \[
uv = F u ^2 + G v^2, 
\] where the coefficients $F := s f_0$ and $G := s g_0$ are 
continuous functions defined on $U$.  
But this yields a contradiction.  To see this, let $A := \{(u,v) \mid u=v\} \cap U$ and $B := \{(u,v) \mid u=-v\} \cap U$.
On the set $A \setminus \z$, we have $F + G = 1$, so that by continuity, $F(0,0) + G(0,0) =1$.  But on $B \setminus \z$,
we have $F+G = -1$, so that $F(0,0) + G(0,0) = -1$ (again by continuity), whence $1=-1$, an absurdity.
\end{example}

\subsection*{Generalizing the counterexample}
If $R = \C[X]$ for an affine algebraic set $X$ and $x \in X$, recall from Discussion~\ref{disc:ctx} that we write $I\ctx$ for
the set of elements of the local ring $R_x$ of $\C[X]$ at $x$ that are continuous linear combinations
of elements of $I$ on some Euclidean neighborhood of $x$ in $X$.  This is the contraction to $R_x$
of the expansion of $I$ to the ring of germs of continuous (in the Euclidean topology) functions
on $X$ at $x$.

Suppose that $B \to R$ is a $\C$-homomorphism of finitely generated $\C$-algebras such 
that $B, R$ are reduced. Let $Y = \MaxSpec(B)$ and $X = \MaxSpec(R)$. 
Thus, we have a map $\pi:X \to Y$.  If $y \in Y$,  let  $\ov{R}^y$ denote the coordinate ring of the reduced fiber
 over $y$,  i.e., if $\m_y$ is the maximal ideal of $B$ corresponding to $y$,  then  $\ov{R}^y = (R/\m_yR)\red$.
$\MaxSpec(\ov{R}^y)$ may be identified with the fiber $X_y = \pi^{-1}(y)$,  and $\ov{R}^y$ with the ring of regular
functions on $\pi^{-1}(y)$.  We have a surjection $R \surj \ov{R}^y$ for every $y$, which may be
thought of as restriction of regular functions from $X$ to $X_y$.  If $g \in R$,  we write $g^y$
for the image of $g$ in $\ov{R}^y$.  

\begin{thm}[fiber criterion for exclusion from continuous closure]\label{thm:fibercrit}
 Let $B,\,R$ be as in the paragraph
above, and let notation be as in that paragraph.  Suppose that $f,\, g \in R$ and $I, \, J \inc R$ are
ideals. Suppose that:
\begin{enumerate}
\item $f \notin J\ct$ in $R$.
\item $\{ x \in X: g^{\pi(x)} \notin (I\ov{R}^{\pi(x)})\ctx\}$  is dense in $X$ in the Euclidean topology.
\end{enumerate}
Then $gf \notin (I+gJ)\ct$ in $R$. 
\end{thm}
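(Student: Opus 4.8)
The plan is to argue by contradiction: suppose $gf \in (I+gJ)\ct$ in $R$. Fix generators $p_1,\dots,p_m$ of $I$ and $q_1,\dots,q_n$ of $J$, so that $I+gJ$ is generated by $p_1,\dots,p_m,gq_1,\dots,gq_n$. Since $R$ is reduced, $R\inj\cC(X)$, and the assumed containment produces continuous $\C$-valued functions $a_i,b_j$ on $X$ with $gf=\sum_{i=1}^m a_ip_i+g\sum_{j=1}^n b_jq_j$ in $\cC(X)$. Set $h:=f-\sum_{j=1}^n b_jq_j\in\cC(X)$, so that $g\cdot h=\sum_{i=1}^m a_ip_i$ in $\cC(X)$.

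First I would dispose of the degenerate possibility $h\equiv 0$ on $X$: then $f=\sum_j b_jq_j\in J\cC(X)\cap R=J\ct$, contradicting hypothesis (1). So $h$ is not identically zero, and $U:=\{x\in X:h(x)\ne 0\}$ is a nonempty Euclidean-open subset of $X$. On $U$ one may divide by $h$ to obtain $g=\sum_{i=1}^m(a_i/h)p_i$ as continuous functions on $U$.

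For the main step, note that the set $D:=\{x\in X:g^{\pi(x)}\notin(I\ov{R}^{\pi(x)})\ctx\}$ is Euclidean-dense in $X$ by hypothesis (2), while $U$ is a nonempty open set, so $D\cap U\ne\emptyset$. I would pick $x_0\in D\cap U$, put $y_0:=\pi(x_0)$, and restrict the identity $g=\sum_i(a_i/h)p_i$ along the closed embedding $X_{y_0}=\pi^{-1}(y_0)\inj X$ (which corresponds to the surjection $R\surj\ov{R}^{y_0}$). This gives $g^{y_0}=\sum_i\bigl((a_i/h)|_{X_{y_0}}\bigr)p_i^{y_0}$ on $U\cap X_{y_0}$, a Euclidean neighborhood of $x_0$ in $X_{y_0}$, with each $(a_i/h)|_{X_{y_0}}$ continuous there. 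Since the $p_i^{y_0}$ generate $I\ov{R}^{y_0}$ and $\ov{R}^{y_0}$ is reduced (hence injects into $\cC(X_{y_0})$), this exhibits the germ of $g^{y_0}$ at $x_0$ as a continuous linear combination of elements of $I\ov{R}^{y_0}$, i.e. $g^{\pi(x_0)}\in(I\ov{R}^{\pi(x_0)})\ctx$ — contradicting $x_0\in D$. Either way we reach a contradiction, so $gf\notin(I+gJ)\ct$.

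I do not expect a genuine obstacle; the only point needing care is the bookkeeping between continuous and regular functions in the last step — checking that restricting the $\cC(X)$-relation for $g$ on $U$ to the Euclidean-closed fiber $X_{y_0}$ produces exactly the germ-level membership $g^{y_0}\in(I\ov{R}^{y_0})\ctx$ that the definition in Discussion~\ref{disc:ctx} calls for. This reduces to the reducedness of $\ov{R}^{y_0}$ (so that its regular functions embed in $\cC(X_{y_0})$) together with $U\cap X_{y_0}$ being an open neighborhood of $x_0$ in the fiber. Hypothesis (1) is used only to force $U\ne\emptyset$, so that the density in (2) has a nonempty open set to meet.
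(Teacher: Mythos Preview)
Your proof is correct and follows essentially the same route as the paper: assume a continuous relation $gf=\sum a_ip_i+g\sum b_jq_j$, set $h=f-\sum b_jq_j$ (the paper's $\gamma$), use hypothesis~(1) to get $h\not\equiv 0$, intersect the nonvanishing locus of $h$ with the dense set from~(2), and restrict the resulting relation $g=\sum(a_i/h)p_i$ to the fiber through the chosen point to contradict membership in $D$. Your bookkeeping around the reducedness of $\ov{R}^{y_0}$ and the restriction to the fiber is, if anything, a bit more explicit than the paper's.
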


Before giving the proof, we show how the example from the beginning of this section 
can be analyzed using this criterion.  Let $B= \C[x] \inc \C[x,u,v] = R$.  
Let $f = x$,  $g = uv$,  $I = (u^2,v^2)R$,  and $J = x^2R$.  Note that $f \notin J\ct$ in
$R$.  The fibers are simply the rings obtained by specializing $x$ to a complex constant,
and all of them may be identified  with $\C[u,v] \inc R$.  In this case,  $uv \notin (u^2,v^2)\ctx$
in all fibers $\C[u,v]$ for all $x$.  To see this, observe that $(u^2,\,v^2)$ is primary to the maximal ideal  $(u,v)$ 
of $\C[u,\,v]$.  It is clear that  $uv \notin (u^2,\,v^2)\ncl$, which is the same as $(u^2,\, v^2)\ct =
(u^2,\,v^2)\ax$ by Corollary~\ref{cor:zerodimctax}, and we may apply Proposition~\ref{pr:primmax}              
to conclude that $uv \notin (u^2,\,v^2)\ctz$.  Hence,  $xuv \notin (u^2,v^2, x^2uv)\ct$ in $R$. 

\begin{proof}[Proof of the fiber criterion]
Let $\vect u h$ generate $I$ and $\vect v k$ generate $J$.  Suppose that
\[
fg = \sum_{i=1}^h \alpha_i u_i + \sum_{j=1}^k \beta_j gv_j,
\]  where the the $\alpha_i$ and
$\beta_j$ are continuous.    Then  $\gamma = f - \sum_{j=1}^k \beta_j v_j$ is a continuous
function on $X$ that does not vanish identically, since $f \notin J\ct$.  Then $U = \gamma^{-1}(\C-\{0\})$
is a nonempty subset of $X$ that is open in the Euclidean topology.  Hence, it must meet 
$\{ x \in X: g^{\pi(x)} \notin (I\ov{R}^{\pi(x)})\ctx\}$.  Thus, we may choose $x \in X$ such that $\gamma(x) \not= 0$
(since $x \in U$) and $g^{\pi(x)} \notin (I\ov{R}^{\pi(x)})\ctx$.  But then there is a Euclidean neighborhood of $x$ on which
$\gamma$ does not vanish, so that $1/\gamma$ is a continuous function on this neighborhood, and, 
if $y = \pi(x)$,  on the intersection of this neighborhood with $X_y$ we have
\[
 g^y = \sum_{i = 1}^h \frac{\alpha_i(y)}{\gamma(y)} {u_i}^y,
\]
which shows that
$g \in (I\ov{R}^y)\ctx$,  a contradiction. 
\end{proof}

 \section{Mixed natural closure  and continuous closure for monomial ideals in polynomial rings}\label{sec:monomial}
 
 We  know that for an ideal $I$ of a finitely generated reduced $\C$-algebra, $I\ncl \inc I\ct  \inc I\ax$,  and that if
$I$ is unmixed, the conditions that it be naturally closed, continuously closed, and axes closed are all equivalent.
It turns out that when $I$ is mixed, even if it is a monomial ideal, a naturally closed ideal need not be
continuously closed and a continuously closed ideal need not be axes closed.  

Mixed natural closure is intended to make up for the following defect of  natural closure:  when $f \in R$,
it is possible that  $fg$ is not in the natural closure of  $fI$  even though $g$ is in the natural closure
of $I$.  (Note that it is obvious that if $g$ is in the continuous closure of $I$, then $fg$ is in the continuous
closure of $fI$.)

In fact, in the polynomial ring $\C[u,v,x]$ the ideal $(u^3x, v^3x)$ is naturally closed but not continuously
closed, since $u^2v^2x$ is in the continuous closure.  The point is that  $u^2v^2$ is in the natural closure
of $(u^3,v^3)$, and so it is in the continuous closure.  It follows that  $u^2v^2x$ is in the continuous closure
of $(u^3x, v^3x)$.  To see that $\mu = u^2v^2x$ is not in the natural closure, note that for $I$ monomial both
$I\ncl$ and   $I_{>1}$ are monomial.  Thus, it suffices to show that $\mu \notin I$ and $\mu \notin I_{>1} $.  The first
is clear.  The second is true since it is obviously true even after we localize at the prime ideal $(x)$.  

We note also that in $\C[u,v,x]$  the ideal  $(u^2, v^2, uvx^2)$ is continuously closed but not axes closed:  see
Example~\ref{ex:ctnotax} of the preceding section.

In this section, we first define a modified version of natural closure for ideals $I$ in any ring, which
we refer to as {\it mixed natural closure}.  We let $I\nmx$ denote the mixed natural closure of $I$. If $I = I\nmx$,
we say that $I$ is {\it \mnc}.  It will turn out that $I\ncl \inc I\nmx$, that a primary ideal is \mnc\ if and only if it is naturally 
closed. The definition will force it to be true that if  $fJ \inc I$ and $u \in J\nmx$,  then $fu \in I\nmx$.  

We shall eventually use the fiber criterion to prove that for monomial ideals in polynomial rings over $\C$, 
continuous closure always equals mixed natural closure.

\begin{defn}\label{defn:mnc}  We shall say that an ideal $I$ of a ring $R$ is {\it \mnc} if whenever $fJ \inc I$ and
$u \in J\ncl$,  then $fu \in I$.  An obviously equivalent condition is that for all $f \in R$,   $f\bigl((I:_Rf)\ncl\bigr) \inc I$.
Since $f$ may be taken to be 1,  an \mnc\ ideal is naturally closed.  
An intersection of \mnc\ ideals is clearly \mnc.  We define the {\it mixed natural closure} of $I$, denoted 
$I\nmx$, to be the intersection of all the MN-closed ideals containing  $I$, which is evidently the smallest
\mnc\ ideal containing $I$.  It is easy to see that $u \in I\nmx$ if and only if $(\dagger)$ there are finite sequences of ideals
$I = I_0, \, I_1, \, \ldots, \, I_h$,  $J_0,\, \ldots, J_{h-1}$ and finite sequences of elements  $\vect u h$ and $\vect f h$ in $R$
such that $u \in I_h$  and
such that for every $i$,  $0 \leq i < h$,  $f_{i+1}J_i \inc I_i$, $u_{i+1} \in J_i\ncl$,  and 
$I_{i+1} = I_i + f_{i+1}u_{i+1}R$. We note that we may take the ideals $J_i$ to be finitely generated,
and if $u_i \in J_i\ncl$ there is a subring  $R_0$ of $R$ finitely generated over $\Z$ and containing $u_i$, 
and an ideal $J_0$ of  $R_0$ generated by finitely many elements of $J_i$ such
that  $u_i \in J_0\ncl$.  It follows,
for example, that $I\nmx$ is the union of the ideals $(I \cap S)\nmx$ (in $S$) as $S$ runs through any directed
family of subrings of $R$ whose union is $R$.  In particular, we may let $S$ run through subrings of $R$
finitely generated over $\Z$. 

We give a third description of $I\nmx$.   Let $\fA_1(I) := \fA(I) := \sum_{f \in R}  f(I:_Rf)\ncl$.  Recursively, define
$\fA_{n+1}(I) = \fA\bigl(\fA_n(I)\bigr)$.  Note that $I \inc I\ncl \inc \fA(I)$ since we may take $f=1$.   
Then it is easy to see  $I = I\nmx$ if and only if $I = \fA(I)$ and that $I\nmx = \bigcup_{n=1}^\infty \fA_n(I)$.  
 \end{defn} \bigskip

 The following two Propositions are immediate from the definition and discussion above.

\begin{prop} Let $(R_\lambda, I_\lambda)$ be a directed family of rings and ideals (so that the map
$R_\lambda \to R_\mu$ takes $I_\lambda$ into $I_\mu)$.  Let $(R,\,I)$ be the direct limit, where $I \inc R$
is an ideal.  Then $I\nmx$ is the direct limit of the ideals $I_\lambda\nmx$ (calculated in the respective 
$R_\lambda$). \qed \end{prop}

\begin{prop} If $I \inc J$ are ideals of $R$, then $I\nmx \inc J \nmx$ and $(I\nmx)\nmx = I\nmx \supseteq I$.  In other words, mixed natural closure is a closure operation in the sense of Definition~\ref{def:closure}. \qed \end{prop}

We also have: 
\begin{prop}\label{pr:mnxinax} If $I \inc R$,  $I\ncl \inc I\nmx$. If $\blank^\#$ is any closure operation on ideals of $R$ such
that $I\ncl \inc I^\#$ for all ideals $I \inc R$ and $f\cdot(I^\#) \inc (fI)^\#$ for all $f \in R$ and all ideals $I \inc R$,  then 
$I\nmx \in I^\#$ for all $I$.  In particular, $I\nmx \inc \ov{I}$,  $I\nmx \inc I\ax$ whenever $I\ax$ is defined,  and $I\nmx \inc I\ct$
when $I\ct$ is defined.  Consequently, if $I = I\ct$,  then $I = I\nmx$.  Moreover,  if  $f$ is a non-zerodivisor in
a seminormal ring, then  $fR$ is \mnc. \end{prop}
\begin{proof} The statements are immediate except for the last statement.  This statement reduces to the
case where  $R$ is finitely generated over the integers and, hence, excellent.  The result now
follows from Theorem~\ref{thm:semiaxes}, since  $fR$ is axes closed.   \end{proof}
 
\begin{prop}\label{pr:mnclocal} If $R \to S$ is a homomorphism and $I$ is an ideal of $R$,   then $I\nmx$ 
maps into $(IS)\nmx$ in $S$. 
That is, mixed natural closure is persistent.  Hence, if $J$ is \mnc\ in $S$,  its contraction to  $R$ is \mnc\ in $R$.
If $W$ is any multiplicative system in $R$,  the mixed natural closure of $IW^{-1}S$ in $W^{-1}S$ is $I\nmx W^{-1}S$.
\end{prop}
\begin{proof}  The first statement follows at once from the recursive construction of the elements in the mixed natural
closure and the fact that natural closure is persistent, and the second statement is immediate from the first.
The fact that $I\nmx W^{-1}S \inc (IW^{-1}S)\nmx$ is a special case of persistence.  For the converse,
we use the final construction of $I\nmx$ in Definition~\ref{defn:mnc}. It suffices to show that $\fA(IW^{-1}S)
\inc W^{-1}\fA(IS)$. This follows at once because for all $f \in S$ and $w \in W$,
\[
IW^{-1}S:_{W^{-1}S} (f/w) = IW^{-1}S:_{W^{-1}S} (f/1) = (IS:_Sf)W^{-1}S;
\]
moreover, localization commutes with natural closure (see Proposition~\ref{pr:intloc}) and 
sum of ideals. \end{proof}

\begin{cor}  Let $I \inc R$ and $u \in R$.  Then $u \in I\nmx$ if and only if for every prime (respectively, maximal) ideal
$m$ of $R$,   $u/1 \in (IR_m)\nmx$. \qed \end{cor}

\begin{prop} If $I$ is primary to a prime ideal $P$ of $R$, then $I = I\nmx$ if and only if $I = I\ncl$.  \end{prop}
\begin{proof} It suffices to show that if $I = I\ncl$ then $I = I\nmx$.  For this we must show that if $fJ \inc I$ and $u \in J\ncl$,  
then $fu \in I$.  The condition that $u \in J\ncl$ can be expressed in terms of finitely many elements of $J$ and $R$ with
certain equations holding among them.  Therefore, we can choose a subring $R_0$ of $R$ that is finitely generated over
$\Z$ that contains $u$, $f$, and these elements.  Let $I_0$ be the contraction of $I$ to $R_0$,  $J_0$ the contraction
of $J$,  and let $P_0$ be the contraction of $P$.  Then it is still true that $I_0$ is primary to $P_0$,  that $I_0$ is naturally closed,
that $fJ_0 \inc I_0$,  and that $u$ is in the natural closure of $J_0$ in $R_0$.   It will suffice to show that $fu \in I_0$ in $R_0$.
Therefore, it suffices to consider the case where $R$ is a finitely generated algebra over $\Z$.  In particular, we may assume
without loss of generality that $R$ is excellent.

The result is now immediate from Theorem~\ref{thm:prnatax} and  Proposition~\ref{pr:mnxinax},   for we have
$I = I\ncl$ for a primary ideal implies $I = I\ax$ as well in the excellent case, and $I\nmx \inc I\ax = I$. \end{proof}

\begin{prop}  Let $J,\,I$ be ideals of the ring $R$ and let $f \in R$.  Suppose that $I = I\nmx$.  Then 
$fJ \inc I$ implies  $f\cdot(J\nmx) \inc I$. \end{prop}
\begin{proof}  If $u \in J\nmx$ then there are sequences of ideals and elements as in condition $(\dagger)$ of Definition~\ref{defn:mnc} 
such that $J$ is successively enlarged by one additional generator at a time until one reaches an ideal containing $u$.
It will suffice to show that each of the successive enlargements $J_i$ of $J$ has the property that $fJ_i \inc I$.  Since the base
step of the induction is simply that $fJ \inc I$, it will suffice to show that if $fJ_i \inc I$ and  $J_{i+1} = J_i + Rgv$,  where there is 
an ideal $\fA$ of $R$ and $g \in R$ such that $g\fA \inc J_i$ and $v \in \fA\ncl$,  then $fgv \in I$.   Since $fg\fA \inc
fJ_i \inc I$,  the result follows at once from the fact that $I = I\nmx$.  \end{proof}

\begin{rmk*} If an ideal $I$ is graded or multigraded, the same holds for its closure or another ideal derived from it in many instances.  
This is true for $I\ct$ by Proposition~\ref{pr:cthomog}, $I_{>1}$ and $I\ncl$ by Proposition~\ref{pr:nchomog}, as well as for
$I\ax$ by Proposition~\ref{pr:axgrad}.  In particular, in each of these cases,  if $I$ is a monomial ideal in a polynomial
ring over a field,  so is  $I\ct$, $I_{>1}$, $I\ncl$ and $I\ax$.  This is also true of $I\nmx$ if the ring contains an infinite
field.  We conjecture that this hypothesis is unnecessary. \end{rmk*}

\begin{prop}\label{pr:mncgrad} Let $R$ be a $\Z^h$-graded ring, $h>0$, and let $I$ be a homogeneous ideal with respect to this grading.
Assume that $R$ contains an infinite field.
Then $I\nmx$ is graded.  If $R$ is a polynomial ring over a ring $A$, then the MN-closure of every monomial ideal
is a monomial ideal.  In this later case, if $I$ is an ideal of $A$ that is \mnc,  then $IR$ is \mnc. \end{prop}
\begin{proof} All of the statements but the last are immediate from the Discussion~\ref{disc:grad}. 
For  the final statement, by a direct limit argument, we may assume that the number of variables is finite:  say
 $R = A[\vect x h]$. Let $F$ be a polynomial in the mixed natural closure of $IR$.
 It suffices to show that each term $a\mu$ of $F$,  where  $\mu$ is monomial in the $x_i$,  is such that $a \in I$.
 Since $(IR)\nmx$ is multigraded, we have that  $a\mu$ is in $(IR)\nmx$.  There is a unique $A$-homomorphism
 $R \to A$ that maps all the variables to 1.  Since mixed natural closure is persistent by Proposition~\ref{pr:mnclocal},  
 the image of $a\mu$,  which is $a$, is in the mixed natural closure of  $I$,  which is $I$.  \end{proof}

 \begin{rmk*}  By Discussion~\ref{disc:grad}, mixed natural closure preserves $\Z^h$-gradings whenever
 the ring contains arbitrarily large families of units such that the difference of any two distinct elements is also
 a  unit.  We conjecture that the results of Proposition~\ref{pr:mncgrad} hold without any hypothesis on $R$.
 However, we have not been able to prove in general that $(*)$ if $I$ is \mnc\ in  $A$,  then  $IA[x]$ is \mnc\ in the
 polynomial ring $A[x]$.  (Of course, if one knows this for one variable one gets the result for arbitrary sets
 of variables.)  If we simply knew this fact, it would imply that mixed natural closure commutes with
 $\Z^h$-gradings in general.  To see this, suppose that $R$ is $\Z^h$-multigraded and that $F \in I\nmx$
 for a graded ideal  $I$.  By Discussion~\ref{disc:grad}, if $s$ is a sufficiently large positive integer, 
 $\vect ts$ are new indeterminates of degree $(0,\,\ldots, \, 0)$,  and $g = \prod_{i=1}^s t_i \prod_{1 \leq j < k \leq s}(t_j-t_k)$,
 then each component $F_\lambda$ of $F$ is in the mixed natural closure of $IS$,  where  $S = R[\vect t s][1/g]$.  Since mixed
 natural closure commutes with localization by Proposition~\ref{pr:mnclocal}, there is an integer $N > 0$ such that $g^NF_\lambda$ 
 is in the mixed natural closure of   $IR[\vect ts]$.  Let $J$ be the mixed natural closure of $I$ in $R$.    If we know $(*)$,  we can 
 conclude that when $g^NF_\lambda$ is viewed as a polynomial in the $t_i$,  every coefficient is in $J$.  Hence, we have
 the desired result if some coefficient of $g^N F_\lambda$  is $F_\lambda$. It is therefore sufficient to note
 that for lexicographic order with $t_1 > \cdots > t_n$,  the highest order term in $g$  (and hence in $g^N$) has
 coefficient 1  (this term is $(\prod_{i=1}^s t_i)\prod_{j=1}^s t_j^{s-j})$.  \qed  \end{rmk*}


\begin{lemma}\label{lem:startmnb} Let $R$ be a polynomial ring over $\C$ in variables
$\vect x n$.   Let $I_1$ be a naturally closed ideal primary to the homogeneous maximal ideal $m_1$ in $R_1 = \C[\vect x k]$ and
suppose that \begin{equation}\tag{\#}\label{poundtag}
 \nu \text{ is a monomial in } I_1:_{R_1}m_1 \text{ such that } I_1 + \nu R_1 \text{ is naturally closed in } R_1.
\end{equation}
Let $J$ be a continuously closed monomial ideal of $R_2 = \C[x_{k+1}, \, \ldots, \, x_n]$.
Then $I_1R + \nu JR$ is continuously closed in $R$.  \end{lemma}

\begin{proof}   The continuous closure will be monomial.  Since $\nu$ is in the integral closure of $I_1$,
$I_1 + \nu R_1$ is naturally closed and primary in $R_1$ (or, if $\nu= 1$, it is all of $R_1$),  and so its expansion
to  $R$ is either $R$ or a primary naturally closed ideal.  Hence,
its expansion to $R$ is continuously closed.  It follows that the continuous closure of $I_1 + \nu J$ must have
the form $I_1 + \nu J'R$,  where $J' \supseteq J$ is a monomial ideal of $R_2$ (when $\nu$ is multiplied
by any variable in $R_1$, the product is in  $I_1R$ by hypothesis). 

Let $f$ be a monomial in $J' - J$ such that $f\nu \in (I_1 + JR)\ct$.  Let  $B = R_2$.  Let $g = \nu$. 
We apply the fiber criterion Theorem~\ref{thm:fibercrit} of the preceding section.  The fiber over a point
of $B$ is simply the result of specializing the $x_j$ for $j > k$ to complex constants given by the point.
Thus, all of the fibers may be identified with $R_1$,  and the image of $g = \nu$ is simply $\nu$. 
In this case,  $\nu \notin I_1\ctv$
in all fibers $R_1$ for all $v$ in $\MaxSpec(R)$.  To see this, observe that $I_1$ is primary to the maximal ideal  
$(\vect x k)R_1$ of $R_1$.  We have that $\nu \notin I_1 = I_1\ncl$, which is the same as $I_1\ct =
I_1\ax$ by Corollary~\ref{cor:zerodimctax}, and we may apply Proposition~\ref{pr:primmax}              
to conclude that $\nu \notin I_1\ctzz$.  But then $f \notin I_1R + \nu JR$ by  Theorem~\ref{thm:fibercrit}. \end{proof}

\begin{defn}\label{defn:mnb} Let $K$ be any field, and let $R = K[\vect x n]$ be a polynomial ring over $K$. 
We shall say that an ideal
$I$ of $R$ is {\it \mnb}  if there exist, for some positive integer $k$, mutually disjoint nonempty subsets $\vect S k$ of the variables,
monomial ideals $\vect I k$ such that for $1 \leq j \leq k$,  $I_j$ is naturally closed and primary to
the homogeneous maximal ideal $m_j$ of $R_j = K[x_t: x_t \in  S_j]$,  and for each $j$, where $1 \leq j \leq k-1$,
there is  a monomial 
$\mu_j$ such that $(\#_j)$ $\mu_j \in I_j:_{R_j}m_j$ and  $I_j + \mu_j R_j$ is naturally closed;  moreover,
$$
(\dagger) \quad I = (I_1 + \mu_1I_2 + \mu_1\mu_2I_3  + \cdots + \mu_1\mu_2\cdots \mu_{k-1}I_k)R.
$$
Note that if $k = 1$ this simply means that $I$ is a naturally closed primary monomial ideal.  An alternative recursive definition
is the following.  We allow among the \mnb\ ideals the primary naturally closed monomial ideals and those ideals obtained
by the following recursive rule.   If  $R_1$ and $R_2$ are polynomial subrings of $R$ generated by mutually disjoint subsets
of the variables, $I_1$ is a naturally closed monomial ideal primary to the homogeneous maximal ideal $m_1$ of $R_1$, 
 $\mu_1$ is a monomial of $R_1$ such that $(\#_1)$ $\mu_1 \in I_1:_{R_1}m_1$ and $I_1 + \mu_1 R_1$ is naturally closed, 
 and $J$ is an \mnb\ ideal in $R_2$,  then $(I_1 + \mu_1J)R$ is an \mnb\ ideal of $R$.
 
 We shall say that an \mnb\ ideal as above is {\it maximal} \mnb\ for the monomial $\mu$ if 
 $\vect S k$ is a partition of all of the variables $\{\vect x n\}$ and if there exists $\mu_k \in R_k$
 such $\mu = \mu_1 \, \cdots \, \mu_k$  and and for all $i$,  $1 \leq j \leq k$,  $I_j$ is maximal among
 naturally closed $m_j$-primary ideals in $R_j$ that do not contain $\mu_j$.  
\end{defn}

\begin{rmk*} Let $K$ be any field, let $R = K[\vect xn]$ be a polynomial ring, and let $R_1 = K[\vect x k]$
with homogeneous maximal ideal $m_1$.  Let $I_1$ be $m_1$-primary and naturally closed.  Suppose
that $\nu \in \ov{I_1} - I_1$.  Then it is automatic that condition (\ref{poundtag}) from Lemma~\ref{lem:startmnb} holds, i.e., that 
$\nu \in I_1:_{R_1} m_1$ and that $I_1' = I_1 + \nu R_1$ is
naturally closed.  The first part of the condition holds because $m_1 = \Rad(I_1)$, so that 
$m_1 \nu \inc \Rad(I_1) \ov{I_1} \inc (I_1)_{>1} \inc I_1$,
and the second because $(I_1')_{>1} \inc (\ov{I_1})_{>1} =(I_1)_{>1} \inc I_1 \inc I_1'$.   Also note that
in the definition $\nu$ can be 1,  but only if $I_1 = m_1$.  Finally, note that if $k = 1$,  the condition $(\#)$
holds precisely when $I_1 = x_1^{a+1}$ for $a \in \N$ and $\mu_1 = x_1^a$. \end{rmk*}

From the recursive version of \mnb\ monomial ideals and  Lemma~\ref{lem:startmnb}   
we have at once:

\begin{thm}\label{thm:mnbctclo} If $R$ is the polynomial ring  $\C[\vect x n]$,  every \mnb\ monomial ideal 
is continuously closed. \qed \end{thm}

\begin{cor} Let $K$ be a field of characteristic 0 and let $R$ be the polynomial ring $K[\vect x n]$.  
Then for every \mnb\ monomial ideal $I$,  $I\nmx = I$.  \end{cor}
\begin{proof}  $R$ is the directed union of the rings $K_0[\vect x n]$ where $K_0$ runs through all subfields
of $K$ finitely generated over $\Q$.  It therefore suffices to prove the result for a field $K$ that is finitely
generated over $\Q$.  Any such field $K$ is isomorphic to a subfield of $\C$, and so there is no loss of
generality in assuming that $K \inc \C$.  The given \mnb\ ideal $I$, expanded to $S = \C[\vect x n]$, is obviously still
\mnb.  Hence, $IS$ is continuously closed by the preceding result, and so $IS$ is \mnc\ by Proposition~\ref{pr:mnxinax}.  
Hence,  $IS \cap R = I$ is \mnc\ by Proposition~\ref{pr:mnclocal}.
\end{proof} 

\begin{rmk*} We conjecture that the result just above and the result just below hold for every field $K$,  
not only those of characteristic 0.
  \end{rmk*}
  
In Theorem~\ref{thm:intmnb} below we show that \mnc\ monomial ideals maximal with respect
to not containing a given monomial are maximal \mnb\ monomial ideals.  It will be convenient to have
the following result:

\begin{lemma}\label{lem:maxall} Let $K$ be a field of characteristic 0, let $R = K[\vect xn]$ be polynomial ring.  Let 
$\mu$ be a monomial.  Let $I$ be a monomial
ideal with $I\nmx = I$ such that $\mu \notin I$.  Let $S$ be the set of variables that do not occur in $\mu$,
and let $J = I + (x_j: x_j \in S)$.  Then $J\nmx = J$ and $\mu \notin J$.  Hence, a monomial ideal
$I$ which is maximal with respect to the conditions that $I\nmx = I$ and $\mu \notin I$ contains all
variables that do not occur in $\mu$. \end{lemma}

\begin{proof} After renumbering we may assume that $\vect x k$ occur in $\mu$, so that
$S = \{x_{k+1}, \, \ldots, \, x_n\}$.  Consider the $K$-algebra homomorphism $R \to R_1 = K[\vect x k]$
that fixes $x_j$ for $j \leq k$ and kills $x_j$ for $j \geq k+1$.  The image $I_1$ of $I$ in $R_1$ is the
monomial ideal spanned over $K$ precisely by those monomials in $I$ that are in $R_1$. 
We claim that $I_1\nmx = I_1$ in $R_1$,  for if $\nu \notin I_1$ were in the mixed natural closure
of $I_1$ in $R_1$,  then $\nu$ would be in $I\nmx = I$, since $I_1R \inc I$ (we may expand using
the inclusion $R_1 \inc R$).   This shows that $J\nmx = J$,  for $J$ is the contraction of $I_1$
under the surjection $R \to R_1$.  The final statement is now clear.  \end{proof}

\begin{thm}\label{thm:intmnb} Let $R = K[\vect xn]$ be a polynomial ring over a field $K$ of characteristic $0$.  
Then a monomial ideal $I \inc R$ is \mnc\ if and only if it is an intersection of \mnb\  monomial ideals.  Moreover, these
may be taken to be maximal \mnb\ monomial ideals.
\end{thm}
\begin{proof}   Let $\mu$ be a fixed monomial and let $I$ be an \mnc\ monomial
ideal that does not contain $\mu$.  Evidently, $I$ can be enlarged to a maximal \mnc\ monomial
ideal that does not contain $\mu$. Consequently, it will suffice to show that if $I$ is a maximal \mnc\ monomial
ideal for $\mu$, then it is maximal \mnb\ monomial ideal for $\mu$. We shall prove this by induction on $n$.   
If $\mu = 1$, then the only possible choice of $I$ is the homogeneous maximal
ideal, and this is maximal \mnb\ for 1.  Henceforth, we assume that $\mu \not= 1$.

We next handle the case where all of the $x_i$ occur in $\mu$.  

For each monomial $\alpha = x_1^{a_1} \cdots x_n^{a_n} \in K[ \vect x n]$,  let  $h(\alpha) = (\vect an) \in \N^n$. 
Consider the convex hull $C$ of points of $\N^n$ corresponding to  all monomials in $I$ together with
$h(\mu)$.     $h(\mu)$ must be a boundary point of $C$,  or else $\mu$ would be in $I_{>1}$.  
By a hyperplane in $\R^n$ we mean the translate of a vector subspace of dimension $n-1$.
We may choose a supporting hyperplane of $C$ that contains $h(\mu)$.   That is, 
there is a hyperplane through $h(\mu)$ such that $C$ lies entirely in one of the closed half-spaces
it determines. There is a nonzero real linear form  $L$ over $\R$ and $c \in \R$ such that this hyperplane
is defined by the equation $L = c$.  Let $\vect y n$ be real variables, and renumber the $y_i$ so that 
$\vect  y k$ are the real variables that have nonzero coefficients in $L$: at least one coefficient
is nonzero.   

Thus, we may assume that
the equation of the hyperplane is $m_1 y_1 + \cdots m_k y_k = c$, where the $m_i \in \R -\{0\}$.  
 
By multiplying by $-1$ if necessary, we may assume 
that $c \geq 0$.   We may assume that all the coefficients $m_i$ are positive.  (To see this, note that
all points with sufficiently large coordinates represent elements in $I$ and  will lie on one side of this 
hyperplane.  If $m_i$ is positive (respectively, negative), choose a large value $N \in \N$ for the $y_j$,
$j \not = i$,   and a very large positive integer value $B$ for the value of $m_i$.  
 The value of $L = m_1y_1 + \cdots + m_ky_k$ will be $> c$ (respectively, $< c$) for $B \gg 0$.  
 Thus, if there are coefficients 
 with different signs, not all points with large coordinates are on the same side of the hyperplane.)  
 Thus, we may assume that all the $m_i$ have the same sign.  By multiplying by $-1$ we may assume
 that all of them are positive.  Note that $L$ evaluated at $h(\mu)$ or $h(\nu_1)$ is $c$  (the value of $L$ only depends on
the first $k$ entries of the vector).  Since $h(\mu)$ has no nonzero entries, $c >0$.  
Write $\mu = \mu_1 \theta$,  where  $\mu_1$ involves $\vect xk$ and $\theta$ involves the other variables.
 \medskip

Let $I_1 \subseteq K[\vect xk] = R_1$ be generated by all monomials $\lambda$ except $\mu_1$ in $\vect xk$ 
such that the value of $L$ at $h(\lambda)$ is $\geq c$.  Note that $I_1$ is primary to $(\vect x k)$:  
since each coefficient of $L$ is positive,
the functional will be $> c$ on $Ne_i$,  $1 \leq i  \leq k$, when $N \gg 0$. \medskip

Moreover,  $I_1 + \mu_1 R_1$ is integrally closed:  the set of exponent vectors contains all lattice
points in its convex hull because it is the intersection of a half-space, the first orthant, and the set
of lattice points.   In particular, it is naturally closed.   It is also the case that $I_1$ contains
$x_i \mu_1$ for $1 \leq i \leq k$:   the value of $L$ is evidently $\geq c$ for each of these, and
each of these is different from $\mu_1$.  We also claim that $I_1$ is naturally closed.  It suffices to show that $\mu_1$ is not in the natural closure, and for this it suffices to
see that $\mu_1 \notin (I_1)_{>1}$.  But if $\mu_1^t \in I_1^{t+1}$ then $t h(\mu_1)$ is the
sum of $t+1$ values of  $h$ on points of $I_1$.  When we apply  $L$,  we find that  $tL\bigl(h(\mu_1)\bigr)$
is the sum of $t+1$ real numbers, each of which is $\geq c$.  This implies that $tc \geq (t+1)c$,  a
contradiction.  

Moreover, $I_1R + \mu_1 R$ is the ideal of $R$ generated by all monomials in $R$ on
which the value of $L$ is $\geq c$.  Clearly,   $I \inc I_1R + \nu R$:  given any monomial in $I$,  the
value of $L$ on  the associated vector is $\geq c$,  and this value depends only on that part of
the monomial involving $\vect x k$:  the latter must be in $(I_1,\, \nu) K[\vect x k]$.  The monomials in $I$
that are not in $I_1R$ must be monomials of the form $\alpha \beta$  where $\alpha \in K[\vect x k]$
and $\beta \in K[x_{k+1} , \, \ldots, \, x_n]$.  Since they are in $I$,  the value of the functional on
$h(\alpha)$ must be $\geq c$  which means that $\alpha \in I$ unless $\alpha = \nu$.  Thus, $I \inc
I_1R+  J_0\nu R$,  where $J_0 \inc K[x_{k+1}, \, \ldots, \, x_n]$ contains those monomials whose product
with $\nu$ is in $I$.  $\theta$ cannot be in the mixed natural closure of $J$,  or else $\mu = \nu \theta$
will be in the mixed natural closure of $\nu JR \inc I$.   Thus,  we may enlarge
$J_0$ to a maximal mixed naturally closed ideal $J$ of $K[x_{k+1},\, \dotsc,\, x_n]$ maximal with respect to not 
containing $\theta$.   

By the induction hypothesis we have that $J$ is a maximal \mnb\ monomial ideal of $K[x_{k+1},\, \dotsc,\, x_n]$
maximal with respect to not containing $\theta$.  It follows that $I_1R + JR$ is a maximal \mnb\ ideal
for $\mu = \mu_1 \theta$.  Since $I$ is contained in this ideal, $I$ must be equal to $I_1 + JR$. 

This completes the treatment of the case where all of the $x_i$ occur in $\mu$.  In the general case,
by renumbering, suppose that $\vect x h$ are the variables that do not occur in $\mu$.  Giving a maximal
monomial \mnc\ ideal $I$ such that $\mu \notin I$ is equivalent to give such an ideal $\fA$  for $\mu$ in 
$K[x_{h+1}, \, \ldots, \, x_n]$ and then enlarging it to include $\vect x h$,  by Lemma~\ref{lem:maxall}.
The enlarged ideal will still be maximal \mnb:  if $I_1$ is the initial primary ideal in the representation
$(\dagger)$  of $\fA$ as in Definition~\ref{defn:mnb},  one can simply enlarge $I_1$ to contain $\vect x h$.  
\end{proof}

We now come to the motivating result for this section:  

\begin{thm} For a monomial ideal $I$ in the polynomial ring $\C[\vect xn]$,  $I\ct = I\nmx$. \end{thm}
\begin{proof} We observed earlier that $I\nmx \inc I\ct$.  Thus, it will suffice to show that $I\nmx$ is
continuously closed.  But by Theorem~\ref{thm:intmnb}, it is an intersection of \mnb\ ideals, and by 
Theorem~\ref{thm:mnbctclo} every \mnb\ ideal is continuously closed. \end{proof}

\begin{example} Note that it is not true that continuous closure agrees with mixed natural closure in reduced
affine algebras over $\C$.  In $R = \C[x^2,\,x^3, xy,\,y]$, a subring of the polynomial ring  $S= \C[x,\,y]$,  we have that  the principal ideal $I = yR$ is such that $I = I\nmx$. (We know that $I\nmx$ is monomial and is contained in $(y,xy)R$, since this 
is the contraction of  $y\C[x,\,y]$.  Hence, it will suffice to show that $xy$ is not in  $f(I:f)\ncl$ for any choice of  $f$.
Since $xy$ is irreducible in  $R$,  the only possibilities for $f$ are, up to multiplication by nonzero elements
of $\C$, 1  and  $xy$.   The choice $f = 1$ does not work since $I\ncl = I$,  and the choice
$f = xy$ does not work since  $I:_Rf$ is proper monomial, and so its natural closure is contained in the 
homogeneous maximal ideal $m$ of  $R$, while  $xy \notin gym$.)  However,  $xy \in I\ct$.  \end{example}

\section{A bigger axes closure}\label{sec:bigax}
In deciding on a generalization of Brenner's notion of axes closure to arbitrary Noetherian rings, we had a choice between whether we would base it on seminormal rings or so-called \emph{weakly normal} rings.

\begin{defn}\cite{AnBom-wn}
Let $R$ be a reduced Noetherian ring, and $\icr{R}$ the integral closure of $R$ in its fraction field.  Then the \emph{weak normalization} $R^{\mathrm{wn}}$ of $R$ is the set of all elements $x\in \icr{R}$ that satisfy the following property for all $\p \in \Spec R$:

If $R/\p$ has prime characteristic $p>0$, let $\pi(\p) :=p$; otherwise let $\pi(\p) :=1$.  Then there is some positive integer $n$ such that $(x/1)^{\pi(\p)^n} \in R_\p + \Jac(\icr{R}_\p)$.    

We say that $R$ is {\it weakly normal} if  $R = R^{\mathrm{wn}}$.
\end{defn}

It is clear that if $R$ is weakly normal, it is also seminormal.  Moreover, if $R$ has equal characteristic $0$, the weak normalization is of course the same as the seminormalization of $R$, and in particular in the finitely generated $\C$-algebra case, they agree.  
Therefore, with the following definition, one has that $I\ax = I\AX$ for any ideal $I$ in a $\C$-algebra.

\begin{defn}
Let $R$ be a Noetherian ring, $I$ an ideal of $R$, and $f\in R$.  We write $f\in I\AX$ if for every map from $R$ to an excellent one-dimensional weakly normal ring $S$, the image of $f$ is in $IS$.
\end{defn}

It is quite straightforward to verify that $I \mapsto I\AX$ is a closure operation. 

\begin{prop} For every ideal $I \inc R$, a Noetherian ring,  $I\ax \subseteq I\AX \subseteq \ici{I}$.\end{prop}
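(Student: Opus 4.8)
The plan is to treat the two containments separately; each is short. For the first, $I\ax \subseteq I\AX$, I would simply invoke the remark recorded just above the statement, namely that every weakly normal ring is seminormal. Consequently, any ring homomorphism $R \to S$ with $S$ an excellent one-dimensional weakly normal ring is, in particular, a homomorphism to an excellent one-dimensional seminormal ring. By Definition~\ref{def:axsemi} together with Theorem~\ref{thm:axsemi}, the assertion $f \in I\ax$ means precisely that the image of $f$ lies in $IS$ for every homomorphism of the former, more general, kind; a fortiori the image of $f$ lies in $IS$ for every homomorphism to an excellent one-dimensional weakly normal ring, which is exactly the condition defining $f \in I\AX$. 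So this inclusion is immediate once weak normality is known to imply seminormality.

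For the second containment, $I\AX \subseteq \ici{I}$, I would run the same argument that proves Proposition~\ref{pr:normthenaxthensemi}(1), with ``seminormal'' replaced by ``weakly normal.'' Recall from the valuative criterion recalled at the beginning of \S\ref{sec:nat} (condition (2) there) that $f \in R$ lies in $\ici{I}$ if and only if $fV \subseteq IV$ for every homomorphism $R \to V$ with $V$ a discrete valuation ring. Given such a $V$, pass to its completion $\widehat{V}$: this is again a DVR, hence one-dimensional and, being complete local, excellent; and, being a normal domain, it is weakly normal (its integral closure in its fraction field is itself, so its weak normalization is itself). Thus if $f \in I\AX$, the image of $f$ lies in $I\widehat{V}$; since $V \to \widehat{V}$ is faithfully flat, $I\widehat{V} \cap V = IV$, so $f \in IV$. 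As $V$ was arbitrary, $f \in \ici{I}$.

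I do not anticipate any genuine obstacle; the two points that warrant a line of care are: (i) a DVR need not be excellent, which is exactly why one completes before applying the $\AX$-condition; and (ii) if $R$ is not a domain one tests integral closure modulo the minimal primes of $R$ (equivalently, one allows the map $R \to V$ to have a kernel), but this causes no difficulty, since the completion of the resulting DVR is still an excellent one-dimensional weakly normal ring. Finally, it is worth noting that in equal characteristic $0$—in particular for affine $\C$-algebras—weak normalization coincides with seminormalization, so the chain $I\ax \subseteq I\AX \subseteq \ici{I}$ then reads $I\ax = I\AX \subseteq \ici{I}$, recovering Proposition~\ref{pr:normthenaxthensemi}(1) in that setting.
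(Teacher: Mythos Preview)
Your proof is correct and follows essentially the same approach as the paper's one-line argument, which simply invokes the chain of implications among normality, weak normality, and seminormality (noting that the paper's printed order ``normal $\implies$ seminormal $\implies$ weakly normal'' appears to be a typographical slip for ``normal $\implies$ weakly normal $\implies$ seminormal''). You have merely spelled out the details---including the completion of the DVR to ensure excellence, exactly as in Proposition~\ref{pr:normthenaxthensemi}(1)---that the paper leaves implicit.
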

\begin{proof} This is clear, since normal $\implies$ seminormal $\implies$ weakly normal.  \end{proof}

One has the following parallel to Proposition~\ref{pr:semibasic}.  Items (6) and (7) follow in this case for the same 
reasons their analogues did in the seminormal case:

\begin{prop}\label{pr:weakbasic}
Suppose $R$, $S$ are reduced Noetherian rings.  Let $\icr{R}$ be the integral closure of $R$ in its total quotient ring. \begin{enumerate}
\item Suppose $R$ is seminormal.  Then $R$ is weakly normal if and only if for any prime integer $p$ and any $x \in \icr{R}$ such that $x^p, px \in R$, we have $x\in R$. \cite[Proposition 1]{Ito-wn}
\item If $g: R \to S$ is faithfully flat and $S$ is weakly normal, then $R$ is weakly normal. \cite[Corollary II.2]{Man-wn}
\item If $R$ is weakly normal and $W$ is a multiplicative set, then $W^{-1}R$ is weakly normal. \cite[Corollary IV.2]{Man-wn}
\item Suppose the integral closure of $R$ in its total quotient ring is module-finite over $R$.  The following are equivalent: \cite[Corollary IV.4]{Man-wn} \begin{enumerate}
 \item $R$ is weakly normal.
 \item $R_\m$ is weakly normal for all $\m \in \MaxSpec R$.
 \item $R_\p$ is weakly normal for all $\p \in \Spec R$.
 \item $R_\p$ is weakly normal for all $\p\in \Spec R$ such that $\depth R_\p=1$.
 \end{enumerate}
\item Suppose $g: R \to S$ is flat with geometrically reduced (e.g. normal) fibers and $\icr{R}$ is module-finite over $R$.  If $R$ is weakly normal, then so is $S$. \cite[Proposition III.3]{Man-wn}
In particular, if $S$ is smooth over $R$, which includes the case where $S$ is \'etale over $R$, and $R$ is weakly normal, then $S$
is weakly normal.
\item A directed union of weakly normal rings is weakly normal.  
\item If $R$ is local and weakly normal, then the Henselization of $R$ and the strict Henselization of $R$ are weakly normal.
\item Suppose $R$ is excellent and local.  $R$ is weakly normal $\iff \hat{R}$ is weakly normal. \cite[Proposition III.5]{Man-wn}
\item Let $X$ be an indeterminate over $R$.  $R$ is weakly normal $\iff R[\![X]\!]$ is weakly normal. \cite[Proposition III.7]{Man-wn}
\end{enumerate}
\end{prop}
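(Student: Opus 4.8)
The plan is to prove the two implications separately; the backward one is immediate and all the work lies in the forward one. For ``$R[\![X]\!]$ weakly normal $\Rightarrow R$ weakly normal'': the structure map $R \to R[\![X]\!]$ is faithfully flat (flat since $R$ is Noetherian, and $\m R[\![X]\!] \neq R[\![X]\!]$ for maximal $\m$), so this is immediate from Proposition~\ref{pr:weakbasic}(2).

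For the forward implication, assume $R$ is weakly normal. Then $R$ is reduced, so $R[\![X]\!]$ is reduced; and $R$ is seminormal, so $R[\![X]\!]$ is seminormal by Proposition~\ref{pr:semibasic}(9). By the criterion of Proposition~\ref{pr:weakbasic}(1) it then suffices to show: for every prime integer $p$ and every $\eta \in \icrp{R[\![X]\!]}$ with $\eta^p \in R[\![X]\!]$ and $p\eta \in R[\![X]\!]$, one has $\eta \in R[\![X]\!]$. The first step is to identify $\icrp{R[\![X]\!]} = \icr{R}[\![X]\!]$: writing $\icr{R} = \prod_i \icr{(R/\p_i)}$ over the minimal primes $\p_i$ of $R$ (and reducing, as one may, to the case that $\icr{R}$ is module-finite over $R$), the ring $\icr{R}[\![X]\!]$ is module-finite over $R[\![X]\!]$, is normal because a power series ring over a normal Noetherian domain is normal, and embeds in the total quotient ring of $R[\![X]\!]$ since $\icr{R} \inc \tfrac1s R$ for a single non-zerodivisor $s\in R$; hence it is the integral closure.

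The heart of the matter is then a coefficientwise induction. Write $\eta = \sum_{i\geq 0} c_i X^i$ with $c_i \in \icr{R}$; I claim $c_i \in R$ for all $i$, by induction on $i$. Assuming $c_0,\dots,c_{i-1}\in R$, set $g = \sum_{j<i} c_j X^j \in R[\![X]\!]$. From $p\eta \in R[\![X]\!]$ we read off $pc_i \in R$. Expanding $\eta^p = (g + (\eta-g))^p$ and using that $p \mid \binom{p}{k}$ for $1\leq k\leq p-1$, the coefficient of $X^{ip}$ in $\eta^p$ equals $c_i^p + [X^{ip}]g^p$ modulo $p\,\icr{R}$; since $\eta^p, g^p \in R[\![X]\!]$, this forces $c_i^p \in R + p\,\icr{R}$. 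It remains to deduce $c_i\in R$ from the three facts $c_i \in \icr{R}$, $pc_i \in R$, $c_i^p \in R + p\,\icr{R}$ together with $R = R^{\mathrm{wn}}$. For this I would verify the defining condition for membership in the weak normalization at each $\p \in \Spec R$: if the residue characteristic at $\p$ is not $p$, then $p$ is a unit in $R_\p$ and $c_i = p^{-1}(pc_i) \in R_\p$; if it is $p$, then $p \in \p$, so $p \in \Jac(\icr{R}_\p)$ and hence $c_i^p \in R_\p + p\,\icr{R}_\p \inc R_\p + \Jac(\icr{R}_\p)$. In either case the required condition holds with $n=1$, so $c_i \in R^{\mathrm{wn}} = R$; the induction then yields $\eta \in R[\![X]\!]$, as needed.

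I expect the main obstacle to be exactly this last deduction. After the binomial expansion one only controls $c_i^p$ up to the error term $p\,\icr{R}$, so no naive argument closes; the point is that Traverso's local description of the weak normalization is robust enough to absorb that error, since $p\,\icr{R}$ lands in the Jacobson radical of $\icr{R}_\p$ precisely at the primes of residue characteristic $p$, which are exactly the primes where $pc_i \in R$ does not already force $c_i$ into $R_\p$. A secondary technical nuisance is justifying $\icrp{R[\![X]\!]} = \icr{R}[\![X]\!]$ — normality of power series over normal Noetherian domains, and module-finiteness of $\icr{R}$ — for a general reduced Noetherian $R$; in the excellent setting relevant to this paper it is routine.
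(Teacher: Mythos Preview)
The paper gives no proof of this proposition: parts (1)--(5), (8), and (9) carry citations to Itoh and Manaresi, while (6) and (7) are said to follow ``for the same reasons their analogues did in the seminormal case.'' In particular, part (9) --- the only part you address --- is simply attributed to \cite[Proposition III.7]{Man-wn}. So there is no paper-proof to compare against; your direct argument supplies what the paper omits by citation.

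Your proof is correct. The coefficientwise induction via the binomial expansion, together with the appeal to the local (Traverso-type) description of the weak normalization to absorb the $p\,\icr{R}$ error term, is exactly the right mechanism, and your case split on whether $p$ lies in $\p$ works as stated (the point being that $\icr{R}_\p$ is integral over the local ring $R_\p$, so every maximal ideal of $\icr{R}_\p$ lies over $\p R_\p$ and hence contains $p$). One remark on your ``secondary technical nuisance'': you do not actually need the equality $\icrp{R[\![X]\!]} = \icr{R}[\![X]\!]$, only the inclusion $\icrp{R[\![X]\!]} \subseteq \icr{R}[\![X]\!]$, and this holds without reducing to the module-finite case. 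Passing to $R$ a domain with fraction field $K$, one has $\fra(R[\![X]\!]) \subseteq K(\!(X)\!)$; since $\icr{R}$ is a Krull domain (as the integral closure of a Noetherian domain), $\icr{R}[\![X]\!]$ is again a Krull domain and hence integrally closed in $K(\!(X)\!)$. Thus any $\eta$ integral over $R[\![X]\!]$ already lies in $\icr{R}[\![X]\!]$, and the rest of your argument goes through verbatim.
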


For reasons parallel to observations in the $\ax$ case, it suffices to consider only maps to where $S$ is local, or even complete local.  Many properties which hold for $I\ax$ have analogies in $I\AX$.  To see how this works, we offer the following analogue to Theorem~\ref{thm:glue}.

\begin{thm}\label{thm:weakglue}
Let $k$ be a field  of prime characteristic $p>0$, let $L_1, \dotsc, L_n$ be finite algebraic extension fields of $k$ such that under the diagonal embedding $k \rightarrow L_1 \times \cdots \times L_n$, the image of $k$ is $p$th-root closed.  Let $(V_i, \m_i)$ be discrete valuation rings such that $V_i / \m_i \cong L_i$.  Let $S$ be the subring of 
$\prod_{i=1}^n V_i$ consisting of all $n$-tuples $(v_1, \dotsc, v_n)$ such that there exists $\alpha \in k$ such that $v_i \equiv \alpha\ (\md \m_i)$ for all $i$.  Then $S$ is weakly normal.

Conversely, let $(R,\m, k)$ be a \emph{complete} one-dimensional weakly normal Noetherian local ring, where $k$ has prime characteristic $p>0$.  Then there exist such extension fields $L_i$ and such DVRs $V_i$ (which moreover are complete) such that $R$ is isomorphic to the ring $S$ described above.
\end{thm}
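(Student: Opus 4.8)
The plan is to reduce everything to Theorem~\ref{thm:glue} together with the characterization of weak normality among seminormal rings in Proposition~\ref{pr:weakbasic}(1). The point to notice first is that the ring $S$ in the statement is \emph{literally} the glueing of $W := \prod_{i=1}^n V_i$ over $\m$, so Theorem~\ref{thm:glue} already tells us that $S$ is a one-dimensional local Noetherian ring with integral closure $W$ and maximal ideal $\m_S = \prod_{i=1}^n \m_i = \Jac(W)$, and that $S$ is seminormal. Thus in the forward direction the only new thing to establish is weak normality, and in the converse the only new thing is the $p$th-root-closedness of $k$ inside $\prod_i L_i$.

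For the forward implication I would apply Proposition~\ref{pr:weakbasic}(1): since $S$ is seminormal, it suffices to check that for every prime integer $\ell$ and every $x \in W = \icr{S}$ with $x^\ell, \ell x \in S$ one has $x \in S$. If $\mathrm{char}(k)=0$, or if $\ell \ne \mathrm{char}(k) = p > 0$, then $\ell$ is a unit of $S$ and the condition is automatic (and the hypothesis on $k$ is vacuous when $\mathrm{char}(k)=0$). So assume $\mathrm{char}(k) = p > 0$ and $\ell = p$; then $px = 0 \in S$ is automatic, and what must be shown is that $x = (x_1,\dots,x_n) \in W$ with $x^p \in S$ forces $x \in S$. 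Now $x^p \in S$ says exactly that for a suitable $\alpha \in k$ the residues $\bar x_i \in L_i$ satisfy $\bar x_i^{\,p} = \alpha$ (via $k \hookrightarrow L_i$) for all $i$, i.e.\ the tuple $(\bar x_1,\dots,\bar x_n)$ is a $p$th root, in $\prod_i L_i$, of the diagonal image of $\alpha$. By the $p$th-root hypothesis this tuple is itself the diagonal image of a single $\beta \in k$, so $x_i \equiv \beta \pmod{\m_i}$ for every $i$, i.e.\ $x \in S$. This establishes weak normality via Proposition~\ref{pr:weakbasic}(1).

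For the converse, let $(R,\m,k)$ be complete one-dimensional weakly normal local Noetherian. Weak normality implies seminormality, so Theorem~\ref{thm:glue} supplies finite extensions $L_i/k$ and complete DVRs $(V_i,\m_i)$ with $V_i/\m_i \cong L_i$ and an isomorphism of $R$ with the glueing $S$ of $\prod_i V_i$ over $\m$; I only need to verify that these $L_i$ satisfy the $p$th-root condition, which is vacuous unless $\mathrm{char}(k)=p>0$, so assume that. Identify $R$ with $S$, fix a coefficient field $k \subseteq R$ compatible with $R \to R/\p_i \hookrightarrow V_i$ (so that $k \subseteq V_i$ induces the given $k \hookrightarrow L_i$), and recall from the proof of Theorem~\ref{thm:glue} that seminormality gives $\m = \Jac(\icr{R}) = \prod_i \m_i$. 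Given $\alpha \in k$ that becomes a $p$th power $y_i^{\,p}$ in each $L_i$, pick lifts $x_i \in V_i$ of $y_i$ and set $x = (x_1,\dots,x_n) \in \icr{R}$. Then each $x_i^p - \alpha \in \m_i$, so $x^p - \alpha \in \prod_i \m_i = \m$, whence $x^p \in R$; also $px = 0 \in R$. By Proposition~\ref{pr:weakbasic}(1) (valid since $R$ is seminormal and weakly normal) we get $x \in R$, so $x \bmod \m$ lies in $k = R/\m$ and maps, under the diagonal embedding $k \hookrightarrow \icr{R}/\Jac(\icr{R}) = \prod_i L_i$, to $(y_1,\dots,y_n)$. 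Hence all $y_i$ equal a common $\beta \in k$ with $\beta^p = \alpha$, so $k$ is $p$th-root closed in $\prod_i L_i$.

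The routine facts — that $S$ is local, one-dimensional, Noetherian with normalization $W$, and $\m_S = \Jac(W)$ — are exactly those from the proof of Theorem~\ref{thm:glue}, and uniqueness of $p$th roots in characteristic $p$ keeps the coefficient-field identifications consistent. \textbf{The main obstacle} I expect is purely organizational: keeping straight the compatible coefficient fields $k \subseteq R \subseteq \icr{R} = \prod_i V_i$ so that the element $\beta$ produced at the end is simultaneously a genuine $p$th root of $\alpha$ in $k$ \emph{and} the common residue of the $x_i$ in the $L_i$; a secondary nuisance is that Proposition~\ref{pr:weakbasic}(1) quantifies over \emph{all} prime integers, so the cases $\mathrm{char}(k)=0$ and $\ell \ne \mathrm{char}(k)$ must be dismissed explicitly.
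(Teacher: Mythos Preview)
Your proof is correct and follows essentially the same approach as the paper's: both directions reduce to Theorem~\ref{thm:glue} for the seminormal structure and then apply the criterion of Proposition~\ref{pr:weakbasic}(1), handling the case $\ell \neq \mathrm{char}(k)$ via invertibility of $\ell$ and the case $\ell = p = \mathrm{char}(k)$ via the $p$th-root-closedness hypothesis. Your discussion of coefficient fields in the converse is more elaborate than necessary---the paper works purely at the level of residues (showing $\overline{v^p} \in k$ and $\overline{pv} = 0 \in k$ directly, without embedding $k$ into $R$)---but this does no harm.
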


\begin{proof}
If $R$ is weakly normal, then it is seminormal, so it has the form given in Theorem~\ref{thm:glue}.  One must only check that $k$ is $p\,$th-root closed in $L_1 \times \cdots \times L_n$.  So let $p$ be the characteristic of $k$, which must therefore agree with the characteristics of all the $L_i$.  We may assume $p>0$.  Let $c = (c_1, \dotsc, c_n) \in \prod_{i=1}^n L_i$ such that $c^p \in k$.  Let $v = (v_1, \dotsc, v_n) \in \prod V_i$ such that $c_i$ is the residue class of $v_i$ mod $\m_i$, for each $i$.  Then $\ov{pv} = p\ov{v} = 0 \in k$, so that $pv \in R$, and $\ov{v^p} = c^p \in k$, so that $v^p \in R$.  Since $R$ is weakly normal, it follows that $v\in R$.

Conversely, suppose $R$ is constructed in the way outlined in the statement of the theorem.  Say $p$ is the characteristic of $k$.  Without loss of generality, $p>0$.  Let $q$ be a prime integer and let $v \in \prod_{i=1}^n V_i$ be such that $v^q, qv \in R$.  If $q\neq p$, then since $q$ is a unit in all of the $L_i$, it follows that it is invertible in $\prod L_i$.  So $q \ov{v} \in k$ implies that $\ov{v} \in k$, which then implies that $v\in R$ by the description of $R$.  On the other hand, if 
$v = (v_1, \dotsc, v_n) \in \prod V_i$ is such that $v^p, pv \in R$, then, in particular, $\ov{v}^p \in k$, and since $k$ is $p\,$th-root closed in $\prod L_i$, it follows that $\ov{v} \in k$, so that $v\in R$.
\end{proof}

It follows, for example, that a local or complete ring of axes over any field is weakly normal, since the diagonal 
embedding in question is the usual one, $k \rightarrow k \times \cdots \times k$, in which it is clear that the image of $k$ is $p$th-root closed.

We also have the following parallel to Proposition~\ref{pr:onedimsemi}
\begin{prop}\label{pr:onedimweak} Let  $L$ be an algebraically closed field. 
\begin{enumerate}
\item[(a)]  A complete axes ring over $L$ is weakly normal.
\item[(b)] Every complete local one-dimensional
weakly normal ring of equal characteristic with algebraically closed residue
class field $L$ is isomorphic with a complete ring of axes  over $L$.  
\item[(c)] Every affine ring of axes over $L$ is weakly normal.
\item[(d)] A one-dimensional affine $L$-algebra $R$ is weakly normal if and only if there
are finitely many \' etale $L$-algebra maps $\theta_i : R \to A_i$,  where the $A_i$ are
affine rings of axes over $L$, and every maximal ideal of $R$ lies under a maximal ideal
of some $A_i$.
\end{enumerate}
\end{prop}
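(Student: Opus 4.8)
The plan is to mirror the proof of Proposition~\ref{pr:onedimsemi} step by step, replacing each invocation of Proposition~\ref{pr:semibasic} by the corresponding item of Proposition~\ref{pr:weakbasic}, and using the observation (made just before this proposition) that a weakly normal ring is seminormal in order to transport the two deeper statements --- part (b) and the ``only if'' direction of (d) --- verbatim from Proposition~\ref{pr:onedimsemi}.

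For part (a) I would argue exactly as in the remark following Theorem~\ref{thm:weakglue}: a complete axes ring $L[\![\vect xn]\!]/(x_ix_j : i<j)$ is the glueing of $\prod_{i=1}^n L[\![x_i]\!]$ (a product of complete DVRs with residue field $L$) along the diagonal embedding $L \inj L \times \cdots \times L$, and the image of $L$ under the diagonal is $p$th-root closed because the Frobenius is injective on the field $L$; Theorem~\ref{thm:weakglue} then gives weak normality. For part (b), a weakly normal ring is seminormal, so Proposition~\ref{pr:onedimsemi}(b) applies directly and there is nothing further to do. For part (c), let $A = L[\vect xn]/(x_ix_j : i<j)$, which is reduced; its normalization $\icr A = \prod_{i=1}^n L[x_i]$ is obtained from $A$ by adjoining the $n$ orthogonal idempotents, hence is module-finite over $A$. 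At any maximal ideal $\m$ of $A$ other than the singular point, $A_\m$ is regular, hence weakly normal; at the singular point, $\widehat{A_\m}$ is a complete ring of axes over $L$, so is weakly normal by part (a), whence $A_\m$ is weakly normal by Proposition~\ref{pr:weakbasic}(8) (using that $A_\m$ is excellent). Proposition~\ref{pr:weakbasic}(4) now upgrades this to weak normality of $A$.

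For the ``if'' direction of (d), given étale $L$-algebra maps $\theta_i : R \to A_i$ as in the statement, I would pick, for each maximal ideal $\m$ of $R$, an index $i$ and a maximal ideal $\widetilde\m$ of $A_i$ lying over $\m$; then $R_\m \to (A_i)_{\widetilde\m}$ is a flat local homomorphism, hence faithfully flat, and $(A_i)_{\widetilde\m}$ is weakly normal by part (c) and Proposition~\ref{pr:weakbasic}(3). In particular $(A_i)_{\widetilde\m}$ is reduced, so $R_\m$ is reduced, and then $R_\m$ is weakly normal by Proposition~\ref{pr:weakbasic}(2). Since this holds at every maximal ideal, $R$ is reduced and, being a one-dimensional affine $L$-algebra, has module-finite normalization, so Proposition~\ref{pr:weakbasic}(4) yields that $R$ is weakly normal. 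The ``only if'' direction is immediate: a weakly normal $R$ is seminormal, and Proposition~\ref{pr:onedimsemi}(d) then furnishes the desired finite family of étale maps to affine rings of axes.

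None of the steps is a genuine obstacle; the one place demanding a moment's care is part (c), where one must verify the hypotheses of Proposition~\ref{pr:weakbasic}(4) and (8) --- module-finiteness of the normalization of an affine axes ring and excellence of its local rings --- together with the local-to-global passage. The essentially cosmetic restructuring of the argument of Proposition~\ref{pr:onedimsemi}, rather than any new idea, is all that is needed.
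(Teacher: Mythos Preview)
Your proposal is correct and follows essentially the same approach as the paper: part (a) via Theorem~\ref{thm:weakglue} and the diagonal embedding, part (b) via the implication weakly normal $\Rightarrow$ seminormal together with Proposition~\ref{pr:onedimsemi}(b), part (c) via parts (4) and (8) of Proposition~\ref{pr:weakbasic}, and part (d) by mirroring the argument of Proposition~\ref{pr:onedimsemi}(d) with Proposition~\ref{pr:weakbasic} in place of Proposition~\ref{pr:semibasic}. Your only (harmless) deviation is that for the ``only if'' direction of (d) you invoke Proposition~\ref{pr:onedimsemi}(d) directly via the implication weakly normal $\Rightarrow$ seminormal, whereas the paper phrases it as re-running the argument; these amount to the same thing, and your extra care in (c) about module-finiteness of the normalization is welcome but not a departure.
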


Combining this with Proposition~\ref{pr:onedimsemi}, it follows that for complete one-dimensional rings with algebraically closed residue field, and for finitely generated one-dimensional algebras over an algebraically closed field, there is no difference between weak normality and seminormality.

\begin{proof}
For part (a), we use the characterization in Theorem~\ref{thm:weakglue}, noting that one obtains no extra $p$th roots from the diagonal embedding $L \to L \times \cdots \times L$.  For part (b), any complete local one-dimensional weakly normal ring of equal characteristic with residue field $L$ is in particular seminormal, so by Proposition~\ref{pr:onedimsemi}(b), it is isomorphic to a complete ring of axes over $L$.  Part (c) then follows immediately from part (a) and from parts 4b and 8 of Proposition~\ref{pr:weakbasic}.

The proof of part (d) follows from the argument in the proof of Proposition~\ref{pr:onedimsemi}(d), using the corresponding parts of Proposition~\ref{pr:weakbasic} in place of where we had previously used Proposition~\ref{pr:semibasic}.
\end{proof}

However, $\AX$ is really too big for our purposes here.  Consider the following:

\begin{example}
Let $k$ be a field of characteristic $p>0$, let $t$, $x$ be analytic indeterminates, and let $R = k(t^p)[\![x,tx]\!]$.  Then we claim that $R$ is seminormal, but also that $ \icr{R} = R^{\mathrm{wn}} = k(t)[\![x]\!]$.  The statement about the weak normalization follows from the fact that $t^p, pt=0 \in R$.  The resulting ring is obviously normal, so $\icr{R} = R^{\mathrm{wn}} = k(t)[\![x]\!]$. To see that $R$ is seminormal, take any $f \in \icr{R}$ such that $f^2, f^3 \in R$.  Then if $f_0$ is the constant term of the power series $f$, we have $f_0^2, f_0^3 \in k(t^p)$, whence $f_0 = f_0^3/f_0^2 \in k(t^p)$.  But $R$ is exactly the set of all $f\in \icr{R}$ whose constant term is in $k(t^p)$.

Now let $\m = (x, tx)R$ be the unique maximal ideal of $R$, and let $I = xR$. Note that $I$ is $\m$-primary. Then $I\AX = x R^{\mathrm{wn}} \cap R = (x,tx)R = \m$, but $I\ncl = I\ax=I$ because $R$ is a one-dimensional complete seminormal ring.  Hence, \begin{enumerate}
\item $\ax$ and $\AX$ do not always agree, even for $\m$-primary ideals in 1-dimensional complete local domains, and
\item $\ncl$ and $\AX$ do not always agree, even for $\m$-primary ideals in 1-dimensional complete local domains.
\end{enumerate}
\end{example}

Property (1) is perhaps not surprising, but property (2) means that the closure is too big to apply our methods \emph{mutatis mutandis}:
since one lacks the property that $I\AX=I\ncl$ for primary ideals, it is not clear how one would prove an analogue of Theorem~\ref{thm:nzdtest} or of the crucial Theorem~\ref{thm:semiaxes} in this new context (substituting $\AX$ for $\ax$ and weakly normal for seminormal everywhere).  Thus, this bigger axes closure does not appear to be as suitable for our main purpose here as the smaller one.  However, we have provided some of the fundamentals in this section because it may be useful in other situations.

\providecommand{\bysame}{\leavevmode\hbox to3em{\hrulefill}\thinspace}
\providecommand{\MR}{\relax\ifhmode\unskip\space\fi MR }
\providecommand{\MRhref}[2]{%
  \href{http://www.ams.org/mathscinet-getitem?mr=#1}{#2}
}
\providecommand{\href}[2]{#2}


\begin{thebibliography}{AHH93}

\bibitem[AB69]{AnBom-wn}
Aldo Andreotti and Enrico Bombieri, \emph{Sugli omeomorfismi delle variet\`a
  algebriche}, Ann. Scuola Norm. Sup. Pisa (3) \textbf{23} (1969), 431--450.

\bibitem[AHH93]{AHH}
Ian~M. Aberbach, Melvin Hochster, and Craig Huneke, \emph{Localization of tight
  closure and modules of finite phantom projective dimension}, J. Reine Angew.
  Math. \textbf{434} (1993), 67--114.

\bibitem[Art69]{Ar-approx}
Michael Artin, \emph{Algebraic approximation of structures over complete local
  rings}, Inst. Hautes \'Etudes Sci. Publ. Math. (1969), no.~36, 23--58.

\bibitem[Bom73]{Bom-semi}
Enrico Bombieri, \emph{Seminormalit\`a e singolarit\`a ordinarie}, Symposia
  {M}athematica, {V}ol. {XI} ({C}onvegno di {A}lgebra {C}ommutativa, {INDAM},
  {R}oma, {N}ovembre 1971), Academic Press, London, 1973, pp.~205--210.

\bibitem[Bre06]{Br-cc}
Holger Brenner, \emph{Continuous solutions to algebraic forcing equations},
  {arXiv}:math.AC/0608611v2, 2006.

\bibitem[Eak68]{Eak-Noeth}
Paul~M. Eakin{, Jr.}, \emph{The converse to a well known theorem on
  {N}oetherian rings}, Math. Ann. \textbf{177} (1968), 278--282.

\bibitem[EH79]{EisHo-Zar}
David Eisenbud and Melvin Hochster, \emph{A {N}ullstellensatz with nilpotents
  and {Z}ariski's main lemma on holomorphic functions}, J. Algebra \textbf{58}
  (1979), no.~1, 157--161.

\bibitem[Eps10]{nme-sp}
Neil Epstein, \emph{Reductions and special parts of closures}, J. Algebra
  \textbf{323} (2010), no.~8, 2209--2225.

\bibitem[Eps12]{nme-guide2}
\bysame, \emph{A guide to closure operations in commutative algebra}, Progress
  in Commutative Algebra 2 (Berlin/Boston) (Christopher Francisco, Lee
  Klingler, Sean Sather-Wagstaff, and Janet~C. Vassilev, eds.), De Gruyter
  Proceedings in Mathematics, De Gruyter, 2012, pp.~1--37.

\bibitem[FK13]{FK-cc}
Charles Fefferman and J\'anos Koll\'ar, \emph{Continuous solutions of linear
  equations}, From {F}ourier analysis and number theory to {R}adon transforms
  and geometry, Dev. Math., vol.~28, Springer, New York, 2013, pp.~233--282.

\bibitem[Gib89]{Gib-Fp}
Gavin Gibson, \emph{Seminormality and {$F$}-purity in local rings}, Osaka J.
  Math. \textbf{26} (1989), 245--251.

\bibitem[GT80]{GrTr-semi}
Silvio Greco and Carlo Traverso, \emph{On seminormal schemes}, Comp. Math.
  \textbf{40} (1980), no.~3, 325--365.

\bibitem[GV11]{GaVit-wsi}
Terence Gaffney and Marie~A. Vitulli, \emph{Weak subintegral closure of
  ideals}, Adv. Math. \textbf{226} (2011), no.~3, 2089--2117.

\bibitem[GW77]{GoWa-Fp}
Shiro Goto and {Kei-ichi} Watanabe, \emph{The structure of 1-dimensional
  {$F$}-pure rings}, J. Algebra \textbf{49} (1977), 415--421.

\bibitem[HH94a]{HHbase}
Melvin Hochster and Craig Huneke, \emph{{$F$}-regularity, test elements, and
  smooth base change}, Trans. Amer. Math. Soc. \textbf{346} (1994), no.~1,
  1--62.

\bibitem[HH94b]{HHsplit}
\bysame, \emph{Tight closure of parameter ideals and splitting in module-finite
  extensions}, J. Algebraic Geom. \textbf{3} (1994), 599--670.

\bibitem[HR74]{HoRo-invar}
Melvin Hochster and Joel~L. Roberts, \emph{Rings of invariants of reductive
  groups acting on regular rings are {C}ohen-{M}acaulay}, Adv. Math.
  \textbf{13} (1974), 115--175.

\bibitem[HS06]{HuSw-book}
Craig Huneke and Irena Swanson, \emph{Integral closure of ideals, rings, and
  modules}, London Math. Soc. Lecture Note Ser., vol. 336, Cambridge Univ.
  Press, Cambridge, 2006.

\bibitem[Ito83]{Ito-wn}
Shiroh Itoh, \emph{On weak normality and symmetric algebras}, J. Algebra
  \textbf{85} (1983), no.~1, 40--50.

\bibitem[Kol12]{Kol-cc}
J\'anos Koll\'ar, \emph{Continuous closure of sheaves}, Michigan Math. J.
  \textbf{61} (2012), no.~3, 475--491.

\bibitem[LV81]{LeVit-snwn}
John~V. Leahy and Marie~A. Vitulli, \emph{Seminormal rings and weakly normal
  varieties}, Nagoya Math. J. \textbf{82} (1981), 27--56.

\bibitem[Man80]{Man-wn}
Mirella Manaresi, \emph{Some properties of weakly normal varieties}, Nagoya
  Math. J. \textbf{77} (1980), 61--74.

\bibitem[Mat86]{Mats}
Hideyuki Matsumura, \emph{Commutative ring theory}, Cambridge Studies in
  Advanced Mathematics, no.~8, Cambridge Univ. Press, Cambridge, 1986,
  Translated from the {Japanese} by {M.} {Reid}.

\bibitem[Nag62]{NagLR}
Masayoshi Nagata, \emph{Local rings}, Interscience Tracts in Pure and Appl.
  Math., no.~13, Interscience, 1962.

\bibitem[Nag68]{Nag-Noeth}
\bysame, \emph{A type of subring of a {N}oetherian ring}, J. Math. Kyoto Univ.
  \textbf{8} (1968), 465--467.

\bibitem[Swa80]{Sw-semi}
Richard~G. Swan, \emph{On seminormality}, J. Algebra \textbf{67} (1980),
  210--229.

\bibitem[Tra70]{Tr-semipic}
Carlo Traverso, \emph{Seminormality and {P}icard group}, Ann. Scuola Norm. Sup.
  Pisa (3) \textbf{24} (1970), 585--595.

\bibitem[Vit11]{Vit-survey}
Marie~A. Vitulli, \emph{Weak normality and seminormality}, Commutative algebra:
  {N}oetherian and non-{N}oetherian perspectives, Springer-Verlag, 2011,
  pp.~441--480.

\bibitem[Yos82]{Yo-biratint}
{Ken-ichi} Yoshida, \emph{On birational-integral extensions of rings and prime
  ideals of depth one}, Japan. J. Math. (N.S.) \textbf{8} (1982), no.~1,
  49--70.

\end{thebibliography}
\end{document}